\newcommand{\nc}{\newcommand}
\nc{\rnc}{\renewcommand}
\rnc{\P}{\mathbf P}
\nc{\R}{\mathbf R}
\rnc{\rm}{\mathrm}
\nc{\C}{\mathbf C}
\nc{\Q}{\mathbf Q}
\nc{\Z}{\mathbf Z}
\nc{\N}{\mathbf N}
\nc{\A}{\mathbf A}
\nc{\an}{\operatorname{an}}
\nc{\red}{\operatorname{red}}
\nc{\coker}{\operatorname{coker}}
\nc{\et}{\text{\'et}}
\nc{\htt}{\operatorname{ht}}
\nc{\Nm}{\operatorname{Nm}}
\nc{\Ker}{\operatorname{Ker}}
\nc{\mmod}{\operatorname{mod}}
\nc{\End}{\operatorname{End}}
\nc{\Aut}{\operatorname{Aut}}
\nc{\cont}{\text{cont}}
\nc{\sep}{\text{sep}}
\nc{\Hom}{\mathrm{Hom}}
\nc{\Gal}{\mathrm{Gal}}
\nc{\Spec}{\text{Spec}\,}
\nc{\RZ}{\operatorname{RZ}}
\rnc{\t}{\tau}
\nc{\mm}{\pmb{\mu}}
\rnc{\a}{\alpha}
\nc{\n}{\mathfrak n}
\nc{\m}{\mathfrak m}
\nc{\mfs}{\mathfrak s}
\nc{\p}{\mathfrak p}
\nc{\q}{\mathfrak q}
\nc{\Sym}{\operatorname{Sym}}
\nc{\codim}{\operatorname{codim}}
\nc{\rk}{\operatorname{rk}}
\nc{\GL}{\operatorname{GL}}
\nc{\SL}{\operatorname{SL}}
\nc{\Lie}{\operatorname{Lie}}
\nc{\Ind}{\operatorname{Ind}}
\nc{\Div}{\underline{Div}}
\nc{\Pic}{\mathbf{Pic}}
\nc{\uPic}{\underline{ \mathbf{Pic}}}
\nc{\rH}{\mathrm{H}}
\nc{\Spf}{\operatorname{Spf}}
\nc{\Frac}{\operatorname{Frac}}
\nc{\colim}{\operatorname{colim}}
\nc{\Spa}{\operatorname{Spa}}
\rnc{\an}{\operatorname{an}}
\nc{\xr}{\xrightarrow}
\nc{\eps}{\epsilon}
\nc{\ov}{\overline}
\nc{\ud}{\underline}
\nc{\wdh}{\widehat}
\nc{\F}{\mathcal F}
\nc{\G}{\mathcal G}
\nc{\E}{\mathcal E}
\nc{\M}{\mathcal M}
\nc{\X}{\mathfrak X}
\nc{\Y}{\mathfrak Y}
\nc{\T}{\mathfrak T}
\nc{\LL}{\mathcal{L}}
\rnc{\S}{\mathcal S}
\nc{\ra}{\rangle}
\nc{\os}{\overset}
\rnc{\O}{\mathcal O}
\nc{\J}{\mathcal J}
\theoremstyle{definition}
\newtheorem{thm1}{Theorem}[subsection]
\newtheorem{lemma1}[thm1]{Lemma}
\newtheorem{defn1}[thm1]{Definition}
\newtheorem{prop1}[thm1]{Proposition}
\newtheorem{facts1}[thm1]{Facts}
\newtheorem{rmk1}[thm1]{Remark}
\newtheorem{exmpl1}[thm1]{Example}
\newtheorem{cor1}[thm1]{Corollary}
\newtheorem{thm}{Theorem}[section]
\newtheorem{lemma}[thm]{Lemma}
\newtheorem{defn}[thm]{Definition}
\newtheorem{prop}[thm]{Proposition}
\newtheorem{setup}[thm]{Setup}
\newtheorem{conj}[thm]{Conjecture}
\newtheorem{rmk}[thm]{Remark}
\newtheorem{cor}[thm]{Corollary}
\begin{document}
\bibliographystyle{halpha-abbrv}
\title{Altered Local Uniformization of Rigid-Analytic Spaces}
\author{Bogdan Zavyalov}
\maketitle

\begin{abstract}
We prove a version of Temkin's local altered uniformization theorem. We show that for any rig-smooth, quasi-compact and quasi-separated admissible formal $\O_K$-model $\X$, there is a finite extension $K'/K$ such that $\X_{\O_{K'}}$ locally admits a rig-\'etale morphism $g\colon \X' \to \X_{\O_{K'}}$ and a rig-isomorphism $h\colon \X'' \to \X'$ with $\X'$ being a successive semi-stable curve fibration over $\O_{K'}$ and $\X''$ being a polystable formal $\O_{K'}$-scheme. Moreover, $\X'$ admits an action of a finite group $G$ such that $g\colon \X' \to \X_{\O_{K'}}$ is $G$-invariant, and the adic generic fiber $\X'_{K'}$ becomes a $G$-torsor over its quasi-compact open image $U=g_{K'}(\X'_{K'})$. Also, we study properties of the quotient map $\X'/G \to \X_{\O_{K'}}$ and show that it can be obtained as a composition of open immersions and rig-isomorphisms.
\end{abstract}
\tableofcontents

\section{Introduction}
\subsection{Historical Overview}

The stable modification theorem \cite{DM} of P. Deligne and D. Mumford  says that for a discrete valuation ring $R$ and a smooth proper curve $X$ over the fraction field $K$, there is a finite separable extension $K \subset K'$ such that the base change $X_{K'}$ extends to a semi-stable curve over the integral closure of $R$ in $K'$. This theorem is crucial for many geometric and arithmetic applications since it allows one to reduce many questions about curves over the fraction field $K$ of $R$ to questions about {\it semi-stable curves} over a finite separable extension of $K$. For example, this theorem plays an important role in the proof of Mordell's Conjecture by G. Faltings (look at \cite{Mor} for a discussion of this proof). \smallskip

Over the years people tried to generalize this statement to a more general set-up. For example, a ``Lemma of Gabber''~\cite{Del} roughly says that the same result may be achieved over a quasi-compact and quasi-separated base\footnote{The result is stated in a weaker form, but using the ``spreading out'' techniques developed in \cite[IV\textsubscript{3}]{EGA}
 and \cite{ThTr} the case of a qcqs base can be reduced to \cite[Lemma 1.6]{Del}.} $S$ after a base change along some proper and surjective morphism $S' \to S$. Another generalization was obtained by J.\ de Jong in \cite{DJ2}: he shows that any projective relative curve $f\colon X \to S$ can be made semi-stable by means of a generically \'etale alteration $S'\to S$ and a further modification $X' \to X_{S'}$. This theorem has many important consequences in algebraic geometry (e.g. resolution of singularities up to an alteration), but it has a disadvantage that one can not control the \'etale locus of the morphism $S' \to S$ (and $X' \to X_{S'}$). For example, suppose that $X$ is smooth (or just semi-stable) over an open $U\subset S$. Then de Jong's approach is not robust enough to allow one to choose an $U$-\'etale alteration $S' \to S$ such that $X_{S'}$ admits a semi-stable modification. \smallskip

This difficulty was recently overcome by M. Temkin in \cite{T2}. He shows there that given a relative curve (see Definition~\ref{def-curve}) $f\colon X \to S$ that is semi-stable over an (quasi-compact and schematically dense) open subset $U \subset S$, one can find an ``$U$-\'etale covering'' (see Definition~\ref{U-etale cover}) $S' \to S$ such that $X_{S'}$ admits a semi-stable modification. This proof uses completely new ideas (compared to all older proofs) related to the non-archimedean geometry. Roughly, Temkin uses the notion of a Riemann-Zariski space to reduce the case of an arbitrary quasi-compact and quasi-separated base $S$ to the case of a spectrum of a complete rank-$1$ (possibly not discrete) valuation ring. He then treats this case using rigid-analytic techniques.
\smallskip

The case of higher dimensional families is much harder, and not much is known besides the characteristic $0$ case. It is shown in~\cite{TorEmb} that given a characteristic $0$ field $k$, a $k$-curve $C$, and a finite type morphism $X \to C$ that is smooth over the generic point of $C$, there is a finite morphism $C' \to C$ such that $X_{C'}$ admits a semi-stable modification. The main techniques in the proof are resolution of singularities and toroidal geometry. The case of higher dimensional base (and higher-dimensional fibers) was recently solved by K. Adiprasito, G. Liu and M. Temkin in \cite{AdipLiuTem} using techniques based on the previous work of D. Abramovich and K. Karu \cite{AbrKaru}. \smallskip

To the best of our knowledge, not much is known in the case of finite or mixed characteristic schemes. However, there are some positive results in this direction. M. Temkin proved in \cite{T3} that for any valuation ring $R$ of rank-$1$\footnote{The case of a discrete valuation ring $R$ was obtained before in the work of U. Hartl \cite{Hartl}.} and a finite type flat $R$-scheme $X$ with a smooth fiber over the fraction field $K$, there is a finite extension of valued fields $K \subset K'$ with ring of integers $R'$ such that the base change $X_{R'}$ admits a $K'$-\'etale covering (see Definition~\ref{U-etale cover} and Remark~\ref{rmk:K=U}) $X' \to X_{R'}$ with a strictly semi-stable $R'$-scheme $X'$. The proof is based on his previous work on the Stable Modification Theorem and Relative Riemann-Zariski spaces in \cite{T2}. We note that this is pretty far from the case of the actual Semi-Stable Reduction Theorem since the argument is local on $X$, so it can not control properness of $X' \to X_{R'}$. Also, usually the algorithm produces $X'$ that is not birational to $X_{R'}$. \smallskip

It is important to note that  Temkin's result actually shows more. Namely, his theorem applies not only to usual $R$-schemes, but also to formal $R$-schemes (in the case of a complete $R$). In particular, it is shown in \cite[Theorem 3.3.1]{T3} that given a complete valuation ring $R$ of rank-$1$ and a rig-smooth admissible formal $R$-scheme $\X$, there is a finite valued field extension $K \subset K'$ with the valuation ring $R'$ such that $\X_{R'}$ admits a rig-\'etale covering (see Definition~\ref{different-C-mod}) $\X' \to \X_{R'}$ with a strictly semi-stable admissible formal $R'$-scheme $\X'$.

\subsection{Our results}

In this paper, we prove a version of the local altered uniformization theorem for smooth rigid-analytic varieties following the ideas from \cite{T3}. We start with proving a slightly more refined version of 
Temkin's Stable Modification Theorem~\cite[Theorem 2.3.3]{T2} that will be very useful for our later purposes:

\begin{thm}\label{example-1} (Theorem~\ref{stable-modification-general}) Let $U\subset S$ be a schematically dense quasi-compact open subset of a quasi-compact and quasi-separated scheme $S$, and let $f\colon X \to S$ be an $S$-curve that is semi-stable over $U$ (see Definition~\ref{def-ss}). Then there exist 
\begin{itemize}\itemsep0.5em
	\item A projective $U$-modification (see Definition~\ref{U-mod}) $h\colon S' \to S$ with a finite open Zariski covering $\cup_{i=1}^n V'_i = S'$ by quasi-compact opens $V'_i \subset S'$.
	\item A finite group $G_i$ and a finite, finitely presented, faithfully flat, and $U$-\'etale $G_i$-invariant morphism $t_i\colon W'_i \to V'_i$ for each $i\leq n$. In particular, the morphism $t\colon W'=\sqcup_{i=1}^n W'_i \to S$ is a $U$-\'etale covering (see Definition~\ref{U-etale cover}). 
\end{itemize}

satisfying the following properties:

\begin{enumerate}
	\item The induced morphisms $t_{i,U}\colon W'_{i, U} \to V'_{i, U}$ are $G_i$-torsors.
	\item Each $X_{W'_i}$ admits a $W'_i$-stable $U$-modification (see Definition~\ref{U-stable}) $g_i\colon X'_i \to X_{W'_i}$.
\end{enumerate}

\end{thm}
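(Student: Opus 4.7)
The plan is to bootstrap from Temkin's Stable Modification Theorem \cite[Theorem 2.3.3]{T2} by upgrading the $U$-\'etale covering it provides into one whose pieces carry finite group actions and become torsors over $U$.

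First I would apply \cite[Theorem 2.3.3]{T2} directly to $(f\colon X\to S, U)$; this yields a projective $U$-modification $h_0\colon S_0'\to S$, a finite open covering $S_0'=\cup_{i=1}^{n} V_{0,i}$ by quasi-compact opens, and for each $i$ a finite, finitely presented, faithfully flat, $U$-\'etale morphism $s_i\colon Y_i\to V_{0,i}$ such that $X_{Y_i}$ admits a $Y_i$-stable $U$-modification. For each $i$, the finite \'etale cover $s_{i,U}\colon Y_{i,U}\to V_{0,i,U}$ is dominated by a $G_i$-torsor $Z_{i,U}\to V_{0,i,U}$ for some finite group $G_i$ (the Galois closure of $s_{i,U}$; after possibly refining $\{V_{0,i}\}$ into clopen pieces to separate connected components of $V_{0,i,U}$). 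Since $Z_{i,U}$ is finite \'etale over $Y_{i,U}$, it corresponds to a finite locally free $\O_{Y_{i,U}}$-algebra; I would extend this to a coherent $G_i$-equivariant $\O_{Y_i}$-algebra by the integral-closure construction on affine charts, producing a finite, $G_i$-equivariant morphism $\tilde W_i\to Y_i\to V_{0,i}$ that is $U$-\'etale and whose restriction over $U$ is the torsor $Z_{i,U}\to V_{0,i,U}$.

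The morphism $\tilde W_i\to V_{0,i}$ need not be flat away from $U$. I would next apply Raynaud--Gruson flattening by blow-ups in its $U$-modification form (as used repeatedly in \cite{T2}) to each $\tilde W_i\to V_{0,i}$, and combine the resulting centers into a single projective $U$-modification $h_1\colon S'\to S_0'$. Writing $V'_i:=h_1^{-1}(V_{0,i})$ and letting $W'_i$ be the strict transform of $\tilde W_i$ over $V'_i$, the induced morphism $t_i\colon W'_i\to V'_i$ becomes finite, finitely presented, faithfully flat, $U$-\'etale, and $G_i$-equivariant, with $t_{i,U}$ equal to the torsor $Z_{i,U}\to V_{0,i,U}$. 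Setting $h:=h_0\circ h_1$ gives the required projective $U$-modification, and assertion~(1) is automatic. For assertion~(2), I would pull back the $Y_i$-stable $U$-modification produced in Step~1 along the factored map $W'_i\to Y_i$ and invoke invariance of stable modifications under faithfully flat $U$-\'etale base change.

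The main obstacle is the flattening step: one must enforce faithful flatness of $\tilde W_i\to V_{0,i}$ while simultaneously preserving the $U$-\'etale locus and the $G_i$-action. The flattening centers can be chosen $G_i$-equivariantly because the non-flat locus of $\tilde W_i\to V_{0,i}$ is $G_i$-stable, and they leave the $U$-locus untouched because $\tilde W_i\to V_{0,i}$ is already \'etale there. A secondary technical point is that the integral-closure extension used to build $\tilde W_i$ must glue across the affine charts of $Y_i$ and produce an honestly coherent algebra; this can be arranged either by first passing to a further $U$-modification that renders $V_{0,i}$ normal, or directly by spreading-out arguments of the same flavor as those in \cite{T2}.
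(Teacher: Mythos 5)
Your proposal takes a genuinely different route from the paper — black-boxing Temkin's theorem and grafting the group actions on afterwards, rather than re-running the Riemann--Zariski argument with an equivariant local input — but as written it has a genuine gap at the very first step. The statement of \cite[Theorem 2.3.3]{T2} produces only an \emph{abstract} $U$-\'etale covering $S'\to S$ in the sense of Definition~\ref{U-etale cover}; it does not assert that this covering factors as a projective $U$-modification, followed by a finite Zariski cover by quasi-compact opens, followed by finite, finitely presented, faithfully flat pieces. That factorization is precisely half of what Theorem~\ref{example-1} adds to Temkin's statement (the paper says so explicitly when comparing the two results), and it cannot be extracted formally from the valuative lifting property defining a $U$-\'etale covering. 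To obtain it one must go back into the proof, i.e.\ run the relative Riemann--Zariski / semi-valuation-ring argument — which is exactly what Steps 0--4 of the paper's proof of Theorem~\ref{stable-modification-general} do, with the group action and torsor structure already built into the local input (Theorem~\ref{stable-modification-semivaluation}). So your Step 1 silently assumes a substantial part of the conclusion.

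Even granting the structured output, the extension-and-flattening step is not justified as stated. The integral closure used to build $\tilde W_i$ need not be finite or of finite presentation over a general qcqs base, and neither of your proposed fixes works: a morphism ``rendering $V_{0,i}$ normal'' is not a $U$-modification unless $U$ is already normal, and normality of the base does not by itself give finiteness of integral closures in finite extensions without excellence hypotheses. The correct repair is the one the paper performs at the outset (Step 0): approximate $(S,U,X)$ by data of finite type over $\Z$, after which excellence makes integral closures finite and finitely presented, and also guarantees that $V_{0,i,U}$ has finitely many connected components — which you need in any case, since Lemma~\ref{dominate-torsor} requires constant degree and the clopen degree strata of $V_{0,i,U}$ do not automatically extend to opens of $V_{0,i}$ without further modification. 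With that reduction in place, the rest of your argument (equivariance of strict transforms under blow-ups of the base, faithful flatness from schematic density of the $U$-locus, and base change of the stable modification along $W'_i\to Y_i$, whose fiberwise stability conditions are preserved) does go through, and it is a legitimate alternative to the paper's strategy of making the semi-valuation-ring statement equivariant from the start.
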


\begin{rmk} We actually prove a slightly more general result. Namely, Theorem~\ref{example-1} holds under fewer assumptions on $U$ (see Theorem~\ref{stable-modification-general} and Setup~\ref{setup5}). We only mention here that, in particular, Theorem~\ref{example-1} holds if $U$ is the generic point of a quasi-compact, quasi-separated integral scheme $S$. 
\end{rmk}

The main difference between our theorem and \cite[Theorem 2.3.3]{T2} is that we gain a better control over the $U$-\'etale morphism (see Definition~\ref{U-etale}) $W'\to S$\footnote{$W'$ is denoted as $S'$ in \cite[Theorem 2.3.3]{T2}.} such that $X_{W'}$ admits a stable $U$-modification. Namely, our $W'\to S$ has the form 
\[
    W'\coloneqq\bigsqcup_{i=1}^n W'_i \to \bigsqcup_{i=1}^n V'_i \to S
\] 
that it is not merely an abstract $U$-\'etale covering but is a disjoint union of $G_i$-invariant finite, finitely presented and faithfully flat morphisms over the Zariski open covering of a projective $U$-modification $\bigcup_{i=1}^n V'_i=S' \to S$. Moreover, the $U$-restriction of each $W'_i\to S$ becomes a $G_i$-torsor over its open image. We later use these further properties in a crucial way. \smallskip

Theorem~\ref{example-1} allows us to prove a version of Temkin's local altered uniformization Theorem \cite[Theorem 3.3.1]{T3}. We now recall its statement. For any smooth, quasi-compact and quasi-separated rigid-analytic space $X$ over a complete non-archimedean field $K$ and an admissible formal model $\X$ over $\Spf \O_K$, there is a finite separable field extension $K\subset K'$ and a rig-\'etale map $g\colon \X' \to \X_{\O_{K'}}$ such that $\X'$ is strictly semi-stable over $\Spf \O_{K'}$. In this article, we weaken the condition on $\X'$, but we get some control over the structure of the morphism $g\colon  \X' \to \X_{\O_{K'}}$ instead. Namely, in our version of Temkin's local altered uniformization result, $\X'$ will not be (strictly) semi-stable over $\Spf \O_{K'}$, but it rather will be only polystable over $\Spf \O_{K'}$. In exchange, we will get a better control over the generic fiber of $\X'$ (in terms of an action of a finite group).

\begin{thm}\label{example-2} (Theorem~\ref{main-main}) Let $X$ be a quasi-compact and quasi-separated smooth rigid-analytic space over $\Spa(K, \O_K)$ with a given admissible quasi-compact formal model $\X$. Then there is a finite Galois extension $K \subset K'$, a finite extension $K' \subset K''$, a finite number of morphisms of admissible formal $\O_{K'}$-schemes (resp. $\O_{K''}$-schemes ) $g_i\colon \X'_i \to \X_{\O_{K'}}$ and $h_i\colon \X''_{i} \to \X'_{i, \O_{K''}}$, such that
\begin{itemize}\itemsep0.5em
\item Each $\X'_i$ admits an action of a finite group $G_i$ such that $g_i\colon \X'_i \to \X_{\O_{K'}}$ is $G_i$-invariant for each $i$.
\item The morphism $g\colon \X'\coloneqq  \sqcup_i \X'_i \to \X_{\O_{K'}}$ is a rig-\'etale covering (see Definition~\ref{rig-etale-covering}). 
\item On the generic fiber, each $\X'_i$ becomes a $G_i$-torsor over its (quasi-compact) open image in the adic generic fiber $X_{K'}=\X_{K'}$.
\item Each $\X'_i$ is formally quasi-projective over $\O_{K'}$ (see Definition~\ref{quasiproj-formal}) and has a structure of a successive formal semi-stable rig-smooth curve fibration (see Definition~\ref{def-ss-formal}).
\item Each $h_i\colon \X''_i \to \X'_{i, \O_{K''}}$ is a rig-isomorphism, and $\X''_i$ is rig-smooth, polystable formal $\O_{K''}$-scheme (see Definition~\ref{poly-def-formal}).
\item If $\O_K$ is discretely valued, one can choose $\X_i''$ to be strictly semistable (see Definition~\ref{semi-def-formal}).
\end{itemize}
\end{thm}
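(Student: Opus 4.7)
The plan is to reduce to an algebraic statement and then induct on the relative dimension $d=\dim X_{K}$, at each step applying the refined stable modification theorem (Theorem~\ref{example-1}) to slice off one dimension of relative curve. All the conclusions of the theorem (rig-\'etaleness, rig-isomorphism, (strict) semi-stability, polystability, the torsor property on the generic fibre) depend only on the admissible formal model, so working Zariski-locally on $\X$ and invoking a standard Elkik-type algebraization of rig-smooth formal schemes, I may assume $\X$ is the $\pi$-adic completion of a flat finite-type $\O_{K}$-scheme $Y$ with $Y_{K}$ smooth of dimension $d$. The target then becomes an algebraic statement about $Y$ proved by induction on $d$: the case $d=0$ is trivial, and $d=1$ follows directly from Theorem~\ref{example-1} applied to $Y\to\Spec\O_{K}$ with $U=\Spec K$.

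For $d\geq 2$, after further Zariski localization on $Y$ I would choose a dominant morphism $f\colon Y\to Z$ of relative dimension one, with $Z$ flat finite-type over $\O_{K}$ and $Z_{K}$ smooth of dimension $d-1$, and compactify $f$ to a projective relative curve. A preliminary generic stable-modification argument over the function field of $Z_{K}$, spread out over a dense open, lets me assume (after absorbing the resulting $U$-\'etale base change into the eventual covering $\X'\to\X_{\O_{K'}}$) that $f$ is semi-stable over a quasi-compact dense open $U\subset Z$. Now Theorem~\ref{example-1} produces a projective $U$-modification $Z'\to Z$ with Zariski cover $Z'=\bigcup V'_{i}$ and, for each $i$, a $G_{i}$-invariant finite flat map $t_{i}\colon W'_{i}\to V'_{i}$ that is a $G_{i}$-torsor over $U$, together with a $W'_{i}$-stable $U$-modification of $Y\times_{Z}W'_{i}$. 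Each $W'_{i,K}$ is \'etale over the smooth $V'_{i,K}$, hence itself smooth of dimension $d-1$, so the inductive hypothesis applied to a formal model of $W'_{i}$ (after a finite Galois extension $K'/K$ trivializing all finite covers in sight) produces a rig-\'etale cover realizing $W'_{i}$ as a $G'_{i}$-equivariant successive semi-stable rig-smooth curve fibration. Stacking the stable $U$-modification of $Y\times_{Z}W'_{i}$ on top of this inductive cover, and assembling $G_{i}$ together with $G'_{i}$ into a single finite group acting on the total space, yields the desired $\X'_{i}$ with the required torsor behaviour on the generic fibre.

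For the rig-isomorphism step, an \'etale-local chart of a successive semi-stable rig-smooth curve fibration over $\O_{K'}$ has the form $\Spf\O_{K'}\langle x_{1},y_{1},\dots,x_{n},y_{n},z_{1},\dots,z_{m}\rangle/(x_{1}y_{1}-a_{1},\dots,x_{n}y_{n}-a_{n})$ with $a_{j}\in\m_{K'}\setminus\{0\}$. After a further finite extension $K''/K'$ adjoining enough roots of the $a_{j}$ so that each $v(a_{j})$ is sufficiently divisible in the value group, an explicit toroidal modification (local normalization of each singular toric equation, carried out $G_{i}$-equivariantly) converts each chart into a polystable one, and these local modifications glue to a rig-isomorphism $h_{i}\colon\X''_{i}\to\X'_{i,\O_{K''}}$ with $\X''_{i}$ polystable. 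In the discretely valued case one can moreover arrange $v(a_{j})=v(\pi_{K''})$ for every $j$, which produces strict semi-stability on the nose.

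The main obstacle is running the entire induction $G_{i}$-equivariantly: once an action of $G_{i}$ appears from one application of Theorem~\ref{example-1}, the subsequent inductive call on $W'_{i}$ must respect it, forcing equivariant versions of all auxiliary constructions (choice of the generic curve fibration $f\colon Y\to Z$, the $U$-modifications, the finite flat covers) and a careful assembly of the inductively produced finite groups into a single $G_{i}$ whose generic-fibre quotient realizes the torsor property over an explicit open image. Maintaining formal quasi-projectivity of $\X'_{i}$ at each step is automatic under projective modifications but requires some care with the finite-flat covers. Finally, the passage from polystable to strictly semi-stable in the non-discretely-valued case is genuinely obstructed by the structure of the value group, which is why only polystability is claimed in that generality.
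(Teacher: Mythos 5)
Your overall architecture (Zariski-localize, algebraize via Elkik, build a tower of relative curves, resolve it layer by layer with Theorem~\ref{example-1}, then pass to completions) is the same as the paper's, but there are two genuine gaps. The more serious one is in the polystable step. A successive formal semi-stable rig-smooth curve fibration does \emph{not} have local charts of the form $\Spf\O_{K'}\langle x_{1},y_{1},\dots,x_{n},y_{n},z_{1},\dots,z_{m}\rangle/(x_{1}y_{1}-a_{1},\dots,x_{n}y_{n}-a_{n})$ with all $a_{j}\in\m_{K'}\setminus\{0\}$: by Definition~\ref{def-ss-formal}, the element $a_{j}$ appearing at level $j$ of the tower is a \emph{function on the $(j-1)$-st level}, and only $a_{1}$ is forced to lie in $\O_{K'}$. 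A chart in which every relation has a constant right-hand side is already of polystable type, so your description assumes exactly what has to be proved. Converting a genuinely nested singularity such as $x_{2}y_{2}=a_{2}(x_{1},y_{1},\dots)$, $x_{1}y_{1}=\pi$ into polystable form is the content of the polystable reduction theorem of Adiprasito--Liu--Pak--Temkin, which the paper invokes as an essential black box after first equipping the tower with a vertical log smooth log structure (Appendix~\ref{log}); no finite extension of $K'$ together with chart-by-chart normalization replaces it. Likewise, strict semi-stability in the discretely valued case comes from log-regularity plus the subdivision algorithms of \cite{TorEmb}, not from adjusting valuations of constants.

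The second gap is the equivariance of the stable modification. You rightly flag running the induction $G_{i}$-equivariantly as the main obstacle, but you do not supply the mechanism: the stable modification $X'_{i}\to X_{W'_{i}}$ furnished by Theorem~\ref{example-1} carries no a priori $G_{i}$-action. The paper's device is to replace $W'_{i}$ by (a finitely presented approximation of) its normalization in its generic fiber and then invoke Temkin's uniqueness of the \emph{stable} modification over a normal base (\cite[Theorem 1.2]{T1}), which lifts each element of $G_{i}$ canonically; without this or an equivalent argument the torsor property cannot be propagated through the induction. Two smaller points: if your preliminary generic stable-modification step only makes $f$ semi-stable over a \emph{dense} open $U\subset Z$ rather than over all of $Z_{K}$, the resulting coverings are controlled only over $U$ and the final map $g$ need not be rig-\'etale and rig-surjective on the whole generic fiber --- the paper avoids this by arranging, via the \'etale charts to $\A^{d}_{K}$ and Raynaud--Gruson flattening, that each layer is smooth over the entire $K$-fiber; and the choice of the intermediate base $Z$ is not free but is produced by exactly that construction.
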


Let us now explain the main advantage of our version of the local uniformization theorem. One of the disadvantages of \cite[Theorem 3.3.1]{T3} is that there the admissible formal $\O_{K'}$-scheme $\X'$ does not give a model of the rigid-analytic space $X_{K'}$. Instead, what one gets is only that the morphism $\X'_{K'} \to X_{K'}$ is an \'etale covering. While it is good enough to study questions that are local on $X$, it is usually not strong enough for question that are only Zariski local on $\X$. Theorem~\ref{example-2} is better suited for these type of questions. In order to see this, we need to exploit the group action achieved in Theorem~\ref{example-2}. Namely, it allows us to define quotients $\X_i\coloneqq \X'_i/G_i$ that come with natural maps $\varphi_i\colon \X_i \to \X_{\O_{K'}}$. Then the assumptions on $\X_i$ in Theorem~\ref{example-2} imply that the $\X_i$ are admissible formal $\O_{K'}$-models of an open covering of $X_{K'}$. It turns out that the maps $\varphi_i$ can be controlled in a reasonable way {\it integrally}, this plays an important role in  our paper \cite{Zav4} where we give a proof of Poincar\'e Duality for \'etale $\mathbf{F}_p$ cohomology groups on a smooth and proper $p$-adic rigid space.

\begin{thm}\label{example-3} (Theorem~\ref{thm:final}) Let $\X$ be an admissible, quasi-compact and quasi-separated formal $\O_K$-scheme with the smooth generic fiber $\X_K$. Then there is a finite Galois extension $K\subset K'$, and a finite extension $K' \subset K''$, a finite set $(\X_i, \varphi_i)_{i\in I}$ of quasi-compact, quasi-separeted admissible formal $\O_{K'}$-schemes with morphisms $\varphi_i\colon \X_i \to \X_{\O_{K'}}$ such that
\begin{itemize}\itemsep0.5em
\item The set $(\X_i, \varphi_i)$ can be obtained from $\X_{\O_{K'}}$ as a ``composition of open Zariski coverings and rig-isomorphisms'' (see Definition~\ref{comp-formal}).
\item Each $\X_i$ is a geometric quotient (see Remark~\ref{rmk:about-quotient}) of an admissible formal $\O_{K'}$-scheme $\X'_i$ by an action of a finite group $G_i$ such that the map $p_{i,K'}\colon \X'_{i,K'} \to \X_{i,K'}$ is a $G_i$-torsor.
\item Each $\X'_i$ has a structure of a formal semi-stable, rig-smooth successive curve fibration over $\Spf \O_{K'}$.
\item Each $\X'_{i, \O_{K''}}$ admits a rig-isomorphism $h_i\colon \X''_i \to \X'_{i, \O_{K''}}$ with a rig-smooth, polystable formal $\O_{K''}$-scheme $\X''_i$.
\item If $\O_K$ is discretely valued, one can choose $\X_i''$ to be strictly semi-stable.
\end{itemize}
\end{thm}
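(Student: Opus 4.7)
The plan is to deduce Theorem~\ref{example-3} from Theorem~\ref{example-2} by forming the geometric quotients of the $\X'_i$ by the $G_i$-actions, and then using Raynaud's formal--rigid dictionary to package the result in the form required by Definition~\ref{comp-formal}. First, I would apply Theorem~\ref{example-2} to $\X$; this produces the finite Galois extension $K'/K$, the further finite extension $K''/K'$, the formally quasi-projective, successively semi-stable curve fibrations $\X'_i$ with their $G_i$-actions and rig-\'etale covering $g\colon \sqcup_i \X'_i \to \X_{\O_{K'}}$, and the rig-isomorphic polystable (resp.\ strictly semi-stable in the discrete case) upgrades $h_i\colon \X''_i \to \X'_{i,\O_{K''}}$.

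Next, I would form the geometric quotients $\X_i \coloneqq \X'_i/G_i$. Because each $\X'_i$ is formally quasi-projective over $\O_{K'}$, the $G_i$-action admits a geometric quotient in the category of admissible formal $\O_{K'}$-schemes, with quotient map $p_i\colon \X'_i \to \X_i$ finite and surjective. The $G_i$-invariance of $g_i$ then provides a unique factorization $g_i = \varphi_i \circ p_i$. Because formal quotients by finite group actions are compatible with passage to the rigid generic fiber, the torsor property of Theorem~\ref{example-2} gives $\X_{i,K'} \cong U_i$, where $U_i \coloneqq g_{i,K'}(\X'_{i,K'}) \subseteq X_{K'}$ is the quasi-compact open image, and $\varphi_{i,K'}$ becomes exactly the inclusion $U_i \hookrightarrow X_{K'}$. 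Since $g$ is a rig-\'etale covering, the $U_i$ cover $X_{K'}$.

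To realize $(\X_i, \varphi_i)$ as a composition of open Zariski coverings and rig-isomorphisms, I would invoke Raynaud's theorem to pick an admissible formal blowup $\widetilde{\X} \to \X_{\O_{K'}}$ (which is a rig-isomorphism) together with a Zariski open covering $\widetilde{\X} = \bigcup_i \widetilde{\mathfrak U}_i$ whose generic fibers reproduce the $U_i$. Both $\widetilde{\mathfrak U}_i$ and $\X_i$ are then admissible formal $\O_{K'}$-models of the same quasi-compact rigid space $U_i$, so after a further admissible formal blowup of each they become isomorphic, yielding a zigzag of rig-isomorphisms connecting them. Concatenating the initial blowup of $\X_{\O_{K'}}$, the Zariski covering by the $\widetilde{\mathfrak U}_i$, and the piecewise chain of rig-isomorphisms that end at $\X_i$ realizes $(\X_i, \varphi_i)$ in the form demanded by Definition~\ref{comp-formal}. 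All remaining structural properties on the $\X'_i$ and $\X''_i$ are inherited directly from Theorem~\ref{example-2}.

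The principal obstacle is the third step: one must produce the admissible formal blowup $\widetilde{\X} \to \X_{\O_{K'}}$ so that it simultaneously carries a Zariski open covering whose generic fibers are \emph{exactly} the $U_i$, and then ensure that the subsequent zigzag of blowups linking $\widetilde{\mathfrak U}_i$ to $\X_i$ fits the precise formalism of Definition~\ref{comp-formal} without disturbing the $G_i$-equivariance on $\X'_i$ or the semi-stable curve fibration structure. This amounts to a uniform, functorial extraction from Raynaud's equivalence across the indices $i$, and is the genuinely technical step that separates Theorem~\ref{example-3} from a purely formal consequence of Theorem~\ref{example-2}.
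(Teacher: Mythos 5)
Your first two steps match the paper exactly: apply Theorem~\ref{example-2}, use formal quasi-projectivity to form the geometric quotients $\X_i=\X'_i/G_i$ via \cite[Theorem 3.3.4]{Z2}, and use the compatibility of quotients with adic generic fibers (\cite[Theorem 4.4.1]{Z2}) to identify $\X_{i,K'}$ with the open image $U_i$ and obtain the $G_i$-torsor property. The gap is in your third step. Definition~\ref{comp-formal} is \emph{directional}: at each stage you may only precompose an existing $\varphi_i\colon \X_i \to \X$ with a rig-isomorphism $\X'_i \to \X_i$ (i.e.\ pass to a model that \emph{dominates} $\X_i$) or refine by a Zariski cover. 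Your plan produces a Raynaud model $\widetilde{\mathfrak U}_i$ of $U_i$ and then a common admissible blowup $\mathfrak Z_i$ dominating both $\widetilde{\mathfrak U}_i$ and $\X_i$; but the chain of allowed operations then terminates at $\mathfrak Z_i$, and there is no move in Definition~\ref{comp-formal} that lets you descend from $\mathfrak Z_i$ to the dominated model $\X_i$. Since the theorem asserts that the quotients $\X_i$ themselves (not some blowups of them) are obtained by the allowed operations, the zigzag argument does not prove the first bullet point. This is precisely the point the paper flags with ``we do not know how to see this from the first principles.''

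The paper's actual resolution is to abandon any abstract Raynaud-type comparison and instead re-run the entire construction of $\X'_i$: it reduces to the affine case, algebraizes via Elkik, passes to the schematic statement by Lemma~\ref{al-an} and \cite[Theorem 3.4.1]{Z2}, and then invokes Lemma~\ref{main-lemma-4}(\ref{main-lemma-4-5}) (proved via Corollary~\ref{main-lemma-3}). There, a descending induction through the explicit tower decomposes each quotient map $X'_{n,j}/G_j \to X_{n,R'}$ step by step into Zariski open coverings of quotients and $K$-modifications of quotients, using \cite[Proposition 5.1.2]{Z2} to see that the quotient of an equivariant $K$-modification (or of a finite equivariant map) is again a $K$-modification. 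In other words, the fact that $\X_i$ sits at the end of a chain of the allowed operations is an artifact of how $\X'_i$ was built, not a formal consequence of $\X_i$ being a model of $U_i$; your proposal would need to be replaced by (or supplemented with) this explicit bookkeeping.
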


\begin{rmk}\label{rmk:about-quotient}
The formalism of (geometric) quotients in the set-up of admissible formal schemes and (strongly noetherian) adic spaces is developed from scratch in \cite{Z2}. We briefly give the definition of geometric quotient in Section~\ref{section-final} (see Definition~\ref{geom-quot}) and refer to \cite{Z2} for a detailed discussion of the subject. 
\end{rmk}

Let us say a few words how one can use Theorem~\ref{example-3}, in particular clarify its role in \cite{Zav4}. Suppose we want to show some statement on a(ny) admissible formal $\O_C$-model $\X$ of a smooth, quasi-compact, quasi-separated rigid space $X$ over an algebraically closed non-archimedean field $C$. Then Theorem~\ref{example-3} implies that it suffices to show that this statement is Zariski-local on $\X$, descends along rig-isomorphisms and geometric quotients by an action of a finite group $G$ acting freely on generic fiber, and prove the claim for {\it polystable} $\X$. This strategy is used in \cite{Zav4} to show that particularly defined Faltings' trace map defines an almost (in the technical) perfect pairing on any admissible formal $\O_C$-model of a smooth $X$ by reducing to the case of a polystable model where one can verify it by hands. This, together with the construction of the Faltings' trace map, are the two key parts in the proof of Poincar\'e Duality. \smallskip 

One can think of Theorem~\ref{example-3} as a useful local substitute for a much stronger polystable reduction conjecture:

\begin{conj}\cite[Rigid Version of Conjecture 1.2.6]{AdipLiuPakTemkin}\label{conj} Let $\O$ be a complete rank-$1$ valuation ring with the algebraically closed fraction field $K$, and let $\X$ be a flat, topologically finitely presented formal $\O$-scheme whose adic generic fiber $\X_K$ is smooth. Then there is a rig-isomorphism $\X' \to \X$ such that $\X'$ is polystable. 
\end{conj}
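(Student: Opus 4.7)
\smallskip

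\noindent\textbf{Plan.} The proof should be a bootstrapping of Theorem~\ref{example-2} via the quotient formalism. First apply Theorem~\ref{example-2} to produce $K\subset K'\subset K''$, the admissible formal $\O_{K'}$-schemes $\X'_i$ with $G_i$-actions, the $G_i$-invariant morphisms $g_i\colon \X'_i\to \X_{\O_{K'}}$, and the rig-isomorphisms $h_i\colon \X''_i\to \X'_{i,\O_{K''}}$ with $\X''_i$ polystable (or strictly semi-stable when $\O_K$ is discretely valued). Using the geometric-quotient formalism developed in \cite{Z2} (cf.\ Remark~\ref{rmk:about-quotient}), form the quotients $\X_i\coloneqq \X'_i/G_i$, together with the quotient map $p_i\colon \X'_i\to \X_i$ and the induced map $\varphi_i\colon \X_i\to \X_{\O_{K'}}$. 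Taking generic fibers commutes with geometric quotients in this set-up, so $\X_{i,K'}=\X'_{i,K'}/G_i$; combined with the assertion in Theorem~\ref{example-2} that $\X'_{i,K'}$ is a $G_i$-torsor over its quasi-compact open image $U_i\coloneqq g_{i,K'}(\X'_{i,K'})\subset X_{K'}$, this gives that $p_{i,K'}$ is a $G_i$-torsor and that $\varphi_{i,K'}\colon \X_{i,K'}\to X_{K'}$ is an open immersion with image $U_i$. Because the $g_i$ assemble into a rig-\'etale covering, the opens $(U_i)$ form a quasi-compact open cover of $X_{K'}$.

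\smallskip

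\noindent It then remains to show that $(\X_i,\varphi_i)_{i\in I}$ can be obtained from $\X_{\O_{K'}}$ by a composition of Zariski open coverings and rig-isomorphisms in the sense of Definition~\ref{comp-formal}. The tool is Raynaud's equivalence between admissible formal $\O_{K'}$-schemes up to admissible formal blow-ups and qcqs rigid-analytic spaces over $K'$. Given the quasi-compact open cover $(U_i)$ of $\X_{K'}$, Raynaud's theorem produces an admissible formal blow-up $b\colon \wdh{\X}\to \X_{\O_{K'}}$ (automatically a rig-isomorphism) and a Zariski open covering $\wdh{\X}=\bigcup_i \wdh{\X}_i$ whose generic fibres are precisely the $U_i$. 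Each $\wdh{\X}_i$ is thus an admissible formal $\O_{K'}$-model of the same rigid space $U_i=\X_{i,K'}$ as $\X_i$, so by the dominance property of admissible formal blow-ups there exist a common further blow-up $\wdh{\X}_i'\to \wdh{\X}_i$ and $\wdh{\X}_i'\to \X_i$ (both rig-isomorphisms) compatible with the map to $\X_{\O_{K'}}$. Composing the Zariski covering step $\wdh{\X}=\bigcup \wdh{\X}_i \to \wdh{\X}$ with the blow-up $b$ on the source and the rig-isomorphisms $\wdh{\X}_i'\to \X_i$ on each piece exhibits $(\X_i,\varphi_i)$ as the required composition starting from $\X_{\O_{K'}}$.

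\smallskip

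\noindent\textbf{Main obstacle.} The delicate point is in the last step: one must arrange the admissible formal blow-ups dominating $\wdh{\X}_i$ and $\X_i$ so that the resulting map to $\X_{\O_{K'}}$ literally agrees with $\varphi_i$, rather than merely coinciding with it on the adic generic fibre. This is a coherence statement about lifting a morphism of rigid-analytic spaces to a morphism of admissible formal models, and follows from Raynaud's theorem (morphisms of qcqs rigid spaces lift, uniquely up to admissible formal blow-up of the source, to morphisms of admissible formal models). All other properties requested in Theorem~\ref{example-3}---the successive formal semi-stable rig-smooth curve fibration structure of $\X'_i$, the polystability (resp.\ strict semi-stability) of $\X''_i$, the rig-isomorphisms $h_i$, and the $G_i$-torsor property of $p_{i,K'}$---are inherited unchanged from Theorem~\ref{example-2}, so no further work is needed beyond the covering/quotient reorganization above.
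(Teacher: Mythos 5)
There is a fundamental mismatch between what you prove and what the statement asserts. The statement you were asked about is Conjecture~\ref{conj}, which demands a \emph{single} rig-isomorphism $\X' \to \X$ with $\X'$ itself polystable --- a global polystable modification of the given model, with the generic fiber unchanged. The paper does not prove this; it explicitly presents Theorem~\ref{thm:final} only as a ``local substitute for a much stronger polystable reduction conjecture.'' What your argument constructs is essentially the content of Theorem~\ref{thm:final}: quotients $\X_i = \X'_i/G_i$ obtained from $\X$ by Zariski open coverings and rig-isomorphisms, together with polystable schemes $\X''_i$ rig-isomorphic to the \emph{covers} $\X'_i$. That is a correct reproduction of the paper's route to Theorem~\ref{thm:final} (including the use of \cite{Z2} for geometric quotients, the compatibility of quotients with generic fibers, and the torsor statement for $p_{i,K'}$), but it is strictly weaker than the conjecture.

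Concretely, two gaps prevent your construction from yielding the conjecture. First, polystability in your output lives upstairs on the $G_i$-covers $\X'_i$ (via the rig-isomorphisms $h_i\colon \X''_i \to \X'_{i,\O_{K''}}$), and there is no reason it descends through the finite quotient $\X'_i \to \X'_i/G_i$: the quotient typically acquires singularities not of polystable type, and the generic fiber of $\X'_i$ is a $G_i$-torsor over an open $U_i\subset \X_K$, not an open of $\X_K$ itself, so $\X''_i$ is not even a model of a piece of the given rigid space. Second, even granting polystable models of each open $U_i$, assembling them into one admissible formal $\O$-scheme $\X'$ with a single rig-isomorphism $\X'\to\X$ is precisely the open gluing problem that the conjecture encodes; Raynaud's theorem lets you dominate finitely many models of the \emph{same} rigid space by blow-ups, but blow-ups destroy polystability, so the final ``covering/quotient reorganization'' cannot terminate in a polystable total model. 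In short, your proposal proves (in essentially the paper's way) Theorem~\ref{thm:final}, not Conjecture~\ref{conj}, and no proof of the conjecture exists in the paper.
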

 
\begin{rmk} The conjecture is formulated in the most optimistic way. For instance, it does not even assume that $X$ is separated. This generality is probably motivated by Temkin's Stable Modification Theorem~\cite[Theorem 2.3.3]{T2}, where also no separatedness assumptions are required. 
\end{rmk}


\subsection{Ideas of the main proofs} We now explain the plan of our proof that we follow in the paper. Basically, we follow the ideas of \cite{T1}, \cite{T2}, and \cite{T3}, but unfortunately, we do not know how to use the results of those papers directly to achieve our form of the local uniformization result. Instead we slightly modify all the mains proofs in \cite{T1}, \cite{T2} and \cite{T3}. At the end we use the main result of \cite{AdipLiuPakTemkin} as an important input.  Namely, we use the following strategy:

\begin{itemize}\itemsep0.5em

\item We prove a (slightly) refined version of a stable modification theorem for relative curves over a finite rank valuation rings. The main input here is \cite[Proposition 4.5.1]{T1}.

\item We generalize this refined version of a stable modification theorem for relative curves over semi-valuation rings of finite rank. Here the main input is \cite[Section 2]{T2} and the previous bullet point.

\item We finally prove Theorem~\ref{example-1}. We follow the ideas of \cite[Theorem 2.3.3, Step 2]{T2} but we use the result from the previous bullet point as our input over a semi-valuation base.

\item We prove a schematic version of our local altered uniformization result. This is the most difficult step, we use the ideas from the proof of \cite[Corollary 2.4.2]{T3}. Namely, we present our $S$-scheme $X$ (after some $U$-admissible blow up) as a successive curve fibration over a base. Then we apply our version of the stable modification theorem on each level of this successive curve fibration, and finally we use \cite[Theorem 5.2.16]{AdipLiuPakTemkin} to find a polystable model. In the case of a discretely valued $\O_K$, we use results from \cite{TorEmb} instead. \smallskip

The key points here are flattening techniques from \cite{RG}, our formulation of relative stable modification theorem for curves (which we establish in the previous bullet point), and \cite[Theorem 5.2.16]{AdipLiuPakTemkin} (resp. \cite{TorEmb} in discretely valued case).

\item We reduce Theorem~\ref{example-2} to the schematic local altered uniformization theorem by means of \cite{Elkik} and \cite[Proposition 3.3.2]{T0}.

\item To prove Theorem~\ref{example-3}, we study the construction of the local altered uniformization in more detail. The main input here is the observation that all the constructions above are quite explicit, so we can understand the quotients $\X'_i/G_i$ ``by hands''. We also use the results on geometric quotients from \cite{Z2} to ensure certain properties of these quotients. 

\end{itemize}

The last thing that we want to emphasize is that the main reason behind the fact that we can rather `easily' implement group actions in Temkin's local uniformization result is the uniqueness of stable modification over normal bases \cite[Theorem 1.2]{T1}. This makes certain constructions automatically $G$-equivariant and simplifies the exposition significantly.

\subsection*{Acknowledgements}
The author warmly thanks M. Temkin for enlightening discussions. This paper clearly owes a huge intellectual debt to \cite{T1}, \cite{T2} and \cite{T3}. We are also very grateful to B.\,Bhatt, B.\,Conrad,  A.\,Landesman, S.\,Li, D.\,B.\,Lim, S.\,Petrov, and R.\,Taylor for many fruitful discussions. We are grateful to L.\,Illusie for his help with Appendix~\ref{log}. We express additional gratitude to B.\,Conrad, N.\,Kurnosov, and D.\,B.\,Lim for reading the first draft of this paper and making lots of suggestions on how to improve the exposition of this paper. We would also like to thank Z.\,Zhang for helpful comments on the previous version of the paper.  
\section{Terminology}

We will freely use a lot of definitions related to the notion of a ``(relative) curve''. It seems that different people denote slightly different things by this name. For example, there is no consensus whether a (relative) curve should have geometrically connected fibers. Since it is actually very important for our purposes that we allow curve to be disconnected (and also be non-proper) we decide to explain here some relevant notations. 

Another thing that we recall here is a number of definitions from \cite{T3} that also seem to vary from author to author. So, in order to get rid of any ambiguities in the definitions we decided to spell them out in this section.

\begin{defn}\label{def-curve} We say that a morphism of scheme $f\colon  X \to S$ is a {\it relative curve} (or {\it curve fibration}), if it is finitely presented\footnote{We recall that a finitely presented morphism is required to be quasi-separated.} flat morphism and all non-empty fibers $X_s$ are of pure relative dimension $1$.
\end{defn}

\begin{rmk} As mentioned above, we do not impose any properness assumption on $f$. Moreover, we do not require fibers of $f$ to be (geometrically) connected. This is an important technical point that will help us a lot later. Also, we do not even require $f$ to be separated, but this generality will be of no real interest to us in this paper.
\end{rmk}

\begin{defn}\label{def-ss-field} We say that a morphism $f\colon  X \to \Spec k$ is a {\it semi-stable curve}, if it is a curve over $\Spec k$ (in the sense of Definition~\ref{def-curve}) and for each closed point $x\in X_{\bar k}$ the completed local ring $\widehat{\O_{X_{\bar k}, x}}$ is isomorphic to $\bar{k} \llbracket T \rrbracket$ or $\bar{k}\llbracket
 U,V \rrbracket/(UV)$.
\end{defn}

\begin{defn}\label{def-ss} We say that a morphism $f\colon  X \to S$ is a {\it semi-stable relative curve} (or {\it semi-stable curve fibration}), if $f$ is a relative curve (in the sense of Definition~\ref{def-curve}) and for all $s\in S$ the fiber $f_s\colon  X_s \to \Spec k(s)$ is a semi-stable curve (in the sense of Definition~\ref{def-ss-field}).
\end{defn}

\begin{lemma}\label{ss-alternative} Let $f\colon X\to S$ be a semi-stable relative curve, then for any point $x\in X$ either $f$ is smooth at $x$, or there is an open affine neighborhood $\Spec B \subset X$ containing $x$ and an open affine neighborhood $\Spec A \subset S$ containing $y=f(x)$, such that there exists a diagram of pointed schemes
\[
\begin{tikzcd}
& \left(\Spec C, s\right)  \arrow[swap]{ld}{g} \arrow{rd}{h}\\
\left(\Spec B,x\right) &  & \left(\Spec \frac{A[U,V]}{\left(UV-a\right)}, \left\{y,0,0\right\}\right),
\end{tikzcd}
\]
where $g$ and $h$ are \'etale and $a$ lies in the ideal of $y$.  
\end{lemma}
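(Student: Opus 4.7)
The plan is to reduce the problem to a standard local-structure theorem for nodal fibrations. If $f$ is smooth at $x$, there is nothing to prove, so I would assume $f$ is not smooth at $x$. The first thing to observe is that $x$ must be closed in its fiber $X_y$: indeed, by Definition~\ref{def-ss-field} a semi-stable curve over a field is smooth away from a finite set of closed points (the nodes), so any non-smooth point of $X_y$ is necessarily closed. Since the conclusion of the lemma is étale-local around $x$ and $y$, I may freely replace $S$ and $X$ by small affine open neighborhoods of $y$ and $x$ and pass to strict henselizations when convenient, spreading out at the end by the usual finite-presentation arguments.

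Next, I would analyze the completed local ring $\widehat{\mathcal O}_{X,x}$ as a flat algebra over $R=\widehat{\mathcal O}_{S,y}$. By Definition~\ref{def-ss-field} together with faithfully flat descent along $k(y)\to\overline{k(y)}$, after possibly passing to an étale neighborhood of $y$ in $S$ that splits the node, the closed fiber becomes $\widehat{\mathcal O}_{X_y,x}\cong k(y)\llbracket U,V\rrbracket/(UV)$. I would then pick lifts $U,V\in \widehat{\mathcal O}_{X,x}$ of these coordinates. By Nakayama's lemma together with the flatness of $\widehat{\mathcal O}_{X,x}$ over $R$, the induced map $R\llbracket U,V\rrbracket\to\widehat{\mathcal O}_{X,x}$ is surjective, and its kernel is principal, generated by a single element of the form $UV-g$ with $g$ in the ideal generated by $\mathfrak m_R$.

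The core step is to show that, after a change of coordinates of the form $U\mapsto U+\alpha$, $V\mapsto V+\beta$ with $\alpha,\beta\in\mathfrak m_R\,R\llbracket U,V\rrbracket$, the element $g$ can be brought into the constant subring $R$, i.e.\ one can arrange $g=a\in\mathfrak m_R$. This is a Weierstrass-preparation-type computation: the extra terms $U\beta+V\alpha+\alpha\beta$ introduced by the change of variables allow one to kill, by successive approximation in the $\mathfrak m_R$-adic topology, all monomials of $g$ containing $U$ or $V$. Equivalently, this reflects the well-known fact that the ordinary double point is a hypersurface singularity with a one-parameter versal deformation $UV-t$, so any flat deformation of it over a complete local base $R$ is isomorphic to $R\llbracket U,V\rrbracket/(UV-a)$ for some $a\in\mathfrak m_R$. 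I expect this to be the main technical point of the argument.

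Finally, I would descend the resulting isomorphism of completions to an étale neighborhood. Writing the strict henselization $\mathcal O_{S,y}^{sh}$ as a filtered colimit of étale neighborhoods $\Spec A$ of $y$, the finitely many data $(a,U,V)$ and the defining relation $UV-a$ are defined over some such $A$; the resulting map to $\Spec A[U,V]/(UV-a)$ is then étale at a point $s$ over $x$ by the infinitesimal criterion (its completion is an isomorphism), and after further shrinking one obtains the étale pointed morphisms $g$ and $h$ of the lemma. The condition $a\in\mathfrak m_y$ is automatic, since modulo $\mathfrak m_y$ the relation reduces to $UV=0$, matching the nodal fiber.
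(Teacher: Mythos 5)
Your strategy---compute the completed local ring, identify the deformation of the node with the versal family $UV-t$, then descend by Artin approximation---is the standard route (the paper itself gives no argument, only the citation to \cite{FK}), but as written it has a genuine gap: it silently assumes $S$ is noetherian. Here $S$ is an arbitrary quasi-compact quasi-separated scheme, and the cases the paper actually needs are non-noetherian (finite-rank valuation and semi-valuation rings, rank-$1$ valuation rings such as $\O_{\C_p}$). For a non-discrete valuation ring one has $\m_y^2=\m_y$, so the $\m_y$-adic completion $R=\widehat{\O_{S,y}}$ is just the residue field $k(y)$: your base ring retains no information, the ``successive approximation in the $\m_R$-adic topology'' is vacuous, and the argument would force $a=0$, i.e.\ it would wrongly conclude that every node in such a family is formally the constant node $UV=0$ (already false for $UV=p$ over $\O_{\C_p}$). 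Artin approximation, which your last step needs, likewise requires an excellent (or at least G-ring) henselian local ring. All of this is repaired by a preliminary reduction that must be stated explicitly: since $f$ is finitely presented and semi-stability of fibres spreads out, write $(S,X,f,x,y)$ as a filtered limit of data of finite type over $\Z$ and prove the lemma there; after that reduction your deformation-theoretic computation is correct and is indeed the heart of the matter.

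A second, smaller discrepancy: your descent produces $\Spec A$ as an \'etale neighborhood of $y$, whereas the lemma asserts an honest open affine $\Spec A\subset S$ with $a\in A$. The residue-field extension $k(x)/k(y)$ and the splitting of the two branches can be absorbed into the \'etale map $h$ (that is the point of the correspondence formulation, and note that splitting the node requires \'etale localization around $x$ in $X$, not only around $y$ in $S$), but arranging the smoothing parameter $a$ itself to lie in $\O_S$ of a Zariski open rather than in an \'etale extension of $\O_{S,y}$ needs an additional argument; this is exactly what separates the cited statement from the more common version in which the base is also \'etale-localized. For the paper's applications the \'etale-local version would in fact suffice, but it is not the statement being proved.
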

\begin{proof}
The proof is standard, we only give a reference to \cite[Ch. III, Proposition 2.7]{FK}. 
\end{proof}
\medskip 

For the rest of the definitions in this section, we fix a quasi-compact quasi-separated scheme $S$ and a subset $|U|\subset |S|$ which is schematically dense, quasi-compact, closed under generalization, and {\it admits a schematic structure}. What we mean by this is that there is a quasi-compact scheme $U$ and a schematically dominant monomorphism $i\colon U \to S$ that can be topologically identified with the (topological) embedding $|U|\subset |S|$. 
\smallskip

Two main types of examples of such $U$ are quasi-compact schematically dense open subschemes of $S$ and the generic points of an integral scheme $S$. 

\begin{defn}\label{U-adm} We say that an $S$-scheme $X$ is {\it $U$-admissible} if a scheme $X_U$ is schematically dense in $X$.
\end{defn}

\begin{rmk} We note that that any finitely presented, flat $S$-scheme is $U$-admissible. 
\end{rmk}

\begin{defn}\label{U-mod} We say that a map of $S$-schemes $f\colon X' \to X$ is a {\it $U$-modification}, if $X'$ is $U$-admissible, and $f$ is a proper morphism such that the base change $f_U\colon  X'_U \to X_U$ is an isomorphism. 
\end{defn}

\begin{rmk} For technical reasons, it will be more convenient not to assume that $X$ or $X'$ are finitely presented in Definition~\ref{U-mod} and Definition~\ref{U-adm}. However, the main examples of interest will be finitely presented. 
\end{rmk}

\begin{defn}\label{U-stable} Let $h\colon S'\to S$ be a map of quasi-compact, quasi-separated schemes. We say that a morphism $f\colon X' \to X$ of relative $S'$-curves is an {\it $S'$-stable $U$-modification} if it is a $h^{-1}(U)$-modification, $X'$ is a semi-stable relative curve over $S'$, and for all geometric points $\ov{s}'\in S'$ all contracted irreducible components $Z\subset X'_{\ov{s}'}$ that are isomorphic to $\P^1_{k(\ov{s}')}$ intersect the singular locus $X'_{\ov{s}'}$ in at least $3$ points.
\end{defn}

\begin{rmk} If $S=S'$, we will call $S$-stable $U$-modifications simply as stable $U$-modifications.
\end{rmk}


\begin{defn}\label{U-etale} We say that a map of two $S$-schemes $f\colon X' \to X$ is a {\it $U$-\'etale morphism} (resp. {\it $U$-smooth morphism}), if it is a separated, finitely presented morphism such that the base change $f_U\colon  X'_U \to X_U$ is \'etale (resp. smooth) and $X'$ is $U$-admissible.
\end{defn}

\begin{defn}\label{U-etale cover} We say that a map of two $S$-schemes $f\colon X' \to X$ is a {\it $U$-\'etale covering}, if it is $U$-\'etale and for any valuation ring $R$ any morphism $\phi\colon  \Spec R \to X$ taking the generic point to $X_U$ lifts to a morphism $\phi'\colon  \Spec R' \to X'$ where $R'$ is some valuation ring such that $\Frac(R')/\Frac(R)$ is finite.
\end{defn}

The two main examples of $U$-\'etale coverings are $U$-modifications and faithfully flat $U$-\'etale morphisms. 

\begin{lemma}\label{example-U-covering} Let $S, U$ be as above, and let $h\colon S' \to S$ be either a $U$-modification or a faithfully flat $U$-\'etale morphism. Then $h$ is a $U$-\'etale covering (in the sense of Definition~\ref{U-etale cover}).
\end{lemma}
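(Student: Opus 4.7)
The plan is to treat the two cases separately. In both, the preliminary $U$-\'etaleness conditions of Definition~\ref{U-etale} — separatedness, finite presentation, \'etaleness of $h_U$, and $U$-admissibility of $S'$ — are either immediate or built into the hypotheses, so the substantive content lies in the valuative lifting property from Definition~\ref{U-etale cover}. I will fix a morphism $\phi\colon \Spec R \to S$ with $\phi(\eta) \in U$, where $\eta$ is the generic point of $\Spec R$, and aim to produce a valuation ring $R'$ with $\Frac(R')/\Frac(R)$ finite together with a compatible lift $\phi'\colon \Spec R' \to S'$.

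The $U$-modification case is nearly automatic. Since $h_U$ is an isomorphism and $\phi(\eta) \in U$, the induced map $\Spec K \to S$ on the generic point factors through $U \hookrightarrow S$ and lifts canonically, via $h_U^{-1}$, to a map $\Spec K \to S'$. I will then take $R' = R$ and invoke the valuative criterion of properness for the proper morphism $h$ to extend the lifted generic point to the desired map $\Spec R \to S'$.

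For the faithfully flat $U$-\'etale case, I will first base change to $\Spec R$, obtaining $h_R\colon S'_R \coloneqq S' \times_S \Spec R \to \Spec R$, which remains faithfully flat and finitely presented. Since $\phi(\eta) \in U$, the generic fiber $(S'_R)_K = S' \times_S \Spec K$ is nonempty and \'etale over $\Spec K$, hence a disjoint union of spectra of finite separable extensions of $K$; I will pick a point $y \in (S'_R)_K$ with residue field $L/K$ finite separable. Applying going-down for the flat morphism $h_R$ to the specialization $\eta \rightsquigarrow s$ in $\Spec R$ (with $s$ the closed point) yields a point $y' \in h_R^{-1}(s)$ with $y \rightsquigarrow y'$ in $S'_R$. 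Taking $Z \subset S'_R$ to be the reduced closure of $\{y\}$ gives an integral scheme with function field $L$ containing $y'$, whose local ring $\O_{Z, y'}$ is a local domain with fraction field $L$. Choosing a valuation ring $V \subset L$ dominating $\O_{Z, y'}$ — an application of the standard valuation extension theorem — produces a valuation ring with $\Frac(V) = L$ finite over $K$ which dominates $R$ (since $y'$ lies over $s$). The composition $\Spec V \to Z \hookrightarrow S'_R \to S'$ is then the desired lift, with $R' = V$.

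The hard part will be Case 2: a faithfully flat $U$-\'etale morphism need not be proper or quasi-finite, so no valuative criterion directly extends a section over $\Spec K$ to one over $\Spec R$. The key trick will be to replace this missing criterion by a combination of going-down for flat morphisms (producing a specialization inside $S'_R$ connecting generic and special points) and the valuation extension theorem (upgrading that specialization to an honest map from a valuation ring).
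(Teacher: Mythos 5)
Your Case 1 and the overall architecture of Case 2 (base change to $\Spec R$, connect the generic and closed fibres of $S'_R$ by a specialization, then dominate a local ring by a valuation ring of a finite extension of $K$) are the same as the paper's. However, the pivotal step of your Case 2 is incorrect as stated: going-down for the flat morphism $h_R$ does \emph{not} produce, from a prescribed point $y$ of the generic fibre, a specialization $y'$ of $y$ lying in the closed fibre. Going-down lifts \emph{generizations} — given a point of $S'_R$ over the closed point $s$, it produces a generization of that point lying over $\eta$ — whereas you need to lift the specialization $\eta\leadsto s$ starting from your chosen $y$, which is the opposite direction and genuinely fails for an arbitrary $y$. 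Concretely, take $R$ a discrete valuation ring, $U=\Spec K$, and $S'=\Spec R \sqcup \Spec K$: this is a faithfully flat, finitely presented, $U$-admissible, $U$-\'etale morphism to $\Spec R$, but the point $y$ of the second component is closed in $S'$ and has no specialization over $s$. For such a $y$ your reduced closure $Z$ of $\{y\}$ misses the closed fibre entirely, there is no $y'$ to work with, and the valuation ring you would produce (namely $L$ itself, a field) does not dominate $R$.

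The repair is simply to reverse the order of the two choices, which is exactly what the paper does: first use surjectivity of $h_R$ to choose a point $s'$ in the closed fibre $h_R^{-1}(s)$, and only then apply going-down (in its correct direction) to obtain a generization $\eta'$ of $s'$ lying over $\eta$. Then $\eta'$ lies in the generic fibre, $\O_{S'_R,\eta'}$ is a field (as $h$ is \'etale over the reduced point $\eta$), its fraction field is finite over $K$ by quasi-finiteness of $h$ over $U$, and your concluding step — dominating the local ring of $s'$ on the closure of $\{\eta'\}$ by a valuation ring $V\subset \kappa(\eta')$ via the standard extension theorem — goes through verbatim and recovers the paper's argument.
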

\begin{proof}
Let us firstly consider the case of a $U$-modification morphism $h\colon S' \to S$. Then it is clearly $U$-\'etale and $S'$ is an $U$-admissible scheme by the very definition of an $U$-modification. The condition about lifting maps from valuation rings is trivially satisfied by the valuative criterion of properness. \medskip

Now let us deal with the case of faithfully flat $U$-\'etale morphisms. Again, the only condition we need to check is the condition about lifting maps from valuation rings. Suppose that we are given a morphism $\phi\colon \Spec R \to S$ such that $R$ is a valuation ring and $\phi$ sends the generic point of $\Spec R$ into $U$. We want to lift it to a morphism $\phi'\colon \Spec R' \to S'$ in a commutative diagram 
\[
\begin{tikzcd}
\Spec R' \arrow{d}  \arrow{r}{\phi'} & S' \arrow{d}{h}\\
\Spec R \arrow{r}{\phi} & S.
\end{tikzcd}
\]
\noindent such that $R'$ is also a valuation ring and the extension of fraction fields $\Frac(R')/\Frac(R)$ is finite. Since $\phi$ sends the generic point of $\Spec R$ into $U$, we conclude that $U\times_S \Spec R$ is schematically dense in $\Spec R$ as $R$ is reduced. Note that surjectivity and flatness are preserved by arbitrary base change. Therefore, it is sufficient to assume that $S=\Spec R$ for the purposes of the proof. \medskip

Consider the closed point $s\in \Spec R$ and choose an arbitrary point $s'\in S'$ in a fiber over $s$. We find an open affine subscheme $V'\subset S'$ that contains the point $s'$. The Going-Down Theorem (\cite[Theorem 9.5]{M1}) shows that there is a point $\eta' \in V'$ that maps to the generic point $\eta\in \Spec R=S$. Moreover, this point can be chosen to be a generic point of $V'$. 
\smallskip

We claim that $\O_{S',\eta'}$ is a field. The only thing we need to check is that $S'$ is reduced at $\eta'$, but this is automatic because $\Spec R$ is reduced and $h\colon S' \to \Spec R$ is \'etale over $\eta$. Let us denote a prime ideal corresponding to $s'$ by $\p$. Then $U$-admissibility of $S'$ implies that $\O_{S',s'}$ is a subring of a field $\O_{S', \eta'}$, and \cite[Theorem 10.2]{M1} guarantees that there is a valuation ring $R'\subset \O_{S',\eta'}$ that dominates $\O_{S', s'}$. It defines a morphism $\phi'\colon \Spec R' \to S'$ as a composition \[\Spec R' \to \Spec \O_{S',s'} \to S'.\] Then it is clear that the diagram
\[
\begin{tikzcd}
\Spec R' \arrow{d}  \arrow{r}{\phi'} & S' \arrow{d}{f}\\
\Spec R \arrow{r}{\phi} & S
\end{tikzcd}
\]
is commutative. The finiteness $\Frac(R')/\Frac(R)=\O_{S', \eta'}/\Frac(R)$ follows from quasi-finiteness (over $U$) of the morphism $f\colon S' \to \Spec R$.
\end{proof}

\begin{rmk}\label{rmk:K=U} In the case $S=\Spec \O_K$ for some valuation ring $K$ and $U=\Spec K$, we will usually say $K$-admissible (resp. $K$-modification, resp. $K$-\'etale, resp. $K$-smooth) instead of $U$-admissible (resp. $U$-modification, resp. $U$-\'etale, resp. $U$-smooth).
\end{rmk}

We recall another two technical definitions that will play an important role in this paper:

\begin{defn}\label{defn:strictly-semistable} Let $\O_K$ be a discrete valuation ring. A finitely presented $\O_K$-scheme $X$ is called {\it strictly semi-stable} if Zariski-locally it admits an \'etale morphism 
\[
U \to \Spec \frac{\O_K[t_0, \dots, t_l]}{(t_{0}\cdots t_{m}-\pi)}
\]
for some integers $m\leq l$, and a uniformizer $\pi \in \m_K\setminus \m_K^2$. 
\end{defn}

\begin{defn}\label{snc} Let $X$ be a regular scheme. A {\it strict normal crossing divisor} (or an {\it snc divisor})
 in $X$ is a closed subscheme $Y$ such that the intersection of every set of irreducible components of $Y$ is also regular.
\end{defn}

\begin{rmk} If the residue field of $\O_K$ is perfect, a flat finitely presented $\O_K$-scheme $X$ is strictly semi-stable if it the generic fiber $X_K$ is $K$-smooth, $X$ is regular, and the special fiber $X_k \subset X$ is an snc divisor in $X$. See \cite[ page 59, 2.16]{DJ1} for more detail. 
\end{rmk}

\begin{defn}\label{polystable}\cite{AdipLiuPakTemkin} Let $\O_K$ be a rank-$1$ valuation ring. A finitely presented $\O_K$-scheme $X$ is called {\it ALPT-polystable} if \'etale locally it admits an \'etale morphism 
\[
U \to \Spec \frac{\O_K[u_1^{\pm 1}, \dots, u_m^{\pm 1}, t_{0,0},\dots, t_{0, n_0}, \dots, t_{l,0}, \dots, t_{l, n_l}]}{(t_{1,0}\cdots t_{1, n_1}-\pi_1, \dots, t_{l,0}\cdots t_{l, n_l}-\pi_l)}
\]
for some $\pi_i\in \m_K$. The $\O_K$-scheme 
\[
\Spec \frac{\O_K[u_1^{\pm 1}, \dots, u_m^{\pm 1}, t_{0,0},\dots, t_{0, n_0}, \dots, t_{l,0}, \dots, t_{l, n_l}]}{(t_{1,0}\cdots t_{1, n_1}-\pi_1, \dots, t_{l,0}\cdots t_{l, n_l}-\pi_l)}
\]
is called a {\it model polystable} $\O_K$-scheme. 
\end{defn}

\begin{rmk} We note that this definition allows $\pi_i=0$. In particular, the generic fiber of an ALPT-polystable $\O_K$-scheme is not necessarily smooth. 
\end{rmk}

\begin{rmk} This definition has an advantage over the notion of semi-stability is that any product of ALPT-polystable $\O_K$-schemes is ALPT-polystable. The analogous statement is false for semi-stable schemes.
\end{rmk}

\begin{rmk}\label{ALPT-polystable-flat} Any ALPT-polystable $\O_K$-scheme is flat over $\O_K$. Indeed, it suffices to show that a model polystable $\O_K$-scheme is flat, but its function algebra clearly does not have $\O_K$-torsion hence $\O_K$-flat.
\end{rmk}

We will slightly modify the notion of poly-stability to make it more useful for our purposes later. We start with a few lemmas.

\begin{lemma}\label{origin} Let $\O_K$ be a rank-$1$ valuation ring with algebraically closed residue field $k$, let $X$ be an ALPT-polystable $\O_K$-scheme, and let $x\in X$ be a closed point in the closed fiber. Then there is an \'etale neighborhood $f\colon (U, u) \to (X, x)$ of $x$ such that $f(u)=x$ and $U$ admits an \'etale morphism 
\[
g\colon U \to \Spec \frac{\O_K[u_1^{\pm 1}, \dots, u_m^{\pm 1}, t_{0,0},\dots, t_{0, n_0}, \dots, t_{l,0}, \dots, t_{l, n_l}]}{(t_{1,0}\cdots t_{1, n_1}-\pi_1, \dots, t_{l,0}\cdots t_{l, n_l}-\pi_l)}
\] 
such that $t_{0,j}(g(u))=1$ for all $j\leq n_0$, and $t_{i, j}(g(u))=0$ for all $1\leq i\leq l$, $j\leq n_l$.
\end{lemma}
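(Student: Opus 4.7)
The plan is to start from the definition of ALPT-polystability, which gives an \'etale neighborhood $(U_0,u_0)\to(X,x)$ together with an \'etale morphism $g_0\colon U_0\to Y_0$ to some model polystable $\O_K$-scheme $Y_0$. Setting $y\coloneqq g_0(u_0)$, the point $x$, and therefore $u_0$ and $y$, lies in the closed fiber and has residue field equal to the algebraically closed $k$, so the values $t_{i,j}(y)\in k$ are unambiguous. For each $i\ge 1$ the relation $\prod_j t_{i,j}=\pi_i$ evaluated at $y$ forces $\prod_j t_{i,j}(y)=0$, so the index set $J_i\coloneqq\{j : t_{i,j}(y)=0\}\subseteq\{0,\dots,n_i\}$ is nonempty.

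The heart of the argument is a purely algebraic change of coordinates on the Zariski open $V\subset Y_0$ defined by requiring $t_{i,j}\ne 0$ for every $i\ge 1$ and every $j\notin J_i$; then the preimage of $V$ under $g_0$ remains an \'etale neighborhood of $x$, and $y\in V$. On $V$, for each $i\ge 1$ I fix some $j_i^{*}\in J_i$ and introduce
\[
s_{i,j_i^{*}}\coloneqq t_{i,j_i^{*}}\cdot\prod_{j\notin J_i}t_{i,j},\qquad s_{i,j}\coloneqq t_{i,j}\ \text{for}\ j\in J_i\setminus\{j_i^{*}\},
\]
while promoting the $t_{i,j}$ with $j\notin J_i$ to new torus coordinates. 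A direct verification yields $\prod_{j\in J_i}s_{i,j}=\pi_i$, and each $s_{i,j}$ vanishes at $y$ because $t_{i,j_i^{*}}(y)=0$. When $|J_i|=1$ the relation degenerates to $s_{i,j_i^{*}}=\pi_i$, so that variable is eliminated and the $i$-th factor is absorbed into the torus part. For the free coordinates I pick lifts $c_{0,j}\in\O_K$ of $t_{0,j}(y)\in k$ (using surjectivity of $\O_K\to k$) and set $s_{0,j}\coloneqq t_{0,j}-c_{0,j}+1$, which gives $s_{0,j}(y)=1$ and preserves the structure since no relation constrains the $t_{0,j}$. The original $u_k$ are retained and already take unit values at $y$.

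After relabeling, $V$ is $\O_K$-isomorphic to a model polystable $\O_K$-scheme $Y$ in the exact form of Definition~\ref{polystable}: its torus variables are the $u_k$'s together with the $t_{i,j}$ for $j\notin J_i$, its smooth variables are the $s_{0,j}$'s, and its semistable blocks are $\prod_{j\in J_i}s_{i,j}=\pi_i$ for those $i\ge 1$ with $|J_i|\ge 2$. Taking $U\coloneqq g_0^{-1}(V)$, $u\coloneqq u_0$, and $g$ to be the composition $U\hookrightarrow U_0\xrightarrow{g_0}V\xrightarrow{\sim}Y$ produces an \'etale map, and by construction $g(u)$ satisfies $s_{0,j}=1$ for all $j\le n_0$ and $s_{i,j}=0$ for all $i\ge 1$ and every $j$ in the new $i$-th block, as required. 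The main technical delicacy is the bookkeeping at the $|J_i|=1$ strata---confirming that their collapse genuinely lands in the form of Definition~\ref{polystable} rather than in a superficially similar variant---but the underlying mechanism is simply to absorb the nonzero values of the ``transverse'' semistable coordinates into a single newly-defined variable that vanishes at $y$.
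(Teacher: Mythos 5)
Your proposal is correct and uses the same key mechanism as the paper's proof: absorb the non-vanishing coordinates of each semistable block into a single vanishing coordinate (your $s_{i,j_i^{*}}=t_{i,j_i^{*}}\prod_{j\notin J_i}t_{i,j}$ is exactly the paper's $t'_m=t_m\cdots t_n$), and shift each unconstrained coordinate by $c-1$ so it takes the value $1$. The only difference is organizational — the paper first reduces to a single semistable block using that products of polystable models are polystable, while you handle all blocks simultaneously with the index sets $J_i$ — and your treatment of the degenerate $|J_i|=1$ case (collapsing that block into the torus part) is consistent with the lemma's conclusion.
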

\begin{proof}
{\it Step 1. Reduce to the model case}: As $X$ is ALPT-polystable, we can find an \'etale neighborhood $U$ of $x$ with an \'etale morphism $g$ to some model polystable $\O_K$-scheme $Z$. Pick $u\in U$ to be any point over $x$, and then it suffices to prove the claim for the pair $(Z, z)\coloneqq (Z, g(u))$, i.e. we can assume that \[Z=\Spec \frac{\O_K[u_1^{\pm 1}, \dots, u_m^{\pm 1}, t_{0,0},\dots, t_{0, n_0}, \dots, t_{l,0}, \dots, t_{l, n_l}]}{(t_{1,0}\cdots t_{1, n_1}-\pi_1, \dots, t_{l,0}\cdots t_{l, n_l}-\pi_l)}.\] 

{\it Step 2. Reduce to the semi-stable model example}: As the product of polystable models is polystable, and the product of \'etale morphisms is \'etale, it suffices to treat separately the cases of $Z=\Spec \O_K[t]$ and $Z=\Spec \O_K[t_0, \dots, t_n]/(t_0\cdots t_n -\pi)$ for some element $\pi\in \m_K$. \medskip

{\it Step 3. Finish the proof in the semi-stable}: The semi-stable case was already treated in \cite[Lemma 2.3.5]{T3}, but the proof has a mathematical typo carried all over the proof. So we have decided to write a complete proof here. \smallskip

The case $Z=\Spec \O_K[t]$ is easy. Consider $z\in Z$ as a closed point of the special fiber $Z_s$ that is a variety over an algebraically closed field $k$. Thus it makes sense to evaluate $t$ at $z$ and get an element in $k$. Suppose that $t(z)=\ov{c}\in k$, lift it to some element $c\in \O_K$. Then the desired \'etale morphism is just the shift by $c-1$ on $Z=\Spec \O_K[t]$. \smallskip 

Now we deal with the case $Z=\Spec \O_K[t_0, \dots, t_n]/(t_0\cdots t_n -\pi)$ for some $\pi \in \m_K$. Then we rename $t_0, \dots, t_n$ so that $t_0(z), \dots, t_m(z)=0\in k$ but $t_{m+1}(z), \dots, t_n(z) \neq 0$. We consider $t_i(z)=:\ov{c_i}\in k$ and let $c_i$ be any lift of $\ov{c_i}$ to $\O_K$. Finally, we define $t'_i\coloneqq t_i$ for $0\leq i\leq m-1$, $t'_m\coloneqq t_m\cdots t_n$, and $t'_{i}=t_i-c_i+1$ for $i\geq m+1$. This defines the natural morphism
\[
\frac{\O_K[t'_0, \dots, t'_n]}{(t'_0\cdots t'_m -\pi)} \to \frac{\O_K[t_0, \dots, t_n]}{(t_0\cdots t_n -\pi)}
\]
On the scheme level, it defines the morphism
\[
g\colon Z \to \Spec \O_K[t'_0, \dots, t'_n]/(t'_0\cdots t'_m -\pi)
\]
that is easily seen to be \'etale at $z\in Z$. The target is ALPT-polystable, and $t'_i(g(z))=0\in k$ for $i\leq m$, $t'_i(g(z))=1\in k$ for $i>m$.
\end{proof}

\begin{cor}\label{k-smooth-poly} Let $\O_K$ be a rank-$1$ valuation ring with algebraically closed residue field $k$ and fraction field $K$, let $X$ be a $K$-smooth ALPT-polystable $\O_K$-scheme, and let $x\in X$ be any point. Then there is an \'etale neighborhood $f\colon (U, u) \to (X, x)$ of $x$ such that $f(u)=x$ and $U$ admits an \'etale morphism 
\[
g\colon U \to \Spec \frac{\O_K[u_1^{\pm 1}, \dots, u_m^{\pm 1}, t_{1,0}, \dots, t_{1, n_1}, \dots, t_{l,0}, \dots, t_{l, n_l}]}{(t_{1,0}\cdots t_{1, n_1}-\pi_1, \dots, t_{l,0}\cdots t_{l, n_l}-\pi_l)}
\]
for some {\it non-zero} $\pi_i\in \m_K \setminus 0$.
\end{cor}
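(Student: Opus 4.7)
The plan is to combine Lemma~\ref{origin} with two further simplifications: absorb the $t_{0,\ast}$ variables into the torus factor, and then use $K$-smoothness of $X$ to force every $\pi_i$ to be nonzero.

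First I would reduce to the case where $x$ is a generization of a closed point in the special fiber. Since any ALPT-polystable $\O_K$-scheme is $\O_K$-flat (Remark~\ref{ALPT-polystable-flat}), whenever $X_k \neq \emptyset$ the image of $\ov{\{x\}}$ in $\Spec \O_K$ contains the closed point, so I can pick a closed $x_0 \in \ov{\{x\}} \cap X_k$. Lemma~\ref{origin} then produces an \'etale neighborhood $(U_0, u_0) \to (X, x_0)$ together with an \'etale morphism $g_0 \colon U_0 \to Z_0$ to a model polystable scheme with $t_{0,j}(g_0(u_0)) = 1$ and $t_{i,j}(g_0(u_0)) = 0$ for all $i \geq 1$. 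Openness of $U_0 \to X$ then produces some $u \in U_0$ mapping to $x$ (necessarily a generization of $u_0$). In the degenerate case $X = X_K$ with empty special fiber, I would instead directly invoke smoothness of $X_K$ to find an \'etale chart to $\mathbf{A}^d_K$ and translate it into $\mathbf{G}_m^d \subset \Spec \O_K[u_1^{\pm 1}, \dots, u_d^{\pm 1}]$.

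Absorbing the $t_{0,\ast}$ variables is then immediate: each $t_{0,j} \circ g_0$ is a unit at $u_0$, so both $u_0$ and its generization $u$ lie in the open $U \coloneqq g_0^{-1}\bigl(\bigcap_j D(t_{0,j})\bigr)$, and restricting $g_0$ to $U$ gives an \'etale morphism to the open subscheme of $Z_0$ on which every $t_{0,j}$ is invertible; relabeling these $t_{0,j}$'s as extra torus coordinates produces a target of the exact form demanded by the statement (with $m$ enlarged to $m + n_0 + 1$).

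The main obstacle is to rule out $\pi_i = 0$, and this is where the new hypothesis of $K$-smoothness enters; factors with $n_i = 0$ contribute trivial $\Spec \O_K$ pieces and can simply be discarded. Suppose for contradiction that $\pi_i = 0$ for some $i$ with $n_i \geq 1$, and consider the closed subscheme $S \coloneqq V(t_{i,0}, \dots, t_{i,n_i}) \subset Z_0$. The $i$-th defining relation becomes trivial on $S$, so $S$ is itself a model polystable $\O_K$-scheme with one fewer factor, hence $\O_K$-flat with $S_K$ schematically dense in $S$. The open $g_0(U) \cap S$ contains $g_0(u_0)$ and therefore meets $S_K$, which produces a point $p \in g_0(U) \cap Z_{0,K}$ with $t_{i,j}(p) = 0$ for every $j$. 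Such a $p$ lies on every component $\{t_{i,j} = 0\}$ of the degenerate hypersurface $\{t_{i,0}\cdots t_{i,n_i} = 0\} \subset (Z_{0,i})_K$, making it a singular point of $(Z_{0,i})_K$ and hence of $Z_{0,K}$. This contradicts the fact that $g_{0,K} \colon U_K \to Z_{0,K}$ is \'etale with smooth source $U_K$ (which holds because $X$ is $K$-smooth). Therefore every surviving $\pi_i$ is nonzero, and the resulting \'etale chart has the required form.
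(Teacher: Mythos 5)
Your main argument --- ruling out $\pi_i=0$ by passing to the stratum $S=V(t_{i,0},\dots,t_{i,n_i})$, using flatness of $S$ to see that $S_K$ is topologically dense in $S$, and thereby exhibiting a point of the open image $g_0(U)$ at which $Z_{0,K}$ is visibly singular --- is correct, and it is essentially the paper's argument in different clothing: the paper localizes the target at the origin $O$ of the special fiber, notes that the open image contains $O$ and hence all of its generizations, and applies the Jacobian criterion factor by factor, which amounts to detecting the very same singular point on the generic fiber of the all-zeros locus. Your version is, if anything, more concrete. The absorption of the $t_{0,\ast}$-block into the torus and the discarding of the $n_i=0$ factors are also exactly as in the paper.

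The one step that fails as written is the opening reduction. The claim that flatness of $X$ over $\O_K$ forces the image of $\ov{\{x\}}$ in $\Spec \O_K$ to contain the closed point whenever $X_k\neq\varnothing$ is false: flatness makes $X\to\Spec\O_K$ open, not closed, and for instance $X=\Spec\O_K[t]\sqcup\mathbf{G}_{m,K}$ is a $K$-smooth ALPT-polystable scheme with $X_k\neq\varnothing$ in which every point of the second component has closure disjoint from the special fiber. Such points are caught neither by your main case nor by your fallback case ``$X=X_K$''. The repair is immediate and is what the paper does: split on whether $x\in X_K$ or $x\in X_k$, and handle \emph{every} generic-fiber point by the direct smoothness argument you already give (an \'etale chart to $\mathbf{A}^d_K$, translated into the torus), reserving Lemma~\ref{origin} for points of the special fiber. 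A smaller quibble: a preimage $u$ of $x$ in $U_0$ is not ``necessarily'' a generization of $u_0$, but such a $u$ can be chosen because \'etale (indeed flat) morphisms lift generizations, and that choice is needed to guarantee $u\in g_0^{-1}\bigl(\bigcap_j D(t_{0,j})\bigr)$.
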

\begin{proof}
If $x\in X$ lies in the generic fiber, the claim is easy. Indeed, $X \to \Spec \O_K$ is smooth at $x$, so there is a neighborhood $x\in U\subset X$ with an \'etale morphism $U \to \Spec \O_K[u_0, \dots, u_n]$. Then we can shift the image similarly to what we have done in Step~$2$ of Lemma~\ref{origin} to assume that we get an \'etale map of the form $U \to \Spec \O_C[u_0^{\pm 1}, \dots, u_n^{\pm 1}]$. \medskip

The case of $x\in X$ lying in the special fiber essentially follows from Lemma~\ref{origin}. Namely, it suffices to prove the claim for closed points $x$ since \'etale morphisms are open. In this case, we choose an \'etale neighborhood $U$ of $x$ with an \'etale morphism 
\[
g\colon U \to \Spec \frac{\O_K[u_1^{\pm 1}, \dots, u_m^{\pm 1}, t_{0,0},\dots, t_{0, n_0}, \dots, t_{l,0}, \dots, t_{l, n_l}]}{(t_{1,0}\cdots t_{1, n_1}-\pi_1, \dots, t_{l,0}\cdots t_{l, n_l}-\pi_l)}
\]
such that $t_{0,j}(g(u))=1$ for all $j\leq n_0$, and $t_{i, j}(g(u))=0$ for all $1\leq i\leq l$, $j\leq n_l$. The first condition ensures that the map factors as 
\[
g\colon U \to \Spec \frac{\O_K[u_1^{\pm 1}, \dots, u_m^{\pm 1}, t_{0,0}^{\pm 1},\dots, t_{0, n_0}^{\pm 1}, \dots, t_{l,0}, \dots, t_{l, n_l}]}{(t_{1,0}\cdots t_{1, n_1}-\pi_1, \dots, t_{l,0}\cdots t_{l, n_l}-\pi_l)} \ .
\]
Now note that $U$ is $C$-smooth as it is \'etale over $X$. Since $g$ is \'etale, we conclude that the open subscheme $\rm{Im}(g)$ is $K$-smooth. Now we recall that $t_{i, j}(g(u))=0$ for all $1\leq i\leq l$, $j\leq n_l$, so the composition of $g$ with the projection onto the last factors defines a map
\[
h\colon U \to M\coloneqq \Spec \frac{\O_K[t_{1,0},\dots, t_{1, n_1}, \dots, t_{l,0}, \dots, t_{l, n_l}]}{(t_{1,0}\cdots t_{1, n_1}-\pi_1, \dots, t_{l,0}\cdots t_{l, n_l}-\pi_l)}
\]
that is smooth, and $h(U)$ contains the origin $O$ in the special fiber of $M$. Again, since $h$ is smooth, we conclude that $M$ is $K$-smooth at the image of $h$. In particular, $\O_{M, O}\otimes_{\O_K} K$ is $K$-(ind)smooth as $h(U)$ contains $O$. Now Remark~\ref{ALPT-polystable-flat} implies that each factor \[M_i\coloneqq \Spec \frac{\O_K[t_{i,0},\dots, t_{i, n_i}]}{(t_{i,0}\cdots t_{i, n_i}-\pi_i)}\] is $\O_K$-flat. Thus, \cite[\href{https://stacks.math.columbia.edu/tag/02VL}{Tag 02VL}]{stacks-project} implies that each factor the rings $\O_{M_i, O_i}\otimes_{\O_K} K$ is $K$-(ind)smooth, where $O_i$ is the origin in the special fiber of $M_i$. Now we use the Jacobian criterion to ensure that 
the $K$-(ind)smoothness of $\O_{M_i, O_i}\otimes_{\O_K} K$ implies that $\pi_i\neq 0$.
\end{proof}

\begin{defn}\label{poly-def} Let $\O_K$ be a rank-$1$ valuation ring. A flat, finitely presented, $K$-smooth $\O_K$-scheme $X$ is called {\it polystable} if \'etale locally it admits an \'etale morphism 
\[
U \to \Spec  \frac{\O_K[t_{1,0},\dots, t_{1, n_1}, \dots, t_{l,0}, \dots, t_{l, n_l}]}{(t_{1,0}\cdots t_{1, n_1}-\pi_1, \dots, t_{l,0}\cdots t_{l, n_l}-\pi_l)}
\]
for some $\pi_i\in \O_K \setminus \{0\}$. The $\O_K$-scheme 
\[
\Spec \frac{\O_K[t_{1,0},\dots, t_{1, n_1}, \dots, t_{l,0}, \dots, t_{l, n_l}]}{(t_{1,0}\cdots t_{1, n_1}-\pi_1, \dots, t_{l,0}\cdots t_{l, n_l}-\pi_l)}
\]
is called a {\it model polystable} $\O_K$-scheme. 
\end{defn}

\begin{rmk} Corollary~\ref{k-smooth-poly} ensures that any $K$-smooth ALPT-polystable $\O_K$-scheme is polystable if the residue field $k$ of $\O_K$ is algebraically closed. Indeed, it guarantees that we can choose non-zero $\pi_i\in \m_K$ for all singular factors $\Spec \frac{\O_K[t_{i,0},\dots, t_{i, n_i}]}{(t_{i,0}\cdots t_{i, n_i}-\pi_i)}$, and the smooth factors $\Spec \O_K[u^{\pm 1}]$ can be rewritten as $\Spec \O_K[t_0, t_1]/(t_0t_1-1)$. 

One can show that these two notions of polystability are actually the same provided that $X$ is $K$-smooth and $k$ is algebraically closed. More generally, these two notions are the same if $k$ is perfect. One can show this by passing to the strict henselization $\O_K^{\mathrm{sh}}$ and using \cite[\href{https://stacks.math.columbia.edu/tag/0ASJ}{Tag 0ASJ}]{stacks-project}. We will never use any of these results, so we do not give a proof. We do not know if these two notions are the same for a general $\O_K$. 
\end{rmk}

\section{Stable Modification Theorem for Curves Over Finite Rank Valuation Rings}

In this section we prove a version of Temkin's Stable Modification Theorem over a finite rank valuation ring $R$. We fix some notations for this section:  $S\coloneqq \Spec R$, and $U\coloneqq \eta$ be the generic point of $\Spec R$. There are different ways to define a rank of a valuation; we recall both of them below:

\begin{defn}\label{rank} We say that a valuation ring $R$ is of {\it finite rank}, if $\Spec R$ is a finite set. In this case, the rank $r$ is defined as $ |\Spec R| -1$ (number of points in $\Spec R$ minus $1$). It coincides with a rank (or height) of its valuation group $\Gamma$ by \cite[Ch. VI, \textsection 4, n. 4, Proposition 5]{Bou}. We denote rank of $R$ by $\rk R$ or $\rk \Gamma$. 
\end{defn} 
\begin{defn}\label{rat-rank} We say that a valuation ring $R$ with a value group $\Gamma$ has {\it rational rank} $r$, if $\dim_{\Q} \Gamma \otimes_{\Z} \Q=r$ (it is infinite if dimension of $\Gamma \otimes_{\Z} \Q$ is infinite). We denote it by $\rk_{\Q} \Gamma$ or $\rk_{\Q} R$. 
\end{defn}
\smallskip

Recall that for any valuation ring $R$ we have inequality $\rk_{\Q} R \geq \rk R$ by \cite[Ch. 6, \textsection 10, n.2, Proposition 3]{Bou}. Definition~\ref{rat-rank} is useful to verify that certain valuation rings have finite rank. \medskip

Now we recall the statement of Temkin's Stable Modification Theorem:  

\begin{thm}\cite[Proposition 4.5.1]{T1}\label{st-mod-val-T}  Let $R$ be a finite rank valuation ring with a separably closed field of fractions $K$, and let $X \to \Spec R$ be a relative curve such that the generic fibre is semi-stable. There is a stable $U$-modification $f\colon X' \to X$ (in particular, $X'$ is a semi-stable curve over $\Spec R$).
\end{thm}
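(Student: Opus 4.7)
The plan is induction on the rank $r$ of $R$. The rank-one case contains all of the genuine non-archimedean input: roughly, one realizes $X$ as (an algebraization of) an admissible formal $R$-scheme after completing along the special fiber, passes to its rigid-analytic (Raynaud) generic fiber, which is a quasi-compact rigid curve over the separably closed field $K$, applies the semi-stable reduction theorem for rigid-analytic curves, and then algebraizes the resulting formal semi-stable model back to an algebraic $U$-modification $X'\to X$ with semi-stable total space. A final canonical contraction of exceptional rational components meeting the singular locus in at most two points promotes semi-stability to stability. This is exactly the strategy of \cite[Proposition 4.5.1]{T1}, recast there through Riemann--Zariski spaces and resolution-type machinery; I would take the rank-one statement as the base of the induction.

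For the inductive step, assume the theorem is known for valuation rings of rank strictly less than $r$, and let $\p\subset R$ be the unique height-one prime. Then $R_\p$ is a rank-one valuation ring with the same separably closed fraction field $K$, while $R/\p$ is a valuation ring of rank $r-1$. First I would apply the rank-one case to $X_{R_\p}\to \Spec R_\p$, producing a stable modification $Y_\p\to X_{R_\p}$. The next step is to spread $Y_\p$ to a proper $X$-scheme $Y\to X$ whose base change to $R_\p$ is $Y_\p$; this is a finite-presentation / Raynaud--Gruson flattening argument, made essentially canonical by the uniqueness of the stable modification over normal bases \cite[Theorem 1.2]{T1}, which rigidifies the blow-up ideal realizing $Y_\p$.

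Now restrict to the closed stratum: $Y_{R/\p}\to \Spec R/\p$ is a relative curve over the rank-$(r-1)$ valuation ring $R/\p$ whose generic fiber agrees with the closed fiber of $Y_\p$ and is therefore semi-stable. Applying the induction hypothesis yields a stable $\Frac(R/\p)$-modification $Z\to Y_{R/\p}$. The second technical move is to lift this back to a modification $Y'\to Y$ over all of $\Spec R$ whose reduction modulo $\p$ recovers $Z$ and whose base change to $R_\p$ is an isomorphism. Granting both extensions, the composite $Y'\to Y\to X$ is a $U$-modification whose total space is semi-stable over $R$ (checked fiber-by-fiber), and a final contraction of any remaining non-stable rational components produces the desired stable $U$-modification.

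The main obstacle is the pair of extension steps --- spreading from $R_\p$ to $R$ and lifting from $R/\p$ to $R$ --- since $R$ is in general non-Noetherian and stable modifications over a valuation base are realized as blow-ups of a priori nontrivially described coherent ideals, so neither extension is formal. The technical lever that makes both go through is the uniqueness of the stable modification over a normal base \cite[Theorem 1.2]{T1}: uniqueness forces canonicity, and canonicity lets the constructions on the height-$(r-1)$ and height-$0$ strata of $\Spec R$ be glued across the whole base without making choices and without any Noetherian hypothesis. Keeping track of this gluing at the stratum where the previous step's reduction lives is the delicate bookkeeping the argument must navigate.
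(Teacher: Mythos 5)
The first thing to say is that the paper does not prove this statement at all: Theorem~\ref{st-mod-val-T} is quoted verbatim from \cite[Proposition 4.5.1]{T1} and used as a black box (it is the ``main input'' for Theorem~\ref{stable-modification-valuation}), so there is no in-paper proof to compare yours against. What you have written is essentially a reconstruction of Temkin's own argument in \cite{T1} --- induction on the height of $R$, with the genuine analytic content concentrated in the height-one case, and the inductive step performed by decomposing $R$ as the composite of the rank-one localization $R_{\p}$ at the height-one prime with the rank-$(r-1)$ quotient $R/\p$. The gluing you need in the inductive step is cleanest if you observe that $R \cong R_{\p}\times_{\kappa(\p)} R/\p$ is a bi-Cartesian square and invoke Ferrand-type gluing of finitely presented flat modules/schemes along the residue field $\kappa(\p)$ (this is exactly how the present paper handles the analogous step over semi-valuation rings in Theorem~\ref{stable-modification-semivaluation}, via \cite[Proposition 1.4.4.1]{CCO}); your appeal to the uniqueness theorem \cite[Theorem 1.2]{T1} as the lever is slightly misplaced, since uniqueness holds for \emph{stable} modifications over normal bases, whereas the objects you are extending across strata are only semi-stable, for which uniqueness fails. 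You should also check that $\Frac(R/\p)=\kappa(\p)$ is again separably closed so the induction hypothesis applies (it is, because a valuation ring with separably closed fraction field is henselian).

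The one genuine gap is in your base case. In this paper (and in \cite{T1}) a relative curve is \emph{not} assumed proper, separated, or geometrically connected --- the paper stresses that dropping properness is essential for the later applications, where the curves arise as coordinate projections of affine charts. The ``semi-stable reduction theorem for rigid-analytic curves'' you propose to quote is a statement about proper curves; for a general quasi-compact analytic curve over $K$ the required statement is the structure/uniformization theory of compact one-dimensional analytic spaces (in the style of Bosch--L\"utkebohmert), and establishing the height-one case in that generality is where most of the work in \cite{T1} actually lives. The algebraization step back from the formal semi-stable model to an algebraic $U$-modification of $X$ also needs care without properness. So the skeleton of your induction is sound and matches the source, but ``apply semi-stable reduction for rigid curves'' cannot be taken off the shelf in the non-proper setting, and that is precisely the nontrivial content being cited.
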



We want to show a (slightly) refined version of this theorem for an arbitrary finite rank valuation ring. Firstly, we recall some facts about valuation rings. 

\begin{lemma}\label{ext-val-1} Let $R$ be a valuation ring with a field of fractions $K$, let $K'$ be an algebraic extension of $K$ and denote by $R'$ the normalization of $R$ in $K'$. Then the natural morphism $R' \otimes_R K \to K'$ is an isomorphism. Also, if $R_1$ is any valuation ring in $K'$ that dominates $R$, then the natural map $R_1\otimes_R K \to K'$ is an isomorphism.
\end{lemma}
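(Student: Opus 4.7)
The plan is to handle the two assertions in turn, with the first being the main content and the second reducing to it by a standard domination argument.

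For the first statement, the map $R'\otimes_R K\to K'$ is a map of $K$-algebras whose source sits inside $\Frac(R')$ (since localizing an integral domain at a multiplicative subset yields a subring of its fraction field), and $\Frac(R')\subseteq K'$. So injectivity is automatic. For surjectivity, I would start with an arbitrary $x\in K'$ and use that $K'/K$ is algebraic to write $a_nx^n+a_{n-1}x^{n-1}+\dots+a_0=0$ with $a_i\in R$ (clear denominators) and $a_n\neq 0$. Multiplying the relation through by $a_n^{n-1}$ gives
\[
(a_nx)^n+a_{n-1}(a_nx)^{n-1}+a_{n-2}a_n(a_nx)^{n-2}+\dots+a_0a_n^{n-1}=0,
\]
which is a monic integral relation for $a_nx$ over $R$. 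Hence $a_nx\in R'$, so $x=(a_nx)\cdot a_n^{-1}$ lies in the image of $R'\otimes_R K\to K'$.

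For the second statement, the same injectivity argument applies: $R_1\otimes_R K$ is the localization of the domain $R_1$ at the multiplicative set $R\setminus\{0\}$, hence a subring of $\Frac(R_1)=K'$. For surjectivity, the key observation is that any valuation ring $R_1\subset K'$ dominating $R$ must contain the integral closure $R'$. This is the standard valuative argument: if $v$ is the valuation on $K'$ associated with $R_1$ and $y\in K'$ satisfies a monic relation $y^n+b_{n-1}y^{n-1}+\dots+b_0=0$ with $b_i\in R\subseteq R_1$, then $v(y)<0$ would force $v(y^n)<v(b_iy^i)$ for every $i<n$, contradicting the relation. Thus $R'\subseteq R_1$, and combining with the first part we get
\[
K' \;=\; R'\cdot K \;\subseteq\; R_1\cdot K \;\subseteq\; K',
\]
which shows surjectivity of $R_1\otimes_R K\to K'$.

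There is no serious obstacle here: everything reduces to the standard valuative computation that $R_1$ contains integral elements over $R$, plus clearing denominators to exhibit arbitrary algebraic elements of $K'$ as ratios of integral elements by elements of $R$.
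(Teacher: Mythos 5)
Your proof is correct, and its overall structure (injectivity via ``localization of a domain sits inside the fraction field'', surjectivity of the second map via $R'\subseteq R_1$ plus the first part) matches the paper's. The one place where you take a different route is the surjectivity of $R'\otimes_R K\to K'$: you do the explicit denominator-clearing computation (multiply the algebraic relation by $a_n^{n-1}$ to make $a_nx$ integral over $R$), whereas the paper argues more abstractly that $R'\otimes_R K$ is a domain integral over the field $K$, hence itself a field, and therefore all of $K'$. Your version is more self-contained and makes explicit the fact (implicitly used by the paper) that every element of $K'$ is a ratio of an element of $R'$ by an element of $R$; the paper's version is shorter but leans on that same standard fact to conclude that the resulting subfield is all of $K'$. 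Either way the content is the same, and your valuative argument that $R_1$ contains $R'$ is exactly the paper's appeal to integral closedness of valuation rings, just written out.
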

\begin{proof}
First if all, note that $R' \otimes_R K$ is a localization of the domain $R'$. Therefore, the natural map $R' \otimes_R K \to K'$ is injective, in particular it is a domain. In order to show that it is an isomorphism, it suffices to show that $R'\otimes_R K$ is a field. But $R'\otimes_R K$ is a domain integral over $K$, thus it must be a field.


As for the second claim, we note that $R_1$ is integrally closed in its fraction field $K'$, so it contains $R'$. This easily implies that $R_1 \otimes_R K$ surjects onto $K'$. Injectivity follows easily from $R$-flatness of $R_1$.
\end{proof}
\smallskip

\begin{lemma}\label{val-ext-2} Let $R$ be a valuation ring with a field of fractions $K$, and let $K'$ be a finite Galois extension of $K$ with Galois group $G$. Then the group $G$ acts transitively on the set of maximal ideals in the normalization $R'$ of $R$ (in $K'$), and all these maximal ideals contract to the unique maximal ideal in $R$.
\end{lemma}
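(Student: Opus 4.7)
The statement splits into two claims: every maximal ideal of $R'$ contracts to $\m$, and $G$ acts transitively on these maximal ideals. My plan uses only two properties of $R$, both automatic for a valuation ring: locality with maximal ideal $\m$, and integral closedness in $K$.

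For the contraction claim, I would argue directly from the integrality of $R'/R$. If $\m' \subset R'$ is maximal, then $R'/\m'$ is a field integral over the domain $R/(\m' \cap R)$, and a subring of a field over which that field is integral must itself be a field. Hence $\m' \cap R$ is a maximal ideal of the local ring $R$, so equals $\m$.

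For transitivity I would run the classical norm argument. Assume for contradiction there exist maximal ideals $\m'_1, \m'_2$ of $R'$ in different $G$-orbits. The orbit $\{\sigma(\m'_2) : \sigma \in G\}$ is a finite collection of maximal ideals, none of them equal to $\m'_1$, so prime avoidance produces an element $x \in \m'_1$ avoiding every $\sigma(\m'_2)$. Form $N \coloneqq \Nm_{K'/K}(x) = \prod_{\sigma \in G} \sigma(x)$. Since $G$ preserves the integral closure $R'$, each factor $\sigma(x)$ lies in $R'$, so $N \in R'$; since $N$ is $G$-invariant, $N \in K$; hence $N \in R' \cap K = R$ by integral closedness of $R$ in $K$. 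Now $x \in \m'_1$ forces $N \in \m'_1 \cap R = \m$, while $\sigma(x) \notin \m'_2$ for every $\sigma$ combined with the primality of $\m'_2$ forces $N \notin \m'_2$, hence $N \notin \m$ by the contraction claim applied to $\m'_2$. This contradiction completes the proof.

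The argument presents no serious difficulty; the only point worth flagging is that the norm formula uses separability of $K'/K$, which is built into the definition of a Galois extension, so no additional work is required.
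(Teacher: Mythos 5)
Your proof is correct, and it is essentially the paper's argument with the citations unwound: the paper simply invokes \cite[Section 9, Lemma 2]{M1} for the contraction claim and \cite[Theorem 9.3]{M1} for transitivity, and the standard proofs of those two results are precisely your integrality argument and your prime-avoidance-plus-norm argument (using that $R$ is local and integrally closed in $K$). No gaps.
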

\begin{proof}
Note that the question is well-posed because $G$ acts on a normalization of $R$, and therefore acts on its set of maximal ideals. Since the extension $R \subset R'$ is integral, we see that any maximal ideal in $R'$ contracts to the maximal ideal in $R$ by \cite[Section 9, Lemma 2]{M1}, and \cite[Theorem 9.3]{M1} proves that the Galois group $G$ acts transitively on these ideals.
\end{proof}
\smallskip

\begin{lemma}\label{open-1} Let $R$ be a valuation ring of finite rank with field of fractions $K$. Then the subset $\Spec K \subset \Spec R$ is open.
\end{lemma}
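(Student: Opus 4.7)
The plan is to exhibit an explicit principal open neighborhood of the generic point $(0) \in \Spec R$. Since $R$ is a valuation ring, the spectrum $\Spec R$ is totally ordered under inclusion, so by the finite rank hypothesis (Definition~\ref{rank}) we may write
\[
    (0) = \p_0 \subsetneq \p_1 \subsetneq \p_2 \subsetneq \dots \subsetneq \p_r = \m_R,
\]
with $r = \rk R$. The generic point $\Spec K \hookrightarrow \Spec R$ corresponds to the single element $\p_0 \in \Spec R$, and we wish to see it forms an open subset.

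If $r = 0$, then $R = K$ and there is nothing to prove. Otherwise $r \geq 1$, and $\p_1$ is a nonzero prime of $R$; choose any element $f \in \p_1$ with $f \neq 0$. Because the chain is totally ordered, $\p_1 \subset \p_i$ for every $i \geq 1$, and hence $f \in \p_i$ for all $i \geq 1$. On the other hand, $f \notin \p_0 = (0)$. Therefore the principal open
\[
    D(f) = \{\p \in \Spec R \;:\; f \notin \p\}
\]
equals $\{\p_0\}$, which is exactly the image of $\Spec K \hookrightarrow \Spec R$. This displays $\Spec K$ as an open subset.

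There is no real obstacle here: the entire argument rests on (i) the fact that in a valuation ring the primes form a chain, together with (ii) the finiteness of that chain, which is precisely the finite rank hypothesis. The only micro-subtlety is to make sure $\p_1$ is nonzero so that a witness $f$ exists — but this is guaranteed by $r \geq 1$.
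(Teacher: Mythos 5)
Your proof is correct. It takes a slightly different route from the paper's: the paper writes $K=\colim_{x\in R\setminus\{0\}} R_x$, so that $\Spec K=\bigcap_{x} \Spec R_x$, and then uses the finiteness of the topological space $\Spec R$ (plus the fact that this family of opens is closed under finite intersection) to conclude that the intersection is already equal to a single $\Spec R_x=D(x)$. You instead exploit the total ordering of the primes of a valuation ring to identify the minimal nonzero prime $\p_1$ and exhibit the explicit witness: any nonzero $f\in\p_1$ satisfies $D(f)=\{(0)\}=\Spec K$. Both arguments ultimately produce $\Spec K$ as a principal open and both hinge on the finite rank hypothesis; yours is more constructive in that it pinpoints which $f$ works, while the paper's is a pigeonhole argument that does not need the chain structure of $\Spec R$ and so would apply verbatim to any domain with finite spectrum. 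The one point worth stating explicitly in your write-up is that the image of $\Spec K\to\Spec R$ is exactly the singleton $\{(0)\}$ (clear, since $K=R_{(0)}$), which you implicitly use when identifying $\Spec K$ with $\{\p_0\}$.
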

\begin{proof}
We note that $K=S^{-1}R$ is a localization of a ring $R$ at the multiplicative set $S\coloneqq  R\setminus \{0\}$. We have an equality $K=\colim_{x\in S} R_x$, so 
\[
\Spec K=\lim_{x\in S} \Spec R_x \cong \bigcap_{x\in S} \Spec R_x \subset \Spec R.
\]
Since $\Spec R$ is has a finite underlying topological space, there are only finitely many options for $\Spec R_x \subset \Spec R$. Therefore, there exists some $x\in S$ such that $\Spec K = \Spec R_x$, so $\Spec K$ is open in $\Spec R$.
\end{proof}

\begin{thm}\label{stable-modification-valuation} Let $S= \Spec R$ be a spectrum of a valuation ring $R$ with fraction field $K$, and let $f\colon X \to S$ be a relative curve such that the generic fibre is semi-stable. Then there is a finite Galois extension $K'/K$ with $R'$ the integral closure of $R$ in $K'$ such that $X_{S'}=X\times_S S'$ admits an $S'$-stable $U$-modification $g\colon X' \to X_{S'}$ with $S'= \Spec R'$. Moreover, an action of $G$ on $S'$ over $S$ lifts to an action on $X'$ over $X$.
\end{thm}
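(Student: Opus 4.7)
The plan is to deduce this from Temkin's Theorem~\ref{st-mod-val-T} by passing to the separable closure, spreading out to a finite Galois extension, and then invoking uniqueness of stable modifications over normal bases (\cite[Theorem 1.2]{T1}) both to glue the resulting local data and to produce the $G$-action. By Zorn's lemma, the valuation on $K$ extends to a valuation ring $\widetilde R\subset K^{\sep}$ dominating $R$; since $K^{\sep}/K$ is algebraic, one has $\rk\widetilde R=\rk R$, so $\widetilde R$ is still of finite rank. Base change preserves semi-stability of the generic fibre, so Theorem~\ref{st-mod-val-T} applies to $X_{\widetilde S}\to\widetilde S\coloneqq\Spec\widetilde R$ and produces an $\widetilde S$-stable $U$-modification $\widetilde g\colon \widetilde X'\to X_{\widetilde S}$.

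I then spread $\widetilde g$ out. Writing $\widetilde R=\colim_\alpha(R_\alpha)_{\m_\alpha}$, where $R_\alpha$ is the integral closure of $R$ in some finite Galois subextension $K\subset K_\alpha\subset K^{\sep}$ and $\m_\alpha$ is the maximal ideal of $R_\alpha$ induced by $\widetilde R$, the standard finite-presentation results of \cite[IV$_3$, 8.8.2, 8.10.5, 11.2.6]{EGA} descend $\widetilde g$ to a proper $U$-modification $g_\alpha\colon X'_\alpha\to X_{(R_\alpha)_{\m_\alpha}}$ for some sufficiently large $\alpha$, and a further application of the same principles extends $g_\alpha$ to a finitely presented $U$-modification $g_V\colon X'_V\to X_V$ over an affine open neighborhood $V\subset S_\alpha\coloneqq\Spec R_\alpha$ of $\m_\alpha$. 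Since $(R_\alpha)_{\m_\alpha}\to\widetilde R$ is a faithfully flat local extension and geometric-fibre formation is fpqc-local, openness of the semi-stable and stable loci on a finitely presented flat base then allows one, possibly after shrinking $V$, to arrange that $g_V$ is actually a $V$-stable $U$-modification.

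Let $G_\alpha\coloneqq\Gal(K_\alpha/K)$ and set $V^*\coloneqq\bigcup_{\sigma\in G_\alpha}\sigma(V)\subset S_\alpha$. By Lemma~\ref{val-ext-2} every maximal ideal of the semilocal Pr\"ufer domain $R_\alpha$ lies in the $G_\alpha$-orbit of $\m_\alpha$, so $V^*$ meets every closed point of $\Spec R_\alpha$; since any non-empty closed subset of $\Spec R_\alpha$ contains some maximal ideal, this forces $V^*=S_\alpha$. Each Galois-translate $\sigma^*g_V$ is a $\sigma(V)$-stable $U$-modification, and by uniqueness of stable modifications over the normal base $S_\alpha$ \cite[Theorem 1.2]{T1} these data glue canonically into a single stable $U$-modification $g\colon X'\to X_{S_\alpha}$. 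The same uniqueness yields canonical $X_{S_\alpha}$-isomorphisms $\sigma^*X'\xrightarrow{\sim} X'$ for every $\sigma\in G_\alpha$, which produces the required lifted action. The main technical point I anticipate is the descent step: verifying that the stability property is preserved when spreading out from $\widetilde R$ to a Zariski open of $\Spec R_\alpha$. This should rest on combining fpqc-descent of fibre-wise properties along the faithfully flat local map $(R_\alpha)_{\m_\alpha}\to\widetilde R$ with openness/constructibility of the stable locus for a finitely presented relative curve.
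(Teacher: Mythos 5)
Your overall route is the same as the paper's: pass to a valuation ring $\widetilde R$ of $K^{\sep}$ dominating $R$, apply Temkin's Theorem~\ref{st-mod-val-T} there, spread the resulting stable modification down to a finite Galois level $K_\alpha$ and out to an open neighbourhood of the distinguished maximal ideal of the integral closure $R_\alpha$, then use the transitive Galois action on maximal ideals together with uniqueness of stable modifications over a normal base to glue over all of $\Spec R_\alpha$ and to manufacture the $G$-action. The gluing-by-translation and the cocycle argument via uniqueness are exactly what the paper does (it invokes \cite[Corollary 1.3]{T1} and \cite[Corollary 1.6]{T1} for these two points), and your covering argument for $V^*=S_\alpha$ is fine.

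There is, however, one genuine gap at the very first step. The theorem is stated for an \emph{arbitrary} valuation ring $R$, but you write that $\rk\widetilde R=\rk R$, ``so $\widetilde R$ is still of finite rank'' --- this silently assumes $R$ has finite rank. Since Theorem~\ref{st-mod-val-T} is only available for finite-rank valuation rings with separably closed fraction field, your appeal to it fails whenever $\rk R=\infty$ (such valuation rings exist in abundance), and nothing later in your argument repairs this. The paper disposes of this with a preliminary reduction (its Step~0): write $R=\colim_i R_i$ as a filtered colimit of its finite-height valuation subrings (\cite[Corollary 2.1.3]{T0}), descend the finitely presented relative curve $X$ to some $R_i$ by the usual approximation results, prove the theorem over $R_i$, and base change back. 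You should insert this reduction before invoking Theorem~\ref{st-mod-val-T}; with it in place, the rest of your argument goes through and coincides with the paper's proof.
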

\begin{proof}
{\it Step 0}: We use \cite[Corollary 2.1.3]{T0} to write $R=\colim_i R_i$ as a filtered colimit of its valuation subrings of finite height. Then a standard approximation argument allows to assume that $R=R_i$ is of finite rank. \\

{\it Step 1}: Choose a separable closure $K\subset K^{\operatorname{sep}}$, then \cite[Theorem 10.2]{M1} guarantees the existence of a valuation ring $R'$ in $K^{\operatorname{sep}}$ that dominates $R$. Now by \cite[Ch. 6, \textsection 8, n.1, Corollary 1]{Bou} we get that $\rk(R')=\rk(R)$, so $R'$ is a valuation ring of finite rank. This means that we can apply Theorem~\ref{st-mod-val-T} to get an $S'$-stable $U$-modification $g\colon X' \to X_{R'}$ with $S'=\Spec R'$ (note that $X_{R'} \times_{S} U=X_{K'}$ by Lemma~\ref{ext-val-1}). \\

{\it Step 2}: We write $K^{\operatorname{sep}}= \colim_{i\in I} K_i$ as a direct colimit of finite Galois extensions $K_i$ of $K$. Note that each $R_i\coloneqq  K_i \cap R'$ is again a valuation ring, $R'=\colim_{i\in I} R_i$, and for each $i$ we have $R_i \otimes_R K \cong K_i$. We define $S_i\coloneqq \Spec R_i$. Then a standard approximation argument allows to spread out the $S'$-stable $U$-modification $X' \to X_{S'}$ to an $S_i$-stable $U$-modification $g_i\colon X'_i \to X_{R_i}$. However, we are not done yet since $R_i$ is generally not the integral closure of $R$ in $K_i$ (as this integral closure is usually just semi-local rather than local). \\


{\it Step 3}: Now we change notations and denote $K_i$ by $K'$, $R_i$ by $R_1$, and the stable $U$-modification $g_i\colon X'_i \to X_{R_i}$ by $g\colon X'_1 \to X_{R_1}$. Also we denote by $R'$ the integral closure of $R$ in $K'$. Now note that \cite[Ch. 6, \textsection 8, n.6, Proposition 6]{Bou} implies that $R_1$ is given by a localization of $R'$ at some maximal ideal $\m$, and Lemma~\ref{val-ext-2} says that the Galois group $G\coloneqq \Gal(L/K)$ acts transitively on the set of maximal ideals in $R'$. For all $\sigma \in G$ the pullback $\sigma^*(g)\colon \sigma^*(X'_1) \to X_{(R_1)_{\sigma(\m)}}$ is an $R_1$-stable $U$-modification, so there exists a stable modification of $X_{R'}$ after a localization at each maximal ideal of $R'$. Then we use the standard approximation techniques 
once again to spread out this $R_1$-stable $U$-modification to a $V_{\n}$-stable $U$-modification $g_{\n}\colon X'_{\n} \to X_{V_{\n}}$ over some open $V_{\n} \subset \Spec R'$ around each maximal ideal $\n \subset R'$. Since $\Spec R'$ is normal, an $R'$-stable $U$-modification of $X_{R'}$ is unique up to a unique isomorphism by \cite[Corollary 1.3]{T1} and the same holds over all opens of $\Spec R'$. This uniqueness assertion allows us to glue various $X'_{\n}$ to a unique $R'$-stable $U$-modification $g\colon X' \to X_{R'}$. \\ 


{\it Step 4:} The $R'$-stable $U$-modification $X' \to X_{R'}$ automatically admits a unique action of $G$ extending that an action on its $K'$-fiber by \cite[Corollary 1.6]{T1} and the morphism $g\colon  X' \to X_{R'}$ is $G$-equivariant.
\end{proof}

\section{Stable Modification Theorem for Curves Over Finite Rank Semi-Valuation Rings}

Let us recall a definition of a semi-valuation ring from \cite[\textsection 2]{T2}. 

\begin{defn} A {\it valuation} on a ring $A$ is commutative ordered group $\Gamma$ with a multiplicative map $|\ |\colon A \to \Gamma \cup \{0\}$ which satisfied the inequality
\[
|x+y| \leq \operatorname{max}(|x|, |y|)
\]
and sends $1$ to $1$.
\end{defn}

\begin{defn}\label{defn-semivaluation} A {\it semi-valuation ring} is a ring $\O$ with a valuation $|\ |\colon  \O \to \Gamma \cup \{0\}$ such that all zero-divisors of $\O$ lie in the prime kernel $\m=\ker(|\ |)$ and for any pair $g,h\in \O$ with $|g|\leq |h| \neq 0$ one has $h|g$ in $\O$. We say that $A\coloneqq \O_{\m}$ is the {\it ring of semi-fractions} of $\O$.
\end{defn}

In \cite[\textsection 2]{T2} Temkin gives a very useful description of all semi-valuation rings. Namely, they all come as a ``composition'' of a local ring $A$ and a valuation ring $R$. Let us explain what this means. Given a semi-valuation ring $\O$ we see that the natural map $\O \to \O_{\m}=\colon A$ is an injective morphism with a maximal ideal of $A$ equal to $\m A=\m$ (exercise!), and the image $R=\O/\m$ of $\O$ under the projection map $p\colon A \to A/\m A$ is a valuation ring with fraction field $A/\m A$. Indeed, the fraction field of $R$ is naturally identified with $\O_{\m}/\m\O_{\m}=A/\m A$ and the valuation $|\ |\colon  \O \to \Gamma \cup \{0\}$ induces a valuation on $A/\m A$ with a ring of valuation being equal to $R$. Moreover one can easily show that $\m A=\m$ using that the ideal $\m$ is $\O\setminus \m$-divisible. This implies that $\O=p^{-1}(R)$, and gives an example of a ``composition'' of a local ring $A$ and a valuation ring $R$: 

\begin{defn} The {\it composition} of a local ring $(A, \m_A)$ and a valuation ring $R\subset A/\m A$ (with fraction field $A/\m A$) is the ring $\O\coloneqq  p^{-1}(R)$, where $p\colon A \to A/\m A$ is the natural projection map. It is easy to see that any such composition $\O$ is a semi-valuation ring (in the sense of Definition~\ref{defn-semivaluation}), and any semi-valuation ring $\O$ is a composition of $A=\O_{\m}$ and $R=\O/\m \subset K=A/\m A$.
\end{defn}

Actually this structure result will be more useful for us than the original definition, and from now on by a ``semi-valuation ring'' we will mean a pair $(\O, \m)$ consisting of a ring $\O$ and a prime ideal $\m$ such that $\O$ injects into $A\coloneqq \O_{\m}$ and the image of $\O/\m$ in $A/\m A$ is a valuation ring with fraction field $A/\m A$. Given any semi-valuation ring $(\O, \m)$ we will always denote by $A$ its localization $\O_{\m}$, by $K$ the residue field of $A/\m A$ and by $R \subset K$ the valuation ring $\O/ \m$. Also, we reserve some notation for this section:  we will always denote $\Spec \O$ by $S$, $\Spec A$ by $U$ (so $U \to S$ is schematically dominant, monic and stable under generalization), $\Spec R$ by $T$, and $\Spec K$ by $\eta$ (we follow notations from \cite{T2}). Note that the natural commutative diagram 

\[
\begin{tikzcd}
\eta \arrow{d}  \arrow{r} & U \arrow{d}\\
T \arrow{r} & S
\end{tikzcd}
\]
is bi-Cartesian (easy to check from the very definition of semi-valuation ring). 

\begin{defn} We say that a semi-valuation ring $(\O, \m)$ is of {\it finite rank}, if the valuation ring $R$ associated with this pair is of finite rank (in the sense of Definition~\ref{rank}).
\end{defn}

\begin{lemma}\label{open-2} Let $\O$ be a finite rank semi-valuation ring with ring of semi-fractions $A$. Then the subset $\Spec A \subset \O$ is open.
\end{lemma}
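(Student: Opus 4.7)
The plan is to mimic the finite-rank argument of Lemma~\ref{open-1}, but instead of computing $\Spec A$ as a directed intersection of basic opens I would describe its complement directly, exploiting the explicit structural description $\O = p^{-1}(R)$ with $p\colon A \to A/\m A$. The first and most delicate step will be to show that every prime $\p\subset\O$ is comparable with $\m$, i.e.\ either $\p\subseteq\m$ or $\p\supseteq\m$. Suppose $\p\not\subseteq\m$ and pick $x\in\p\setminus\m$; since $\m$ is the maximal ideal of $A$, the element $x$ is a unit in $A$. For any $y\in\m$ the quotient $y/x$ then lies in $A$ and has image $0$ in $A/\m A$, so $y/x\in \m A=\m\subseteq\O$, and hence $y=(y/x)\cdot x\in\p$. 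This forces $\m\subseteq\p$ and shows that as a set $\Spec\O=\Spec A\cup\Spec R$, with the two pieces meeting only at $\m$. In particular, the complement of $\Spec A$ in $\Spec\O$ is precisely $\Spec R\setminus\{\m\}$.

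Next I would use the finite rank hypothesis. By definition $\Spec R$ is a finite chain $(0)\subsetneq\q_1\subsetneq\cdots\subsetneq\q_r$, corresponding under $\O\twoheadrightarrow R$ to a chain $\m=\p_0\subsetneq\p_1\subsetneq\cdots\subsetneq\p_r$ of primes of $\O$. If $r=0$ the ring $R$ is a field, $\m$ is maximal in $\O$, and $\Spec A=\Spec\O$ is trivially open. Otherwise I would pick some $\bar y\in\q_1\setminus\{0\}$ and lift it to $y\in\p_1$; because $\bar y\neq 0$ in $R$, one has $y\in\p_1\setminus\m$, and because $\bar y\in\q_1\subseteq\q_i$ for every $i\geq 1$, one has $y\in\p_i$ for every $i\geq 1$.

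Finally I would check that $V(y)=\Spec\O\setminus\Spec A$. The inclusion $\supseteq$ was just established, and for $\subseteq$ any $\p\in V(y)$ contains $y\notin\m$, so $\p\not\subseteq\m$ and therefore $\p\in\Spec\O\setminus\Spec A$ by the comparability result of the first paragraph. Consequently $\Spec A=D(y)$ is open in $\Spec\O$, as desired. The main obstacle in this plan is the comparability claim; it is the only place where the full strength of the semi-valuation structure (specifically, $\m A=\m$) enters, and once it is available the finite rank of $R$ reduces the complement of $\Spec A$ to a finite totally ordered set of primes that can be cut out by a single equation.
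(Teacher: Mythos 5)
Your proof is correct, and it reaches the conclusion by a somewhat different mechanism than the paper. The paper writes $A=\colim_{x\in \O\setminus\m}\O_x$, so that $\Spec A=\bigcap_{x\notin\m} D(x)$ is a directed intersection of basic opens each containing $\Spec A$, and then argues by pigeonhole that this intersection must stabilize at a single $D(x)$ because only finitely many subsets of $\Spec\O$ contain $\Spec A$ (a finiteness claim which itself rests on the decomposition $\Spec\O=\Spec A\cup\Spec R$, left implicit there). You instead prove that decomposition explicitly — your comparability argument, using $\m A=\m$ to show every prime not contained in $\m$ contains $\m$, is exactly the justification the paper's finiteness claim needs — and then, rather than appealing to stabilization, you use the chain structure of $\Spec R$ to exhibit a single element $y\in\p_1\setminus\m$ with $V(y)=\Spec\O\setminus\Spec A$, so $\Spec A=D(y)$. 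Both proofs end at the same place (a basic open), but yours is constructive where the paper's is an abstract limit argument, and yours fills in the one step the paper glosses over; the paper's version has the mild advantage of not needing to name a specific cutting element. All the individual steps you use ($x$ a unit in $A$ since $\m$ is the maximal ideal of $A=\O_\m$; $y/x\in\m A=\m\subseteq\O$; $\Spec R$ a finite totally ordered chain by finite rank) check out.
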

\begin{proof}
We note that $A=S^{-1}\O$ is a localization of a ring $\O$ at a multiplicative set $S\coloneqq \O \setminus \m$. So we have an equality $A=\colim_{x\in S} \O_x$, thus 
\[
\Spec A=\lim_{x\in S} \Spec \O_x \cong \bigcap_{x\in S} \Spec \O_x \subset \Spec \O.
\]
Since the spectrum $\Spec R$ of the associated valuation ring $R$ has a finite underlying topological space, there are only finitely many options for subsets of $\Spec \O$ containing $\Spec A$. For any $x\in S$ the subset $\Spec \O_x$ contains $\Spec A$, so we conclude that exists some $x\in S$ such that $\Spec A = \Spec \O_x$, so $\Spec A$ is open in $\Spec \O$.
\end{proof} 
\smallskip

Before going to the proof of Stable Modification Theorem over semi-valuation bases, we need to show two basic lemmas. 

The following lemma is well-known and a form of it appears in \cite{SGA1}, but there is a connectedness assumption on $X$ that we would like to avoid. So we write down a proof following the ideas from \cite{SGA1}.

\begin{lemma}\label{dominate-torsor} Let $Y$ be a connected scheme and $f\colon X \to Y$ be a finite \'etale morphism. Then there is a finite group $G$ (actually it can be chosen to be a symmetric group $S_n$) and a $G$-torsor $g\colon  X' \to Y$ such that $g$ dominates $f$, i.e. there is a factorization 

$$
X' \xr{h} X \xr{f} Y.
$$
\end{lemma}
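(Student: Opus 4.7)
My approach is the classical Galois-closure construction: form the ``ordered configuration space'' inside an iterated fibre product and exhibit it as an $S_n$-torsor. Concretely, since $Y$ is connected and $f$ is finite \'etale, its degree is locally constant hence equal to a single integer $n$. If $n\le 1$ the lemma is trivial (take $X'=X$ with the trivial group), so I will assume $n\ge 2$ and work with the $n$-fold fibre product $X^n_Y \coloneqq X\times_Y \cdots \times_Y X$, which is again finite \'etale over $Y$ and carries the natural $S_n$-action by permutation of factors.

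Next, I would introduce the open-and-closed subscheme $X'\subset X^n_Y$ obtained by removing, for every $1\le i<j\le n$, the partial diagonal $\Delta_{ij}\subset X^n_Y$. The key point is that since $f$ is \'etale, the diagonal of $X\to Y$ is open and closed in $X\times_Y X$, so pulling back shows that each $\Delta_{ij}$ is open and closed in $X^n_Y$. Consequently $X'$ is an open-and-closed subscheme, still finite \'etale over $Y$, and the $S_n$-action on $X^n_Y$ restricts to an action on $X'$. The first projection $X^n_Y\to X$ restricts to a morphism $h\colon X'\to X$, giving the desired factorization $X'\xr{h} X \xr{f} Y$.

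It then remains to verify that $g\coloneqq f\circ h\colon X'\to Y$ is an $S_n$-torsor. The most efficient way is to check this on geometric fibres and then conclude globally: over a geometric point $\bar y\to Y$, the fibre $X_{\bar y}$ is a set with $n$ elements and $X'_{\bar y}$ is the set of ordered $n$-tuples of pairwise distinct elements, on which $S_n$ acts simply transitively. In particular $g$ is surjective (as $n!\ge 1$), and the canonical morphism of finite \'etale $Y$-schemes
\[
S_n\times X' \longrightarrow X'\times_Y X', \qquad (\sigma,x)\mapsto (x,\sigma\cdot x),
\]
is an isomorphism on every geometric fibre. A morphism of finite \'etale $Y$-schemes which is a bijection on geometric fibres is an isomorphism, so this settles the torsor condition.

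\textbf{Main obstacle.} There is no serious obstacle; the only thing one must be careful about is the argument that $\Delta_{ij}$ is open and closed (this relies squarely on the \'etaleness of $f$ and on the constancy of the degree, which in turn uses the connectedness hypothesis on $Y$) and the standard principle that a map of finite \'etale covers of $Y$ which is a geometric-fibrewise bijection is automatically an isomorphism. Both points are very standard, so the work is essentially bookkeeping.
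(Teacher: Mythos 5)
Your proof is correct and is essentially the same as the paper's: both form the complement of the big diagonal in the $n$-fold fibre product, observe it is open and closed (hence finite \'etale over $Y$) using \'etaleness and separatedness of $f$ together with the constancy of the degree on the connected base, and verify the $S_n$-torsor condition by checking the action map $S_n\times X'\to X'\times_Y X'$ on geometric fibres. The only cosmetic difference is that you remove the partial diagonals $\Delta_{ij}$ one at a time rather than the big diagonal all at once, which changes nothing.
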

\begin{proof}
Note that the morphism $f\colon X \to Y$ is finitely presented (part of the definition of an \'etale morphism) and finite, so the degree function $\deg(X/Y)$ is locally constant on $Y$. Since $Y$ is connected; $f$ is of degree $d$ for some integer $d$. Then we consider $d$-th fiber power 
\[
X^{d/Y}=X\times_Y X \times_Y \dots \times_Y X.
\]

The \'etaleness of the morphism $f$ implies that the ``big diagonal'' $\Delta \subset X^{d/Y}$ is open (by the ``big diagonal'' we mean a union of all natural ``diagonals'' $X^{(d-1)/Y} \to X^{d/Y}$). But the separatedness of the morphism $f$ implies that the ``big diagonal'' is also closed in $X^{d/Y}$. This means that $X'\coloneqq  X^{d/Y}\setminus \Delta$ is open and closed in $X^{d/Y}$, so the natural morphism $g\colon  X' \to Y $ is finite (as closed in a finite $Y$-scheme) and \'etale (as open inside an \'etale $Y$-scheme). Moreover, there is the natural action of the symmetric group $S_d$ on $X'$ because functorially it represents a functor of $d$ fiberwise distinct points of $X$ on the category of $Y$-schemes. So, in order to check that $X'$ is an $S_d$-torsor, we need to check that the natural morphism of $Y$-schemes
\[
p\colon  G\times X' \to X'\times_Y X'
\]
is an isomorphism. Since both schemes are finite \'etale over $Y$, we can check this on geometric fibers. But this claim is absolutely clear on geometric fibers from the functorial description of $X'$. Thus $g\colon X' \to Y$ is indeed an $S_d$-torsor. 

Now we define a morphism $h\colon X' \to X$ as a map induced from the projection onto the first coordinate $p_1\colon X^{d/Y} \to X$. We need to check that $h$ is surjective, it can be done on geometric fibers. And again on geometric fibers it is clear from the functorial description of $X'$ and the definition of $d$.
\end{proof}

The next lemma is also well-known and we include it only for our convenience. 

\begin{lemma}\label{lift-fet} Let $(A, \m_A)$ be a local ring with a residue field $K$, and let $K'/K$ be a finite separable extension. Then we can lift it to a finite \'etale morphism $A \to A'$, such that $A'/\m_A A'=K'$. In particular, $A'$ is local. 
\end{lemma}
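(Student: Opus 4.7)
The plan is to lift a minimal polynomial along the primitive element theorem. Since $K'/K$ is finite separable, write $K' = K(\alpha)$ for some $\alpha \in K'$ with monic separable minimal polynomial $\bar{f}(x) \in K[x]$. Lift each non-leading coefficient of $\bar{f}$ arbitrarily to $A$ to obtain a monic polynomial $f(x) \in A[x]$ of the same degree, and set
\[
A' \coloneqq A[x]/(f(x)).
\]
Since $f$ is monic, $A'$ is a finite free $A$-module, and reducing modulo $\m_A$ yields $A'/\m_A A' = K[x]/(\bar{f}) = K'$, which establishes the required residue computation.

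Next I would verify that $A'$ is local. Because $A \to A'$ is finite and $A$ is local, the going-up theorem (or incomparability) forces every maximal ideal of $A'$ to contract to $\m_A$, so $\m_A A'$ is contained in the Jacobson radical of $A'$. But the quotient $A'/\m_A A' = K'$ is a field, so it has no proper nonzero ideals; pulling back, the only maximal ideal of $A'$ is $\m_A A'$ itself. Hence $A'$ is local with maximal ideal $\m_A A'$ and residue field $K'$, as required.

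Finally, I would argue étaleness via the standard criterion: for a monic $f \in A[x]$, the algebra $A[x]/(f)$ is étale over $A$ provided $f'(x)$ is a unit in $A[x]/(f)$. Since $\bar f$ is separable, $\bar f'(x)$ is a unit in $K[x]/(\bar f) = K'$, i.e.\ the image of $f'(x)$ in $A'/\m_A A'$ is nonzero. Because $A'$ is local with maximal ideal $\m_A A'$, this means $f'(x) \notin \m_A A'$ and hence $f'(x)$ is already a unit in $A'$. Combined with the evident flatness (freeness) of $A'$ over $A$, this yields that $A \to A'$ is finite étale. There is essentially no hard step here; the only thing worth care is the local-ring argument in the middle paragraph, which is what forces $f'(x)$ to be a genuine unit in $A'$ rather than merely a unit modulo $\m_A A'$.
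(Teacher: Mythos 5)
Your proof is correct and follows exactly the route of the paper: invoke the primitive element theorem, lift the monic separable minimal polynomial to a monic $f \in A[T]$, and take $A' = A[T]/(f)$. The paper leaves the verifications ("does the job") implicit, whereas you spell out the residue computation, locality, and the unit condition on $f'$; all of these checks are correct.
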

\begin{proof}
By the Primitive Element Theorem we know that $K' \cong K[T]/(\ov{P})$, where $\ov{P}\in K[T]$ is a monic separable polynomial. Lift this polynomial to an arbitrary {\it monic} polynomial $P\in A[T]$ and then $A'\coloneqq  A[T]/(P)$ does the job. 
\end{proof}
\smallskip

\begin{thm}\label{stable-modification-semivaluation} Let $(\O, \m)$ be a semi-valuation ring of a finite rank and let $f\colon  X \to S$ be a relative curve such that the base change $f_U\colon  X_U \to U$ is a semi-stable relative curve. There is a finite group $G$ and a faithfully flat finite and finitely presented morphism $\O \to \O'$ such that $\O'$ admits an action of $G$ with the following properties: 
	\begin{itemize}\itemsep0.5em
	\item The natural morphism $f\colon S' \to S$ is $G$-invariant with $S'=\Spec \O'$.
	\item The restriction of $f$ over $U$ defines a $G$-torsor $f_U\colon  S'_U \to U$ (in particular, it is finite etale).
	\item The base change $X_{S'}$ admits an $S'$-stable $U$-modification $g\colon  X' \to X_{S'}$.
	\end{itemize} 
\end{thm}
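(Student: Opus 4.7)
The strategy is to exploit the decomposition $\O = p^{-1}(R) \subset A$ of a semi-valuation ring (with $p\colon A \twoheadrightarrow A/\m A = K$ the residue projection) to reduce to the valuation-ring case on the closed stratum $T = \Spec R$, and then propagate the result back to $\O$ via a suitable \'etale lift. Concretely, I first apply Theorem~\ref{stable-modification-valuation} to $X_T \to T$, whose generic fiber $X_\eta = (X_U)_\eta$ is semi-stable by hypothesis. This yields a finite Galois extension $K'/K$ with Galois group $G_0$, the integral closure $R'$ of $R$ in $K'$, and a $G_0$-equivariant $(\Spec R')$-stable $\eta$-modification $Y \to X_{R'}$.

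Next, I lift $K'/K$ to a local finite \'etale $A$-algebra $\tilde A$ with residue field $K'$ using Lemma~\ref{lift-fet}. Since $\Spec A$ is connected, Lemma~\ref{dominate-torsor} then produces a finite group $G$ and a $G$-torsor $\Spec A' \to \Spec A$ dominating $\Spec \tilde A$. Writing $A'/\m A' \cong \prod_{i \in I} K_i'$, each factor $K_i'$ is a finite separable extension of $K$ containing $K'$, and $G$ permutes the factors. Letting $R_i'$ be the integral closure of $R$ in $K_i'$ and $R^* := \prod_i R_i'$, I set
\[
\O' := \bigl(A' \twoheadrightarrow A'/\m A'\bigr)^{-1}(R^*) \subset A'.
\]
The $G$-action on $A'$ preserves $R^* \subset A'/\m A'$ and lifts to $\O'$, making $\O \to \O'$ $G$-invariant, while its restriction to $U$ is by construction the $G$-torsor $\Spec A' \to \Spec A$. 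Finiteness, finite presentation, and faithful flatness of $\O \to \O'$ reduce via the pullback description $\O' = A' \times_{A'/\m A'} R^*$ to the corresponding properties of $A \to A'$ (finite \'etale) and of $R \to R^*$ (finite, and faithfully flat since every torsion-free module over the Pr\"ufer domain $R$ is flat).

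It remains to construct the $S'$-stable $U$-modification of $X_{S'}$. Over the open $\Spec A' \subset \Spec \O'$ (open by Lemma~\ref{open-2}), $X_{A'}$ is already semi-stable as an \'etale pullback of $X_U$; over each closed factor $\Spec R_i'$, the base change of $Y$ along $R' \hookrightarrow R_i'$ is an $(\Spec R_i')$-stable $\eta_i$-modification of $X_{R_i'}$. Spreading these out to open neighborhoods of $\Spec R_i'$ in $\Spec \O'$ by standard finite-presentation techniques and glueing with $X_{A'}$ via uniqueness of stable modifications over normal bases (\cite[Corollaries 1.3, 1.6]{T1}) yields the desired $X' \to X_{S'}$, automatically $G$-equivariantly by the same uniqueness. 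The main obstacle is precisely this glueing step: one must verify that the spread-out stable modifications near $\Spec R^*$ agree with the identity modification on $\Spec A'$ across their overlap, which follows because the modification is already an isomorphism over the (semi-stable) generic fiber and hence, by finite presentation, over a full open neighborhood thereof.
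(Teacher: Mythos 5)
Your overall strategy is the same as the paper's: reduce to the valuation ring $R$ on the closed stratum via Theorem~\ref{stable-modification-valuation}, lift $K'/K$ to a finite \'etale $A$-algebra by Lemma~\ref{lift-fet}, dominate it by a $G$-torsor via Lemma~\ref{dominate-torsor}, and reassemble $\O'$ as a fiber product of the \'etale part over $A$ with an integral extension of $R$. However, there is a genuine gap at the point where you take $\O'$ to be the preimage of $R^* = \prod_i R'_i$ with $R'_i$ the \emph{full} integral closure of $R$ in $K'_i$, and assert that $R \to R^*$ is finite. The integral closure of a (non-discrete) valuation ring in a finite separable extension need not be a finite $R$-module (this can fail already for rank-$1$ valuation rings in residue characteristic $p$, where defect occurs), and the theorem demands that $\O \to \O'$ be finite \emph{and finitely presented}. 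The paper explicitly flags this danger elsewhere (``one needs to be careful as the extension $R \to R'$ may not be finite''). The correct repair, which is what the paper does, is to write $R'$ as a filtered colimit of $R$-subalgebras of finite type containing a fixed $K$-basis of $K'$ (so each satisfies $R_i \otimes_R K = K'$); these are automatically $R$-flat, hence finite free and finitely presented by \cite[Theorem 7.10]{M1} and \cite[IV\textsubscript{1}, 1.4.7]{EGA}; one then passes to a $G$-stable member of the system via Lemma~\ref{spread-group} and spreads the stable modification out to it by approximation. Only after this replacement does the fiber product $\O' = A' \times_{K'} R_j$ become finite, finitely presented and faithfully flat over $\O$.

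A secondary, more minor concern is your final gluing step: $\Spec A'$ and the ``neighborhoods of $\Spec R^*$'' do not overlap along a normal open subscheme ($A$ is a local ring of $U$ and need not be normal, so Temkin's uniqueness of stable modifications is not available there), and the relevant gluing is not a Zariski gluing but a Ferrand-type gluing along the closed point $\Spec K'$ of $\Spec A'$, which is simultaneously the generic point of $\Spec R_j$. The paper handles this with \cite[Proposition 1.4.4.1(ii),(iii)]{CCO}, gluing both the base rings and the curves along $\Spec K'$, where $X_{K'}$ is already semi-stable so the trivial modification matches the stable one. Your argument would go through once rephrased in those terms, but as written the justification for compatibility ``across the overlap'' does not quite parse.
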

\begin{proof}
We start by considering the bi-Cartesian square 
\[
\begin{tikzcd}
\Spec K \arrow{d}  \arrow{r} & \Spec A \arrow{d}\\
\Spec R \arrow{r} & \Spec \O.
\end{tikzcd}
\]
We know by Theorem~\ref{stable-modification-valuation} that we can find a finite Galois extension $K \subset K'$ with $R'$ being the integral closure of $R$ in $K'$ such that the base change $X_{R'}$ admits a $R'$-stable $U$-modification $g\colon X' \to X_{R'}$. We use Lemma~\ref{lift-fet} to lift Galois extension $K \subset K'$ to a finite \'etale map $A \to A'$. And, moreover, we use Lemma~\ref{dominate-torsor} to find a finite \'etale $A$-algebra $A''$ such that $\Spec A'' \to \Spec A$ dominates $\Spec A' \to \Spec A$ and $\Spec A'' \to \Spec A$ is a torsor for some finite group $G$. Let $K''$ be $A''/\m_A A''$; this is not necessarilly a field, but still a finite \'etale $K$-algebra (so, it is a product of finite separable field extensions of $K$). Now let us denote by $R''$ the normalization of $R$ in $K''$; note that the morphism
\[
g_{R''}\colon  X'_{R''} \to X_{R''}
\]
is a $R''$-stable $U$-modification (because stability is a condition on geometric fibers).  \smallskip

Let us now denote $K''$ by $K'$, $R''$ by $R'$ and $A''$ by $A'$. So what we get from the previous discussion is that we have a $G$-torsor $\a\colon \Spec A' \to \Spec A$ for some finite group $G$ such that $X_{R'}$ admits a $R'$-stable $U$-modification $g\colon X' \to X_{R'}$, where $R'$ is the normalization of $R$ in the special fibre of $\a$. Note that by construction $G$ acts on $\Spec K'$ and $\Spec R'$ over $\Spec R$. \smallskip

Since $K'$ is a finite \'etale $K$-algebra and $R'\otimes_R K =K'$ (apply Lemma~\ref{ext-val-1} to factor fields of $K'$), we can find a finite number of elements $a_j \in R'$ such that $\{a_j\}$ is a basis of $K'$ as a vector space over $K$. Write $R'=\colim_{i\in I} R_i$ as a filtered colimit of $R$-finite type subalgebras $R_i$ of $R'$ containing all the elements $a_j$. Since they are $R$-torsion free modules (as the subrings of $R$-torsion free $R'$) they are actually flat over $R$. Then it is easy to see that $R_i \otimes_R K \to K'$ is an isomorphism (it is injective by $R$-flatness of $R_i$ and it is surjective since $R_i$ contains all $K$-basis elements $a_j$ of $K'$). Moreover, since $R_i$ are finite type and integral they are really finite. Then \cite[Theorem 7.10]{M1} implies that each $R_i$ is actually finitely presented as an $R$-module, and therefore also finitely presented as an $R$-algebra by \cite[IV\textsubscript{1}, 1.4.7]{EGA}. Now since $R'$ has an action of $G$ by $R$-algebra automorphisms, we can use Lemma~\ref{spread-group} (it is easy to see that all conditions of this Lemma are satisfied for a filtered system $\{R_i\}_{i\in I}$) to say that we can choose a subsystem $R_j$ such that each $R_j$ is $G$-stable subalgebra of $R$, and $R=\colim_{j\in J} R_j$. So a standard approximation argument allows to spread out the $R'$-stable $U$-modification $X' \to X_{R'}$ to an $R_j$-stable $U$-modification $g_j\colon X'_j \to X_{R_j}$.  \smallskip

Now we use \cite[Proposition 1.4.4.1(ii), (iii)]{CCO} to ``glue'' $\Spec R_j$ and $\Spec A'$ ``along'' $\Spec K'$ to achieve a faithfully flat, finite
, finitely presented $\O$-algebra $\O'$ (explicitly, $\O'$ is equal to $A'\times_{K'} R_j$) with an $\O$-algebra action of $G$. Let $S'=\Spec \O'$ and we claim that $X_{S'}$ admits an $S'$-stable $U$-modification. \smallskip

We know that $X_{R_j}$ has a $R_j$-stable $U$-modification $g_j\colon  X'_j \to X_{R_j}$, and $X_U$ is a relative semi-stable $U$-curve by the assumption. Therefore, $X_{K'}$ is also a relative semi-stable $K'$-curve, and $(X'_j)_U$ is canonically isomorphic to $X_{K'}$. Now we can use \cite[Proposition 1.4.4.1(iii)]{CCO} to ``glue'' $X'_j$ and $X_{A'}$ to an $S'$-stable $U$-modification $X'' \to X_{\O'}$\footnote{Alternatively, one can use \cite[Theorem 1.1]{T1} and \cite[Lemma 2.3.1 (ii)]{T2}.}. The last thing to observe is that the $U$-restriction of $\Spec \O' \to \Spec \O$ is a $G$-torsor because it coincides (by construction) with the $G$-torsor $\Spec A' \to \Spec A$.
\end{proof}

\section{Stable Modification Theorem for Curves Over General Bases}

Let us set up some notations for this section. 

\begin{setup}\label{setup5} We fix a quasi-compact quasi-separated scheme $S$, a stable under generalization quasi-compact subset $U$ that admits a scheme structure such that $U \to S$ is schematically dominant, and a relative $S$-curve  $f\colon X \to S$ (in the sense of Definition~\ref{def-curve}).
\end{setup}

\begin{rmk}\label{sch-dense-good} Any quasi-compact open subscheme $U'$ containing the underlying topological space of $U$ is schematically dense in $S$. Indeed, the natural map $U \to S$ clearly factors through the inclusion $U' \to S$, so the inclusion $U' \to S$ must be schematically dominant as $U \to S$ is.  
\end{rmk}

Before we start the discussion of the Stable Modification Theorem over $S$, we need to recall some facts about relative Riemann-Zariski spaces from \cite{T1} and \cite{T2}. 
\smallskip 

We consider the system of all $U$-modifications of $S$. This is a filtered system because any two $U$-modification $S_i$ and $S_j$ can be dominated by a $U$-modification obtained via taking a schematic closure of $U$ in the product $S_i \times_S S_j$. So we take a limit over this filtered system in the category of locally ringed spaces; we call this limit as a {\it relative Riemann-Zariski space} $\RZ_U(S)$. We will usually denote it just by $\S$ (it will not cause any confusion in what follows). It has a natural ``structure sheaf'' obtained as 
\[
\O_{\S}=\colim \pi_i^{-1}\O_{S_i}, \text{ where } \pi_i \text{ is the natural projection } \pi_i\colon  \S \to S_i.
\]

Note that although it is proven in \cite[IV\textsubscript{3}, \textsection 8]{EGA} that a filtered limit of schemes with affine transition maps exists as a scheme, it is extremely false without the affineness assumption. For example, one can show that the functor $\prod_{i=1}^{\infty} \mathbf P^1$ is not even an algebraic space. Thus there is no reason for $\S$ to be a scheme, but is rather just some abstract locally ringed space. It is difficult to work with this space, unless one proves results about the structure of this mysterious object. We summarize some of them below.

\begin{lemma}\label{temkin-RZ}\cite[Proposition 3.3.1]{T1}\cite[Proposition 2.2.1]{T2} Fix $U,S, \S$ as in the Setup~\ref{setup5}. Then
\begin{enumerate}
\item The underlying topological space of $\S$ is quasi-compact, quasi-separated, and surjects onto $S'$ for any $U$-modification $S' \to S$.
\item For each point $\mathfrak{s}\in \S$ the local ring $\O_{\S, \mathfrak{s}}$ has a natural structure of a semi-valuation ring with an isomorphism $\Spec A \cong U\times_S \Spec \O_{\S,\mfs}$, where $A$ is the ring of semi-fractions of $\O_{\S,\mfs}$.
\end{enumerate}
\end{lemma}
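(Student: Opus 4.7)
The plan is to follow Temkin's approach in \cite[Proposition 3.3.1]{T1} and \cite[Proposition 2.2.1]{T2}, treating the topological assertion (1) and the local algebraic assertion (2) separately.

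For (1), I would first note that each $U$-modification $S_i \to S$ is proper over the qcqs scheme $S$, hence $S_i$ is itself qcqs and $|S_i|$ is a spectral space. The system of $U$-modifications is cofiltered: any two are dominated by the schematic closure of $U$ in their fiber product. Since a cofiltered limit of spectral spaces along spectral transition maps is spectral, $|\S|$ is qcqs. For the surjectivity onto a fixed $U$-modification $S' \to S$, I would restrict to the cofinal subsystem of those $U$-modifications factoring through $S'$; each transition map $S_j \to S_i$ in this subsystem is proper with closed image containing $U$. Since $U$ is schematically dense in each $S_i$ (using Remark~\ref{sch-dense-good} together with $U$-admissibility of $S_i$), the image is all of $S_i$. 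A cofiltered limit of non-empty quasi-compact spaces with surjective continuous transition maps surjects onto every entry, which yields surjectivity of $\S \to S'$.

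For (2), set $s_i = \pi_i(\mfs)$ and $A_i = \O_{S_i, s_i}$, so that $\O_{\S, \mfs} = \colim A_i$ is local with maximal ideal $\colim \m_{A_i}$. To produce the semi-valuation structure I would let $\m \subset \O_{\S, \mfs}$ be the colimit of the kernels of the localizations $A_i \to (A_i)_U$ (the germs supported off $U$), and identify the ring of semi-fractions with $A \coloneqq \colim (A_i)_U$. The isomorphism $\Spec A \cong U \times_S \Spec \O_{\S, \mfs}$ then follows because filtered colimits commute both with stalks and with pullback along the quasi-compact immersion $U \hookrightarrow S_i$ (after replacing the system by a cofinal subsystem where $U$ is open in each $S_i$, which can be arranged by Lemma~\ref{open-1}-type reasoning applied to the images). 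It remains to show that $R \coloneqq \O_{\S, \mfs}/\m$ is a valuation ring in its fraction field $A/\m A$. Given a nonzero $x \in A/\m A$, I would lift to $a/b \in (A_i)_U$ and form the blow-up $\widetilde{S}_i$ of $S_i$ along the schematic closure of the ideal $(a,b)$ taken away from $U$; this center is disjoint from $U$, so $\widetilde{S}_i \to S$ is again a $U$-modification. By (1), $\mfs$ lifts to a point of $\widetilde{S}_i$, at which the ideal $(a,b)$ becomes principal, giving either $a/b$ or $b/a$ in the local ring and hence the valuative dichotomy in $R$.

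The main obstacle is the valuation-theoretic argument in (2): one must check that the blow-up really belongs to the cofiltered system (i.e., that its center can always be chosen to miss $U$) and that the lift produced by (1) yields a coherent element of the colimit that honestly represents $x$ or $x^{-1}$. This is the technical core of \cite[\S 3]{T1} and requires the universal property of blow-ups together with careful bookkeeping between the $U$-part and the $S \setminus U$-part of each $A_i$; everything else—locality of the colimit, identification of $\m$ as a prime, and the Cartesian diagram at the start of Section~4—is formal.
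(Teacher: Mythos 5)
Your proposal sets out to reprove Temkin's results from scratch, whereas the paper does something much more modest: it simply cites \cite[Proposition 3.3.1]{T1} and \cite[Proposition 2.2.1]{T2} and only verifies the one hypothesis those results need, namely that $U \to S$ is \emph{decomposable} (a composition of an affine morphism followed by a proper one) — either by invoking \cite[Theorem 1.1.3]{T2} in general, or, when $U$ is open, by blowing up a finite-type ideal supported on $S\setminus U$ so that $U$ becomes an affine open in the blow-up. Your outline of the topological part (1) is fine: cofiltered limits of spectral spaces are spectral, and the surjectivity argument via properness, schematic (hence topological) density of $U$, and compactness of the fibers is the standard one.

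The genuine gap is in part (2), and it sits exactly where the paper's actual content lies. You claim the identification $\Spec A \cong U\times_S \Spec\O_{\S,\mfs}$ follows "after replacing the system by a cofinal subsystem where $U$ is open in each $S_i$, which can be arranged by Lemma~\ref{open-1}-type reasoning applied to the images." For general $U$ as in Setup~\ref{setup5} (e.g.\ the generic point of an integral scheme), $U$ is not open in the $S_i$ and cannot be made so; Lemma~\ref{open-1} concerns $\Spec K \subset \Spec R$ for a finite-rank valuation ring and gives nothing here. The correct substitute — and the point the paper takes care to check — is decomposability of $U \to S$, which lets one commute the filtered colimit with the (affine) base change to $U$ and identify $A$ with $\O_{U,y'}$. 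Separately, you yourself flag that the valuation-theoretic dichotomy for $R = \O_{\S,\mfs}/\m$ (that the blow-up of $(a,b)$ has center missing $U$ and hence stays in the system, and that the lifted point honestly principalizes the ideal in the colimit) is left unverified; that is the technical core of \cite[\S 3]{T1}, so as written the argument for (2) is a plan rather than a proof. Either complete that argument or, as the paper does, cite Temkin and confine yourself to checking decomposability.
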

\begin{proof}
We set up dictionary between our notations and the notations in \cite{T2}. What we call $U$ (resp. $S$, resp. $\S$) is denoted by $Y$ (resp. $X$, resp $\X$) in \cite{T2}. Then the first claim is shown in \cite[Prop. 3.3.1]{T1} and \cite[Prop. 2.2.1]{T2}. And the second claim is also implicitly obtained in {\it the proof} of \cite[Prop. 2.2.1]{T2} (and \cite[Prop. 2.2.2]{T2}). \\

We note that the proof of \cite[Proposition 2.2.1]{T2} works under the assumption that $f\colon Y \to X$ is decomposable (a composition of an affine morphism followed by a proper one). We can use \cite[Theorem 1.1.3]{T2} to see that our morphism $U \to S$ is automatically decomposable. But in the case of $U \to S$ being an open immersion (the only case we will really use in this paper\footnote{Even though we allow more general $U$ in the formulation of Theorem~\ref{stable-modification-general}, the first step there is reduction to the case of an open $U$.}) it is much easier to show. Namely, we consider the complement $Z\coloneqq S\setminus U$, a closed subset of $S$. Since $S$ is quasi-compact quasi-separated and $U \to S$ is quasi-compact, \cite[Lemma 1.3]{C2007} says that there exists a finite type quasi-coherent ideal sheaf $\mathcal I$ with $\operatorname{V}(\mathcal I)=Z$. Then we define $S'\coloneqq {\bf{Proj}}_S \bigoplus_{i\in \N} \mathcal I^n$ to be the blow-up of $S$ along $\mathcal I$. Since $\mathcal I$ is finite type we conclude that $p\colon S' \to S$ is projective by \cite[II, 5.5.1]{EGA} (in particular, it is proper) . Since $\mathcal I|_U \cong \O_U$ we see that the $U$-restriction of $p\colon S' \to S$ is an isomorphism. Therefore, we have a commutative diagram: 
\[
\begin{tikzcd}
& S' \arrow{d}{p} \\
U \arrow{ur}{j'} \arrow{r}{j} & S
\end{tikzcd}
\]
\noindent where $p$ is proper and $j'$ is an affine open immersion since its complement $S' \setminus j'(U)$ is a Cartier divisor.
\end{proof}

This Lemma explains the significance of semi-valuation rings for the purposes of proving the Stable Modification Theorem:  we use various approximation techniques to reduce the case of a general $S$ to the case of curves over local rings of a relative Riemann-Zariski spaces $\S$, which in turn are semi-valuation rings. 

We will freely use the notion of ``$U$-admissible blow-up'' from \cite[Section 5.1]{RG}. We now provide the reader with the definition and main properties of this construction

\begin{defn}\label{adm-blowup} For an open immersion $U\to S$, a morphism $p\colon S' \to S$ is called a {\it $U$-admissible blow-up} if $p$ is equal to the morphism ${\bf{Proj}}_S \bigoplus_n \mathcal I^n \to S$, where $\mathcal I$ is a finitely generated, quasi-coherent sheaf of ideals on $S$ such that $\mathrm{V}(\mathcal{I})\cap U = \varnothing$. In particular, $p$ is of finite type. 
\end{defn}

We summarize the main properties of such blow-ups in the following lemma:

\begin{lemma}\label{cofinal} Let $S$ be a quasi-compact quasi-separated scheme, and let $U\subset S$ be a schematically dense quasi-compact open subscheme. Then any $U$-admissible blow-up $p\colon S' \to S$ is a projective admissible $U$-modification. Moreover, $U$-admissible blow-ups are cofinal among all $U$-modifications of $S$.
\end{lemma}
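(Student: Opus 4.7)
The plan is to treat the two assertions of the lemma separately: first show that a $U$-admissible blow-up is a projective $U$-modification, then invoke the Raynaud--Gruson flattening theorem to deduce cofinality.

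For the first assertion, fix the finite type quasi-coherent ideal $\mathcal I\subset \O_S$ with $V(\mathcal I)\cap U=\varnothing$ defining $p\colon S'=\mathbf{Proj}_S\bigoplus_{n\geq 0}\mathcal I^n\to S$. Projectivity of $p$ is immediate from \cite[II, 5.5.1]{EGA} since $\mathcal I$ is finitely generated. The condition $V(\mathcal I)\cap U=\varnothing$ forces $\mathcal I|_U=\O_U$, so $p_U\colon S'_U\to U$ is the blow-up along the unit ideal and hence an isomorphism; in particular, $U$ embeds as a quasi-compact open in $S'$. It remains to check schematic density of $S'_U$ in $S'$. This is local on $S$: on an affine chart $\Spec A\subset S$ with $\mathcal I|_{\Spec A}=(a_0,\dots,a_n)$, the blow-up is covered by the standard affine charts $D_+(a_i)=\Spec B_i$, where $B_i$ is (by the Proj construction of the Rees algebra) the quotient of $\bigoplus_n I^n$ by its $a_i$-torsion in degree zero; in particular $a_i$ is a nonzerodivisor in $B_i$. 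Hence the localization map $B_i\hookrightarrow B_i[a_i^{-1}]$ is injective, which is exactly the statement that $S'_U\cap D_+(a_i)$ is schematically dense in $D_+(a_i)$. Since schematic density is local on the target, this finishes the first part.

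For the cofinality assertion, let $f\colon X\to S$ be an arbitrary $U$-modification. Using standard qcqs approximation techniques from \cite[IV\textsubscript{3}]{EGA}, I reduce to the case where $f$ is of finite presentation (any proper qcqs $U$-modification is dominated by a finitely presented one, compatibly with the isomorphism over $U$). I then apply the Raynaud--Gruson flattening theorem to this finitely presented proper morphism $f$: there exists a $U$-admissible blow-up $q\colon S'\to S$ such that the strict transform $f'\colon X'\to S'$ of $f$ along $q$ becomes flat. By construction $f'$ is still proper and of finite presentation, and $X'_U\cong X_U\cong U$ because both $f_U$ and $q_U$ are isomorphisms.

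To conclude, it is enough to show that $f'\colon X'\to S'$ is an isomorphism, since then the composition $S'\xrightarrow{(f')^{-1}}X'\to X$ provides the desired factorization of $q\colon S'\to S$ through $X\to S$. The morphism $f'$ is proper, flat, finitely presented, and an isomorphism over $U\subset S'$. By proper flat finite presentation, the fibre dimension of $f'$ is locally constant on $S'$; it vanishes on $U$, and since $U$ is schematically dense in $S'$ by Part~1, it meets every connected component of $S'$, so the fibre dimension vanishes identically. Hence $f'$ is proper and quasi-finite, i.e.\ finite; by the same connectedness argument applied to the locally constant degree, this degree is identically $1$, so $f'$ is an isomorphism. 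The main technical input is the Raynaud--Gruson flattening theorem; the rest is bookkeeping around schematic density and the Proj construction of a blow-up.
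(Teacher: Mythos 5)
Your argument for the first assertion and for the finitely presented case of cofinality is essentially the standard one, and it is worth noting that the paper itself does no more than cite \cite[II, 5.5.1]{EGA} for projectivity and \cite[Corollaire 5.7.12]{RG} (or the Stacks Project) for cofinality; what you have written out is, in effect, the proof of the cited reference. The finitely presented case is fine: flattening by a $U$-admissible blow-up, local constancy of the fibre dimension for a flat, proper, finitely presented morphism, schematic (hence topological) density of the preimage of $U$ forcing the fibre dimension and then the degree of the resulting finite locally free morphism to be identically $0$ and $1$ respectively. Two small imprecisions there: the identification of $S'_U\cap D_+(a_i)$ with $\Spec B_i[a_i^{-1}]$ is only correct when $U=S\setminus V(\mathcal I)$; in general $U$ may be strictly smaller, and you need the extra (easy) observation that $U$ is schematically dense in $S\setminus V(\mathcal I)$ together with transitivity of schematic density. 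Likewise ``meets every connected component'' should be ``meets every nonempty open level set of the locally constant function'', since connected components need not be open.

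The genuine gap is the reduction to finite presentation. By Definition~\ref{U-mod} and the remark following it, a $U$-modification is \emph{not} assumed finitely presented, so this reduction carries real content, and your parenthetical justification does not work as stated: the standard approximation of a proper morphism over a qcqs base writes $X=\lim_\mu X_\mu$ with $X_\mu\to S$ proper, finitely presented and with transition maps closed immersions, which produces morphisms $X\to X_\mu$ --- the opposite direction to ``$X$ is dominated by a finitely presented $Y$'', and I do not see how to produce a finitely presented $U$-modification mapping \emph{to} $X$ directly. The repair goes the other way: first check that $X_{\mu,U}\to U$ is an isomorphism for $\mu$ large (the section $U\cong X_U\to X_{\mu,U}$ is a closed immersion of finite presentation, so its ideal is finitely generated and dies at a finite stage of the limit); then apply the finitely presented case to $X_\mu$ to obtain a $U$-admissible blow-up $S'\to S$ with a factorization $S'\to X_\mu$; finally observe that this map factors through the closed subscheme $X\subset X_\mu$, because $S'_U$ is schematically dense in $S'$ (your Part 1) and maps into $X_U$, so the image of $\sigma^*\mathcal I_{X}\to\O_{S'}$ vanishes. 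Without some argument of this kind your proof only establishes cofinality among finitely presented $U$-modifications, which is weaker than the statement of the lemma.
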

\begin{proof}
Projectivity follows the direct description of $p$ as the Proj of a finitely generated quasi-coherent sheaf of ideals, and \cite[II, 5.5.1]{EGA}. The fact that $p$ is an isomorphism can be easily seen from the fact that $\mathcal I|_U =\O_U$. Finally, admissible blow-ups are cofinal due to \cite[Corollaire 5.7.12]{RG} (or \cite[\href{https://stacks.math.columbia.edu/tag/081T}{Tag 081T}]{stacks-project} for a proof that does not mention algebraic spaces).
\end{proof}

\begin{thm}\label{stable-modification-general} Let $S, U$ be as in the Setup~\ref{setup5}, and let $f\colon X \to S$ be an $S$-curve that is semi-stable over $U$. Then there exist 
\begin{itemize}\itemsep0.5em
	\item A projective $U$-modification $h\colon S' \to S$ with a finite open Zariski covering $\cup_{i=1}^n V'_i = S'$ by quasi-compact opens $V'_i \subset S'$.
	\item A finite group $G_i$ and a finite, finitely presented, faithfully flat, and $U$-\'etale $G_i$-invariant morphism $t_i\colon W'_i \to V'_i$ for each $i\leq n$. In particular, the morphism $t\colon W'=\sqcup_{i=1}^n W'_i \to S$ is a $U$-\'etale covering. 
\end{itemize}

satisfying the following properties:

\begin{enumerate}
	\item The induced morphisms $t_{i,U}\colon W'_{i, U} \to V'_{i, U}$ are $G_i$-torsors.
	\item Each $X_{W'_i}$ admits a $W'_i$-stable $U$-modification $g_i\colon X'_i \to X_{W'_i}$.
\end{enumerate}
\end{thm}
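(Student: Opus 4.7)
The plan is to apply the semi-valuation base case Theorem~\ref{stable-modification-semivaluation} pointwise on the relative Riemann-Zariski space $\S = \RZ_U(S)$, and then use its quasi-compactness together with standard limit arguments to descend the resulting data to actual $U$-modifications of $S$. First we reduce to the case of a quasi-compact schematically dense open $U \subset S$ (as the paper's footnote indicates, the general case follows from this by a decomposability argument). We then form $\S = \RZ_U(S)$ and invoke Lemma~\ref{temkin-RZ}: $\S$ is quasi-compact, surjects onto every $U$-modification $S_i \to S$, and at each point $\mfs \in \S$ the local ring $\O_{\S,\mfs}$ is a semi-valuation ring with ring of semi-fractions $A_\mfs$ satisfying $\Spec A_\mfs = U \times_S \Spec \O_{\S,\mfs}$.

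\textbf{Pointwise application.} Fix $\mfs \in \S$. Since $X_U \to U$ is semi-stable, the pulled-back curve $X \times_S \Spec \O_{\S,\mfs}$ is semi-stable over $\Spec A_\mfs$. After reducing to the finite-rank case, by writing the underlying valuation ring of $\O_{\S,\mfs}$ as a filtered colimit of finite-rank sub-valuation rings (using \cite[Cor.~2.1.3]{T0}) and spreading $X$ out accordingly, Theorem~\ref{stable-modification-semivaluation} produces a finite group $G_\mfs$ and a finite, faithfully flat, finitely presented, $G_\mfs$-invariant $\O_{\S,\mfs}$-algebra $\O'_\mfs$ whose $A_\mfs$-restriction is a $G_\mfs$-torsor, together with an $\Spec \O'_\mfs$-stable $U$-modification of the pulled-back curve $X \times_S \Spec \O'_\mfs$.

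\textbf{Spreading out and gluing.} Since $\O_{\S,\mfs} = \colim_i \O_{S_i,\pi_i(\mfs)}$ with $S_i$ ranging over $U$-modifications of $S$, a standard application of \cite[IV\textsubscript{3}, \textsection 8]{EGA} lets us simultaneously spread out the algebra $\O'_\mfs$ with its $G_\mfs$-action, the $G_\mfs$-torsor structure on its $U$-part, the $U$-\'etaleness, and the stable $U$-modification of the curve. We obtain a quasi-compact open neighborhood $T_\mfs$ of $\pi_{i(\mfs)}(\mfs)$ in some $U$-modification $S_{i(\mfs)}$, a finite, faithfully flat, finitely presented, $G_\mfs$-invariant, $U$-\'etale morphism $\widetilde W_\mfs \to T_\mfs$ whose $U$-restriction is a $G_\mfs$-torsor, and a $\widetilde W_\mfs$-stable $U$-modification of $X_{\widetilde W_\mfs}$. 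Quasi-compactness of $\S$ yields finitely many such $T_{\mfs_1}, \dots, T_{\mfs_n}$ whose preimages cover $\S$. By cofinality of $U$-admissible blow-ups (Lemma~\ref{cofinal}), choose a single projective $U$-modification $h\colon S' \to S$ dominating each $S_{i(\mfs_j)}$; let $V'_j \subset S'$ be the pullback of $T_{\mfs_j}$, and let $W'_j \to V'_j$ be the pullback of $\widetilde W_{\mfs_j}$. The $V'_j$ cover $S'$ by surjectivity of $\S \to S'$ (Lemma~\ref{temkin-RZ}), and the $W'_j$ inherit the required properties.

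\textbf{Main obstacle.} The essential difficulty is the compatible spreading-out step: one must jointly descend the finite $G$-equivariant algebra, its $U$-torsor structure, its $U$-\'etaleness, and the stable $U$-modification of the curve to a common quasi-compact open neighborhood in some $U$-modification of $S$, in a way that respects all compatibilities and $G$-equivariances. This rests entirely on the finite-presentation character of every object involved and on careful bookkeeping with cofiltered limits of schemes having affine transition maps. A secondary nuisance is the preliminary reduction from an arbitrary semi-valuation ring to a finite-rank one, which combines the filtered-colimit decomposition of valuation rings with a further finite-presentation descent of the curve $X$.
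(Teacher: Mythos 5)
Your overall architecture is exactly the paper's: apply the semi-valuation case pointwise on $\S=\RZ_U(S)$, spread out first over the local rings $\O_{S_i,s_i}$ of $U$-modifications, then over affine opens, then use quasi-compactness of $\S$ and surjectivity of $\S\to S'$ to get a finite cover, and finally Lemma~\ref{cofinal} for projectivity of $S'\to S$. Steps 2--4 of your sketch match the paper step for step.

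The one place you diverge is the finite-rank reduction, and as written it has a gap. You propose to write the valuation ring $R=\O_{\S,\mfs}/\m$ as a filtered colimit of finite-rank valuation subrings $R_i$ (via \cite[Corollary 2.1.3]{T0}) "and spread $X$ out accordingly," keeping the rest of the semi-valuation structure fixed. But the finite-rank subrings $R_i$ have fraction fields $K_i=\Frac(R_i)$ that are in general \emph{proper} subfields of $K=A/\m A$, so $p^{-1}(R_i)\subset\O_{\S,\mfs}$ is a semi-valuation ring whose ring of semi-fractions is $p^{-1}(K_i)$, not $A$. To apply Theorem~\ref{stable-modification-semivaluation} at a finite level you must therefore shrink the local ring $A$ along with $R$, descend the curve to the smaller semi-valuation ring, descend its semi-stability over the shrinking $\Spec A_i$, and then check that the output (the finite flat $G$-algebra, the torsor structure, and the stable modification) base-changes back to $\O_{\S,\mfs}$ correctly. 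None of this is addressed by your phrase "spreading $X$ out accordingly," and it is not the "secondary nuisance" you describe: it is the content of the reduction. The paper avoids the issue entirely by a global noetherian approximation at the very start (Step 0): replace $(S,U,X)$ by a base change from a finite type $\Z$-scheme. After that, the proof of Lemma~\ref{temkin-RZ} identifies the ring of semi-fractions of $\O_{\S,\mfs}$ with $\O_{U,y'}$, which is essentially of finite type over $\Z$, so $K$ has finite transcendence degree over the prime field and Abhyankar's inequality \cite[Lemma 2.1.2]{T1} forces every $\O_{\S,\mfs}$ to have finite rank automatically. You should either adopt that initial reduction or supply the missing bookkeeping for the shrinking fraction fields; as it stands, the finite-rank hypothesis of Theorem~\ref{stable-modification-semivaluation} is not verified.
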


\begin{rmk}\label{stable-modification-general-U-covering} Note that  the morphism \[t\colon \bigsqcup_{i=1}^n W'_i \to S\] from Theorem~\ref{stable-modification-general} is a quasi-projective $U$-\'etale covering. Indeed, it is a composition of the faithfully flat quasi-projective $U$-\'etale morphism \[\bigsqcup W'_{i} \to S'\] and the quasi-projective $U$-modification \[S' \to S.\] Both of those are $U$-\'etale coverings by Lemma~\ref{example-U-covering}, so their composition is also a quasi-projective $U$-\'etale covering.

Moreover, we recall that \cite[Theorem 1.1]{T1} shows that the $W'_i$-stable $U$-modifications $g_i\colon X'_i \to X_{W'_i}$ are also projective.
\end{rmk}

\begin{proof}
{\it Step 0. Approximation to the case of  finite type  $\Z$-schemes:} Note that \cite[Lemma 5.1]{T1} guarantees that any $x\in U$ has an open quasi-compact neighborhood $U_x\subset S$ such that $X$ has semi-stable fibers over $U_x$. Since $U$ is quasi-compact we can cover $U$ by finitely many of those to find some quasi-compact schematically dense open subscheme $U'\subset S$ (see Remark~\ref{sch-dense-good}) such that $X$ has semi-stable fibers over $U'$. We can replace $U$ by $U'$ without loss of generality (any $U'$-modification is also a $U$-modification). So, from now on, we may (and do) assume that $U$ is open in $S$. 

Now a standard approximation argument ensures that the triple $(S, U, X)$ comes a base change from some finite type $\Z$-scheme $S'$, an open schematically dense subscheme $U'$, and a relative curve $f'\colon X' \to S'$. Therefore, without loss of generality we can assume that $S$ is finite type over $\Spec \Z$ and $U$ is a schematically dense open subscheme of $S$. \\

{\it Step 1. We solve the problem over local rings of $\O_{\S}$}: Lemma~\ref{temkin-RZ} guarantees that all local rings of $\O_{\S}$ are semi-valuation rings. By Theorem~\ref{stable-modification-semivaluation} the only thing that we need to check is that under the assumption that $S$ is finite type over $\Z$, all these semi-valuation rings are of finite rank. In order to do this we need to observe that the proof of \cite[Proposition 2.2.1]{T2} actually shows a bit more than stated there explicitly. Namely, choose an arbitrary point $\mathfrak{s}\in \S$, then Lemma~\ref{temkin-RZ} guarantees that there is a natural structure of a semi-valuation ring on a ring $\O\coloneqq \O_{\S,\mfs}$. Denote the kernel of corresponding valuation by $\m$ (and let $y$ be the corresponding point of $\Spec \O$), and the corresponding ring semi-fractions $\O_{\m}$ by $A$. By the construction of relative Riemann-Zariski spaces we have a natural morphism of schemes $\pi_{\mfs}\colon \Spec \O \to S$; consider the image $y'\coloneqq \pi_{\mfs}(y)\in U$. Then the proof of \cite[Proposition 2.2.1]{T2} really proves that $A\cong \O_{U, y'}$. In particular, this is a ring essentially of finite type over $\Z$. Thus $K\coloneqq A/\m A$ is a field of finite transcendence degree over the prime field $\mathbf F$. So we can use Abhyankar's inequality \cite[Lemma 2.1.2]{T1} to conclude that $R\subset K$ must be of finite rank. 

Use Theorem~\ref{stable-modification-semivaluation} to get a finite group $G_{\mfs}$ and a faithfully flat finite finitely presented $G_{\mfs}$-invariant morphism $h_{\mfs}\colon \Spec \O'_{\mfs} \to \Spec \O_{\S, \mfs}$ such that $X_{\O'_{\mfs}}$ admits a $\O'_{\mfs}$-stable $U$-modification $g'_{\mfs}\colon X'_{\mfs} \to X_{\O'_{\mfs}}$ and $(h_{\mfs})_U\colon (\Spec \O'_{\mfs})_U \to (\Spec \O_{\S, \mfs})_U$ becomes a $G_{\mfs}$-torsor. \\

{\it Step 2. We spread out this stable modification over a local ring of some $U$-modification of $S$:} Let $\pi_i\colon \S \to S_i$ be the natural projection from a relative Riemann-Zariski space of $S$ onto some $U$-modification $S_i$ and denote by $s_i=\pi_i(\mfs)$ the image of $\mfs$ in $S_i$. Then since filtered colimits commute with each other we see that 
\[
\O_{\S, \mfs}=\colim_{i} \O_{S_i, s_i}.
\]
We have a faithfully flat finite finitely presented $G_{\mfs}$-invariant morphism $\Spec \O'_{\mfs} \to \Spec \O_{\S, \mfs}$ which is a $G_{\mfs}$-torsor over $U$. Therefore, we can use the standard approximation techniques to 
find a faithfully flat finite finitely presented $G_{\mfs}$-invariant morphism $\Spec \O'_{s_i} \to \Spec \O_{S_i, s_i}$ which becomes a $G_{\mfs}$-torsor over $U$.
Moreover, we can use standard approximation techniques to get that (possibly after enlarging $i$) we can assume that $X_{\O'_{s_i}}$ admits a $\O'_{s_i}$-stable $U$-modification. 

Let us summarize what we have achieved so far. For each point $\mfs \in \S$ there exists a big enough $i$ such that for $s_i\coloneqq \pi_i(s)$ there is a $U$-modification $S_i \to S$ and a faithfully flat finite finitely presented $G_{\mfs}$-invariant (for some finite group $G_{\mfs}$) morphism $\Spec \O'_{s_i} \to \Spec \O_{S_i, s_i}$ such that it induces a $G_{\mfs}$-torsor over $U$ and $X_{\O'_{s_i}}$ admits a $\O'_{s_i}$-stable $U$-modification. \\

{\it Step 3. We spread out this stable modification over some Zariski open subspace of the point $s_i$:} Basically, we repeat the same argument as above, but now we use that \[ \O_{S_i, s_i}=\colim_{s_i \in V \text{-affine}} \O_{S_i}(V)\] This means that we can again run the same approximation argument to find some open affine $V_{i} \subset S_i$ that contains $s_i$ and a faithfully flat finite finitely presented $G_{\mfs}$-invariant morphism $W_i \to V_i$ such that it induces $G_{\mfs}$-torsor over $U$ and $X_{W_i}$ admits a $W_i$-stable $U$-modification. Note that by the very construction $V_{\mfs}\coloneqq \pi_i^{-1}(V_i)$ is an open subset of $\S$ containing $\mfs$. \\

{\it Step 4. We use quasi-compactness of $\S$ to finish the argument:} Since $\mfs$ was an arbitrary point of $\S$, the previous construction provides us with a covering of $\S$ by open subsets $V_{\mfs}$. Lemma~\ref{temkin-RZ} says that $\S$ has quasi-compact underlying topological space. Therefore we can choose a finite covering of $\S$ by means of such opens $V_{\mfs}$. Denote the corresponding $\mfs$'s just by $(\mfs_j)_{j\in J}$ for some finite set $J$. Then after changing some notation, the previous discussion gives us a set of finite groups $G_j$, $U$-modifications $h_j\colon S_j \to S$, open affine subsets $V_j \subset S_j$ containing a point $\pi_j(\mfs_j)$, and finite, finitely presented and finitely presented $G_j$-invariant morphisms $W_j \to V_j$ which become $G_j$-torsors over $U$, with the condition that $X_{W_j}$ admits a $W_j$-stable $U$-modification for all $j$. 
\smallskip 

Recall that a system of all $U$-modifications is filtered, so we can find an $U$-modification $S' \to S$ that dominates all of the $S_j$. Then the data above induces open quasi-compact subsets \[V'_j\coloneqq V_j\times_{S_j}S' \subset S'\] and finite, finitely presented and faithfully flat $G_j$-invariants morphisms \[W'_j\coloneqq W_j\times_{S_j} S' \to V'_j,\] such that they induce $G_j$-torsors over $U$ and $X_{W'_j}$ admits a $W'_j$-stable $U$-modification for all $j$.

We claim that open quasi-compact $V'_j$ cover $S'$. To show this we note that under the natural projections $\pi_j\colon \S \to S_j$ and $\pi\colon \S \to S'$ we have an equality 
\[
V_{s_j}\coloneqq \pi_j^{-1}(V_j)=\pi^{-1}(V'_j).
\]
This means that $\pi^{-1}(V'_j)$ form a covering $\S$. Therefore, we conclude that the $V'_j$ cover $S'$ because the projection $\S \to S'$ is surjective by Lemma~\ref{temkin-RZ}.

The fact that we can choose $h\colon S' \to S$ to be a projective $U$-modification follows from Lemma~\ref{cofinal}.
\end{proof}


\section{Schematic Version of Local Uniformization}

In what follows, we will freely use the following notations

\begin{defn}\label{G-inv} We say that a morphism of $S$-schemes $f\colon X \to Y$ is {\it $G$-invariant} for some group $G$, if $X$ admits an action of $G$ by $S$-automorphisms, and $f\circ g=f$ for all $g\in G$.
\end{defn}

Whenever we say that a morphism is $G$-invariant, we implicitly assume that there is a given action of $G$ on $X$ over $S$.

\begin{defn}\label{defn-U-torsor} A morphism $f\colon  X\to Y$ is called a {\it $G$-torsor over an open subset $V \subset Y$}, if $f$ is a $G$-invariant morphism that factors through $V$, and 
\[
f_V\colon X \to V
\]
is a $G$-torsor over $V$ (i.e. it is flat finitely presented and the natural action morphism $X\times G \to X\times_V X$ is an isomorphism). In particular, $V$ must be equal to the image $f(X)\subset Y$.
\end{defn}

\begin{lemma}\label{U-torsor} Let $G, H$ be two finite groups, and $f\colon X' \to X$ be a $G$-invariant map of schemes that is a $G$-torsor over its open image $V\coloneqq f(X')$ (in the sense of Definition~\ref{defn-U-torsor}). Let $g\colon X' \to Y$ be any $G$-invariant morphism, and let $h\colon Z \to Y$ be an $H$-invariant morphism that is an $H$-torsor over its open image $W\coloneqq h(Z)\subset Y$. Consider the base change morphism $h'\colon X''\coloneqq Z\times_Y X' \to X'$ and the composition $t\colon X'' \to X$ in the commutative diagram: 
\[
\begin{tikzcd}
X & X' \arrow{d}{g} \arrow{l}{f} & X'' \arrow{d} \arrow{l}{h'}\\
& Y  & Z \arrow{l}{h}
\end{tikzcd}
\]
The morphism $t$ is $G\times H$-invariant, and it is a $G\times H$-torsor over the open $f(g^{-1}(W))$.
\end{lemma}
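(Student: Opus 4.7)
The plan is to present $X''$ in a form where the torsor structure is transparent, then verify the torsor axiom by a direct fiber-product computation.

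First I would collapse the fiber product. Since $h\colon Z\to Y$ factors through its open image $W$, one has $X''=Z\times_Y X'=Z\times_W X'_W$ where $X'_W\coloneqq g^{-1}(W)$. Because $g$ is $G$-invariant, $X'_W$ is a $G$-stable open of $X'$; and because $f\colon X'\to V$ is a $G$-torsor (hence fpqc, and so submersive), the image $V'\coloneqq f(g^{-1}(W))$ is open in $V$, the preimage $f^{-1}(V')$ equals $X'_W$, and the restriction $f\colon X'_W\to V'$ is again a $G$-torsor. In particular $t\colon X''\to X$ factors through the open $V'\subseteq X$, as required for the statement.

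Next I would assemble the $G\times H$-action. The $G$-action on $X'$ lifts to $X''$ via the second factor (using $G$-invariance of $g$, so that the structure map $X'\to Y$ is $G$-invariant) and the $H$-action on $Z$ lifts via the first factor; these commute, giving a $G\times H$-action on $X''$. Invariance of $t$ under $G\times H$ is immediate, since $f$ is $G$-invariant and $h'\colon X''\to X'$ is $H$-invariant as the pullback of $h$.

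The main step is to show that the action morphism $\alpha\colon X''\times(G\times H)\to X''\times_{V'}X''$ is an isomorphism. Using associativity of fiber products along the factorization $X''\to X'_W\to V'$,
\[
X''\times_{V'}X''\;=\;X''\times_{X'_W}(X'_W\times_{V'}X'_W)\times_{X'_W}X''.
\]
The middle factor equals $X'_W\times G$ by the $G$-torsor property of $f$; a change of coordinates by the $G$-action (permitted because $X''\to X'_W$ is $G$-equivariant) rewrites the whole expression as $(X''\times_{X'_W}X'')\times G$. Now $X''\times_{X'_W}X''=Z\times_W Z\times_W X'_W\cong X''\times H$ by the $H$-torsor property of $h$. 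Combining yields $X''\times_{V'}X''\cong X''\times G\times H$, and unwinding the identifications shows that this isomorphism is exactly $\alpha$. Faithful flatness and finite presentation of $t\colon X''\to V'$ follow since both $X''\to X'_W$ (a base change of the torsor $h$) and $X'_W\to V'$ are fppf. The only real difficulty is bookkeeping: tracking which projection or action map is used at each invocation of associativity; there is no substantive obstruction.
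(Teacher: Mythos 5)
Your proposal is correct, and the main step is handled by a genuinely different mechanism than the paper's. Both arguments begin with the same reductions (restricting to $W$, identifying $X''$ with $Z\times_W g^{-1}(W)$, noting that $g^{-1}(W)\to f(g^{-1}(W))$ is again a $G$-torsor, and assembling the commuting $G\times H$-action), but they diverge at the verification that the action map $\alpha\colon X''\times(G\times H)\to X''\times_{V'}X''$ is an isomorphism. The paper observes that $t$ is finite \'etale and checks $\alpha$ on geometric fibers, reducing to a counting argument over an algebraically closed field $k$, where $G$-invariance of $g$ forces everything to collapse to a simply transitive action on $\#(G\times H)$ copies of $\Spec k$. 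You instead compute $X''\times_{V'}X''$ globally via associativity of fiber products, substituting $X'_W\times_{V'}X'_W\cong X'_W\times G$ and $Z\times_W Z\cong Z\times H$ and using $G$-equivariance of $h'$ to reindex; tracing the identifications does recover exactly $\alpha$. Your route avoids the passage to geometric points entirely and in particular never uses finiteness of $G$ and $H$ or \'etaleness of the torsors, so it would apply verbatim to fppf torsors under more general group schemes; the paper's route is shorter to write down but leans on the finite \'etale hypothesis. The one place where you are as terse as the paper is the claim $f^{-1}(V')=X'_W$ (equivalently, that the image of a $G$-stable open under a torsor has $G$-stable preimage); this follows from surjectivity of $X'\times_V X'\to |X'|\times_{|V|}|X'|$ together with the torsor isomorphism, and is worth a sentence, but it is not a gap.
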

\begin{proof}
Firstly, we note that the subset $f(g^{-1}(W))$ is indeed open since $f$ is flat and locally finitely presented by definition. Now we observe that since $g\colon X' \to Y$ is $G$-invariant, so $g^{-1}(W)$ is a $G$-stable open subset of $X'$, the map $p\colon g^{-1}(W) \to X$ is a $G$-torsor over $f(g^{-1}(W))$. Therefore we can replace $Y$ with $W$ (so, we also replace $X'$ with $X'\times_Y W$, $Z$ with $Z\times_Y W$, and $X''$ with $X''\times_Y W$) without affecting any assumptions of the lemma. Hence we can assume $h\colon Z \to Y$ is actually an $H$-torsor.

Now observe that we can replace $X$ with $V'\coloneqq f(X')$ without affecting any assumptions of the lemma, so we can assume that $f\colon X' \to X$ is a $G$-torsor.  Then we have a diagram

\[
\begin{tikzcd}
X & X' \arrow{d}{g} \arrow{l}{f} & X'' \arrow{d} \arrow{l}{h'}\\
& Y  & Z \arrow{l}{h},
\end{tikzcd}
\]
where $f$ is a $G$-torsor and $h'$ is an $H$-torsor (as the base change of the $H$-torsor $h\colon Z\to Y$), and we need to show that $t=f\circ h'\colon X'' \to X'$ is a $G\times H$-torsor. We see that there is a natural action of the group $G\times H$ on $X''$ since $g$ and $h$ are, respectively, $G$ and $H$-invariant morphisms. Moreover, the morphism $h'$ is $H$-invariant and $G$-equivariant. Finally, we note that $t$ is a finite \'etale morphism as a composition of two finite \'etale morphism. 

It now suffices to check that the natural action $X''\times (G\times H) \to X'' \times_{X} X''$ is an isomorphism. We can check this on geometric points (because $t$ is finite \'etale). So we can reduce to the situation $X=\Spec k$ for an algebraically closed field $k$. Then $X'$ is a disjoint union of $\Spec k$, and the $G$-invariance of $g\colon X' \to Y$ implies that $g$ factors through a point in $Y$. This implies that we can also assume that $Y$ is just a point $\Spec k$. In this case it is easy to see that $X''$ is just a disjoint union of $\#(G\times H)$ copies of $\Spec k$ with a simply transitive action of $G\times H$. Therefore $X'' \times (G\times H) \to X'' \times_X X''$ is an isomorphism in this case. 
\end{proof}


We recall another technical definition that we will need to formulate the main theorem of this section. 

\begin{defn}\label{defn-henselian-field} A valuation field $K$ is {\it henselian} if its valuation ring $\O_K$ is a henselian local ring.
\end{defn}

We summarize the main facts about henselian valuation rings in the following lemmas: 



\begin{lemma}\label{prop-hens} Let $K$ be a rank-$1$ valuation field with the valuation ring $\O_K$. 
\begin{itemize} 
\item Suppose that $\O_K$ is complete with respect to its valuation topology. Then $K$ is a henselian field.
\item Suppose $K$ be henselian, and let $L$ be an algebraic extension of $K$. Then $L$ has a unique structure of a henselian field compatible with $K$. Moreover, the associated valuation ring $\O_L$ is equal to the normalization of $\O_K$ in $L$.
\end{itemize}
\end{lemma}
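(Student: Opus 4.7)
The plan is to handle the two bullets separately, reducing the algebraic case in the second bullet to the finite case by a filtered colimit argument.

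For the first bullet, I would prove Hensel's lemma for $\O_K$ directly by Newton iteration, the textbook argument for complete non-archimedean valued fields. Given a monic $f\in\O_K[T]$ and an approximate root $a_0\in\O_K$ satisfying $|f(a_0)|<|f'(a_0)|^2$, the sequence $a_{n+1}=a_n-f(a_n)/f'(a_n)$ is Cauchy, and rank-$1$ completeness in the valuation topology lets it converge in $\O_K$ to a true root. The factorization form of Hensel's lemma (lifting a coprime factorization $\bar f=\bar g\bar h$ in $k[T]$ to $\O_K[T]$) follows from the same successive-approximation idea applied to the pair $(g_n,h_n)$.

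For the finite case of the second bullet, let $L/K$ be finite and set $B$ equal to the integral closure of $\O_K$ in $L$. The general theory of extensions of valuations provides that $B$ is a semilocal Pr\"ufer domain whose localizations at its maximal ideals are exactly the valuation rings of $L$ that extend $\O_K$. The key step is to show $B$ is itself local, using henselianity of $\O_K$: any finite integral $\O_K$-subalgebra $B_0\subset B$ decomposes as a finite product of henselian local rings (a standard property of finite algebras over henselian local rings), but since $B_0\subset L$ is a domain the decomposition must be trivial. Taking the filtered union over all such $B_0\subset B$ shows that $B$ is local. Therefore $B$ is a valuation ring of $L$ extending $\O_K$, equal to the sought-after $\O_L$, and it is henselian as a finite integral local extension of the henselian ring $\O_K$.

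For the general algebraic case, I would write $L=\colim L_i$ as a filtered colimit of finite subextensions and set $\O_L\coloneqq\colim\O_{L_i}$ with transition maps from the finite case. A filtered colimit of valuation rings along local inclusions is a valuation ring of the colimit field, and a filtered colimit of henselian local rings along local homomorphisms is henselian, so $\O_L$ has the required structure. It equals the normalization of $\O_K$ in $L$ because normalization commutes with filtered colimits, and it is the unique valuation ring of $L$ extending $\O_K$ because this uniqueness already holds at each finite level. The hard part is the locality of $B$ in the finite case — equivalently, the classical theorem that a henselian valued field admits a unique extension of its valuation to every finite extension; this ultimately reduces via the primitive element theorem to analyzing factorizations of minimal polynomials in $\O_K[T]$, where the Hensel's lemma from the first bullet applies, with some additional care in the inseparable setting.
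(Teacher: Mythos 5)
Your proof is correct, but it follows a genuinely different route from the paper. The paper disposes of both bullets by quoting Berkovich's equivalence between \emph{henselian} and \emph{quasi-complete} fields, the BGR fact that a complete rank-$1$ valued field admits a unique extension of its absolute value to any finite extension, and Matsumura's Theorem~10.4 to identify $\O_L$ with the normalization of $\O_K$; essentially everything is outsourced to references. You instead give a self-contained argument: Newton iteration for the first bullet, and for the second the observation that the integral closure $B$ of a henselian local ring in a domain is a filtered union of finite subalgebras, each of which splits as a product of local rings and hence is local, so that $B$ is local (and henselian), and is therefore the unique valuation ring of $L$ dominating $\O_K$ among the localizations of $B$ at its maximal ideals. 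This is a perfectly valid and arguably more transparent proof; it also makes clear that the second bullet uses only henselianity, not rank $1$, so it works for valuation rings of arbitrary rank (consistent with the paper's own footnote to this effect elsewhere). One small remark: your closing sentence undersells your own argument --- the product-decomposition characterization of henselian local rings already gives locality of $B$ in full, with no need for the primitive element theorem or any special care in the inseparable case; that extra analysis would only be needed if one insisted on deriving everything from the polynomial form of Hensel's lemma.
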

\begin{proof}
We start with the proof of the first statement. \cite[Proposition 2.4.3]{Ber} implies that $K$ is henselian if and only if $K$ is quasi-complete in the sense of \cite[Definition 2.3.1]{Ber}. Now quasi-completeness of $\O_K$ follows from \cite[Theorem 3.2.4/2]{BGR}. \medskip

Now we prove the second part. We note that \cite[Proposition 2.4.3]{Ber} implies that $K$ is quasi-complete. In particular, there is a unique extension of the norm on $K$ to a norm on $L$. Now \cite[Theorem 10.4]{M1} implies that $\O_{L}$ is equal to the normalization of $\O_K$ in $L$. Finally, it is easy to see from \cite[Definition 2.3.1]{Ber} that $L$ is quasi-complete. Therefore, $L$ is henselian by \cite[Proposition 2.4.3]{Ber}. 
\end{proof}


The main goal of this section is to prove the following result.

\begin{thm}\label{schematic-uniformization} Let $K$ be a henselian rank-$1$ valuation field with a valuation ring $\O_K$, and let $X \to \Spec \O_K$ be a flat, finitely presented morphism that is smooth over $\Spec K$. Then there is finite Galois extension $K\subset K'$, a finite extension $K' \subset K''$, and a finite set $I$ of triples $\{g_i\colon  X'_i \to X_{\O_{K'}}, h_i\colon X''_i \to X'_{i, \O_{K''}}, G_i, V_i\}_{i\in I}$ with the following properties: 
\begin{enumerate}
\item The morphism $g\colon \bigsqcup_{i\in I} X'_i \to X_{\O_{K'}}$ is a quasi-projective, $K'$-\'etale covering with each $X'_i$ being a finitely presented, $K'$-admissible $\O_{K'}$-scheme.
\item For all $i\in I$, $G_i$ is a finite group, the morphism $g_i\colon  X'_i \to X_{\O_{K'}}$ is $G_i$-invariant, and its base change $g_{i,K'}\colon X'_{i, K'} \to X_{K'}$ is a $G_i$-torsor over its image $V_i\coloneqq g_i(X'_{i,K'}) \subset X_{K'}$.
\item For all $i\in I$, $X'_i$ admits a structure of a successive semi-stable $K'$-smooth curve fibration over $\Spec \O_{K'}$.
\item For all $i\in I$, $h_i\colon X''_i \to X'_{i, \O_{K''}}$ is a $K''$-modification with $X''_i$ as $K''$-smooth, polystable $\O_{K''}$-scheme. 
\item If $K$ is discretely valued, one can choose $X''_i$ to be strictly semistable over $K''$. 
\end{enumerate}
\end{thm}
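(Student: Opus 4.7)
The plan is to argue by induction on the relative dimension $d$ of $X$ over $\O_K$, following the strategy of \cite[Corollary 2.4.2]{T3} but using our refined Theorem~\ref{stable-modification-general} to carry along the $G_i$-actions and torsor structures. Since the conclusion is Zariski-local on $X$ and any Zariski covering of $X$ already constitutes a quasi-projective $K$-\'etale covering, I may assume from the start that $X$ is affine; the base case $d\leq 1$ is then essentially Theorem~\ref{stable-modification-general} applied to $X\to\Spec\O_K$ with $U=\Spec K$.

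For the inductive step, I would use smoothness of $X_K/K$ together with Noether normalization on $X_K$ to construct, after shrinking $X$ Zariski-locally, a morphism $\pi\colon X\to Y$ with $Y$ flat, finitely presented, quasi-projective over $\O_K$ of relative dimension $d-1$ and smooth generic fiber, such that $\pi$ is a relative curve in the sense of Definition~\ref{def-curve} with $\pi_K$ a smooth relative curve. This requires a \cite{RG}-type flattening, performed via a $Y_K$-admissible blow-up of $Y$ (harmless, since pulling back to $X$ produces a $K$-modification of $X$, which is itself a $K$-\'etale covering by Lemma~\ref{example-U-covering}). I then apply Theorem~\ref{stable-modification-general} to $\pi$ with $U=Y_K\subset Y$, obtaining a projective $Y_K$-modification $Y'\to Y$, a finite Zariski covering $Y'=\bigcup V'_i$, finite finitely presented faithfully flat $G_i$-invariant morphisms $W'_i\to V'_i$ that are $G_i$-torsors over $Y_K$, and $W'_i$-stable $Y_K$-modifications $g_i\colon X'_i\to X_{W'_i}$; each $X'_i\to W'_i$ is by definition a semi-stable relative curve, smooth on the generic fiber. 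Each $W'_i$ is again flat, finitely presented over $\O_K$ of relative dimension $d-1$ with smooth generic fiber (being \'etale over $Y_K$ which is smooth), so the inductive hypothesis gives, after a further Galois extension $K'/K$, quasi-projective $K'$-\'etale coverings of $W'_{i,\O_{K'}}$ by successive semi-stable $K'$-smooth curve fibrations with their own group actions and generic torsor properties. Pulling $X'_i\to W'_i$ back along these and composing yields the successive semi-stable curve fibration structure on the final $X'_i$; compatibility of the group actions and torsor properties is then a mechanical iterated application of Lemma~\ref{U-torsor}, while quasi-projectivity is preserved throughout by Lemma~\ref{cofinal} and Remark~\ref{stable-modification-general-U-covering}.

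Finally, to produce the polystable refinement $h_i\colon X''_i\to X'_{i,\O_{K''}}$, I would invoke \cite[Theorem 5.2.16]{AdipLiuPakTemkin} after a further finite extension $K'\subset K''$: it produces a $K''$-modification to an ALPT-polystable scheme, which, thanks to $K''$-smoothness of the generic fiber and Corollary~\ref{k-smooth-poly}, is polystable in the sense of Definition~\ref{poly-def}. In the discretely valued case I would replace this input by \cite{TorEmb} to obtain strict semi-stability instead. The hard part will be the first move of the inductive step: arranging $\pi\colon X\to Y$ to simultaneously be a relative curve with smooth generic fiber over a flat, quasi-projective $\O_K$-base of smaller relative dimension. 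The \cite{RG}-style flattening must be executed $Y_K$-admissibly so that the subsequent application of Theorem~\ref{stable-modification-general} still matches the generic \'etale/torsor framework, and keeping all these compatibilities straight, especially through the inductive step, is the technical heart of the proof.
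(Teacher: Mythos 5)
Your overall strategy coincides with the paper's: reduce to a successive curve fibration via projections and \cite{RG}-flattening, resolve the fibration top-down with Theorem~\ref{stable-modification-general}, control torsors with Lemma~\ref{U-torsor}, and finish with \cite{AdipLiuPakTemkin} (resp.\ \cite{TorEmb}). Packaging this as an induction on relative dimension rather than first building the whole tower and then running the descending induction of Lemma~\ref{main-lemma} is a cosmetic difference. However, there is a genuine gap at the point you dismiss as ``mechanical.'' Theorem~\ref{stable-modification-general} hands you a $W'_i$-stable modification $g_i\colon X'_i\to X_{W'_i}$ with \emph{no} group action on $X'_i$: the group $G_i$ acts on $W'_i$, hence on $X_{W'_i}$, but there is no a priori reason the action lifts to the modification $X'_i$, and without that lift you cannot even state that the composite $X'_i\to X_{V'_i}$ is $G_i$-invariant, let alone apply Lemma~\ref{U-torsor} (which presupposes the actions and only composes torsors). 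The paper treats this as \emph{the} key step: it invokes the uniqueness of stable modifications over a normal base \cite[Theorem 1.2, Corollary 1.3]{T1}, replaces the base by its normalization in the generic fiber (checking it splits into finitely many normal integral components), lifts the action of each $g\in G_i$ separately by uniqueness, and then descends to a finitely presented model by writing the normalization as a filtered colimit of finite, finitely presented modifications. Your proposal contains none of this, and the theorem's assertion (2) cannot be reached without it.

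Two smaller omissions: first, ``Noether normalization on $X_K$ plus shrinking'' does not by itself produce an integral morphism $\pi\colon X\to Y$; the paper compactifies the target to $(\P^1_{\O_K})^{d-1}$ precisely so that \cite[Theorem 2.4]{C2007} extends the generic-fiber map after a $K$-admissible blow-up of $X$, before the \cite{RG}-flattening is applied. Second, \cite[Theorem 5.2.16]{AdipLiuPakTemkin} takes as input a \emph{log smooth log variety}; equipping a successive semi-stable $K'$-smooth curve fibration over a general rank-$1$ valuation ring with a vertical log smooth structure is not automatic (the standard log structure on $\Spec\O_{K''}$ is not fine) and occupies Appendix~\ref{log} of the paper (Theorem~\ref{log-str-semist}). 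You should at least flag that this hypothesis needs verification.
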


\begin{rmk} Lemma~\ref{prop-hens} ensures that $K'$ (resp. $K''$) has the unique structure of a rank-$1$ valuation field compatible with that structure on $K$. In particular, $\O_{K'}$ (resp. $\O_{K''}$) makes sense as the valuation ring associated to that valuation. 
\end{rmk}

\begin{rmk} Lemma~\ref{prop-hens} also implies that Theorem~\ref{schematic-uniformization} holds for a complete rank-$1$ valuation field $K$. 
\end{rmk}


The idea of our proof is rather easy in principle:  we first reduce to the case when $X$ has a structure of a successive curve fibration over a base $S$, then we ``resolve'' each curve fibration by a semi-stable one using Theorem~\ref{stable-modification-general}. The last step is to apply \cite[Theorem 5.2.16]{AdipLiuPakTemkin} to ``resolve'' each $X'_i$ by a polystable scheme. There two main difficulties in this approach. \smallskip

The first issue is to control the torsor condition on $K$-fibers. The problem already appears in the case of finite field extensions: a tower of two finite Galois extensions is not necessarily Galois. To resolve this issue we use (a trivial observation) that a composite of two Galois extension is Galois. So, if we try to resolve our tower of curve fibrations step by step from top to bottom, we can hope to preserve the $G$-torsor condition at each step of our inductive argument. This is exactly (with some extra technical complications) what we do in the proof below. \smallskip

The other problem is to verify the assumptions of \cite[Theorem 5.2.16]{AdipLiuPakTemkin}, i.e. we need to construct a structure of a log smooth log variety on a successsive semi-stable $K$-smooth curve fibration over $\Spec \O_K$. This issue is resolved in Appendix~\ref{log} by a spreading out argument to the well-known case.  \smallskip

We prefer firstly to deal with the case of curve fibrations (over a general base) and then use some other techniques to reduce the general case to the case of a successive curve fibration. \smallskip

The next proposition is rather lengthy to formulate; unfortunately, we do not know how to avoid  the heavy notation.

\begin{lemma}\label{main-lemma} Let $\O_K$ be a henselian valuation ring of rank-$1$ with fraction field $K$, and $S=\Spec \O_K$. Suppose that $f\colon X \to S$ is a morphism that can be written as a composition
\[
X=X_n \xr{f_n} X_{n-1} \xr{f_{n-1}}X_{n-2} \xr{f_{n-2}} \dots \xr{f_{2}}X_1 \xr{f_1}X_0=S
\]
such that each morphism $f_i$ is a relative curve (in the sense of Definition~\ref{def-curve}) that is $K$-smooth. Then there is a finite Galois extension $K\subset K'$ with $\O_{K'}$ the ring of integers in $K'$, a finite non-empty set $J$, a set of finite groups $\left(G_{j}\right)_{j\in J}$ indexed by $J$, and commutative diagrams: 

\[
\begin{tikzcd}[column sep=13ex]
X_{n, \O_{K'}} \arrow{d}{f_{n, \O_{K'}}}  & X'_{n,j}  \arrow{l}{g_{n,j}} \arrow{d}{h_{n,j}}\\
X_{n-1, \O_{K'}} \arrow{d}{f_{n-1, \O_{K'}}} & X'_{n-1, j} \arrow{l}{g_{n-1,j}} \arrow{d}{h_{n-1, j}}\\
\vdots \arrow{d}{f_{2, \O_{K'}}} & \vdots \arrow{l}{g_{k,j}} \arrow{d}{h_{2, j}}\\
X_{1, \O_{K'}}\arrow{d}{f_{1, \O_{K'}}} & X'_{1, j} \arrow{ld}{h_{1, j}} \arrow{l}{g_{1,j}} \\
\Spec {\O_{K'}} &  
\end{tikzcd}
\]
of flat, finitely presented $S'\coloneqq \Spec \O_{K'}$-schemes such that 
\begin{enumerate}
\item $X'_{k, j}$ admits an $\O_{K'}$-action of the group $G_j$ for any $k\geq 1$ and $j\in J$. Moreover, $g_{k,j}$ is $G_j$-invariant and $h_{k,j}$ is $G_j$-equivariant for any $j\in J, k=1, \dots, n$.
\item The $K'$-restriction $(g_{k, j})_{K'}\colon (X'_{k, j})_{K'} \to (X_{k})_{K'}$ is a $G_j$-torsor over its (open) image $V_{k,j}$ for $k\geq 1$.
\item $h_{k, j}$ a relative semi-stable $K'$-smooth curve\footnote{This means that $h_{k, j}$ is a relative semi-stable curve fibration that is smooth on generic fibers.} for any $j\in J$ and any $k\geq 1$.
\item $g_{k,j}$ is quasi-projective for any $k\geq 1$ and $j\in J$. Moreover, the map $g_k\colon \sqcup_{j\in J}X'_{k,j} \to X_{k, R'}$ is a $K'$-\'etale covering for any $k\geq 1$.
\item If $f_i$ are quasi-projective for every $i\geq 1$. Then the geometric quotient $X'_{n,j}/G_j$ exists as a flat, finitely presented $S'$-scheme for any $j\in J$. Moreover, in this case one can choose $X'_{n,j}$ so that the set 
\[
\left(X'_{n,j}/G_{j}, \ \ov{g_{n,j}}\right)_{j\in J}
\]
can be obtained from $X_{n, \O_{K'}}$ as a composition of Zariski open coverings and $K$-modifications (in the sense of Definition~\ref{comp-alg}). 
\end{enumerate}
\end{lemma}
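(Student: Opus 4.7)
The plan is to induct on $n$, the length of the tower. The base case $n=0$ is trivial: take $K'=K$, $J=\{*\}$, and $G_* = \{e\}$. For the inductive step, apply the lemma to the truncated tower $X_{n-1} \to X_{n-2} \to \dots \to S$ of length $n-1$. This furnishes a finite Galois extension $K^*/K$, an index set $J^*$, groups $G_{j^*}$, and for each $j^* \in J^*$ a tower of semi-stable $K^*$-smooth curves $X'_{n-1, j^*} \to \dots \to \Spec \O_{K^*}$ together with equivariant maps $g_{k, j^*}\colon X'_{k, j^*} \to X_{k, \O_{K^*}}$ satisfying the torsor condition on $K^*$-fibers.

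For each $j^*$, form the pullback $Y_{j^*} \coloneqq X_{n, \O_{K^*}} \times_{X_{n-1, \O_{K^*}}} X'_{n-1, j^*}$, a $K^*$-smooth relative curve over $X'_{n-1, j^*}$, and apply Theorem~\ref{example-1} to $Y_{j^*} \to X'_{n-1, j^*}$ with $U_{j^*}$ the schematically dense $K^*$-fiber (open by Lemma~\ref{open-1}). This produces a projective $U_{j^*}$-modification $p_{j^*}\colon S'_{j^*} \to X'_{n-1, j^*}$ with an open cover $S'_{j^*} = \bigcup_{i \in I_{j^*}} V'_{j^*, i}$, finite groups $H_{j^*, i}$, and finite faithfully flat $H_{j^*, i}$-invariant morphisms $W'_{j^*, i} \to V'_{j^*, i}$ that become $H_{j^*, i}$-torsors on $U_{j^*}$, together with a $W'_{j^*, i}$-stable $U_{j^*}$-modification $Z'_{j^*, i} \to Y_{j^*, W'_{j^*, i}}$. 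Set $J \coloneqq \bigsqcup_{j^*} I_{j^*}$ and, for $j = (j^*, i) \in J$, put $X'_{n, j} \coloneqq Z'_{j^*, i}$ and $G_j \coloneqq G_{j^*} \times H_{j^*, i}$, so that $X'_{n, j} \to W'_{j^*, i}$ is a semi-stable $K^*$-smooth curve.

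The principal obstacle is choosing $X'_{n-1, j}$ in such a way that both $X'_{n, j} \to X'_{n-1, j}$ is a semi-stable curve and $X'_{n-1, j}$ itself fits into the tower as a semi-stable curve over some $X'_{n-2, j}$. The naive choice $X'_{n-1, j} = W'_{j^*, i}$ satisfies the former but breaks the latter, since $W'_{j^*, i} \to X'_{n-1, j^*}$ is a composition of a modification and a finite flat cover rather than a relative curve. To surmount this I would invoke the flattening techniques of Raynaud--Gruson~\cite{RG}: the generic fiber of $W'_{j^*, i}$ is already a curve over $X'_{n-2, j^*, K^*}$ (it is finite \'etale over an open of the smooth curve $X'_{n-1, j^*, K^*}$, and flat-plus-dimension-one is preserved under the further projection), so after a further $U_{j^*}$-admissible blow-up of $X'_{n-2, j^*}$ one can arrange $W'_{j^*, i}$ to be a flat relative curve over the refined base. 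A fresh application of Theorem~\ref{example-1} then yields its semi-stable model, at the price of further modifications cascading downward, a process which terminates at level zero. Equivariance under $G_{j^*}$ is preserved throughout by the uniqueness of stable modifications over normal bases~\cite[Corollary 1.6]{T1}, and the new groups $H_{j^*, i}$ combine multiplicatively with the inherited ones.

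Once the cascade completes, take $K'$ to be a Galois closure absorbing every finite extension introduced. The torsor property on $K'$-fibers of each composite $g_{k, j}$ follows by iterated application of Lemma~\ref{U-torsor}, and the quasi-projective $K'$-\'etale covering property is a consequence of Remark~\ref{stable-modification-general-U-covering} together with the inductive hypothesis. For (5), assuming the $f_i$ are quasi-projective, all of the fiber products and blow-ups above preserve quasi-projectivity, the geometric quotients $X'_{n, j}/G_j$ exist as flat, finitely presented $\O_{K'}$-schemes by standard theory, and tracking the construction shows the induced map $\ov{g_{n, j}}\colon X'_{n, j}/G_j \to X_{n, \O_{K'}}$ is assembled from $X_{n, \O_{K'}}$ via open Zariski coverings (from the $\{V'_{j^*, i}\}$) and $K$-modifications (from $p_{j^*}$ and the subsequent flattening blow-ups). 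The core difficulty throughout is orchestrating the simultaneous flattening and stable modification cascade across all levels while preserving every equivariance, torsor, and compatibility condition.
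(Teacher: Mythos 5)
Your overall toolkit is the right one --- Raynaud--Gruson flattening, the relative stable modification theorem (Theorem~\ref{example-1}), Temkin's uniqueness for lifting group actions, and Lemma~\ref{U-torsor} for the torsor bookkeeping --- and your ``cascade'' is, in substance, the descending induction of the paper's Proposition~\ref{main-lemma-2}. But the \emph{direction} in which you set up the induction creates a genuine gap. The paper resolves the tower from the top down precisely so that every application of Theorem~\ref{stable-modification-general} is to a curve $X'_{m,j}\to X'_{m-1,j}$ whose base is a $K$-admissible blow-up of the \emph{original} $X_{m-1}$, carrying no group action: the groups always act \emph{vertically}, i.e.\ the structure map to the base is $G$-invariant in the sense of Definition~\ref{G-inv}. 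In your version the base $X'_{n-1,j^*}$ of $Y_{j^*}$ already carries a nontrivial $G_{j^*}$-action (it is a $G_{j^*}$-torsor over its image on generic fibers), and $Y_{j^*}\to X'_{n-1,j^*}$ is only $G_{j^*}$-\emph{equivariant}. Theorem~\ref{example-1} gives no control whatsoever over the compatibility of its output (the modification $S'_{j^*}$, the opens $V'_{j^*,i}$, the covers $W'_{j^*,i}$) with that action, so $G_{j^*}$ does not act on $W'_{j^*,i}$, hence not on $Y_{j^*,W'_{j^*,i}}$ or on $Z'_{j^*,i}$; the claimed $G_{j^*}\times H_{j^*,i}$-action on $X'_{n,j}$ does not exist, and the hypotheses of Lemma~\ref{U-torsor} (which require the map to the intermediate base to be $G$-\emph{invariant}, not equivariant) are not satisfied. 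The same defect recurs at every step of your cascade, since each lower base $X'_{k,j^*}$ also carries a $G_{j^*}$-action; the net effect is that the initial bottom-up resolution cannot be retained, and the torsor condition over $(X_k)_{K'}$ for the full group fails.

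A second, independent gap is your one-line justification of equivariance of the stable modification by ``uniqueness of stable modifications over normal bases.'' The bases $W'_{j^*,i}$ over which the stable modifications are taken are finite flat covers of opens of modifications and are not normal, so \cite[Theorem 1.2]{T1} does not apply directly. This is exactly the ``key step'' of the paper's proof: one replaces the base by its normalization inside its generic fiber, checks that this normalization is a finite disjoint union of integral normal open subschemes (so that uniqueness can be invoked componentwise), and then uses an approximation argument to descend to a finite, finitely presented modification of the base, since the normalization itself need not be of finite type over the non-noetherian $\O_K$. Without this step the action simply does not lift. Finally, your treatment of part (5) is a single sentence, whereas the paper needs all of Corollary~\ref{main-lemma-3} to track the geometric quotients through every blow-up, open covering and finite cover of the construction; as written, this part of your argument is an assertion rather than a proof.
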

\begin{proof}
We only explain the main idea of the proof here and refer to Lemma~\ref{main-lemma-4} for a detailed proof of this Lemma. 

The key idea is to argue by descending induction on $n$ to construct the desired tower of semi-stable curve fibrations step by step. At each induction step we use Theorem~\ref{stable-modification-general} to ``resolve'' a new layer of our tower (and possibly increasing the set $J$). Then there are two issues: the first is to lift a group action on a semi-stable modification, and the second is to control the $G$-torsor condition. We deal with the first issue by invoking the Temkin's uniqueness result for {\it stable} modification over a normal base (\cite[Theorem 1.2]{T1}). And Lemma~\ref{U-torsor} allows us to effectively control the torsor condition. Details of this argument are carried over in Section~\ref{pizdec}.
\end{proof}

A delicate aspect of Theorem~\ref{schematic-uniformization} is the finite presentation condition. Since the ring $\O_K$ is usually not noetherian, we need to be able to control that all our constructions preserve finite presentation condition. This can be done due to a (well-known) lemma below: 

\begin{lemma}\label{admissible} Let $R$ be a valuation ring with the fraction field $K$, and let $X$ be a flat $R$-scheme locally of finite type. Then
\begin{itemize}\itemsep0.5em
\item $X$ is locally finitely presented over $R$.
\item Any $K$-admissible blow-up $X'$ of $X$ is a flat and locally finitely presented $R$-scheme. In particular, the blow-up morphism $p\colon X' \to X$ is locally of finite presentation.
\end{itemize}
\end{lemma}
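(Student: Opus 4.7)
The first bullet is a classical commutative-algebra fact due to Raynaud--Gruson: over a valuation ring $R$, every flat, finite-type $R$-algebra is finitely presented. Since the claim is local on $X$, my plan is to simply pass to an affine chart and invoke \cite{RG} as a black box.

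For the second bullet, I would proceed as follows. Write $X' = \mathbf{Proj}_X \bigoplus_n \mathcal{I}^n$ for the finite-type quasi-coherent ideal sheaf $\mathcal{I}\subseteq\O_X$ with $\operatorname{V}(\mathcal{I})\cap X_K=\varnothing$ from Definition~\ref{adm-blowup}. Since $\mathcal{I}$ is of finite type, $p\colon X'\to X$ is projective by \cite[II, 5.5.1]{EGA}, hence locally of finite type, so $X'\to\Spec R$ is locally of finite type as well. Granted the first bullet applied to $X'$, the task reduces to verifying that $X'$ is $R$-flat; local finite presentation will then follow automatically.

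The flatness check is straightforward on standard affine charts. Working affine-locally on $X=\Spec A$ with $I=\mathcal{I}(X)=(f_0,\dots,f_n)$, the blow-up is covered by the opens $\Spec A[I/f_i]$, where $A[I/f_i]\subseteq A[f_i^{-1}]$ is the $A$-subalgebra generated by the fractions $f_j/f_i$. Since $A$ is $R$-flat, so is the localisation $A[f_i^{-1}]$; over a valuation ring, $R$-flatness of an $R$-module is equivalent to $R$-torsion-freeness, a property which passes to submodules. Hence $A[I/f_i]$ is $R$-torsion-free and therefore $R$-flat, and consequently $X'$ is $R$-flat.

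The ``in particular'' assertion about $p\colon X'\to X$ itself is then a formal consequence: since $X\to\Spec R$ is locally of finite type and the composition $X'\to\Spec R$ is locally of finite presentation, \cite[\href{https://stacks.math.columbia.edu/tag/02FV}{Tag 02FV}]{stacks-project} gives that $p$ is locally of finite presentation. The only genuinely nontrivial ingredient in the whole argument is the Raynaud--Gruson input underlying the first bullet; everything else is elementary manipulation with $\mathbf{Proj}$ and flatness over valuation rings.
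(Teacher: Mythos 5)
Your proposal is correct and follows essentially the same route as the paper's proof: the first bullet is the Raynaud--Gruson finiteness theorem, and the second bullet reduces to the first via the equivalence of flatness and torsion-freeness over a valuation ring, which is preserved by the blow-up. Your explicit verification on the charts $\Spec A[I/f_i]$ and the cancellation argument for $p$ merely spell out details the paper leaves implicit.
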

\begin{proof}
The first claim follows from \cite[Th\'eor\`eme (3.4.1)]{RG} (an alternative reference is \cite[\href{https\colon //stacks.math.columbia.edu/tag/053E}{Tag 053E}]{stacks-project}) and faithfully flat descent from the henselisation of $R$. As for the second claim we recall that 
$X$ is $R$-flat if and only if $\O_X$ is $R$-torsion free. Then any $K$-admissible blow-up $X'\coloneqq {\bf{Proj}}_X \bigoplus_{i\in \N} \mathcal I^n$ of $X$ is again $R$-flat and locally of finite type over $R$. Hence, it is locally of finite presentation by the first part of the proof.
\end{proof}

\begin{rmk} Before we start the proof of Theorem~\ref{schematic-uniformization}, we want to point out that it will be crucial for the argument that we do not make any properness or connectedness assumptions in Theorem~\ref{stable-modification-general}. The proof starts by choosing an open cover of $X_K$ by $U_i$ with  \'etale maps to $\mathbf{A}^n_K$. This defines relative curves $U_{i, K} \to \A^{n-1}_K$ and then after suitable modification we end up in the situation where Lemma~\ref{main-lemma} is applicable. But in order for this to work,   we cannot impose any properness or geometric connectivity assumptions in Theorem~\ref{stable-modification-general}.
\end{rmk}

Now we are finally ready to prove Theorem~\ref{schematic-uniformization}.

\begin{proof}[Proof of Theorem~\ref{schematic-uniformization}]
We start the proof by noting that if $X$ has a structure of a successive curve fibration over $\O_K$, then Lemma~\ref{main-lemma} constructs $K'/K$ and $g_i\colon X'_i \to X_{\O_{K'}}$ satisfying properties $(1)$, $(2)$, and $(3)$. So we want to reduce the question to the situation when $X$ has a structure of a successive curve fibration over $\O_K$ so that Lemma~\ref{main-lemma} is applicable. Then we separately construct $K''/K$ and $h_i\colon X''_i \to X'_{i, \O_{K''}}$ using  \cite[Theorem 5.2.16 and Corollary 5.1.14]{AdipLiuPakTemkin}. 

We note that Lemma~\ref{admissible} guarantees that for the purpose of the proof we can pass to a finite covering of $X$ by quasi-compact open subsets and $K$-admissible blow-ups of those. Since the generic fibre $X_{K}$ is smooth and quasi-compact over $\Spec K$, we can choose a finite covering of $X_K$ by quasi-compact (affine) open subschemes $U_i$ with \'etale $K$-morphisms $U_i \to \mathbf A^{n_i}_K$ for some ${n_i}$. Then \cite[Lemma 2.5.1]{T3} says that there is a $K$-admissible blow-up $\pi\colon X' \to X$ and a finite open quasi-compact covering $\cup_{i=1}^m X'_i = X'$ such that $X'_{i, K} \cap X_K =U_i$. We use the observation from the first sentence of this paragraph to replace $X$ with $X'_i$, so we can assume that the generic fibre of $X$ admits an \'etale morphism to some $\mathbf A^d_K$. 

Now we are in the situation that $X$ is finitely presented and flat over $\Spec \O_K$ and its generic fiber admits an \'etale morphism $f''\colon X_K \to \mathbf A^{d}_{K}$. Compose it with projection on the first $d-1$ coordinates to get a morphism $f''_d\colon X_K \to \mathbf A^{d-1}_K$ that is a smooth relative curve! For technical reasons it will be simpler to consider it as a morphism $f''_d\colon X_K \to (\P^1_K)^{d-1}$, which is also, certainly, a smooth relative curve. The reason is that we want to use \cite[Theorem 2.4 and Remark 2.5]{C2007} to find a $K$-admissible blow-up $\pi\colon X' \to X$ and a morphism $f'_d\colon X' \to (\P^1_{\O_K})^{d-1}$ extending $f''_d\circ \pi_K$. And we can use the above observation to replace $X$ with $X'$, so we assume that $X$ admits a map to $(\P^1_K)^{d-1}$ that is a smooth relative curve over the generic fibre. 

Under the assumption as above, we almost have a structure of a successive $K$-smooth curve fibration on $X$. Indeed, we have a map $f'_d\colon X \to (\P^1_{\O_K})^{d-1}$ over $\O_K$ that over $K$ is a smooth relative curve, and also we have projection morphisms $p_i\colon  (\P^1_{\O_K})^i \to (\P^1_{\O_K})^{i-1}$ that are $K$-smooth curve fibrations. The only issue is that $f'_d$ is not necessary flat, but we deal with that issue by invoking \cite[Corollaire 5.7.10]{RG} (or \cite[\href{https://stacks.math.columbia.edu/tag/081R}{Tag 081R}]{stacks-project}) to find a $K$-admissible blow up $r_{d-1}\colon  X_{d-1} \to (\P^1_{\O_K})^{d-1}$ such that the strict transform $f_d\colon  X' \to X_{d-1}$ is flat. Then we claim that $f_d$ is automatically a relative curve. Indeed, it is flat and \cite[\href{https://stacks.math.columbia.edu/tag/0D4H}{Tag 0D4H}]{stacks-project} implies that the relative dimension of all fibers is $\leq 1$. Now \cite[\href{https://stacks.math.columbia.edu/tag/02FZ}{Tag 02FZ}]{stacks-project} implies that the dimension of all non-empty fibers is $\geq 1$ as $X'_K$ is dense in $X'$. Thus, all non-empty fibers are of pure dimension $1$. 

Now we consider the map $X_{d-1} \to (\P^1_{\O_K})^{d-2}$. It is not necessarily flat, but we can find those $K$-admissible blow-ups step by step (also affecting previous choices of $f_i$'s via base change) to finally come up with a structure of a successive $K$-smooth curve fibration on a new $X'$. Then we just replace $X$ with $X'$ as we did above and use Lemma~\ref{main-lemma} to find a finite Galois extension $K \subset K'$ and morphisms $g_i\colon X'_i \to X_{\O_{K'}}$ with the source $X'_i$ being a successive semi-stable $K'$-smooth curve fibration over $\Spec \O_{K'}$. Now we say that since $K$-admissible blow-ups are $\O_K$-projective, we get that the final morphisms $X'_i \to X_{\O_{K'}}$ are quasi-projective and they are finitely presented by Lemma~\ref{admissible}.

We show how to construct a finite extension $K' \subset K''$ such that $X'_{i, \O_{K''}}$ admits a $K''$-modification $h_i\colon X_i'' \to X'_{i, \O_{K''}}$ with a $K''$-smooth polystable $X''_i$. The usual approximation type argument allows to assume that $K$ is algebraically closed. In this situation, the value group of $\O_{K}$ is divisible, so the results from \cite{AdipLiuPakTemkin} are applicable. Namely, \cite[Theorem 5.2.16 and Corollary 5.1.14]{AdipLiuPakTemkin} imply that there is a $K$-modification $h_i\colon X_i'' \to X'_{i}$ with the desired property once we know that there is a vertical\footnote{This means that the log structure is trivial on the generic fiber, i.e. $\M_{X}|_{X_{K'}}\simeq \O_{X_{K'}}^\times$.} log structure on each $X'_i$ making it into a log smooth log variety\footnote{Look at Definition~\ref{log-variety-temkin} and Definition~\ref{log-smooth-temkin} for the precise meaning of these words.}. However, this structure is constructed Theorem~\ref{log-str-semist}. This constructs the desired modification. 

The last thing we are left to show is that $X''_i$ can be chosen to be strictly semi-stable over $K''$ if $K$ is discretely valued. Proposition~\ref{noeth-case} ensures that $X'_i$ are log smooth over $\O_{K'}$ (with its standard log structure) if $K'$ is discretely valued. It is a classical fact (that essentially goes back to \cite{TorEmb}) that a log smooth scheme over a discretely valued can be modified into a strictly semi-stable one after a finite extension of $K$. We sketch a proof because we can not find it being explicitly formulated in the literature. 

First, \cite[Theorem 5.10]{Niziol} implies there is a log-blow-up (in particular, it is a $K'$-modification) $Y'_i \to X'_i$ such that $Y'_i$ is classically regular, log-regular $\O_{K'}$-scheme for each $i$. Then \cite[Exp.\ III, Prop. 3.6.5]{deGabber} ensures that, \'etale locally, each $Y'_i$ admits an \'etale morphism to 
\[
\Spec \O_{K'}[T_0,\dots, T_l]/(T_0^{e_0}\dots T_l^{e_l}-\pi)
\]
for some uniformizer $\pi$ and  $e_i\in \Z_{\geq 0}$ with at least one $e_i\neq 0$. Then the discussion before \cite[Theorem 3.1.7]{Temkin-absolute} describes how the combinatorial algorithms from \cite{TorEmb} can be used to find a quasi-finite extension of discretely valued rings $\O_{K'} \subset \O''$ such that $Y'_{i, \O''}$ admits a $K''\coloneqq \rm{Frac}(\O'')$-modification $X''_i \to Y'_{i, \O''}$ that is strictly semistable. The henselian assumption on $K$ implies that $\O''=\O_{K''}$.  
\end{proof}

\section{Analytic Version of Local Uniformization}

The idea for the proof of analytic version of the Local Uniformization result is to reduce the question to the schematic case. We do this by means of local algebraization. For what follows, we fix a complete rank-$1$ valuation ring $K$ with a valuation ring $\O_K$. \medskip

Before starting the proof, we briefly discuss some notions related to formal schemes and their adic generic fibers. We refer the reader to Appendix~\ref{adic} for a more detailed discussion. Here we mention only the main notions.

A {\it formal $\O_K$-scheme} will always mean an $I$-adic formal $\O_K$-scheme for a(ny) ideal of definition $I\subset \O_K$. It is easily seen to be independent of the choice of $I$. And by the {\it completion} $\wdh{X}$ of a finite type $\O_K$-scheme $X$, we always mean the $I$-adic completion. The same terminology is used in \cite{B}. This abuse of notation is common in $p$-adic geometry, but might be non-common in the other areas. 

A {\it rigid-analytic space over $K$} will always mean a locally topologically finite type $(K, \O_K)$-adic space in the sense of \cite{H1}. It is not necessary to use this approach to the foundations of non-archimedean geometry for our purposes, but we find it to be more convenient in our subsequent work. The reader, who is more familiar with the classical Tate approach to rigid-analytic geometry (as one used in \cite{B}), may safely use it in what follows. For a precise relation between those two notions, we refer the reader to \cite[Proposition 4.5]{H1}.

\begin{thm}\label{main-main} Let $X$ be a quasi-compact and quasi-separated smooth rigid-analytic space over $\Spa(K, \O_K)$ with a given admissible quasi-compact formal model $\X$. Then there is a finite Galois extension $K \subset K'$, a finite extension $K' \subset K''$, a finite number of morphisms of admissible formal $\O_{K'}$-schemes (resp. $\O_{K''}$-schemes ) $g_i\colon \X'_i \to \X_{\O_{K'}}$ and $h_i\colon \X''_{i} \to \X'_{i, \O_{K''}}$, such that
\begin{itemize}\itemsep0.5em
\item Each $\X'_i$ admits an action of a finite group $G_i$ such that $g_i\colon \X'_i \to \X_{\O_{K'}}$ is $G_i$-invariant for each $i$.
\item The morphism $g\colon \X'\coloneqq  \sqcup_i \X'_i \to \X_{\O_{K'}}$ is a rig-\'etale covering (in the sense of Definition~\ref{rig-etale-covering}). 
\item On the generic fiber, each $\X'_i$ becomes a $G_i$-torsor over its (quasi-compact) open image in the adic generic fiber $X_{K'}=\X_{K'}$.
\item Each $\X'_i$ is formally quasi-projective over $\O_{K'}$ (in the sense of Definition~\ref{quasiproj-formal}) and has a structure of a successive formal semi-stable rig-smooth curve fibration (in the sense of Definition~\ref{def-ss-formal}).
\item Each $h_i\colon \X''_i \to \X'_{i, \O_{K''}}$ is a rig-isomorphism, and $\X''_i$ is rig-smooth, polystable formal $\O_{K''}$-scheme (in the sense of Definition~\ref{poly-def-formal}).
\item If $K$ is discretely valued, one can choose $\X''_i$ to be strictly semistable over $K''$ (in the sense of Definition~\ref{semi-def-formal}). 

\end{itemize}
\end{thm}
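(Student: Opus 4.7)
The plan is to reduce Theorem~\ref{main-main} to its schematic counterpart Theorem~\ref{schematic-uniformization} by local algebraization. Since $\X$ is quasi-compact, I would first cover it by finitely many formal affine opens $\U_j = \Spf A_j$, where each $A_j$ is topologically finitely presented, $\O_K$-flat, and rig-smooth. Using Elkik's algebraization theorem (together with \cite[Proposition 3.3.2]{T0}), I would produce, for each $j$, a finitely presented, flat $\O_K$-algebra $B_j$ whose $I$-adic completion is $A_j$ and such that $\Spec(B_j \otimes_{\O_K} K)$ is smooth over $K$. Since the output of Theorem~\ref{main-main} is only required to be a disjoint union forming a rig-\'etale covering, we do not need to algebraize $\X$ globally.

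Next, apply Theorem~\ref{schematic-uniformization} to each $Y_j = \Spec B_j$, obtaining a finite Galois extension $K \subset K'_j$, a finite extension $K'_j \subset K''_j$, and triples $\{g^{\rm sch}_{i,j}\colon Y'_{i,j} \to Y_{j,\O_{K'_j}}, h^{\rm sch}_{i,j}\colon Y''_{i,j} \to Y'_{i,j,\O_{K''_j}}, G_{i,j}, V_{i,j}\}$ satisfying properties (1)--(5) of that theorem. Replace $K'$ by the compositum of all $K'_j$ (which is again finite Galois over $K$ since $K$ is henselian) and $K''$ by a sufficiently large finite extension of $K'$ containing all $K''_j$, and base change all the schematic data accordingly. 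Now take $I$-adic completions: set $\X'_{i,j}$ and $\X''_{i,j}$ to be the formal completions of $Y'_{i,j}$ and $Y''_{i,j}$ along their special fibers, and let $g_{i,j}$, $h_{i,j}$ be the completed morphisms.

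The final step is to check that completion preserves all required structures. Flatness and finite presentation are preserved, so each $\X'_{i,j}$ is admissible; the finite group action of $G_{i,j}$ on $Y'_{i,j}$ induces one on $\X'_{i,j}$, and $g_{i,j}$ is $G_{i,j}$-invariant; since a $K'$-modification of flat finitely presented $\O_{K'}$-schemes completes to a rig-isomorphism, each $h_{i,j}$ is a rig-isomorphism. The structures of successive semi-stable rig-smooth curve fibration, polystability, and (in the discretely valued case) strict semi-stability are defined \'etale-locally by explicit coordinate forms, and these forms transfer verbatim under completion. Schematic quasi-projectivity over $\O_{K'}$ becomes formal quasi-projectivity over $\O_{K'}$ (Definition~\ref{quasiproj-formal}) after completion. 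For the rig-\'etale covering condition, rig-\'etaleness of each $g_{i,j}$ follows from $K'$-\'etaleness of $g^{\rm sch}_{i,j}$; the covering property and the $G_{i,j}$-torsor condition on quasi-compact open images of the adic generic fibers follow by comparing the adic generic fiber of the completion $\wdh{Y_{j,\O_{K'}}}$ with the analytification of $(Y_j \otimes_{\O_K} K')^{\rm an}$ restricted to the affinoid $\U_{j,K'}$, and using that analytification preserves \'etaleness, open images, and torsor structures.

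The main obstacle is the last step: verifying that the formal-analytic avatars of the schematic outputs retain every claimed property. In particular, one must ensure that (a) $G$-torsor conditions on algebraic generic fibers really descend to $G$-torsor structures on adic generic fibers of the completed maps; (b) the notion of ``successive semi-stable rig-smooth curve fibration'' (Definition~\ref{def-ss-formal}) on the formal side is exactly the completion of a ``successive semi-stable $K$-smooth curve fibration'' on the schematic side, including the rig-smoothness of each fibration step; and (c) the valuative criterion defining a rig-\'etale covering (Definition~\ref{rig-etale-covering}) is implied by the $K'$-\'etale covering condition from Theorem~\ref{schematic-uniformization}. These are essentially bookkeeping verifications made cleanly possible by the compatibility between formal completion and analytification worked out in Appendix~\ref{adic}, together with the fact that the schematic theorem was proved in precisely the form needed to produce its formal-analytic analog.
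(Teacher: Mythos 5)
Your proposal follows essentially the same route as the paper's own proof: reduce to the affine case (possible because the conclusion is only a rig-\'etale covering), algebraize via Elkik's theorem together with \cite[Proposition 3.3.2]{T0}, apply Theorem~\ref{schematic-uniformization}, and then complete, using the comparison lemmas of Appendix~\ref{adic} (notably Lemma~\ref{completion-ss}, Lemma~\ref{completion-polyss}, Lemma~\ref{different-C-mod}, and Lemma~\ref{Galois-analytification}) to transfer each property to the formal/adic side. The three verification points you flag at the end are exactly the ones the paper handles with those lemmas, so the argument is correct as proposed.
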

\begin{proof}
We start by noting that the statement is local on $\X$, so we can assume that $\X$ is an affine admissible rig-smooth formal model. We use \cite[Theorem 3.1.3]{T3} (it essentially boils down to \cite[Th\'eorem\`e 7 on page 582 and Remarque 2(c) on p.588]{Elkik} and \cite[Proposition 3.3.2]{T0}) that says that an affine rig-smooth formal scheme $\X$ can be algebraized to an affine flat finitely presented $\O_K$-scheme $Y$ with smooth generic fibre $Y_K$.  We apply Theorem~\ref{schematic-uniformization} to find a finite Galois extension $K'/K$, an extension $K''/K$, and morphisms $g'_i\colon  Y'_i \to Y_{\O_{K'}}$, $h'_i\colon Y''_i \to Y'_{\O_{K''}}$ with all the properties from Theorem~\ref{schematic-uniformization}. Then we pass to the $I$-adic completions of those schemes $\wdh{g'_i}\colon \wdh{Y'_i} \to \wdh{Y_{\O_{K'}}}$, $\wdh{h'_i}\colon \wdh{Y''_i} \to \wdh{Y'_{\O_{K''}}}$ and apply \cite[Lemma 3.2.2]{T3}, Lemma~\ref{completion-ss}, Lemma~\ref{Galois-analytification} and an isomorphism $\wdh{Y_{\O_{K'}}} \cong \X_{\O_{K'}}$ to get morphisms 
\[
g_i\coloneqq \wdh{g'_i}\colon  \wdh{Y'_i} \to \X_{\O_{K'}} \text{ and } h_i\coloneqq \wdh{h'_i}\colon  \wdh{Y''_i} \to \wdh{Y'_{i, \O_{K''}}}=\X_{\O_{K''}}
\] 
which satisfy all the properties from the formulation of the Theorem besides formal quasi-projectivity. But it is actually also formally quasi-projective over $\Spf \O_{K'}$ because  Theorem~\ref{schematic-uniformization} ensures that $Y_i$ is quasi-projective over an affine scheme $Y$ for all $i$. Thus it is also quasi-projective over $\Spec \O_{K'}$. And the completion of a finite type quasi-projective $\O_{K'}$-scheme is a quasi-projective topologically finite type formal $\O_{K'}$-scheme (since the completion along a finitely generated ideal in $\O_{K}$ preserves the special fibre).
\end{proof}

\section{Uniformization by Quotients of Successive Semi-Stable Curve Fibrations}\label{section-final}

In the section we fix a complete non-archemidean field $K$. We show that any admissible quasi-compact and quasi-separated formal model of a smooth rigid-analytic space locally admits an uniformization (in a precise sense explained below) by a ``good'' quotient of a successive semi-stable curve fibration by an action of a finite group $G$. This is morally just a consequence of Theorem~\ref{main-main} (or really its proof); the basic idea is that we just pass from each model $\X_i'$ in the formulation of Theorem~\ref{main-main} to a model $\X'_i/G_i$. Existence of such quotients is systematically worked out in \cite{Z2}. We now briefly review  the definition of such quotients and the necessary existence results:

\begin{defn}\label{geom-quot} Let $G$ be a finite group, $S$ a ringed space (resp. topologically ringed space), and $X$ an $S$-ringed (resp. topologically ringed) spaces with a right $S$-action of a finite group $G$. The {\it geometric quotient} $X/G=(|X/G|, \O_{X/G}, h)$ consists of:
\begin{itemize}\itemsep0.5em
\item the topological space $|X/G|\coloneqq |X|/G$ with the quotient topology. We denote by $\pi:|X| \to |X/G|$ the natural projection,
\item the sheaf of rings (resp. topological rings) $\O_{X/G}\coloneqq (\pi_*\O_X)^G$,
\item the morphism $h:X/G \to S$ defined by the pair $(h, h^{\#})$, where $h:|X|/G \to S$ is the unique morphism induced by $f\colon X \to S$, and $h^{\#}$ is the natural morphism 
\[
\O_{S} \to h_*\left(\O_{X/G}\right)=h_*\left(\left(\pi_*\O_{X}\right)^G\right)=\left(h_*\left(\pi_*\O_{X}\right)\right)^G=\left(f_*\O_X\right)^G
\]
that comes from $G$-invariance of $f$.
\end{itemize}
\end{defn}

Now suppose that $\O_K$ is a complete rank-$1$ valuation ring. Then \cite[Theorem 2.2.6]{Z2} ensures that the geometric quotient of a quasi-projective, flat, locally finite type $\O_K$-scheme $X$ exists as an admissible $\O_K$-scheme and the quotient map $X \to X/G$ is the universal $G$-invariant map in the category of $\O_K$-schemes. Similarly, \cite[Theorem 3.3.4]{Z2} guarantees that the geometric quotient of a formally projective, admissible formal $\O_K$-scheme exists as an admissible formal $\O_K$-scheme with the same universal property.


We now define precisely what we mean a ``uniformization''. Let $\X$ be a locally topologically finite type formal $\O_K$-scheme, and let $\varphi_i\colon \X_i \to \X$ be a finite set of locally topologically finite type $\O_K$-morphisms. 

\begin{defn}\label{comp-formal} We say that a set $(\X_i, \varphi_i)_{i\in I}$ can be obtained as a {\it composition of open Zariski coverings and rig-isomorphisms of $\X$}, if this set can be achieved in a finite number of steps using the following rules:  we start with the set consisting of one morphism $(\X, \operatorname{Id}_\X)$ and at each step we are allowed either to change one element $\varphi_i\colon \X_i \to \X$ by composing with a rig-isomorphism (in the sense of Definition~\ref{C-mod-formal}) $\varphi'_i\colon \X'_i \to \X$  or to replace $\varphi_i\colon X_i \to X$ (and keep other elements the same) by the compositions $\X_{i,j} \to \X_i \xr{\phi_i} \X$ with $\X_i=\cup_{j=1}^n \X_{i,j}$ a Zariski cover.
\end{defn}

It also has its schematic counterpart that will play an intermediate step in our proof of the uniformization result:

\begin{defn}\label{comp-alg} Let $\varphi_i\colon X_i \to X$ be a finite set of morphisms between $\O_K$-schemes. We say that a set $(X_i, \varphi_i)_{i\in I}$ can be obtained as a {\it composition of open Zariski coverings and $K$-modifications of $X$}, if this set can be achieved in a finite number of steps using the following rules:  we start with the set consisting of one morphism $(X, \operatorname{Id}_X)$ and at each step we are allowed either to change one element $\varphi_i\colon X_i \to X$ by composing with a $K$-modification (in the sense of Definition~\ref{U-mod}) $\varphi'_i\colon X'_i \to X$ or to replace $\varphi_i\colon \X_i \to \X$ (and keep other elements the same) by the compositions $X_{i,j} \to X_i \xr{\phi_i} X$ with $X_i=\cup_{j=1}^n X_{i,j}$ a Zariski cover.
 \end{defn}

\begin{lemma}\label{al-an} Let $(X_i, \varphi_i)_{i\in I}$ be a finite set that can be obtained from $X$ as a composition of open Zariski coverings and $K$-modifications. The set of $I$-adic completions $(\wdh{X_i}, \wdh{\varphi_i})_{i \in I}$ is obtained from $\X$ as a composition of open Zariski coverings and rig-isomorphisms of $\wdh{X}$.
\end{lemma}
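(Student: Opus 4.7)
The plan is to argue by induction on the number of steps used to obtain $(X_i, \varphi_i)_{i\in I}$ from $X$. The base case is trivial: the one-element set $(X, \operatorname{Id}_X)$ has completion $(\wdh{X}, \operatorname{Id}_{\wdh{X}}) = (\X, \operatorname{Id}_{\X})$, which is the starting data for Definition~\ref{comp-formal}. For the inductive step, we verify that each of the two operations in Definition~\ref{comp-alg} becomes, after $I$-adic completion, one of the operations in Definition~\ref{comp-formal}.

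For the Zariski covering operation, if we replace $\varphi_i\colon X_i \to X$ by compositions $X_{i,j} \to X_i \xr{\varphi_i} X$ with $X_i = \bigcup_{j=1}^n X_{i,j}$ an open cover by quasi-compact opens, then $I$-adic completion is compatible with Zariski opens: the formal scheme $\wdh{X_{i,j}}$ is naturally an open formal subscheme of $\wdh{X_i}$, and the $\wdh{X_{i,j}}$ cover $\wdh{X_i}$ set-theoretically because formal completion does not change the underlying topological space of the special fiber. Hence $\wdh{X_i} = \bigcup_{j=1}^n \wdh{X_{i,j}}$ is a Zariski open cover in the formal sense, and the completed compositions are precisely the output of the Zariski-covering step of Definition~\ref{comp-formal}.

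For the $K$-modification operation, suppose we replace $\varphi_i$ by $\varphi_i \circ \varphi'_i$ where $\varphi'_i\colon X'_i \to X_i$ is a $K$-modification in the sense of Definition~\ref{U-mod}: that is, $\varphi'_i$ is proper, $X'_i$ is $K$-admissible (so $\O_K$-flat, hence by Lemma~\ref{admissible} locally finitely presented), and $\varphi'_{i,K}$ is an isomorphism. The key point is that the induced morphism $\wdh{\varphi'_i}\colon \wdh{X'_i} \to \wdh{X_i}$ is a rig-isomorphism in the sense of Definition~\ref{C-mod-formal}. This is a standard consequence of the equivalence between proper formal schemes and their algebraizations (Appendix~\ref{adic} notation): the completion of a proper morphism that is an isomorphism on the generic fiber induces an isomorphism on adic generic fibers, and both source and target are admissible formal $\O_K$-schemes. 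Granting this, the completed operation is an allowed move in Definition~\ref{comp-formal}.

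The only genuinely non-formal input is verifying that completion of a $K$-modification is a rig-isomorphism, which is the main obstacle if the reader is unfamiliar with the translation between algebraic and formal settings; but this is classical and is exactly the kind of statement the author collects in Appendix~\ref{adic}. Combining these two cases with the inductive hypothesis completes the proof.
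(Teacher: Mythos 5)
Your proposal is correct and takes essentially the same approach as the paper: the proof there is precisely the observation that completion sends Zariski coverings to Zariski coverings and $K$-modifications to rig-isomorphisms, the latter being Lemma~\ref{different-C-mod}. The only caveat is that your one-sentence justification of that key input (via ``the equivalence between proper formal schemes and their algebraizations'') glosses over the fact that $X_i$ itself need not be proper over $\O_K$; the paper's Lemma~\ref{different-C-mod} instead factors the modification through admissible blow-ups via Raynaud--Gruson and invokes \cite[Proposition 8.4/2]{B}, so you should cite that lemma rather than re-derive it.
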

\begin{proof}
This follows directly from the fact that the completion of a Zariski covering is a Zariski covering, and the completion of a $K$-modification is a rig-isomorphism (Lemma~\ref{different-C-mod}).
\end{proof}

\begin{thm}\label{thm:final} Let $\X$ be an admissible, quasi-compact and quasi-separated formal $\O_K$-scheme with the smooth generic fiber $\X_K$. Then there is a finite Galois extension $K\subset K'$, and a finite extension $K' \subset K''$, a finite set $(\X_i, \varphi_i)_{i\in I}$ of quasi-compact, quasi-separeted admissible formal $\O_{K'}$-schemes with morphisms $\varphi_i\colon \X_i \to \X_{\O_{K'}}$ such that
\begin{itemize}\itemsep0.5em
\item The set $(\X_i, \varphi_i)$ can be obtained from $\X_{\O_{K'}}$ as a composition of open Zariski coverings and rig-isomorphisms.
\item Each $\X_i$ is a geometric quotient of an admissible formal $\O_{K'}$-scheme $\X'_i$ by an action of a finite group $G_i$ such that the generic fiber of the quotient map $p_{i,K'}\colon \X'_{i,K'} \to \X_{i,K'}$ is a $G_i$-torsor.
\item Each of $\X'_i$ has a structure of a semi-stable, rig-smooth successive curve fibration over $\Spf \O_{K'}$.
\item Each $\X'_{i, \O_{K''}}$ admits a rig-isomorphism $h_i\colon \X''_i \to \X'_{i, \O_{K''}}$ with a rig-smooth, polystable formal $\O_{K''}$-scheme $\X''_i$.
\item If $K$ is discretely valued, one can choose $\X''_i$ to be strictly semistable over $K''$.
\end{itemize}
\end{thm}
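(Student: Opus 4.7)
The plan is to mirror the proof of Theorem~\ref{main-main}, but to additionally track the geometric quotients $\X'_i/G_i$ at every step, exploiting the sharper output of Lemma~\ref{main-lemma}(5). Since the conclusion is local on $\X$, I may replace $\X$ by an affine open $\Spf A$, and then use \cite[Theorem 3.1.3]{T3} to algebraize $\X$ to an affine, flat, finitely presented $\O_K$-scheme $Y = \Spec A'$ with smooth generic fiber $Y_K$.

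I then apply Theorem~\ref{schematic-uniformization} to $Y$, invoking in particular clause~(5) of Lemma~\ref{main-lemma}. Since $Y$ is affine, the successive curve fibration built inside the proof of Theorem~\ref{schematic-uniformization} is automatically quasi-projective, so the hypothesis of (5) is met. This produces a finite Galois extension $K'/K$, a finite extension $K''/K'$, a finite family of $G_j$-invariant morphisms $g_j\colon Y'_j \to Y_{\O_{K'}}$ from successive semi-stable $K'$-smooth curve fibrations, polystable (and, in the discretely valued case, strictly semistable) $K''$-modifications $Y''_j \to Y'_{j,\O_{K''}}$, and the geometric quotients $Y_j \coloneqq Y'_j/G_j$ as flat, finitely presented $\O_{K'}$-schemes, together with the property that $(Y_j, \bar g_j)_j$ is obtained from $Y_{\O_{K'}}$ as a composition of open Zariski coverings and $K$-modifications in the sense of Definition~\ref{comp-alg}.

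Next, I pass to $I$-adic completions, setting $\X'_i \coloneqq \wdh{Y'_j}$, $\X''_i \coloneqq \wdh{Y''_j}$, $\X_i \coloneqq \wdh{Y_j}$, and $\varphi_i \coloneqq \wdh{\bar g_j}\colon \X_i \to \X_{\O_{K'}}$. By Lemma~\ref{al-an}, the schematic decomposition of $(Y_j, \bar g_j)_j$ translates directly into a decomposition of $(\X_i, \varphi_i)_i$ as a composition of open Zariski coverings and rig-isomorphisms of $\X_{\O_{K'}}$. The structure of $\X'_i$ as a successive semi-stable rig-smooth curve fibration, together with the existence of the rig-isomorphism $h_i\colon \X''_i \to \X'_{i,\O_{K''}}$ to a polystable (resp.\ strictly semistable, when $K$ is discretely valued) formal scheme, is inherited from Theorem~\ref{main-main}, since Lemma~\ref{completion-ss} shows that $I$-adic completion preserves these structures.

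The step I expect to be the main obstacle is the compatibility of $I$-adic completion with geometric quotients and the corresponding transfer of the torsor property to adic generic fibers. For the first point, one compares the schematic quotient $Y'_j/G_j$ from \cite[Theorem 2.2.6]{Z2} with the formal quotient $\wdh{Y'_j}/G_j$ from \cite[Theorem 3.3.4]{Z2}; both are characterized by the same universal property among $G_j$-invariant morphisms, and since $I$-adic completion of a flat, finitely presented module commutes with taking $G_j$-invariants, the canonical comparison $\wdh{Y'_j/G_j} \simeq \wdh{Y'_j}/G_j$ is an isomorphism, which identifies $\X_i$ with $\X'_i/G_i$. For the torsor statement, Lemma~\ref{main-lemma}(2) gives that $Y'_{j,K'} \to V_j$ is a schematic $G_j$-torsor onto its open image $V_j = Y_{j,K'} \subset Y_{K'}$; passing to adic generic fibers preserves torsoriality (the isomorphism $Y'_j\times G_j \to Y'_j\times_{Y_j} Y'_j$ is finite \'etale and stays an isomorphism after analytification), so the induced map $\X'_{i,K'} \to \X_{i,K'}$ is a $G_i$-torsor, as required.
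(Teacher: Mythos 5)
Your proposal is correct and follows essentially the same route as the paper: reduce to the affine case, algebraize via Elkik, invoke the schematic uniformization together with Lemma~\ref{main-lemma}(5) (i.e.\ Corollary~\ref{main-lemma-3}/Lemma~\ref{main-lemma-4}(5)) for the quotients and the decomposition into Zariski coverings and $K$-modifications, and then complete. The two ``obstacles'' you flag are exactly what the paper delegates to its references: the comparison of the completed schematic quotient with the formal quotient is \cite[Theorem 3.4.1]{Z2}, and the transfer of the torsor property to adic generic fibers is handled by \cite[Theorem 4.4.1]{Z2} together with Lemma~\ref{Galois-analytification} (note the adic generic fiber is only an open subspace of the analytification, so your parenthetical one-liner needs the orbit argument of that lemma).
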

\begin{proof}
The construction is very easy, we use Theorem~\ref{main-main} to get a finite Galois extension $K\subset K'$, finite extension $K' \subset K''$ (that can be chosen to be Galois if $K$ is perfect), and a finite set of morphisms $g_i\colon \X'_i \to \X_{\O_{K'}}$, $h_i\colon \X''_i \to \X'_{i, \O_{K''}}$ with all the properties from the Theorem~\ref{main-main}. Formal quasi-projectivity of $\X'_i$ over $\Spf \O_{K'}$ shows that we can use \cite[Theorem 3.3.4]{Z2} to see that the geometric quotient $p_i\colon \X'_i \to \X'_i/G_i$ exists as admissible formal $\O_{K'}$-scheme for any $i\in I$. We define \[\X_i\coloneqq \X'_i/G_i\] as the geometric quotient of $\X'_i$ by an action of $G_i$, this naturally comes equipped with a map $\varphi_i\colon \X_i \to \X_{\O_{K'}}$.

Recall that \cite[Proposition 5.2.1]{Z2} implies that each $\X_i$ is an admissible, quasi-compact and quasi-separated formal $\O_{K'}$-scheme. Moreover, \cite[Theorem 4.4.1]{Z2} shows that the geometric quotient of the generic fibre $\X'_{i,K'}/G_i$ exists as adic space topologically finite type over $\Spa(K', \O_{K'})$, and we have a functorial isomorphism $\X'_{i,K'}/G_i \cong (\X'_i/G_i)_{K'}$. Theorem~\ref{main-main} says that the generic fibre $g_{i,K'}\colon \X'_{i,K'} \to \X_{K'}$ induces a $G$-torsor over some open $U_i\subset \X_{K'}$. Therefore 
\[
(\X'_i/G_i)_{K'} \cong \X'_{i,K'}/G_i \cong U_i.
\]
Thus the $U_i$-restriction of the map $p_{i,K'}\colon  \X'_{i,K'} \to \X_{i,K'}$ is a $G_i$-torsor as well. Thus, the only thing we really need to show is that $(\X_i, \varphi_i)_{i\in I}$ is obtained from $\X_{\O_{K'}}$ as a composition of open Zariski coverings and rig-isomorphisms. We do not know how to see this from the first principles, the only way to prove it that we are aware of is to use the explicit construction of each $\X'_i$, as follows. \smallskip

We briefly remind the reader the proof of Theorem~\ref{main-main}. The first step was to reduce the situation to the affine case, then use Elkik's algebraization theorem to reduce to the schematic case. Since passing to affine coverings does no harm for our purpose (proving that $\X_i$ can be obtained as a composition of open Zariski coverings and $C$-modifications) we can assume from the beginning  that $\X$ is affine. Moreover, Lemma~\ref{al-an} and Theorem \cite[Theorem 3.4.1]{Z2} show that it suffices to prove our claim in the schematic case. We change our notations here and denote an algebraization of $\X$ by $X$\footnote{Since we will never use the adic generic fiber of $\X$ in the rest of the proof, the possible confusion of $X$ with the generic fiber $\X_K$ will never occur.}.

The next step in the proof of Theorem~\ref{main-main} was to reduce the affine schematic version to the situation where $X_{K}$ (is affine and) has a structure of a successive $K$-smooth curve fibration. This was done by means of projective $K$-modifications and Zariski open coverings, they also cause no harm for our purpose. So we can assume that $X$ is an affine flat finitely presented $\O_K$-scheme with a structure of a successive $K$-smooth curve fibration
\[
X=X_n \xr{f_n} X_{n-1} \xr{f_{n-1}} \dots \xr{f_2} X_1 \xr{f_1} X_0=\Spec \O_K
\]
such that each morphism $f_i$ is quasi-projective. Then Lemma~\ref{main-lemma-4}(\ref{main-lemma-4-5}) finishes the proof.
\end{proof}

\section{Proof of Lemma~\ref{main-lemma}}\label{pizdec}

We formulate here a more precise version of Lemma~\ref{main-lemma} and give a detailed proof. Even though the idea of the proof is rather easy, it turns out that it requires some patience to give a rigorous proof.

\begin{prop}\label{main-lemma-2} Let $R$ be a valuation ring with fraction field $K$. Let $S$ be the affine scheme $\Spec R$. Suppose that $f\colon X \to S$ is a morphism that can be written as a composition 
\[
X=X_n \xr{f_n} X_{n-1} \xr{f_{n-1}}X_{n-2} \xr{f_{n-2}} \dots \xr{f_{2}}X_1 \xr{f_1}X_0=S
\]
such that each morphism $f_i$ is a relative curve (in the sense of Definition~\ref{def-curve}) that is $K$-smooth. There is a finite set $J$, a set of finite groups $\left(G_{j}\right)_{j\in J}$ indexed by $J$, and commutative diagrams: 

\[
\begin{tikzcd}[column sep=13ex]
X_{n} \arrow{d}{f_n}  & X_{n,j}  \arrow{l}{g_{n,j}} \arrow{d}{h_{n,j}}\\
X_{n-1} \arrow{d}{f_{n-1}} & X_{n-1, j} \arrow{l}{g_{n-1,j}} \arrow{d}{h_{n-1, j}}\\
\vdots \arrow{d}{f_2} & \vdots \arrow{l}{g_{k,j}} \arrow{d}{h_{2, j}}\\
X_1\arrow{d}{f_1} & X_{1, j} \arrow{l}{g_{1,j}} \\
\Spec R &  
\end{tikzcd}
\]
of flat, finitely presented $S\coloneqq \Spec R$-schemes such that 
\begin{enumerate}
\item $X_{k, j}$ admits an $R$-action of the group $G_j$ for any $k\geq 1$ and $j\in J$. Moreover, $g_{k,j}$ is $G_j$-invariant and $h_{k,j}$ is $G_j$-equivariant for any $j\in J, k=1, \dots, n$.
\item The $K$-restriction $(g_{k, j})_{K}\colon (X_{k, j})_{K} \to (X_{k})_{K}$ is a $G_j$-torsor over its (open) image $V_{k,j}$.
\item $h_{k,j}$ is a semi-stable $K$-smooth relative curve for any $j\in J$ and any $k\geq 2$ 
\item $g_{k,j}$ is quasi-projective for any $k\geq 1$ and $j\in J$. Moreover, the map $g_k\colon \sqcup_{j\in J}X_{k,j} \to X_{k}$ is a $K$-\'etale covering\footnote{In particular, each $g_{k,j}$ is finitely presented, see Definition~\ref{U-etale}.} for any $k\geq 1$.
\end{enumerate}
\end{prop}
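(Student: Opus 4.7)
The plan is to argue by induction on $n$, with trivial base case $n=0$ (take $J=\{\ast\}$, $G_\ast = \{1\}$). In the inductive step I would first resolve the bottom curve $f_1\colon X_1 \to X_0 = \Spec R$ using Theorem~\ref{stable-modification-general} (with $U = \Spec K$), then apply the inductive hypothesis to a base-changed upper tower, and finally assemble the result. Implicitly the inductive hypothesis is used over a general quasi-compact quasi-separated base rather than only $\Spec R$, which is justified by the fact that Theorem~\ref{stable-modification-general} (and the whole apparatus of stable modifications) works at that generality.

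Concretely, applying Theorem~\ref{stable-modification-general} to $f_1$ yields a projective $K$-modification $S' \to \Spec R$, an open cover $\bigcup V_a = S'$, finite groups $G_a$, $G_a$-invariant finite faithfully flat $K$-\'etale morphisms $W_a \to V_a$ that are $G_a$-torsors on the generic fibre, and $W_a$-stable modifications $Y_a \to X_1 \times_R W_a$. The $G_a$-action on $W_a$ canonically lifts to $Y_a$ by Temkin's uniqueness of stable modifications over normal bases \cite[Theorem 1.2]{T1}, and on the generic fibre $(Y_a)_K \simeq (X_1)_K \times_K (W_a)_K$ is a $G_a$-torsor over $(X_1)_K$. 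For each $a$, the base-changed tower
\[
X_n \times_{X_1} Y_a \to \cdots \to X_2 \times_{X_1} Y_a \to Y_a
\]
is again a tower of $K$-smooth relative curves, but of length $n-1$ over the new base $Y_a$, so the inductive hypothesis produces an index set $J_a$, groups $H_b^{(a)}$, and diagrams $Z_{m,b}^{(a)} \to X_m \times_{X_1} Y_a$ for $m = 2, \ldots, n$, together with their semi-stability and torsor properties. I would assemble the output by setting $J = \bigsqcup_a J_a$, $G_{(a,b)} = G_a \times H_b^{(a)}$, $X_{1,(a,b)} = Y_a$, and $X_{k,(a,b)} = Z_{k,b}^{(a)}$ for $k \geq 2$, where the $G_a$-action propagates to the higher levels by base change (again using uniqueness) and combines with the $H_b^{(a)}$-action. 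The generic-fibre torsor condition is then verified by applying Lemma~\ref{U-torsor} at each level: the $G_a$-torsor $(Y_a)_K \to (X_1)_K$, base-changed along $(X_k)_K \to (X_1)_K$, composes with the $H_b^{(a)}$-torsor $Z_{k,b}^{(a)} \to X_k \times_{X_1} Y_a$ over its open image to produce the required $G_{(a,b)}$-torsor.

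The hard part is maintaining semi-stability, group equivariance, and the torsor condition simultaneously while the base changes from $\Spec R$ to $Y_a$ in the recursive step. Temkin's uniqueness of stable modifications over normal bases is essential for the canonical propagation of $G_a$-actions, and Lemma~\ref{U-torsor} is precisely what is needed to ensure the torsor structure survives the composition of two layers of group actions. Quasi-projectivity of each $g_{k,(a,b)}$ is preserved throughout, since all constituent morphisms---projective $K$-modifications, open immersions into them, finite faithfully flat covers, and semi-stable modifications (projective by \cite[Theorem 1.1]{T1})---are quasi-projective.
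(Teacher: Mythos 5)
Your induction runs in the opposite direction from the one that closes, and this creates a genuine gap at the bottom of the tower. You resolve $f_1$ first and then feed the length-$(n-1)$ tower over the new base $Y_a$ into the inductive hypothesis. But the recursive output only makes the maps between its \emph{own} levels $\geq 2$ semi-stable; its bottom object $Z_{2,b}^{(a)}$ reaches $X_2\times_{X_1}Y_a$ only through a $K$-\'etale covering assembled from $K$-admissible blow-ups, open immersions and finite flat covers. Hence in your assembly the map $h_{2,(a,b)}\colon Z_{2,b}^{(a)}\to Y_a$ is the composite of that covering with the relative curve $X_2\times_{X_1}Y_a\to Y_a$, whose special fibers the covering has altered arbitrarily, so condition (3) fails at $k=2$. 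Note also that the semi-stability of $Y_a$ over $W_a$ produced by your first step is never required by the statement, while the one semi-stable map you actually need at the bottom is never produced; strengthening the inductive statement to demand semi-stability of $h_{1,j}$ as well just pushes the same defect down one level. The paper runs a \emph{descending} induction precisely to avoid this: it resolves $f_n$ first and $f_2$ last ($f_1$ is never resolved), so that at each step the entire already-resolved upper tower is merely base-changed along the new covering of its base --- which preserves semi-stability of all the upper maps --- while the newly resolved curve supplies the new semi-stable bottom map.

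A second gap is the propagation of the $G_a$-action. The objects $Z_{k,b}^{(a)}$ are not base changes of $G_a$-equivariant data: they come from a recursive run of the construction over $Y_a$, which involves non-canonical choices ($K$-admissible blow-ups of $Y_a$ and of the schemes $X_k\times_{X_1}Y_a$, open covers, and the finite flat coverings furnished by Theorem~\ref{stable-modification-general}). None of these choices is $G_a$-stable, and Temkin's uniqueness theorem governs only the stable-modification steps, and only over normal bases --- which $W_a$ and its successors need not be; the paper devotes a normalization-plus-approximation argument to exactly this point. Without a $G_a$-action on $Z_{k,b}^{(a)}$ you can form neither the group $G_{(a,b)}=G_a\times H_b^{(a)}$ acting on it nor the hypotheses of Lemma~\ref{U-torsor}. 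In the paper's order the previously constructed group acts on the \emph{total space} of the curve currently being resolved, over its base, so the action extends to the fiber product with the new covering of the base for free, and only the (unique) stable modification has to be made equivariant.
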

\begin{proof}

We prove the claim by descending induction on $m\leq n$. Namely, we show that for any $n\geq m \geq 1$ there is a finite set $J_m$, a set of finite groups $\left(G^{(m)}_{j}\right)_{j\in J_m}$ indexed by $J_m$, and commutative diagrams: 
\[
\begin{tikzcd}[column sep=13ex]
X_{n} \arrow{d}{f_n}  & X_{n,j}^{(m)}  \arrow{l}{g_{n,j}^{(m)}} \arrow{d}{h_{n,j}^{(m)}}\\
X_{n-1} \arrow{d}{f_{n-1}} & X_{n-1, j}^{(m)} \arrow{l}{g_{n-1,j}^{(m)}} \arrow{d}{h_{n-1, j}^{(m)}}\\
\vdots \arrow{d}{f_{m+1}} & \vdots \arrow{l}{g_{k+1,j}^{(m)}} \arrow{d}{h_{m+1, j}^{(m)}}\\
X_m &  \arrow{l}{g_{m,j}^{(m)}} X_{m, j}^{(m)}
\end{tikzcd}
\]
of flat, finitely presented $S$-schemes such that 
\begin{itemize}\itemsep0.5em

\item $X^{(m)}_{k, j}$ admits an $R$-action of the group $G^{(m)}_j$ for any $k\geq m$ and $j\in J_m$. Moreover, $g^{(m)}_{k,j}$ is $G^{(m)}_j$-invariant and $h^{(m)}_{k,j}$ is $G^{(m)}_j$-equivariant for any $j\in J_m, k\geq m$.
\item The $K$-restriction $(g^{(m)}_{k, j})_{K}\colon (X^{(m)}_{k, j})_{K} \to (X^{(m)}_{k})_{K}$ is a $G^{(m)}_j$-torsor over its (open) image $V^{(m)}_{k,j}$.
\item $h^{(m)}_{k,j}$ is a semi-stable $K$-smooth relative curve for any $j\in J_m$ and any $k> m$. 
\item $g^{(m)}_{k,j}$ is quasi-projective for any $k\geq m$ and $j\in J_m$. Moreover, the map $g^{(m)}_k\colon \sqcup_{j\in J_m}X^{(m)}_{k,j} \to X_{k}$ is a $K$-\'etale covering for any $k\geq m$.
\end{itemize} 

\begin{rmk} We use the superscript $^{(m)}$ to denote objects that come from the induction hypothesis. We do it to emphasize that the induction argument does change the objects constructed at the previous step. More precisely, given a tower $(X^{(m)}_{k,j})_{k\geq m}$ the resulting tower ``$(X^{(m-1)}_{k, j})_{k\geq m-1}$'' will not be an extension of $(X^{(m)}_{k,j})_{k\geq m}$ one layer lower, but rather some other tower built out of $(X^{(m)}_{k,j})_{k\geq m}$ using Theorem~\ref{stable-modification-general}. In particular, the induction step does ``enlarge'' the sets $J_m$ and groups $G_j^{(m)}$, but it shrinks the opens $V^{(m)}_{k,j}$.
\end{rmk}

The statement is trivial for $m=n$ (we can take a trivial group and the identity morphism $X_n \to X_n$). In particular, the case $n=1$ is settled. Now suppose that $n\geq 2$ and we proved the claim for some $m\geq 2$,  we deduce the claim for $m-1\geq 1$ from that. We divide the proof into several steps for the convenience of the reader. \smallskip 

{\it Step 1}: We first use the induction hypothesis to find a commutative diagram
\[
\begin{tikzcd}[column sep=13ex]
X_{n} \arrow{d}{f_n}  & X_{n,j}^{(m)}  \arrow{l}{g_{n,j}^{(m)}} \arrow{d}{h_{n,j}^{(m)}}\\
X_{n-1} \arrow{d}{f_{n-1}} & X_{n-1, j}^{(m)} \arrow{l}{g_{n-1,j}^{(m)}} \arrow{d}{h_{n-1, j}^{(m)}}\\
\vdots \arrow{d}{f_{m+1}} & \vdots \arrow{l}{g_{k,j}^{(m)}} \arrow{d}{h_{m+1, j}^{(m)}}\\
X_m \arrow{d}{f_m} &  \arrow{l}{g_{m,j}^{(m)}} X_{m, j}^{(m)} \\
X_{m-1} & 
\end{tikzcd}
\]
with all the desired properties of $g_{k, j}^{(m)}$ and $h_{k, j}^{(m)}$. We now work separately with each $j\in J_m$, so we fix one such $j$ for the rest of the induction argument. \\

{\it We ``modify'' $f_m$}. Consider the finitely presented morphism $\a_{m, j}\colon X_{m,j}^{(m)} \to X_{m-1}$ obtained as the composition  $f_m \circ g^{(m)}_{m,j}$, this morphism is not necessarilly flat, but $(\a_{m, j})_K$ is a smooth relative curve. So \cite[Corollaire 5.7.10]{RG} guarantees that there is an $K$-admissible blow-up $r_j\colon X'_{m-1, j} \to X_{m-1}$ such that the strict transform of $\a_{m, j}$ along the map $r_j$ is an $S$-morphism $\a'_{m, j}\colon X'_{m,j} \to X'_{m-1, j}$ that is flat of pure relative dimension $1$ and $K$-smooth. We note that $X'_{m-1, j}$ is flat and finitely presented over $S$ by Lemma~\ref{admissible} (so $\a'_{m,j}$ is finitely presented), and the map $r_j$ is a projective $K$-modification by Lemma~\ref{cofinal}. As a strict transform of a $K$-admissible blow-up is a $K$-admissible blow-up, we conclude that $r_{m, j}\colon X'_{m, j}\to X_{m, j}^{(m)}$ is a finitely presented, projective $K$-modification ($r_{m,j}$ is finitely presented since it is an $S$-map between finitely presented $S$-schemes). \smallskip

We note that each $X_{m,j}^{(m)}$ admits a $G_{j}^{(m)}$-action over $X'_{m-1, j}$. Indeed, the morphism $\a_{m, j}$ is $G_j^{(m)}$-invariant for any $j\in J_m$ since $g_{m, j}^{(m)}$ is. Therefore, the strict transform $X'_{m, j}$ admits a $G_j^{(m)}$-action such that $\a'_{m, j}$ is $G_j^{(m)}$-invariant and $r_{m ,j}$ is $G_j^{(m)}$-equivariant for all $j\in J_m$. \\

{\it We now ``modify'' $f_k$ for $n\geq k\geq m+1$\footnote{This case does not arise if $m=n$, but this case was already considered above.}}. We consider the morphisms 
\[
\a_{k, j}\colon X^{(m)}_{k, j} \to X_{m-1} \text{ defined as } f_m \circ g^{(m)}_{m,j} \circ h^{(m)}_{m+1, j}\circ \dots \circ h^{(m)}_{k, j}
\]
and their strict transforms along the map $r_j\colon X'_{m-1, j} \to X_{m-1}$, denoted as $\a_{k, j}'\colon X'_{k, j} \to X'_{m-1, j}$. We need to note two things. Firstly, we recall that strict transform is defined as the schematic closure of the $K$-fiber 
\[
(X_{k,j}^{(m)}\times_{X_{m-1}} X'_{m-1, j})_K
\]
in the total fibre product $X_{k,j}^{(m)}\times_{X_{m-1}} X'_{m-1, j}$, so the action of $G^{(m)}_{j}$ on $X^{(m)}_{k, j}$ defines the evident action on $X'_{k, j}$. Moreover, this shows that $X'_{k, j}$ is flat and finite type over $R$, so it is finitely presented by Lemma~\ref{admissible}. Secondly, we note that we can consider $X'_{k, j}$ as the strict transform of $X^{(m)}_{k, j}$ along the $K$-admissible blow-up $X'_{m,j} \to X_{m,j}^{(m)}$. Thus all the $S$-maps $X'_{k,j} \to X_{k,j}^{(m)}$ between finitely presented $S$-schemes are $K$-admissible blow-ups; in particular, they are projective, finitely presented $K$-modifications. Moreover, by the assumption all the morphisms $X^{(m)}_{k, j} \to X^{(m)}_{m, j}$ are successive semi-stable curve fibrations, so they are flat. \smallskip

Therefore, we have natural isomorphisms 
\[
X'_{k, j} \cong X^{(m)}_{k, j} \times_{X^{(m)}_{m, j}} X'_{m, j}
\]
for any $k\geq m$. This implies that all the morphisms 
\[
h'_{k, j}\colon X'_{k, j} \to X'_{k-1, j}
\] 
arising from $h_{k,j}^{(m)}$ are semi-stable curve fibrations. As the fiber product, each $X'_{k, j}$ admits an action of $G_{j}^{(m)}$ such that $r_{k, j}$ and $h'_{k, j}$ are $G_j^{(m)}$-equivariant. In particular, the morphisms $g'_{k,j}\coloneqq g^{(m)}_{k,j} \circ r_{k,j}$ are $G^{(m)}_j$-invariant. 
\\




To sum up, we obtain the following commutative diagram: 

 \[
\begin{tikzcd}[column sep=13ex, row sep=large]
X_n \arrow{d}{f_n}  & X^{(m)}_{n,j}  \arrow{l}{g^{(m)}_{n,j}} \arrow{d}{h^{(m)}_{n,j}} &
 X'_{n,j} \arrow{l}{r_{n,j}} \arrow{d}{h'_{n,j}}\\
X_{n-1} \arrow{d}{f_{n-1}} & X^{(m)}_{n-1, j} \arrow{l}{g^{(m)}_{n-1,j}} \arrow{d}{h^{(m)}_{n-1, j}} & 
X'_{n-1, j} \arrow{l}{r_{n-1, j}} \arrow{d}{h'_{n-1, j}}\\
\vdots \arrow{d}{f_{m+1}} & \vdots \arrow{l}{g^{(m)}_{k,j}} \arrow{d}{h^{(m)}_{m+1, j}} & 
\vdots \arrow{l}{r_{k, j}} \arrow{d}{h'_{m+1, j}} \\
X_m \arrow[rd, swap, "f_{m}"] & X^{(m)}_{m,j} \arrow{l}{g^{(m)}_{m,j}}\arrow{d}{\a_{m,j}} &
X'_{m,j} \arrow{l}{r_{m, j}} \arrow{d}{\a'_{m,j}}\\
& X_{m-1} & X'_{m-1, j}\arrow{l}{r_j}
\end{tikzcd}
\]

Here are the relevant properties of this construction:  
\begin{itemize}\itemsep0.5em

\item All the schemes in this diagram are flat and finitely presented over $S$.
\item $X'_{k,j}$ admits an action of $G^{(m)}_{j}$ for any $k\geq m$. All the ``horizontal'' morphisms $r_{k,j}$ are $G^{(m)}_{j}$-equivariant. In particular, the morphisms $g'_{k,j}= g^{(m)}_{k,j} \circ r_{k,j}$ are $G^{(m)}_j$-invariant. 
\item for all $k\geq m+1$, $h'_{k, j}$ is $G_{j}^{(m)}$-equivariant and defines a semi-stable $K$-smooth curve fibration.
\item The $K$-restriction $\left(g'_{k, j}\right)_K\colon  \left(X'_{k,j}\right)_K \to \left(X_{k}\right)_K$ is a $G^{(m)}_{j}$-torsor over its image $V^{(m)}_{k, j}$ for any $k\geq m$. That follows from the fact that $r_{k,j}$ is a $K$-modification for any $k\geq m$.
\item For each $k\geq m$, the morphism 
\[
\bigsqcup_{j\in J_m} g'_{k, j}\colon \bigsqcup_{j \in J_m} X^{'}_{k, j} \to X_k
\] 
is a quasi-projective $K$-\'etale covering (it is a composition of a $K$-\'etale covering and an $K$-admissible blow-up) for all $k\geq m$.
\item The morphism $\a'_{m, j} \colon X'_{m, j} \to X'_{m-1, j}$ is a relative curve fibration that is also $G^{(m)}_{j}$-invariant. 
\end{itemize}  
\medskip

{\it Step 2: We ``resolve'' each $\a_{m,j}'$.} We note that the $K$-restriction of $\a'_{m,j}$ is naturally identified with the $K$-restriction of $\a_{m, j}$ that is, in turn, the composition of $g^{(m)}_{m,j}$ and $f_m$. By assumption $g^{(m)}_{m,j}$ is $K$-\'etale and $f_{m}$ is $K$-smooth, so $\a'_{m, j}$ is $K$-smooth as well. So we can apply Theorem~\ref{stable-modification-general} (together with Remark~\ref{stable-modification-general-U-covering}) to $\a'_{m, j}$ in the role of $f$ there to obtain a non-empty finite set $I$, a set of finite groups $\left(H_i\right)_{i\in I}$ indexed by $I$, and $H_i$-invariant, quasi-projective $K$-\'etale morphisms
\[
\psi_{m-1, j, i}\colon X_{m-1, j, i} \to X'_{m-1, j}\footnote{$X_{m-1, j, i}$ plays the role of $W'_i$ in Theorem~\ref{stable-modification-general} and we suppress $V'_i$ from there.}
\]
and $X_{m-1,j,i}$-stable (projective) $K$-modifications 
\[
\beta_{m, j, i}\colon X_{m, j, i} \to X'_{m, j, i} \coloneqq X'_{m,j}\times_{X'_{m-1, j}}X_{m-1, j, i}
\]
such that the diagram
\begin{equation}\label{eqn:1}
	\begin{tikzcd}[column sep=13ex, row sep=8ex]
	X^{(m)}_{m, j} \arrow{d}{\a_{m, j}}  & X'_{m, j}  \arrow{l}{r_{m, j}} \arrow{d}{\a'_{m,j}} \drar[phantom, "\boxed{}"]  &   X'_{m, j, i} \arrow{l}{\psi_{m, j, i}}\arrow{d} & X_{m,j, i} \arrow{l}{\beta_{m, j, i}} \arrow{ld}{\text{semi-stable curve}}\\
	X_{m-1} &  X'_{m-1, j} \arrow{l}{r_j} & X_{m-1, j, i} \arrow{l}{\psi_{m-1, j, i}}
	\end{tikzcd}
\end{equation}
is commutative and satisfies the following properties:
\begin{itemize}\itemsep0.5em
\item For any $i\in I, j\in J_m$, $\psi_{m-1, j, i}$ is a composition $c\circ j \circ a$, where $a$ is a projective $K$-modification, $j$ an open immersion, and $c$ a finite, finitely presented, faithfully flat $H_i$-invariant morphism, whose $K$-restriction becomes an $H_i$-torsor.  \smallskip

Moreover, the map  $\psi_{m-1, j}\colon \bigsqcup_{i\in I} X_{m-1, j, i} \to X'_{m-1, j}$ is a quasi-projective $K$-\'etale covering, so the natural morphism \[
\psi_{m-1} \colon \bigsqcup_{j\in J_m, i\in I} X_{m-1, j, i} \to X_{m-1}
\] is also a quasi-projective $K$-\'etale covering.

\item The $K$-restriction $\left(\psi_{m-1, j, i}\right)_K \colon  \left(X_{m-1, j, i}\right)_K \to \left(X'_{m-1, j}\right)_K$ is an $H_{i}$-torsor over its (open) image. 
\end{itemize} 
We note that all schemes in this diagram are flat and finite type over $S$, so they are finitely presented by Lemma~\ref{admissible}.  \smallskip

Now observe that $X'_{m, j, i}$ admits an action of $G_j^{(m)}\times H_i$ via base change (since $\a'_{m, j}$ is $G_j^{(m)}$-invariant and $\psi_{m-1, j, i}$ is $H_i$-invariant), so the morphism $\psi_{m, j, i}$ is $G_j^{(m)}$-equivariant and the morphism $X'_{m, j,i} \to X_{m-1, j, i}$ is $H_i$-equivariant (see the Cartesian square in (\ref{eqn:1})). We now apply Lemma~\ref{U-torsor} to the morphisms 
\[
\begin{tikzcd}[column sep=13ex]
(X_m)_K & \left(X'_{m, j}\right)_K \arrow{l}{\left(g'_{m, j}\right)_K} \arrow{d}{\left(\a'_{m, j}\right)_K} & \left(X'_{m, j, i}\right)_K\arrow{l}{\left(\psi_{m ,j, i}\right)_K} \arrow{d} \\
& \left(X'_{m-1, j}\right)_K & \left(X_{k-1,j_{k-1}}\right)_K \arrow{l}{\left(\psi_{m-1, j, i}\right)_K}
\end{tikzcd}
\]
to obtain that $\left(X'_{m, j, i}\right)_K \to \left(X_m\right)_K$ is a $G^{(m)}_j\times H_i$-torsor over its open image $V_{m, j, i}$. Also, note that $\bigsqcup_{i\in I} X'_{m, j, i} \to X^{(m)}_{m, j}$ is a quasi-projective $K$-\'etale covering as a ``base change over $K$'' of a quasi-projective $K$-\'etale covering. Similarly, projectivity of the morphisms $\beta_{m, j, i}$ implies that the map $\bigsqcup_{i\in I} X_{m, j, i} \to X^{(m)}_{m, j}$ is a quasi-projective $K$-\'etale covering for any $j\in J_m$.\smallskip


{\it Now we do the key step: we want to lift the action of $G^{(m)}_j\times H_i$ on $X'_{m,j, i}$ to an action on $X_{m, j, i}$}. A priori, there is no reason we should be able to lift this action. We solve this problem by {\it changing} $X_{m, j,i}$.  Temkin proved the remarkable uniqueness statement \cite[Theorem 1.2]{T1} that over an irreducible {\it normal} base with generic point $\eta$, a {\it stable} $k(\eta)$-modification of a relative curve is unique. So the idea is to replace $X_{m-1, j, i}$ with the normalization $\widetilde{X}_{m, j, i}$ inside its generic fiber. However, there are two possible issues: the normalization may not be finitely presented over $S$, and it may not be integral. We resolve the first problem by an approximation argument, and we resolve the second problem by showing that $\widetilde{X}_{m-1, j, i}$ is a disjoint union of open integral, normal subschemes.\smallskip

We note that the morphism $\widetilde{X}_{m-1, j, i} \to X_{m-1, j, i}$ an isomorphism over $K$, $\widetilde{X}_{m-1, j, i}$ is $S$-flat, and the action of $H_i$ on $X_{m-1, j, i}$ lifts to an action on $\widetilde{X}_{m-1, j, i}$. Therefore, the going-down lemma \cite[Theorem 9.5]{M1} applied to an affine open covering of $\widetilde{X}_{m-1, j, i}$ implies that all generic points of $\widetilde{X}_{m-1, j, i}$ lie inside 
\[
\left(\widetilde{X}_{m-1, j, i}\right)_K \simeq \left(X_{m-1, j, i}\right)_K
\]
that is $K$-smooth and noetherian. In particular, $\widetilde{X}_{m-1, j, i}$ has finite number of irreducible components and normal. Therefore,  \cite[\href{https://stacks.math.columbia.edu/tag/0357}{Tag 0357}]{stacks-project} implies that $\widetilde{X}_{m-1, j, i}$ is a disjoint union of a finite number of normal integral (open) subschemes. Thus, we can apply \cite[Corollary 1.3]{T1} to all irreducible components of $\widetilde{X}_{m-1, j, i}$ to get that 
\[
\widetilde{X}_{m, j, i}\coloneqq X_{m, j, i}\times_{X_{m-1, j, i}} \widetilde{X}_{m-1, j, i}
\]
is the unique {\it stable} modification of the relative curve
\[
\widetilde{X'}_{m, j, i}\coloneqq X'_{m, j, i}\times_{X_{m-1, j, i}} \widetilde{X}_{m-1, j, i} \  (= X'_{m, j}\times_{X'_{m-1, j}} \widetilde{X}_{m-1, j, i}). 
\]
This allows us to lift the action of $G^{(m)}_j\times H_i$ on $\widetilde{X'}_{m, j, i}$ to an action on $\widetilde{X}_{m, j, i}$ for free\footnote{We define action of $G^{(m)}_j\times H_i$ on $\widetilde{X'}_{m, j, i}$ via base change.}. Namely, the uniqueness result allows us to lift the action of each $g\in G^{(m)}_j\times H_i$ separately, and $S$-flatness of $\widetilde{X}_{m, j, i}$ and separatedness of $\widetilde{\beta}_{m, j, i}\colon \widetilde{X}_{m, j, i} \to \widetilde{X'}_{m, j, i}$ guarantee that this procedure defines an action of $G^{(m)}_j\times H_i$ and that $\widetilde{\beta}_{m, j, i}$ is automatically $G^{(m)}_j\times H_i$-equivariant. Clearly, its $K$-fiber is a torsor over the same 
\[
V_{m, j, i} \subset (X_{m-1, j, i})_K \simeq (\widetilde{X}_{m-1, j, i})_K.
\]

Now we point out that in this non-noetherian situation, the normalization maps may not be finite, so $\widetilde{X}_{m-1 ,j, i}$ (and hence $\widetilde{X}_{m, j, i}$) may not be finite type over $S$. But we note that the morphism $\widetilde{X}_{m, j, i} \to X_{m, j, i}$ is at least integral, so it corresponds to a quasi-coherent $\O_{X_{m, j, i}}$-algebra $\mathcal{A}$ integral over $\O_{X_{m, j, i}}$. \cite[I, 6.9.15]{EGA} gives that $\mathcal{A}$ is a filtered colimit of {\it finite} quasi-coherent $\O_{X_{m, j, i}}$-subalgebras $\mathcal{A}_i$. Note that all these $\O_{X_{m, j, i}}$-algebras are $S$-flat as they are torsion-free, so Lemma~\ref{admissible} implies that they are finite, finitely presented. In other words, $\widetilde{X}_{m, j, i}$ is a filtered colimit of finite, finitely presented, $K$-modifications $X_{m ,j, i}^{\lambda}$ of $X_{m, j, i}$ with affine transition morphisms. Therefore, a standard approximation argument shows that the action $G^{(m)}_j\times H_i$ on $X'_{m ,j,i}$  over $S$ lifts to an action on some $X_{m ,j, i}^{\lambda}$ that we may rename as $X_{m, j, i}$ for our purposes. \smallskip

{\it Step 3: The induction step. We extend our tower one layer lower.} We roughly perform the base change of the initial tower $X^{(m)}_{k, j}$ along the maps $X_{m, j, i} \to X^{(m)}_{m, j}$ to get the desired result. Let us do everything carefully. We consider the following commutative diagram:

\[
\begin{tikzcd}[column sep=12ex, row sep=8ex]
X_n \arrow{d}{f_n}    & X^{(m)}_{n,j}    \arrow{l}{} \arrow{d}{h^{(m)}_{n,j}} &   
 X'_{n, j} \arrow{l}{r_{n,j}} \arrow{d}{h'_{n,j}} &    X'_{n,j, i} \arrow{l}{\psi_{n, j, i}} \arrow{d}{h'_{n, j, i}}  & X_{n,j, i} \arrow{l}{} \arrow{d}{h_{n, j, i}}\\ 
 X_{n-1} \arrow{d}{f_{n-1}}    &    X^{(m)}_{n-1, j} \arrow{l}{} \arrow{d}{h^{(m)}_{n-1, j}} &    
 X'_{n-1, j} \arrow{l}{r_{n-1, j}} \arrow{d}{h'_{n-1, j}}&     X'_{n-1,j, i} \arrow{l}{\psi_{n-1, j, i}} \arrow{d}{h'_{{n-1}, j, i}}  &  X_{n-1, j, i} \arrow{l}{} \arrow{d}{h_{{n-1},j, i}}\\
\vdots \arrow{d}{f_{m+1}}   & \vdots \arrow{l}{} \arrow{d}{h^{(m)}_{m+1, j}}    & 
\vdots \arrow{l}{r_{k, j}} \arrow{d}{h'_{m+1, j}}    & \vdots \arrow{l}{\psi_{k, j, i}}\arrow{d}{h'_{m+1, j, i}}   & \vdots \arrow{l}{}\arrow{d}{h_{m+1, j, i}}\\
X_m \arrow{rd}{f_{m}}&   X^{(m)}_{m, j} \drar[phantom, "\star"] \arrow{l} \arrow{d}{\a_{m, j}}  &    X'_{m, j}  \arrow{l}{r_{m, j}} \arrow{d}{\a'_{m,j}} & X'_{m, j, i} \arrow{l}{\psi_{m, j, i}}\arrow{d} & X_{m, j, i} \arrow{l}{\beta_{m, j, i}} \arrow[ld, swap, "h_{m, j,i }"] \arrow{ld}{\text{semi-stable curve}}\\
& X_{m-1} & X'_{m-1, j} \arrow{l}{r_j} & X_{m-1, j, i} \arrow{l}{\psi_{m-1, j, i}}
\end{tikzcd}
\]
where all the squares except for the left column and the ``starred'' square are Cartesian\footnote{The maps $h_{k, j, i}$ along the right side are defined as the base change of $h'_{k, j ,i}$ for $k>m$.}. We note all the schemes $X'_{k, j, i}$ and $X_{k, j, i}$ are flat, finitely presented schemes over $S$ by construction (we use here that all $f_k$ are flat). Moreover, 
we recall that all the morphisms $h'_{k,j}$ are $G^{(m)}_j$-equivariant, and the morphism $\psi_{m, j, i}$ is $G^{(m)}_j$-equivariant and $H_i$-invariant. This allows us to define via base change an action of $G^{(m)}_j\times H_i$ over $S$ on each $X'_{k, j, i}$ such that all the maps $h'_{k,j, i}$ are $G^{(m)}_j\times H_i$-equivariant and the maps $\psi_{k, j, i}$ are $G^{(m)}_j$-equivariant and $H_i$-invariant. \smallskip

The same construction defines an action of $G^{(m)}_j\times H_i$ over $S$ on each $X_{k, j, i}$ (using that we already have an action on $X_{m, j, i}$) so that all the maps $\beta_{k, j, i}$ and $h_{k, j, i}$ are $G^{(m)}_j\times H_i$-equivariant. As the map $X_{k, j, i} \to X'_{k, j}$ is the base change of the quasi-projective $K$-\'etale map $X_{m, j, i} \to X'_{m, j}$, we conclude that the morphisms $X_{k, j, i} \to X'_{k, j}$ are quasi-projective $K$-\'etale morphisms. And the map
\[
\bigsqcup_{j \in J_m, i\in I} X_{k, j, i} \to X_k
\]
is a quasi-projective $K$-\'etale covering for any $k\geq m-1$. We also note that $h_{k, j, i}$ is a semi-stable, $K$-smooth curve fibration for any $k\geq m$. Finally, we observe that, for $k\geq m+1$, the $K$-restriction $(X_{k, j, i})_K \to (X_{k})_K$ is a $G^{(m)}_j\times H_i$-torsor over its open image $V_{k, j, i}$ as it is a base change of the map $X_{m, j, i} \to X_m$ that shares the same property. \\

We are now ready to define the set $J_{m-1}$ and all other corresponding objects that we need in the inductive step for $m-1$. Namely, we define $J_{m-1}$ to be the direct product $J_m\times I$, and the set of finite groups $\left\{G^{(m-1)}_{j'}\right\}_{j'\in J_{m-1}}$ to be the set of products $\left\{G^{(m)}_j\times H_i\right\}_{j\in J_m, i\in I}$. We also define a flat, finitely presented $S$-scheme $X^{(m-1)}_{k, j'}\coloneqq X_{k, j, i}$ where $j'=\left(j, i\right)$ for any $k\geq m-1$ and $j'\in J_{m-1}$. The maps $g^{(m-1)}_{k, j'}\colon X^{(m-1)}_{k, j'} \to X_k$ are the maps $X_{k, j, i} \to X_k$ in the diagram above, and the maps $h^{(m-1)}_{k, j'}\colon X^{(m-1)}_{k, j'} \to X^{(m-1)}_{k-1, j'}$ are defined as corresponding maps $h_{k, j, i}$ for $j'=\left(j, i\right)$ and any $k\geq m$. The discussion above shows that the diagrams (indexed by $j'\in J_{m-1}$):
\[
\begin{tikzcd}[column sep=13ex]
X_{n} \arrow{d}{f_n}  & X_{n,j'}^{(m-1)}  \arrow{l}{g_{n,j'}^{(m-1)}} \arrow{d}{h_{n,j'}^{(m-1)}}\\
X_{n-1} \arrow{d}{f_{n-1}} & X_{n-1, j'}^{(m-1)} \arrow{l}{g_{n-1,j'}^{(m-1)}} \arrow{d}{h_{n-1, j'}^{(m-1)}}\\
\vdots \arrow{d}{f_{m+1}} & \vdots \arrow{l}{g_{k+1,j'}^{(m-1)}} \arrow{d}{h_{m+1, j'}^{(m-1)}}\\
X_m \arrow{d}{f_{m}}&  \arrow{l}{g_{m,j'}^{(m-1)}} X_{m, j'}^{(m-1)} \arrow{d}{h^{(m-1)}_{m, j'}} \\
X_{m-1} & \arrow{l}{g_{m-1,j'}^{(m-1)}} X_{m-1, j'}^{(m-1)}
\end{tikzcd}
\]
satisfy all the requirements. This finishes the induction argument. 
\end{proof}


\begin{cor}\label{main-lemma-3} Under the notation of Proposition~\ref{main-lemma-2}, suppose that each $f_i$ is quasi-projective (so each $X_{n, j}$ is quasi-projective over $R$, too). Then 
\begin{enumerate}
    \item the geometric quotient $X_{n,j}/G_j$ exists as a flat, finitely presented $S$-scheme for any $j\in J$.
    \item If we define $\ov{g_{n,j}}\colon X_{n, j}/G_j \to X_n$  as the map induced by $g_{n,j}$ for any $j\in J$. Then we can choose $J, G_j, X_{n,j}, \dots$ in a way that the set $\left(X_{n,j}/G_{j}, \ \ov{g_{n,j}}\right)_{j\in J}$ can be obtained from $X_{n}$ as a composition of Zariski open coverings and $K$-modifications (in the sense of Definition~\ref{comp-alg}).
\end{enumerate}
\end{cor}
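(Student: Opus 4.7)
First, I would establish claim (1) by invoking \cite[Theorem 2.2.6]{Z2}, which produces geometric quotients of quasi-projective flat locally finite type $\O_K$-schemes by finite group actions. For this I must verify that each $X_{n,j}$ is quasi-projective over $S$. Inspecting the induction in the proof of Proposition~\ref{main-lemma-2}, $X_{n,j}$ is constructed from $X_n$ by a finite sequence of (a) $K$-admissible blow-ups (projective by Lemma~\ref{cofinal}), (b) base changes along the quasi-projective $K$-\'etale morphisms $\psi_{m-1,j,i}$ coming from Theorem~\ref{stable-modification-general}, and (c) stable modifications $\beta_{n,j,i}$, which are projective by \cite[Theorem 1.1]{T1}. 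Since $X_n$ is quasi-projective over $S$ (the $f_i$ being quasi-projective) and each of these operations preserves quasi-projectivity, flatness, and finite presentation over $S$, the quotients $X_{n,j}/G_j$ exist as flat, finitely presented $S$-schemes.

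For part (2), my plan is to strengthen the inductive hypothesis in the proof of Proposition~\ref{main-lemma-2} by adding the clause: \emph{at each stage $m$, the set $\Phi^{(m)} \coloneqq (X^{(m)}_{n,j}/G^{(m)}_j,\, \ov{g^{(m)}_{n,j}})_{j\in J_m}$ is obtained from $X_n$ as a composition of open Zariski coverings and $K$-modifications}. The base case $m=n$ is trivial since $J_n$ is a singleton with trivial group and identity map. The key technical lemma feeding the induction is the following: \emph{if $f\colon Y \to Z$ is a $G$-equivariant proper morphism of flat, finitely presented, quasi-projective $S$-schemes (so the quotients exist by part (1)) such that $f_K\colon Y_K \to Z_K$ is a map whose restriction to both sides is a $G$-torsor over a common open $V \subset Z_K$, then $Y/G \to Z/G$ is a $K$-modification.} Indeed, $(Y/G)_K \cong V \cong (Z/G)_K$ via the torsor quotient, so the induced map is an isomorphism on $K$-fibers, and properness descends to the quotients by finiteness of $G$.

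The inductive step from $m$ to $m-1$ then amounts to analyzing the three types of operations that happen at the top of the tower in the proof of Proposition~\ref{main-lemma-2}. First, the $G^{(m)}_j$-equivariant $K$-admissible blow-up $r_{n,j}\colon X'_{n,j}\to X^{(m)}_{n,j}$ yields a $K$-modification on quotients by the lemma above. Second, the base change along $\psi_{m-1,j,i}$ factors (by Theorem~\ref{stable-modification-general}) as $X_{m-1,j,i} \to V'_i \hookrightarrow S' \to X'_{m-1,j}$, where the first map is finite, faithfully flat, $H_i$-invariant and an $H_i$-torsor on $K$-fibers, the middle is an open immersion into a member of the Zariski cover $\bigcup_i V'_i = S'$, and the last is a projective $K$-modification; base-changing up to level $n$ and quotienting by $G^{(m)}_j\times H_i$ produces, respectively, a $K$-modification (via the lemma applied to the torsor-on-$K$ piece), the Zariski-cover operation of Definition~\ref{comp-alg} (from the $V'_i$), and a $K$-modification (base change of the projective $K$-modification, lemma again). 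Third, the $G^{(m)}_j\times H_i$-equivariant stable modification $\beta_{n,j,i}$ is a $K$-modification and descends to a $K$-modification on quotients. The principal technical hurdle is confirming at each intermediate stage that the intermediate quotients are themselves admissible flat quasi-projective $S$-schemes so that the composition in Definition~\ref{comp-alg} makes sense; this is handled by applying part (1) to each intermediate scheme, every one of which is built from $X_n$ by the same kinds of quasi-projectivity-preserving operations as above.
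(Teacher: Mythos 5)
Your proposal is correct and follows essentially the same route as the paper: part (1) via \cite[Theorem 2.2.6]{Z2} after checking quasi-projectivity through the construction, and part (2) by strengthening the descending induction of Proposition~\ref{main-lemma-2}, decomposing $\psi_{m-1,j,i}$ into a projective $K$-modification, an open immersion into a Zariski cover, and a finite $H_i$-invariant map that is a torsor on generic fibers, and descending each piece to the quotients. The only difference is cosmetic: where you package the descent step as a single lemma on equivariant proper maps and torsors, the paper repeatedly invokes \cite[Proposition 5.1.2]{Z2} (and your lemma's hypothesis should be read as both $Y_K$ and $Z_K$ being $G$-torsors over a common open in a common base, which is how you in fact apply it).
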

\begin{proof}
Since $X_{n, j}$ is quasi-projective over $R$, we see by \cite[Theorem 2.2.6 and Proposition 5.1.1]{Z2}  that $X_{n ,j}/G_{j}$ exists as a flat and finitely presented $S$-scheme for any $j\in J$. \smallskip 

Now we show that the schemes $X_{n,j}$ constructed in the {\it proof} of Proposition~\ref{main-lemma-2} satisfy the property that  $\left(X_{n,j}/G_{j}, \ov{g_{n,j}}\right)_{j\in J}$ can be obtained from $X_{n}$ as a composition of Zariski open coverings and $K$-modifications. Throughout the proof we use the same notation as in the proof of Proposition~\ref{main-lemma-2}; we refer to Steps $2$ and $3$ therein for the construction of $X^{(m)}_{k, j}$. \smallskip 

We show the claim by descending induction on $n \geq m\geq 1$. Namely, we start the induction by declaring $J_n\coloneqq \{\varnothing \}, G_{\varnothing}\coloneqq \{e\}$ and the map $X^{(n)}_{n, \varnothing} \to X_n$ to be the identity map $X_n \to X_n$. This clearly satisfies the condition that $X^{(n)}_{n, \varnothing}/G_{\varnothing}$ can be obtained from $X_n$ as a composition of Zariski open coverings and $K$-modifications. The claim now is that the induction argument from the proof of Proposition~\ref{main-lemma-2} that builds up $\left\{X^{(m-1)}_{n, j'}\right\}$ out of $\left\{X^{(m)}_{n, j}\right\}$ preserves this property. We suppose that this property holds for some $n \geq m>1$, and show it for $m-1$. \\

The induction argument constructs the schemes $X^{(m-1)}_{n, j'}$ for $j'\in J_{m-1}=J_m\times I$ in a rather specific way. We are going to use this explicit construction. First of all, we note that each $X^{(m-1)}_{n, j'}$ is a quasi-projective, finitely presented, flat $S$-scheme. Thus, \cite[Theorem 2.2.6 and Proposition 5.1.1]{Z2} imply that $X^{(m-1)}_{n ,j'}/G^{(m-1)}_{j'}$ exists as a flat and finitely presented $S$-scheme. Moreover, the $G^{(m-1)}_{j'}$-invariant map $g_{n,j'}^{(m-1)}\colon X_{n,j'}^{(m-1)} \to X_n$ induces a map 
\[
\ov{g_{n,j'}^{(m-1)}}\colon X_{n,j'}^{(m-1)}/G^{(m-1)}_{j'} \to X_n
\]
We claim that that the set $\left(X_{n,j'}^{(m-1)}/G^{(m-1)}_{j'}, \ov{g_{n,j'}^{(m-1)}}\right)_{j'\in J_{m-1}}$ can be obtained from $X_n$ as a composition of $K$-modifications and open Zariski coverings (in the sense of Definition~\ref{comp-alg}). We take a closer look at the construction of this morphism in Steps $2$ and $3$. \smallskip

 Firstly, we recall that $G^{(m-1)}_j=G^{(m)}_j\times H_i$, and the morphism $\psi_{m-1, j, i}\colon X_{m-1, j, i} \to X'_{m-1, j}$ was obtained as a composition of a (projective) $K$-modification followed by an open immersion, followed by a finite, finitely presented $H_i$-invariant morphism that becomes an $H_i$-torsor on generic fibers\footnote{We note that last map was obtained as a composition of a faithfully flat, finitely presented morphism and a finitely presented approximation of the normalization in the generic fiber.}. So we decompose the morphism $\psi_{m-1, j, i}$ as follows:
\[
X'_{m-1, j} \xleftarrow{a} X''_{m-1, j} \xleftarrow{j} U_{m-1, j, i} \xleftarrow{c} X_{m-1, j, i}
\] 
where $a$ is a projective $K$-modification, $j$ is an open immersion, and $c$ is a finite, finitely presented $H_i$-invariant morphism $c$ that becomes an $H_i$-torsor over the generic fiber $(U_{m-1, j, i})_K$. Now we draw the diagram that defines the scheme $X_{n,j'}^{(m-1)}=X_{n, j, i}$ for $j'=\left(j, i \right)$:

\[
\begin{tikzcd}[column sep=8ex, row sep=8ex]
 X^{(m)}_{n,j}   \arrow{d}{} &
 X'_{n,j}  \arrow{l}{r_{n,j}} \arrow{d}{}  & X''_{n, j, i} \arrow{l}{a_n} \arrow{d} & U_{n, j, i} \arrow{l}{j_n} \arrow{d} & X'_{n,j, i} \arrow{l}{c_n} \arrow{d}{} & X_{n,j, i} \arrow{l}{\beta_{n,j,i}} \arrow{d}{}\\ 
X^{(m)}_{m, j} \drar[phantom, "\star"]  \arrow{d}{\a_{m, j}}  & X'_{m, j}  \arrow{l}{r_{m, j}} \arrow{d}{\a'_{m, j}} & X''_{m, j, i} \arrow{l}{a_m} \arrow{d} & U_{m, j, i} \arrow{l}{j_m} \arrow{d} & 
X'_{m, j, i} \arrow{l}{c_{m}}\arrow{d} & X_{m, j, i} \arrow{l}{\beta_{m, j, i}} \arrow{ld}{\text{semi-stable curve}}\\
X_{m-1} & X'_{m-1, j} \arrow{l}{r} &  X''_{m-1, j} \arrow{l}{a}  & U_{m-1, j, i} \arrow{l}{j} &  X_{m-1, j, i} \arrow{l}{c}
\end{tikzcd}
\]
where all the squares except for the ``starred'' one are Cartesian. All the schemes in this diagram are quasi-projective, flat and finitely presented over $S$. Thus, \cite[Theorem 2.2.6 and Proposition 5.1.1]{Z2} guarantees that the geometric quotient of any of those schemes for the action over $S$ by a finite group exists as a flat, finitely presented $S$-scheme. The strategy now is to use this diagram to decompose the map $X_{n, j, i}/\left(G^{(m)}_{j}\times H_i\right) \to X_n$ into a composition of certain maps that we can understand. \smallskip

We recall that $\a'_{m, j}$ is $G^{(m)}_j$-invariant, so $X'_{m, j} \to X'_{m-1, j}$ is $G^{(m)}_j$-invariant as well. Thus,  $X''_{n ,j, i}$ and $U_{n ,j, i}$ admit an action of $G^{(m)}_j$ over $S$ via base change so that $a_n$ and $j_n$ are $G^{(m)}_j$-equivariant. Now $X_{m-1, j, i}$ admits an action of $H_i$ so that $c$ is $H_i$-invariant (and an $H_i$-torsor over $K$). Hence, $X'_{n, j, i}$ similarly admits an action of $G^{(m)}_j\times H_i$ over $S$ via base change. The morphism $c_n$ is $G^{(m)}_j$-equivariant and $H_i$-invariant. The action of $G^{(m)}_j\times H_i$ on $X'_{n, j,i}$ lifts to an action on $X_{n, j,i}$ over $S$ so that $\beta_{n, j, i}$ is equivariant. This is shown in the last paragraph of the proof of Step~$2$ of Proposition~\ref{main-lemma-2}\footnote{It is shown there for $m=n$, one can then define the action on $X_{n, j, i}$ via base change.}. \smallskip

The map $j_n$ is a $G^{(m)}_{j}$-stable open immersion; moreover,  $\left\{U_{n,j,i}\right\}_{i\in I}$ forms a covering of $X_{n,j}''$ for any $j\in J_m$. Finally, the maps $c_n$ and $\beta_{n, j, i}$ are both $G^{(m)}_{j}\times H_i$-equivariant $K$-modifications. The induction hypothesis implies that the set 
\begin{equation}\label{eqn:2}
    \left(X^{(m)}_{n,j}/G^{(m)}_j, \ov{g^{(m)}_{j}} \right)_{j\in J_m}
\end{equation}
is obtained from $X_n$ as a composition of $K$-modification and Zariski open coverings. \smallskip

We note that \cite[Proposition 5.1.2]{Z2} shows that $X''_{n,j}/G^{(m)}_{j} \to X^{(m)}_{n,j}/G^{(m)}_{j}$ is a $K$-modification. The construction of the geometric quotients in \cite[Definition 2.1.1]{Z2} implies that $U_{n,j, i}/G^{(m)}_{j} \subset X''_{n,j}/G^{(m)}_{j}$ is a Zariski open subscheme, and the union (indexed by $I$) of those open subsets is a covering of $X^{(m)}_{n,j}/G^{(m)}_{j}$. Now we note that 
\[
X'_{n,j,i}/\left(G^{(m)}_{j}\times H_i\right)= \left(X'_{n,j,i}/H_i\right)/G^{(m)}_{j}
\]
Therefore, \cite[Proposition 5.1.2]{Z2} shows that the map $X'_{n,j,i}/H_i \to U_{n,j,i}$ is a $K$-modification (as finite morphisms are proper), and {\it loc. cit} shows that the map 
\[
X'_{n,j,i}/\left(G^{(m)}_{j}\times H_i\right)= \left(X'_{n,j,i}/H_i\right)/G^{(m)}_{j} \to U_{n,j, i}/G^{(m)}_{j}
\]
is a $K$-modification. Finally, we use  {\it loc. cit.} once again to conclude that the map $X_{n,j,i}/\left(G^{(m)}_{j}\times H_i\right) \to X'_{n,j,i}/\left(G^{(m)}_{j}\times H_i\right)$ is a $K$-modification. Combining all these observations and the descending inductive hypothesis with (\ref{eqn:2}), we conclude that the set
\[
\left(X_{n,j'}^{(m-1)}/G^{(m-1)}_{j'}, \ov{g_{n,j'}^{(m-1)}}\right)_{j'\in J_{m-1}}=\left(X_{n,j, i}/\left(G^{(m)}_{j}\times H_i\right), \ov{g_{n,j, i}}\right)_{j\in J_{m}, i\in I} 
\]
can be obtained from $X_n$ as a composition of open Zariski coverings and $K$-modifications.
\end{proof}

\begin{lemma}\label{main-lemma-4} Let $R$ be a valuation ring with fraction field $K$. Let $S$ be the affine scheme $\Spec R$. Suppose that $f\colon X \to S$ is a morphism that can be written as a composition 
\[
X=X_n \xr{f_n} X_{n-1} \xr{f_{n-1}}X_{n-2} \xr{f_{n-2}} \dots \xr{f_{2}}X_1 \xr{f_1}X_0=S
\]
such that each morphism $f_i$ is a relative curve (in the sense of Definition~\ref{def-curve}) that is $K$-smooth. Then there is a finite Galois extension $K\subset K'$ with $R'$ the normalization of $R$ in $K'$, a finite non-empty set $J$, a set of finite groups $\left(G_{j}\right)_{j\in J}$ indexed by $J$, and commutative diagrams: 

\[
\begin{tikzcd}[column sep=13ex]
X_{n, R'} \arrow{d}{f_{n, R'}}  & X'_{n,j}  \arrow{l}{g_{n,j}} \arrow{d}{h_{n,j}}\\
X_{n-1, R'} \arrow{d}{f_{n-1, R'}} & X'_{n-1, j} \arrow{l}{g_{n-1,j}} \arrow{d}{h_{n-1, j}}\\
\vdots \arrow{d}{f_{2, R'}} & \vdots \arrow{l}{g_{k,j}} \arrow{d}{h_{2, j}}\\
X_{1, R'}\arrow{d}{f_{1, R'}} & X'_{1, j} \arrow{ld}{h_{1, j}} \arrow{l}{g_{1,j}} \\
\Spec {R'} &  
\end{tikzcd}
\]
of flat, finitely presented $S'\coloneqq \Spec R'$-schemes such that 
\begin{enumerate}
\item\label{main-lemma-4-1} $X'_{k, j}$ admits an $R'$-action of the group $G_j$ for any $k\geq 1$ and $j\in J$. Moreover, $g_{k,j}$ is $G_j$-invariant and $h_{k,j}$ is $G_j$-equivariant for any $j\in J, k=1, \dots, n$.
\item\label{main-lemma-4-2} The $K'$-restriction $(g_{k, j})_{K'}\colon (X'_{k, j})_{K'} \to (X_{k})_{K'}$ is a $G_j$-torsor over its (open) image $V_{k,j}$ for $k\geq 1$.
\item\label{main-lemma-4-3} $h_{k,j}$ is a semi-stable, $K'$-smooth relative curve for any $j\in J$ and any $k\geq 1$ 
\item\label{main-lemma-4-4} $g_{k,j}$ is quasi-projective for any $k\geq 1$ and $j\in J$. Moreover, the map $g_k\colon \sqcup_{j\in J}X'_{k,j} \to X_{k, R'}$ is a $K'$-\'etale covering for any $k\geq 1$.
\item\label{main-lemma-4-5} Assume that $K$ is a henselian rank-$1$ valued field\footnote{The results holds true if one only assumes that $R$ is a henselian valuation ring. The rank-$1$ assumption is only used to show that the integral closure of $R$ in an algebraic field extension $L/K$ is a valuation ring. This can be proven without the rank-$1$ assumption using \cite[Ch. 6, \textsection 8, n.6, Proposition 6]{Bou} and \cite[\href{https://stacks.math.columbia.edu/tag/09XI}{Tag 09XI}]{stacks-project}.} (in the sense of Definition~\ref{defn-henselian-field}) and $f_i$ is quasi-projective for every $i\geq 1$. Then the geometric quotient $X'_{n,j}/G_j$ exists as a flat, finitely presented $S'$-scheme for any $j\in J$. Moreover, we can choose $X'_{n,j}$ so that the set $\left(X'_{n,j}/G_{j}, \ \ov{g_{n,j}}\right)_{j\in J}$ can be obtained from $X_{n, R'}$ as a composition of Zariski open coverings and $K$-modifications (in the sense of Definition~\ref{comp-alg}). 
\end{enumerate}
\end{lemma}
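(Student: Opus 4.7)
The plan is to derive Lemma~\ref{main-lemma-4} from the already-proved Proposition~\ref{main-lemma-2} and Corollary~\ref{main-lemma-3} by supplying a single extra modification at the bottom layer of the tower. Proposition~\ref{main-lemma-2} produces, already over $R$ itself, a finite set $J$, groups $G_j$, and a tower that satisfies conditions (\ref{main-lemma-4-1}), (\ref{main-lemma-4-2}), (\ref{main-lemma-4-4}), together with (\ref{main-lemma-4-3}) for every $k\geq 2$. The only missing content of Lemma~\ref{main-lemma-4} is therefore that the bottom $K$-smooth $R$-curve $X_{1,j}\to\Spec R$ must be made semi-stable; this is exactly the setting of Theorem~\ref{stable-modification-valuation}, at the cost of passing to a finite Galois extension $K'/K$.

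First I would apply Proposition~\ref{main-lemma-2} to obtain the tower $(X_{k,j})_{k\geq 1,\, j\in J}$ with its $G_j$-equivariant structure. For each $j\in J$, the $K$-fibre of $X_{1,j}\to\Spec R$ is $K$-smooth, hence already semi-stable, so Theorem~\ref{stable-modification-valuation} supplies a finite Galois extension $K_j/K$ and an $R_j$-stable $K$-modification $\tilde X_{1,j}\to X_{1,j,R_j}$, where $R_j$ is the integral closure of $R$ in $K_j$. Taking $K'$ to be the compositum of all the $K_j$ (finite Galois over $K$) and $R'$ its integral closure in $K'$ --- a valuation ring under the rank-$1$ henselian hypothesis by Lemma~\ref{prop-hens} --- the base change $\tilde X_{1,j}\times_{R_j}R'$ gives a simultaneous stable modification of $X_{1,j,R'}$, which I again denote $\tilde X_{1,j}$. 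I would then propagate this modification up the tower by flat base change, setting
\[
X'_{k,j} \coloneqq X_{k,j,R'} \times_{X_{1,j,R'}} \tilde X_{1,j}
\]
for every $k\geq 1$. Flatness of the composite $X_{k,j}\to X_{1,j}$ (a tower of semi-stable curves) ensures that each $X'_{k,j}\to X'_{k-1,j}$ remains a semi-stable $K'$-smooth relative curve for $k\geq 2$, while $X'_{1,j}=\tilde X_{1,j}$ is semi-stable by construction. The $G_j$-equivariance of $\tilde X_{1,j}$ follows from Temkin's uniqueness of stable modifications \cite[Corollary 1.6]{T1} and then propagates by base change; since the maps $X'_{k,j}\to X_{k,j,R'}$ are isomorphisms over $K'$, both the torsor property (\ref{main-lemma-4-2}) and the quasi-projective $K'$-\'etale covering property (\ref{main-lemma-4-4}) are inherited from Proposition~\ref{main-lemma-2}.

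For item (\ref{main-lemma-4-5}) I would start from Corollary~\ref{main-lemma-3}, which asserts that $(X_{n,j}/G_j,\overline{g_{n,j}})_{j\in J}$ arises from $X_n$ as a composition of Zariski open coverings and $K$-modifications. Base change of this diagram to $R'$ preserves that property, and the residual modification $X'_{n,j}\to X_{n,j,R'}$ is itself a $K$-modification (being the strict transform of one); passing to the $G_j$-quotient gives again a $K$-modification by \cite[Proposition 5.1.2]{Z2}. The henselian hypothesis enters precisely to ensure that $R'$ remains a valuation ring, which, combined with the quasi-projectivity of each $f_i$, allows \cite[Theorem 2.2.6]{Z2} to produce $X'_{n,j}/G_j$ as a flat, finitely presented $R'$-scheme. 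The main technical obstacle I anticipate is bookkeeping the $G_j$-equivariance of the bottom-layer stable modification and its interaction with the $\Gal(K'/K)$-action after base change; this is ultimately dispatched by the uniqueness theorem over normal bases, after which the rest of the argument is a base change and quotient routine.
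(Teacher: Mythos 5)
Your proposal is correct and follows essentially the same route as the paper's proof: apply Proposition~\ref{main-lemma-2} (resp. Corollary~\ref{main-lemma-3} for item~(\ref{main-lemma-4-5})) over $R$, use Theorem~\ref{stable-modification-valuation} to semi-stabilize the bottom layer $X_{1,j}$ after a single finite Galois extension $K'$, propagate the resulting projective $K'$-modification up the tower by base change, lift the $G_j$-action via Temkin's uniqueness of stable modifications over the normal base $\Spec R'$, and settle the quotient statement with the cited results of \cite{Z2}. The only cosmetic difference is that you take the compositum of the per-$j$ extensions and base change the stable modifications from $R_j$ to $R'$ (legitimate, since stability is checked on geometric fibers), whereas the paper simply chooses one Galois extension dominating all the $K'_j$ from the start.
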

\begin{proof}
The proof is similar to Proposition~\ref{main-lemma-2}, but one needs to be careful as the extension $R \to R'$ may not be finite. Instead we pick $J, \ G_j, \ X_{j,k} \xr{g_{j, k}} X_{j}$ as in Proposition~\ref{main-lemma-2} (or as in Corollary~\ref{main-lemma-3} in the case of henselian $K$ and quasi-projective $f_i$). In particular, $X_{k, j} \to X_{k-1, j}$ is a semi-stable $K$-smooth relative curve for $k\geq 2$, $j\in J$.  \smallskip

We note that $X_{1, j}$ is automatically a $K$-smooth $S$-curve as it is $S$-flat and finitely presented by construction. Thus, we can use Theorem~\ref{stable-modification-valuation} to get a finite Galois extension $K\subset K'$ with $R'$ the integral closure of $R$ in $K'$ such that the $R'$-scheme $X_{1, j, R'}$ admits a $R'$-stable $K$-modification $\beta_{1, j}\colon X'_{1,j} \to X_{1, j, R'}$ for any $j\in J$. We note that Theorem~\ref{stable-modification-valuation} a priori only gives us such an extension $K'_j$ separately for each $j\in J$, but then we can find some finite Galois extension $K \subset K'$ that dominates all $K'_j$. So this field can be chosen independently of $j\in J$. \smallskip

So we have commutative diagrams (indexed by $J$):
\[
\begin{tikzcd}[column sep=13ex]
X_{n, R'} \arrow{d}{f_{n, R'}}  &  \arrow{l}{g_{n, j, R'}} X_{n, j, R'} \arrow{d} & X'_{n,j}  \arrow{l}{\beta_{n, j}} \arrow{d}{h_{n,j}}\\
X_{n-1, R'} \arrow{d}{f_{n-1, R'}} & \arrow{l}{g_{n-1, j, R'}} X_{n-1, j, R'} \arrow{d} & X'_{n-1, j} \arrow{l}{\beta_{n-1, j}} \arrow{d}{h_{n-1, j}}\\
\vdots \arrow{d}{f_{2, R'}} & \vdots \arrow{d} \arrow{l} & \vdots \arrow{l}{\beta_{k, j}} \arrow{d}{h_{2, j}}\\
X_{1, R'}\arrow{d}{f_{1, R'}} &\arrow{l}{g_{1, j, R'}} X_{1, j, R'} \arrow{ld}{}  &  X'_{1, j} \arrow{l}{\beta_{1, j}} \arrow{lld}{h_{1, j}}  \\
\Spec {R'} &  &
\end{tikzcd}
\]
where each right square is Cartesian\footnote{We define $h_{k, j}$ as the base change of $X_{k, j, R'} \to X_{k-1, j, R'}$ for $k\geq 2$. Similarly, for $k\geq 2$, $\beta_{k, j}$ are defined as the base change of $\beta_{1, j}$.}. We note that $h_{k,j}$ is a relative semi-stable curve for each $k\geq 1$. Moreover, \cite[Theorem 1.1]{T1} implies that $\beta_{1, j}$ is a projective $K'$-modification, so the same holds for $\beta_{k,j}$ for any $k\geq 1$. So if we define $g_{k, j} \colon X'_{k, j}\to X_{k, R'}$ as the composition $g_{k ,j, R'}\circ \beta_{k, j}$, then we see that each of those $g_{k, j}$ is quasi-projective and the total map
\[
g_k\colon \bigsqcup_{j\in J} X'_{k, j} \to X_{k, R'}
\]
is a quasi-projective $K'$-\'etale covering for any $k \geq 1$. As for the group action, we use the uniqueness result \cite[Theorem 1.2]{T1} to lift the $R'$-action of $G_j$ on $X_{1, j, R'}$ to an $R'$-action of $G_j$ on $X'_{1, j}$. Since $\beta_{1,j}$ is $G_j$-equivariant and all the vertical maps in the middle column are $G_j$-equivariant, we obtain a canonical $R'$-action of $G_j$ on each $X'_{k,j}$ making morphisms $\beta_{k,j}$ and $h_{k,j}$ be equivariant for any $k\geq 1$. Then it is easy to see that each $g_{k, j}$ is $G_j$-invariant and its $K'$-restriction $(g_{k,j})_{K'}$ becomes a $G_j$ torsor over its open image. \smallskip

Now we go to the situation of a henselian rank-$1$ valued field $K$ and quasi-projective $f_i$. We note that Lemma~\ref{prop-hens} implies that the normalization $R'$ is a valuation ring. So we can use \cite[Theorem 2.2.7 and Proposition 5.1.1]{Z2} to conclude that $X'_{n,j}/G_j$ exists as a flat, finitely presented $R'$-scheme. Since the formation of the geometric quotient commutes with flat base change \cite[Theorem 2.1.16]{Z2}, we use Corollary~\ref{main-lemma-3} to see that the set $\left(X_{n,j, R'}/G_j, \ov{g_{n, j, R'}}\right)$ can be obtained from $X_{n, R'}$ as a composition of Zariski open coverings and $K'$-modifications. \smallskip

Now we note that \cite[Proposition 5.1.2]{Z2} gives that the map $X'_{n,j}/G_j \to X_{n, j, R'}/G_j$ is a $K'$-modification for any $j\in J$. This, in turn, implies that the set $\left(X'_{n,j}/G_j, \ov{g_{n, j}}\right)$ can be obtained from $X_{n, R'}$ as a composition of Zariski open coverings and $K'$-modifications. This finishes the proof.
\end{proof}



\appendix

\section*{Appendix}
\section{Approximation Techniques}

\begin{lemma}\label{spread-group} Let $R$ be a ring, let $G$ be a finite group, and let $A$ be an $R$-algebra with an $R$-algebra action of $G$. 
Suppose that an $R$-algebra $A = \colim_{i\in I} A_i$ if a filtered colimit of $R$-algebras $A_i$ with the following properties
\begin{itemize}\itemsep0.5em
\item Each $A_i$ is a finite type $R$-subalgebra of $A$.
\item Each finite type $R$-subalgebra $B\subset A$, which contains $A_i$ for some $i$, is equal to $A_j$ for some other $j$.
\end{itemize}
The subsystem $J=\{i\in I \ | \ A_i \text{ is } G \text{-stable subalgebra of } A\}$ is filtered and $A=\colim_{j\in J} A_j$.
\end{lemma}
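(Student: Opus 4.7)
The main idea is simple: given any $A_i$ in the system, its ``$G$-saturation'' $\widetilde{A}_i \coloneqq R[g \cdot A_i : g \in G]$, i.e.\ the $R$-subalgebra of $A$ generated by all $G$-translates of $A_i$, is again finitely generated over $R$ (since $G$ is finite and $A_i$ is finitely generated over $R$) and manifestly $G$-stable. Since $\widetilde{A}_i \supset A_i$ is a finite type $R$-subalgebra of $A$ containing some $A_i$, the second hypothesis on the system $\{A_i\}_{i\in I}$ produces some index, call it $\sigma(i) \in I$, with $A_{\sigma(i)} = \widetilde{A}_i$, and by construction $\sigma(i) \in J$.

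Using this operation $\sigma\colon I \to J$, I would then verify the two claims. First, to see that $J$ is filtered (where the partial order on $I$ is inclusion of the corresponding subalgebras), take $j_1, j_2 \in J$; since $I$ is filtered, there is some $i \in I$ with $A_{j_1}, A_{j_2} \subset A_i$, and then $A_{j_1}, A_{j_2} \subset A_i \subset A_{\sigma(i)}$, exhibiting a common upper bound in $J$.

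Second, to see that $A = \colim_{j \in J} A_j$: each $A_j$ is an $R$-subalgebra of $A$, so the natural map $\colim_{j \in J} A_j \to A$ is injective. For surjectivity, any $a \in A$ lies in some $A_i$ (since $A = \colim_{i\in I} A_i$), hence in $A_{\sigma(i)}$ with $\sigma(i) \in J$. Thus the map is also surjective, proving $A = \colim_{j\in J} A_j$.

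There is no real obstacle in this proof; the only thing to double-check is that $\widetilde{A}_i$ really is a finite type $R$-algebra, but this is immediate since if $x_1, \dots, x_n$ generate $A_i$ over $R$, then $\{g \cdot x_k : g \in G, 1 \le k \le n\}$ is a finite generating set for $\widetilde{A}_i$ over $R$.
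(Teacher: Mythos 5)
Your proposal is correct and follows essentially the same route as the paper: both form the $G$-saturation of $A_i$ by adjoining all $G$-translates of a finite generating set, observe it is still of finite type over $R$ since $G$ is finite, and invoke the second hypothesis to identify it with some $A_j$, $j\in J$. Your write-up merely makes explicit the cofinality/filteredness verifications that the paper leaves implicit.
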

\begin{proof}

It suffices to show that for any $i\in I$ there is $j\in I$ such that $A_i\subset A_j$, and $A_j$ is $G$-stable subalgebra of $A$. Pick some $R$-algebra generators $\{h_{\a}\}_{\a \in T}$ of $A_i$ for some finite set $T$. And now define $B$ as an $R$-subalgebra of $A$ generated by $g(h_{\a})$ for all $g\in G$ and $\a \in T$. Since $G$ acts on $A$ by $R$-algebra automorphisms, we conclude that $B$ is a $G$-stable $R$-algebra containing $A_i$. Moreover, this is still a finite type $R$-algebra because the group $G$ is finite. Thus our assumption implies that $B=A_j$ for some $j\in J$. This proves the claim.
\end{proof}
\smallskip

\begin{lemma}\label{val-approximation} Let $(\O, \pi)$ be a pair of a rank-$1$ valuation ring $\O$ with algebraically closed fraction field $K$ and a pseudo-uniformizer $\pi$. Then it can be written as a filtered colimit $(\O, \pi) \simeq \colim (A_i, t_i)$, where each $A_i$ is a regular noetherian subring of $\O$ and $\mathrm{V}(t_i)_{\red}$ is an snc divisor in $\Spec A_i$ (see Definition~\ref{snc}). 
\end{lemma}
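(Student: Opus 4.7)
The plan is to produce the approximation by applying de Jong's alteration theorem to finitely generated $\Z$-subalgebras of $\O$, combined with valuation extensions. The role of the hypothesis that $K$ is algebraically closed is precisely that it lets the alterations stay inside $\O$.

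First I would reduce to the following local claim: for every finitely generated $\Z$-subalgebra $B\subset\O$ containing $\pi$, there exists a regular, finite type $\Z$-subalgebra $A\subset\O$ with $B\subset A$ such that $V(\pi)_{\red}\subset\Spec A$ is an snc divisor. Once this is established, filteredness is immediate: given two such pairs $(B_1,A_1)$ and $(B_2,A_2)$, apply the local claim to the finitely generated $\Z$-subalgebra of $\O$ generated by $A_1\cup A_2$ to obtain a common upper bound. The colimit of the resulting directed system recovers $\O$ because every element of $\O$ lies in some finitely generated $\Z$-subalgebra.

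For the local claim I would proceed as follows. Apply de Jong's alteration theorem to the pair $(\Spec B,V(\pi))$ to obtain an alteration $\phi\colon X'\to\Spec B$ with $X'$ integral and regular, such that the reduced preimage $V_{X'}(\pi)_{\red}$ is an snc divisor on $X'$. Let $L=\Frac(X')$, a finite extension of $\Frac(B)\subset K$. Using that $K$ is algebraically closed, fix an embedding $L\hookrightarrow K$ extending $\Frac(B)\hookrightarrow K$, and restrict the valuation defining $\O$ along this embedding to obtain a valuation ring $\O'\coloneqq\O\cap L\subset L$ contained in $\O$. Since $\phi$ is proper, the valuative criterion provides a center $x'\in X'$ for $\O'$. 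Choose an affine open neighborhood $\Spec A\subset X'$ of $x'$. Integrality of $X'$ ensures that both $B\to A$ and $A\to\O'\hookrightarrow\O$ are injective, so $A$ is a regular noetherian subring of $\O$ containing $B$, and $V(\pi)_{\red}\subset\Spec A$ is snc as an open restriction of $V_{X'}(\pi)_{\red}$.

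The main external input is de Jong's alteration theorem over an arithmetic base, which holds in arbitrary characteristic and yields regularity together with the snc condition in one step. The main obstacle in the argument is to remain inside $\O$ after the alteration; without algebraic closedness of $K$ there would be no way to lift the given valuation from $\Frac(B)$ to $\Frac(X')$ realized as a subfield of $K$, and the center argument producing $\Spec A \subset X'$ with $A\subset\O$ would collapse.
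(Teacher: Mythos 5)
Your proof is correct and takes essentially the same route as the paper's: exhaust $\O$ by finite type subalgebras containing $\pi$, resolve each by a de Jong alteration, and use algebraic closedness of $K$ together with the valuative criterion of properness to center the valuation on the alteration and extract an affine chart sitting inside $\O$. The only difference is cosmetic: the paper splits into equal and mixed characteristic so as to cite the precise statements \cite[Theorem 4.1]{DJ1} and \cite[Theorem 6.5]{DJ1}, whereas you invoke a single arithmetic version of the alteration theorem.
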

\begin{proof}
We consider $3$ cases separately: $\O$ is of equal characteristic $0$, $\O$ is of equal characteristic $p>0$, or $\O$ of mixed characteristic $(0, p)$. Depending on the case, we can 
we can write $\O$ as a filtered colimit of finite type $\Q$ (resp. $\mathbf{F}_p$, resp. $\Z_{(p)}$) sub-algebras $A_i$ containing $\pi$. Now it suffices to show that for any $A_i \to \O_C$ there is a factorization $A_i \to A_j \to \O$ such that $A_j$ is regular, finite type over $\Q$ (resp. $\mathbf{F}_p$, resp. $\Z_{(p)}$), and $\rm{V}(\pi)_{\red}$ is snc in $\Spec A_j$. \smallskip

We use de Jong alternation theorem \cite[Theorem 4.1]{DJ1} in the equal characteristic case, and \cite[Theorem 6.5]{DJ1} in the mixed characteristic case, to say that there is an alteration $f\colon X \to \Spec A_i$ such that $X$ is regular and $f^{-1}(\rm{V}(\pi))_{\red}$ is snc. Since alterations are generically finite and $K$ is algebraically closed, we can lift a map $\Frac(A_i) \to K$ to the map $K(X) \to K$. Then since $X$ is proper over $A_i$, we can lift a map $\Spec \O \to \Spec A_i$ to the map $\Spec \O \to X$. We now choose some open affine neighborhood of its image and denote it by $\Spec A_j$. Then $A_j$ is regular by construction, and there is a factorization $A_i \to A_j \to \O$. The second map is injective as it is compatible with the injection $\Frac(A_j)=K(X) \to K$. Finally, consider $g\colon \Spec A_j \to \Spec A_i$. The construction said that $g^{-1}(\rm{V}(\pi)_{\red})$ is snc, but it is the same as $\rm{V}(g^*(\pi))_{\red}=\rm{V}(\pi)_{\red}$, where we consider $\pi$ as an element of $A_j$. This finishes the proof.
\end{proof}

\section{Formal And Rigid-Analytic Geometry}\label{adic}

The two main goals of this section are to spell out precisely some notions related to formal algebraic and rigid-analytic geometry that we use in the paper, and to compare these notions to their algebraic counterparts. Since terminology in these areas slightly vary from reference to reference, we decide to make the definitions as precise as we can. But we do not really discuss terminology that seem to have only one interpretation.  

\subsection{Formal Geometry}
We fix a complete rank-$1$ valuation field $\O_K$ with fraction field $K$. We recall that a formal $\O_K$-scheme always means for us an $I$-adic formal $\O_K$-scheme for a(ny) ideal of definition $I\subset \O_K$. And by a completion $\wdh{X}$ of finitely type $\O_K$-scheme $X$, we always mean the $I$-adic completion.

We refer to \cite[\textsection 7.4]{B} for the discussion of the notions of morphisms of topologically finite type and of topologically finite presentation in the context of formal $\O_K$-schemes. We now recall the notion of admissible formal $\O_K$-schemes from \cite[\textsection 7.4]{B} and relate it to its algebraic counterpart.

\begin{defn} A formal $\O_K$-scheme $\X$ is {\it admissible}, if it is $\O_K$-flat and locally of topologically finite type.
\end{defn}
\begin{rmk} We note that admissible formal $\O_K$-schemes are always topologically finitely presented by \cite[Corollary 7.3/5]{B}. Thus our definition coincides with \cite[Definition 7.4/1]{B}
\end{rmk}

Sometimes it is required that an admissible formal scheme should be also quasi-compact and quasi-separated. However, we follow the terminology of \cite{B} and \cite{BL1} and do not put those conditions into the definition of admissible formal schemes. We also note that in our situation quasi-compactness of $\X$ automatically implies quasi-separatedness of $\X$. Indeed, $|\X|=|\ov{\X}|$ and the latter is a noetherian topological space.

\begin{lemma}\label{different-adm} Let $X$ be an admissible $\O_K$-scheme (in the sense of Definition \ref{U-adm}), then its completion $\wdh{X}$ is an admissible formal $\O_K$-scheme.
\end{lemma}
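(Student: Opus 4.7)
The plan is to check the two defining properties of an admissible formal $\O_K$-scheme for $\wdh{X}$: local topological finite-type over $\O_K$, and $\O_K$-flatness. Since both conditions are local on $X$, I would immediately reduce to the affine case $X=\Spec A$ where $A$ is a(n implicitly locally) finite-type $\O_K$-algebra such that $X_K$ is schematically dense in $X$.

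The first step is to convert $K$-admissibility into actual $\O_K$-flatness of $A$. Schematic density of $X_K \hookrightarrow X$ says exactly that the localization map $A \to A \otimes_{\O_K} K$ is injective, i.e.\ $A$ is $\O_K$-torsion free. Since $\O_K$ is a valuation ring, torsion-free modules are flat, so $A$ is $\O_K$-flat. Lemma~\ref{admissible} then upgrades this to $A$ being locally finitely presented over $\O_K$, which will let us invoke results from \cite{B} that are stated under a finite presentation hypothesis.

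Given this, local topological finite-type of $\wdh{X}$ is immediate: writing $A$ as a quotient $\O_K[x_1,\dots,x_n]/J$, the $I$-adic completion is a quotient of the restricted power series ring $\O_K\langle x_1,\dots,x_n\rangle$, hence topologically of finite type over $\Spf \O_K$. The main content, and the step I expect to be the real obstacle, is preservation of $\O_K$-flatness under $I$-adic completion: this is the point where non-noetherianness of $\O_K$ makes the standard commutative-algebra argument fail directly. My plan here is to exploit that $A$ is $\O_K$-flat and finitely presented, so by a standard approximation argument one can write $A$ as a filtered colimit of flat, finite-type $R_i$-algebras $A_i$ with $R_i \subset \O_K$ noetherian, reducing flatness of $\wdh{A}$ to the classical noetherian case where completion along a finitely generated ideal preserves flatness of finite modules. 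Concretely, it suffices to show that $\wdh{A}$ is $\pi$-torsion free for every pseudo-uniformizer $\pi \in \O_K$, which is exactly the content of \cite[\textsection 7.3]{B} under the finite presentation hypothesis already secured in the previous paragraph. Once this is established, $\wdh{X}$ is $\O_K$-flat and topologically of finite type, hence admissible.
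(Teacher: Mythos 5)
Your proposal is correct and follows essentially the same route as the paper: reduce to the affine case, observe that $K$-admissibility means $\O_K$-torsion-freeness (hence flatness over a valuation ring), and then quote \cite[\textsection 7.3]{B} for the fact that the $I$-adic completion of a flat, topologically finitely presented $\O_K$-algebra remains $\O_K$-flat, which is exactly the citation (Proposition 7.3/11 together with Remark 7.1/6) the paper uses. The only shaky point is the suggested detour through writing $A$ as a filtered colimit of algebras over noetherian subrings $R_i\subset \O_K$ — $I$-adic completion does not commute with that colimit, so that reduction would not go through as stated — but since you immediately fall back on the Bosch reference for $\pi$-torsion-freeness of $\wdh{A}$, the proof stands without it.
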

\begin{proof}
We note that $X_K$ is schematically dense in $X$ if and only if $\O_X$ is a $\O_K$-torsion free sheaf. We claim that this implies $\O_K$-flatness of $\X$. The statement is Zariski local on $\X$, so we may and do assume that $X$ is affine. Now the completion of a flat $\O_K$-module is $\O_K$-flat by \cite[Proposition 7.3/11 and Remark 7.1/6]{B}. Thus $\wdh{X}$ is admissible as a formal $\O_K$-scheme.
\end{proof}
\medskip

We introduce the notion of a $K$-modification in the set-up of formal geometry and compare it to its algebraic analogue. 

\begin{defn}\label{C-mod-formal} We say that a morphism $f\colon \X \to \Y$ of admissible, quasi-compact, quasi-separated $\O_K$-formal schemes is a {\it rig-isomorphism} (resp. {\it rig-\'etale}, resp. {\it rig-smooth}, resp. {\it rig-surjective}), if the generic fibre $f_K\colon  \X_K \to \Y_K$ is an isomorphism (resp. \'etale, resp. smooth, resp. surjective morphism) of adic spaces.
\end{defn}

\begin{defn}\label{rig-etale-covering} We say that a morphism $f\colon \X \to \Y$ of admissible, quasi-compact, quasi-separated $\O_K$-formal schemes is a {\it rig-\'etale covering} if it is rig-\'etale and rig-surjective.
\end{defn}

\begin{rmk} We note that Definition~\ref{C-mod-formal} implies that any rig-isomorphism $f$ is proper (in the sense of \cite[Definition 4.7.1]{FujKato}). This follows from \cite[Theorem 3.1]{Lutke-proper} in the case $\O_K$ is discretely valued and from \cite[Corollary 4.4. and Corollary 4.5]{Temkin-proper} in general.
\end{rmk}

Now we need the following lemma that was proven in \cite[Corollaire 5.7.12]{RG}. However, the statement there is formulated for algebraic spaces, so to make things more accessible, we provide a short argument that extracts the result in the scheme case.

\begin{lemma}\label{RG-5-7-12} Let $U\subset S$ be a schematically dense quasi-compact open subscheme of a quasi-compact, quasi-separated scheme $S$. Let $f\colon X\to S$ be a proper, finitely presented morphism that is an isomorphism over $U$. Then there is an $f^{-1}(U)$-admissible blow-up $g \colon X' \to X$ such that the composition $h\colon X' \to S$ is also a $U$-admissible blow-up.
\end{lemma}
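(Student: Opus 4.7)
The plan is to apply the Raynaud--Gruson flattening theorem to build the desired double modification in one stroke, and then to upgrade the resulting flat modification to an isomorphism. First, since $f$ is finitely presented and flat (indeed an isomorphism) over $U$, the flattening theorem \cite[Corollaire 5.7.10]{RG} (equivalently, \cite[\href{https://stacks.math.columbia.edu/tag/081R}{Tag 081R}]{stacks-project}) produces a $U$-admissible blow-up $\pi\colon \tilde{S} \to S$ with center some finitely generated quasi-coherent ideal $\mathcal{I}\subset \O_S$ satisfying $V(\mathcal{I})\cap U=\varnothing$, such that the strict transform $\tilde{X}\to\tilde{S}$ of $X$ is $\tilde{S}$-flat. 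By the standard description of the strict transform, $\tilde{X}$ is canonically identified with the blow-up of $X$ along the finitely generated ideal $f^{-1}\mathcal{I}\cdot\O_X\subset\O_X$; since $V(f^{-1}\mathcal{I}\cdot\O_X)\cap f^{-1}(U)=\varnothing$, the induced morphism $g\colon \tilde{X}\to X$ is an $f^{-1}(U)$-admissible blow-up in the sense of Definition~\ref{adm-blowup}.

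Next, I would argue that the canonical morphism $h'\colon \tilde{X}\to\tilde{S}$ is in fact an isomorphism. It is proper (since $\tilde{X}\to X\to S$ is proper and $\tilde{S}\to S$ is separated), finitely presented, flat by the choice of $\pi$, and an isomorphism over $\pi^{-1}(U)\cong U$, which is schematically dense in $\tilde{S}$ because $U$-admissible blow-ups are $U$-modifications by Lemma~\ref{cofinal}. Flatness of $h'$ then forces $(h')^{-1}(\pi^{-1}(U))$ to be schematically dense in $\tilde{X}$. Applying \cite[\href{https://stacks.math.columbia.edu/tag/0D4J}{Tag 0D4J}]{stacks-project}, the fibre-dimension function $y\mapsto \dim_y(\tilde{X})_{h'(y)}$ is locally constant on $\tilde{X}$; vanishing on the dense open $(h')^{-1}(\pi^{-1}(U))$ forces it to vanish identically. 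Thus $h'$ has $0$-dimensional fibres and, being proper and finitely presented, is finite; being in addition flat, it is finite locally free. Local constancy of its rank, together with the value $1$ on $\pi^{-1}(U)$, then forces $h'$ to have constant rank one, hence to be an isomorphism.

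Identifying $\tilde{X}$ with $\tilde{S}$ via $h'$, the composition $f\circ g\colon \tilde{X}\to S$ coincides with the $U$-admissible blow-up $\pi\colon \tilde{S}\to S$, which is the desired conclusion. The main obstacle in this plan is the Raynaud--Gruson flattening theorem, invoked as a black box; the remainder is a routine chain of deductions about flat, proper, finitely presented morphisms that are isomorphisms over a schematically dense open. A potentially subtle point worth verifying carefully is the identification of the strict transform of $X$ under $\pi$ with $\mathrm{Bl}_{f^{-1}\mathcal{I}\cdot\O_X}(X)$ without any flatness assumption on $f$, but this is standard.
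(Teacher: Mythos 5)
Your proof is correct and follows essentially the same route as the paper's: both invoke Raynaud--Gruson flattening by a $U$-admissible blow-up of $S$ and identify the strict transform of $X$ with the blow-up of $X$ along the pulled-back ideal (Tag 080E), so that the two admissible blow-ups in the conclusion are the same morphism read in two ways. The only difference is that the paper's citation of Tag 081R already delivers that the strict transform maps to $S'$ by an open immersion (hence an isomorphism, by properness and schematic density of $U$), whereas you rederive this from flatness via the locally-constant fibre-dimension and finite-locally-free-of-rank-one arguments --- a correct, if longer, version of that same step.
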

\begin{proof}
We note that \cite[\href{https://stacks.math.columbia.edu/tag/081R}{Tag 081R}]{stacks-project} implies that there is a $U$-admissible blow-up $S' \to S$ such that the strict transform $X'\to S'$ is an open immersion. It is also proper, so it must be an isomorphism by schematic density of $U$. Now \cite[\href{https://stacks.math.columbia.edu/tag/080E}{Tag 080E}]{stacks-project} implies that $X' \to X$ is an $f^{-1}(U)$-admissible blow-up. Thus we get a factorization $X' \xr{g} X \xr{f} S$ such that $g$ and $f \circ g$ are admissible blow-ups. 
\end{proof}

\begin{lemma}\label{different-C-mod} Let $f\colon X\to Y$ be a $K$-modification (in the sense of Definition \ref{U-mod}) of flat, finitely presented $\O_K$-schemes. Then its completion $\wdh{f}\colon \wdh{X} \to \wdh{Y}$ is a rig-isomorphism (in the sense of Definition \ref{C-mod-formal}).
\end{lemma}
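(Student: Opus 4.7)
The plan is to reduce the statement to the classical fact that formal admissible blow-ups are rig-isomorphisms, using Lemma~\ref{RG-5-7-12} to factor $f$ through $K$-admissible blow-ups on the algebraic side.

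First I would apply Lemma~\ref{RG-5-7-12} to $f \colon X \to Y$, taking the role of $(U, S)$ in that lemma to be $(Y_K, Y)$. The hypotheses are satisfied: $Y$ is quasi-compact and quasi-separated since it is finitely presented over $\O_K$; $Y_K \subset Y$ is a quasi-compact schematically dense open because $Y$ is $\O_K$-flat and finitely presented; and $f$ is proper, finitely presented, and an isomorphism over $Y_K$ by the definition of a $K$-modification. The conclusion provides a $K$-admissible blow-up $g \colon X' \to X$ such that the composition $h \coloneqq f \circ g \colon X' \to Y$ is also a $K$-admissible blow-up. Note $X'$ is automatically flat and finitely presented over $\O_K$ by Lemma~\ref{admissible}, so all three schemes $X$, $X'$, $Y$ are admissible in the sense of Definition~\ref{U-adm}, and their $I$-adic completions (for $I$ an ideal of definition of $\O_K$) are admissible formal $\O_K$-schemes by Lemma~\ref{different-adm}.

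Next I would pass to $I$-adic completions to obtain $\wdh{g} \colon \wdh{X'} \to \wdh{X}$ and $\wdh{h} \colon \wdh{X'} \to \wdh{Y}$. Since $g$ is the blow-up of a finitely generated quasi-coherent ideal $\mathcal{I} \subset \O_X$ with $\mathrm{V}(\mathcal{I}) \cap X_K = \varnothing$, the completion $\wdh{g}$ is canonically identified with the formal blow-up of $\wdh{X}$ along the open ideal $\mathcal{I}\cdot\O_{\wdh{X}}$ (containing a power of $I$, since $\mathrm{V}(\mathcal{I})$ is contained in the special fiber); this is the standard commutation of Proj with completion along a finitely generated ideal, and it exhibits $\wdh{g}$ as a formal admissible blow-up. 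The same applies to $\wdh{h}$. By the theorem of Raynaud (see \cite[\S 8.2]{B}), formal admissible blow-ups become isomorphisms on the adic generic fiber; therefore $\wdh{g}_K$ and $\wdh{h}_K$ are isomorphisms of adic spaces.

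Finally, from the factorization $\wdh{h} = \wdh{f} \circ \wdh{g}$ and the fact that generic fiber is a functor, the $2$-out-of-$3$ property of isomorphisms forces $\wdh{f}_K$ to be an isomorphism as well, proving that $\wdh{f}$ is a rig-isomorphism. There is no real obstacle in this argument: the only mildly delicate point is the identification of the completion of an algebraic $K$-admissible blow-up with a formal admissible blow-up, which is routine once one observes that the blowing-up ideal already contains a power of $I$ after localizing away from $X_K$.
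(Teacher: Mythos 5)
Your proposal is correct and follows essentially the same route as the paper: both apply Lemma~\ref{RG-5-7-12} to factor $f$ through two $K$-admissible blow-ups, pass to completions, invoke the fact that completions of admissible blow-ups are rig-isomorphisms (the paper cites \cite[Proposition 8.4/2]{B}), and conclude by two-out-of-three. Your extra verification of the hypotheses of Lemma~\ref{RG-5-7-12} and the identification of the completed blow-up with a formal admissible blow-up are just more detailed versions of steps the paper treats by citation.
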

\begin{proof}
Lemma~\ref{different-adm} says that $\wdh{X}$ is an admissible formal $\O_K$-scheme. Moreover, the same argument (using \cite[Proposition 7.3/10]{B} and \cite[Proposition 4.7.3]{FujKato} instead \cite[Proposition 7.3/11]{B}) shows that $\wdh{f}$ is proper since $f$ is proper. 
\smallskip

Now we use Lemma~\ref{RG-5-7-12} to find a morphism $g\colon X' \to X$ such that $g$ and $f\circ g$ are both admissible $K$-blow ups. Therefore, the completions $\wdh{g}\colon \wdh{X'} \to \wdh{X}$ and $\wdh{f\circ g}\colon \wdh{X'} \to \wdh{Y}$ are both rig-isomorphisms by \cite[Proposition 8.4/2]{B}. This implies that $\wdh{f}$ is a $K$-modification as well.
\end{proof}

Now we are ready to give a definition of a semi-stable formal curve fibration and relate it to the algebraic version of it.

\begin{defn}\label{def-ss-formal} We say that a morphism of locally topologically finitely presented formal $\O_K$-schemes $f\colon  \X \to \mathcal \Y$ is a {\it semi-stable formal relative curve} (or {\it semi-stable formal curve fibration}), if for any point $x\in \X$ either $f$ is smooth at $x$, or there is an open affine neighborhood $\Spf B \subset \X$ containing $x$ and an open affine neighborhood $\Spf A \subset \Y$ containing $y=f(x)$, such that there exists a diagram of pointed schemes

\[
\begin{tikzcd}
& (\Spf C, s)  \arrow[swap]{ld}{g} \arrow{rd}{h}\\
(\Spf B,x) &  & \left(\Spf \frac{A\langle U,V\rangle}{(UV-a)}, \{y,0,0\}\right),
\end{tikzcd}
\]
where $g$ and $h$ are \'etale and $a$ lies in the ideal of $y$.
\end{defn}
\smallskip

\begin{rmk} We note that any semi-stable formal curve fibration is flat. Indeed, \cite[Corollary I.4.8.2]{FujKato} implies that it suffices to check the claim modulo any power of a pseudo-uniformizer $\pi$. So the claim boils down to the fact that a (schematic) semi-stable curve fibration is flat. 
\end{rmk}

\begin{lemma}\label{completion-ss} Let $f\colon X \to Y$ be a morphism of flat, finitely presented $\O_K$-schemes that is a relative semi-stable curve (in the sense of Definition~\ref{def-ss}). Then its completion $\wdh{f}\colon \wdh{X} \to \wdh{Y}$ is a semi-stable formal relative curve (in the sense of Definition~\ref{def-ss-formal}).
\end{lemma}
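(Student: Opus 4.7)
The plan is to reduce the formal statement to its algebraic counterpart via Lemma~\ref{ss-alternative} and the compatibility of completions with étale/smooth morphisms and with the specific étale-local model for semi-stable curves.

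First I would note that the underlying topological space of $\wdh{X}$ coincides with that of the special fibre $X_s \subset X$, so any point $\wdh{x} \in \wdh{X}$ corresponds canonically to a point $x \in X$ lying in $X_s$. I would then apply Lemma~\ref{ss-alternative} to the schematic morphism $f$ at $x$ to split into two cases.

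In the smooth case, $f$ is smooth on a quasi-compact Zariski open neighborhood $U \subset X$ of $x$. Completion is compatible with smoothness (e.g.\ a smooth morphism of finitely presented $\O_K$-schemes has topologically finitely presented completion, and the formal Jacobian criterion can be applied; alternatively, one uses that étale-locally $U \to Y$ factors through an étale map to affine space, and the completion of this factorization yields a formally étale map to formal affine space). Hence $\wdh{f}$ is smooth on the open formal subscheme $\wdh{U} \subset \wdh{X}$, which contains $\wdh{x}$.

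In the non-smooth case, Lemma~\ref{ss-alternative} furnishes affine Zariski opens $\Spec B \subset X$ around $x$ and $\Spec A \subset Y$ around $y=f(x)$, together with an étale correspondence
\[
(\Spec B, x) \xleftarrow{g} (\Spec C, s) \xrightarrow{h} \left(\Spec A[U,V]/(UV-a),\{y,0,0\}\right).
\]
I would pass to $\pi$-adic completions, obtaining $\Spf \wdh{B} \subset \wdh{X}$ open around $\wdh{x}$ and $\Spf \wdh{A} \subset \wdh{Y}$ open around $\wdh{y}$. The key algebraic inputs are (i) the $\pi$-adic completion of an étale morphism of flat finitely presented $\O_K$-algebras is étale in the sense of formal schemes (this is the content of \cite[Lemma 3.2.2]{T3} or can be deduced from Elkik-type algebraization), so $\wdh{g}$ and $\wdh{h}$ are formally étale; and (ii) the $\pi$-adic completion of $A[U,V]/(UV-a)$ equals $\wdh{A}\langle U,V\rangle/(UV-a)$, since $\pi$-adic completion commutes with the finitely generated quotient $(UV-a)$ over the $\pi$-adic completion $\wdh{A}\langle U,V\rangle$ of $A[U,V]$. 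Combining these gives the required étale correspondence
\[
(\Spf \wdh{B}, \wdh{x}) \xleftarrow{\wdh{g}} (\Spf \wdh{C}, \wdh{s}) \xrightarrow{\wdh{h}} \left(\Spf \wdh{A}\langle U,V\rangle/(UV-a), \{\wdh{y},0,0\}\right),
\]
and $a$ still lies in the ideal of $\wdh{y}$ since this condition is preserved by passage to the completion. This verifies Definition~\ref{def-ss-formal} at $\wdh{x}$.

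The main obstacle is the careful bookkeeping in the non-smooth case: one needs that the $\pi$-adic completion of $A[U,V]/(UV-a)$ really is $\wdh{A}\langle U,V\rangle/(UV-a)$, that the completion of an étale morphism of $\O_K$-algebras is formally étale in the admissible-formal-scheme sense, and that the chosen base point $\wdh{x}$ can be arranged to lie in the image of $\wdh{g}$ and $\wdh{h}$ (which is automatic because $x$ is in the special fibre and $\Spf \wdh{B}$, $\Spf \wdh{C}$, $\Spf \wdh{A}\langle U,V\rangle/(UV-a)$ retain the same special fibre as their schematic originals). Once these are in place, the rest is purely formal.
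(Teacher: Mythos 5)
Your proposal is correct and follows essentially the same route as the paper: reduce via Lemma~\ref{ss-alternative} to the \'etale-local model $\Spec A[U,V]/(UV-a)$ and observe that completion turns the \'etale correspondence into a formally \'etale one, the latter being checkable modulo powers of a pseudo-uniformizer since $\wdh{A}/I^n \cong A/I^n$ for completion along a finitely generated ideal. The paper's proof is just a terser version of your argument, leaving the bookkeeping you spell out (identification of $\widehat{A[U,V]/(UV-a)}$ with $\wdh{A}\langle U,V\rangle/(UV-a)$, preservation of the base point and of the condition on $a$) implicit.
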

\begin{proof}
Using Lemma~\ref{ss-alternative} the question boils down to the fact that a completion of an \'etale morphism of algebras is formally \'etale. This condition can be checked modulo all powers of a pseudo-uniformizer, but since $\wdh{A}/I^n \cong A/I^n$ holds for any completion along a {\it finitely generated} ideal, we conclude the statement. 
\end{proof}

\begin{defn}\label{semi-def-formal} Let $\O_K$ be a complete discretely valued ring. An admissible formal $\O_K$-scheme $\X$ is called {\it strictly semi-stable} if Zariski locally it admits an \'etale morphism 
\[
\mathfrak{U} \to \Spf \frac{\O_K\langle t_0, \dots, t_l\rangle}{(t_{0}\cdots t_{m}-\pi)}
\]
for some integers $m\leq l$, and a uniformizer $\pi\in \m_K \setminus \m_K^2$. 
\end{defn}

\begin{defn}\label{poly-def-formal} Let $\O_K$ be a complete rank-$1$ valuation ring. An admissible rig-smooth formal $\O_K$-scheme $\X$ is called {\it polystable} if \'etale locally it admits an \'etale morphism 
\[
\mathfrak{U} \to \Spf  \frac{\O_K\langle t_{1,0},\dots, t_{1, n_0}, \dots, t_{l,0}, \dots, t_{l, n_l}\rangle}{(t_{1,0}\cdots t_{1, n_1}-\pi_1, \dots, t_{l,0}\cdots t_{l, n_l}-\pi_l)}
\]
for some $\pi_i\in \O_K \setminus \{0\}$. 
\end{defn}

\begin{lemma}\label{completion-polyss} Let $X$ be a morphism of flat, finitely presented $\O_K$-schemes that is $K$-smooth and polystable in the sense of Definition~\ref{poly-def} (resp. strictly semistable in the sense of Definition~\ref{defn:strictly-semistable}). Then its completion $\wdh{X}$ is a rig-smooth polystable formal $\O_K$-scheme in the sense of Definition~\ref{poly-def-formal} (resp. strictly semistable in the sense of Definition~\ref{semi-def-formal}).
\end{lemma}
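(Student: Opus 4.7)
The plan is to check the conclusion étale-locally by completing the étale charts guaranteed by Definition~\ref{poly-def} (resp.\ Definition~\ref{defn:strictly-semistable}). First, Lemma~\ref{different-adm} shows that $\wdh{X}$ is an admissible formal $\O_K$-scheme, so it only remains to produce the required étale charts onto the standard models of Definition~\ref{poly-def-formal} (resp.\ Definition~\ref{semi-def-formal}) and to check rig-smoothness.

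By hypothesis, there is an étale cover $\{U_\alpha \to X\}$ together with étale $\O_K$-morphisms
\[
U_\alpha \to Z_\alpha \coloneqq \Spec \frac{\O_K[t_{1,0},\dots,t_{l,n_l}]}{(t_{1,0}\cdots t_{1,n_1}-\pi_1,\dots,t_{l,0}\cdots t_{l,n_l}-\pi_l)}
\]
with $\pi_i\in\O_K\setminus\{0\}$ (and analogously in the strictly semi-stable case). I would then form the $I$-adic completions for an ideal of definition $I\subset\O_K$. Since completion along a finitely generated ideal preserves the reduction mod $I^n$, we have $\wdh{Z_\alpha}$ canonically isomorphic to the standard model formal $\O_K$-scheme
\[
\Spf \frac{\O_K\langle t_{1,0},\dots,t_{l,n_l}\rangle}{(t_{1,0}\cdots t_{1,n_1}-\pi_1,\dots,t_{l,0}\cdots t_{l,n_l}-\pi_l)},
\]
and likewise in the strictly semi-stable case. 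Moreover, the completion of an étale morphism of finitely presented $\O_K$-algebras along a finitely generated ideal is again étale in the sense of topologically finitely presented formal schemes: this can be checked after reduction modulo $I^n$, where it follows from the fact that $\wdh{B}/I^n\wdh{B}\cong B/I^nB$ and the original map $A\to B$ is étale. Therefore the induced maps $\wdh{U_\alpha}\to\wdh{Z_\alpha}$ are étale charts of the required form, and the family $\{\wdh{U_\alpha}\to\wdh{X}\}$ is étale and jointly surjective (the underlying topological space of $\wdh{X}$ equals that of the special fibre of $X$, which is covered by the $U_\alpha$).

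Rig-smoothness of $\wdh{X}$ is immediate: by assumption $X_K$ is $K$-smooth, and the adic generic fibre $\wdh{X}_K$ is the analytification of the scheme-theoretic generic fibre $X_K$ in the sense that it is obtained from $X_K$ by analytification along finitely presented affine pieces; hence $\wdh{X}_K$ is a smooth adic space over $\Spa(K,\O_K)$. In the strictly semi-stable case, the same construction yields local étale maps to $\Spf \O_K\langle t_0,\dots,t_l\rangle/(t_0\cdots t_m-\pi)$, which is the defining property of Definition~\ref{semi-def-formal}. The only technical point worth double-checking is the passage from étale morphisms of finite type $\O_K$-algebras to topologically étale morphisms of their $I$-adic completions; this follows, for example, from \cite[Proposition 7.3/10]{B} combined with the Jacobian criterion applied mod $I^n$. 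This completes the sketch.
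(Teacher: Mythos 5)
Your proposal is correct and follows essentially the same route as the paper: the paper reduces Lemma~\ref{completion-polyss} to the argument of Lemma~\ref{completion-ss}, namely completing the étale charts and observing that the completion of an étale morphism along a finitely generated ideal remains étale because $\wdh{A}/I^n\cong A/I^n$. Your additional remarks on surjectivity of the completed charts and on rig-smoothness are correct elaborations of points the paper leaves implicit.
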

\begin{proof}
The proof is identical to that of Lemma~\ref{completion-ss}.
\end{proof}

We need to introduce another definition that is not standard: 

\begin{defn}\label{quasiproj-formal} We say that a morphism $f\colon \X \to \Y$ of topologically finite type formal $\O_K$-schemes is {\it formally quasi-projective}, if its special fibre 
\[
\ov{f}\colon  \ov{\X} \to \ov{\Y}
\]
is a quasi-projective morphism of $\O_K/\m_K$-schemes (in the sense of definition \cite[II, 5.3.1]{EGA}. It is clear that a composition of formally quasi-projective morphisms is formally quasi-projective.
\end{defn}
\medskip

\subsection{Rigid-Analytic Geometry}

We remind the reader that we use Huber's foundation for rigid-analytic geometry in this paper. So, a rigid-analytic space over a complete rank-$1$ valuation field $K$ always means here an adic space locally topologically finite type over $\Spa(K, \O_K)$. When we need to use classical rigid-analytic spaces, we refer to them as Tate rigid-analytic spaces.

We recall that there is a fully faithfull functor 
\begin{equation*}
r_K\colon\left\{\text{Tate Rigid-Analytic Spaces over } K\right\} \to \left\{\begin{array}{l} \text {Adic Spaces locally of topologically} \\ \text{finite type over}\  \Spa(K, \O_K)\end{array}\right\}
\end{equation*}
that becomes an equivalence on categories when it is restricted to quasi-compact and quasi-separated objects on both sides. We do not define this functor here, instead we only mention that it sends an affinoid rigid-analytic space $\operatorname{Sp}(A)$ to the adic space $\Spa(A, A^{\circ})$. The main difficulty then is to show that we can ``glue'' to define $r_K$ on non-affinoids. We refer to \cite[\textsection
4]{H1} and \cite[Lecture 16]{Seminar} for the full construction and discussion of some properties of this functor. \\

We also need the functor of adic generic fiber: 
\begin{equation*}
(-)_K\colon \left\{\begin{array}{l}\text{locally topologically finite} \\ \text{type formal } \O_K\text{-schemes}\end{array}\right\} \to \left\{\begin{array}{l} \text {Adic Spaces locally of topologically} \\ \text{finite type over}\  \Spa(K, \O_K)\end{array}\right\}
\end{equation*}
that is defined in \cite[\textsection 1.9]{H3} (it is denoted by $d$ there). Given an affine formal topologically finite type $\O_K$-scheme $\Spf(A)$, this functor assigns the affinoid adic space $\Spa(A\otimes_{\O_K} K, A^+)$ where $A^+$ is the integral closure of $A$ in $A\otimes_{\O_K} K$. One of the main features of this functor is that there is a natural isomorphism of functors: 
\[
(-)_K \cong r_K\circ (-)_{\text{rig}}
\]
where $(-)_{\text{rig}}$ is the ``classical'' Raynaud generic fiber (as defined in \cite[\textsection 7.4]{B}). \\

The last functor of interest is the analytification functor: 
\begin{equation*}
(-)^{\operatorname{an}}\colon \left\{\begin{array}{l}\text{locally finite type} \\  K\text{-schemes}\end{array}\right\} \to \left\{\begin{array}{l} \text {Adic Spaces locally of topologically} \\ \text{finite type over}\  \Spa(K, \O_K)\end{array}\right\}
\end{equation*}
that is defined as a composition 
\[
(-)^{\an}=r_K \circ (-)^{\text{rig}}
\]
of the classical analytification functor $(-)^{\text{rig}}$ (as it is defined in \cite[\textsection 5.4]{B}\footnote{We want to emphasize that the functors $(-)^{\rm{rig}}$ and $(-)_{\rm{rig}}$ are different. In particular, they have different source categories. The target category for both functors is the category of Tate Rigid-Analytic Spaces over $K$.}) and the functor $r_K$. 

The last thing we want to remind the reader is that given any morphism $\varphi\colon X \to Y$ of finite type $\O_K$-schemes, there are two ways to take the ``generic fibre'' of $\varphi$. The first one is to form the map between the usual generic fibres in the category of schemes and then consider its analytification. We denote the resulting morphism by $\varphi_K^{\an}\colon X_K^{\an} \to Y_K^{\an}$. Another way is first to complete $\varphi$ and then take its adic generic fibre in the sense of Raynaud; the result of this approach we denote by $\widehat{\varphi}_K\colon \widehat{X}_K \to \widehat{Y}_K$. In general, these two approaches are very different, i.e. $\A^{1, \an}_{K}$ is an analytic affine line, but $(\widehat{\A^1_{\O_K}})_K \cong \mathbf D_K$ is the closed unit disc. 

\begin{lemma}\label{open}\cite[Theorem 5.3.1]{C} Let $K$ be a complete rank-$1$ valuation field, and let $X$ be a locally finitely presented $\O_K$-scheme. Then there is a functorial morphism 
\[
j_X\colon \wdh{X}_K \to X^{\an}_K
\]
such that $j_X$ is a quasi-compact open immersion if $X$ is separated and admits a locally finite affine covering.
\end{lemma}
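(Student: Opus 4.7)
The plan is to construct $j_X$ first in the affine case, extend by gluing, and then verify the open-immersion statement by reducing to a model example.

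Concretely, for an affine $X = \Spec A$ with $A$ a finitely presented $\O_K$-algebra, I would use the natural $\O_K$-algebra map $A \to \widehat{A}$ to the $\pi$-adic completion. Inverting $\pi$ gives $A[1/\pi] \to \widehat{A}[1/\pi]$, a morphism of topologically finite type $K$-algebras (in fact, $\widehat{A}[1/\pi]$ is a Tate affinoid and $A[1/\pi]$ is an ordinary $K$-algebra of finite type). The induced map of adic spaces
\[
\Spa\bigl(\widehat{A}[1/\pi],\, \widehat{A}[1/\pi]^{+}\bigr) \longrightarrow \Spec(A[1/\pi])^{\an}
\]
yields the map $j_X$ on the affine level (using the universal property of analytification, or equivalently composing $r_K$ with the classical Raynaud/analytification comparison); this is clearly functorial in $X$. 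For general locally finitely presented $X$, I would glue the affine $j_X$'s: on a double affine overlap $U\cap V$ each restriction of $j_U$ and $j_V$ agrees by functoriality, so the required gluing datum for both $\widehat{X}_K$ and $X^{\an}_K$ matches.

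For the second assertion, I would reduce the open-immersion claim to a model example. Working affine-locally, choose a closed immersion $\iota\colon X\hookrightarrow \A^{n}_{\O_K}$ (possible since $X$ is affine and finitely presented). Both completion-then-generic-fiber and generic-fiber-then-analytification are compatible with closed immersions, and $j$ is functorial, so we get a Cartesian-type diagram reducing the problem to showing that $j_{\A^n_{\O_K}}\colon \widehat{\A^n_{\O_K}}_K \to (\A^n_K)^{\an}$ is a quasi-compact open immersion. But the source is by construction the closed unit polydisc $\mathbf D^n_K$ and the target is analytic affine $n$-space; the inclusion of the polydisc is a standard quasi-compact open immersion, which settles the affine case.

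To pass to the stated generality, I would use the hypothesis that $X$ is separated and admits a locally finite affine cover $\{U_i\}$. Separatedness makes each $U_i\cap U_j$ affine, so the affine-case result applies to all the overlaps and the local pieces $j_{U_i}$ glue compatibly to an open immersion $j_X$; local finiteness upgrades this to quasi-compactness on each affine of $X^{\an}_K$ (any quasi-compact open of $X^{\an}_K$ meets only finitely many $j_{U_i}(\widehat{U_i}_K)$). The principal technical nuisance, and the step I expect to be the main obstacle, is the bookkeeping in the non-noetherian setting: one must ensure that $\widehat{A}[1/\pi]$ really has the right $+$-subring so that $\widehat{X}_K$ is the Huber-theoretic adic space $r_K(\widehat{X}_{\mathrm{rig}})$, and that separatedness (rather than quasi-separatedness) is exactly what is needed to make the gluing argument for open immersions work; this is handled cleanly by invoking the compatibility $(-)_K \cong r_K\circ (-)_{\mathrm{rig}}$ recalled earlier and the analogous result for Tate rigid spaces in \cite[Theorem 5.3.1]{C}.
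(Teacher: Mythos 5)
The paper offers no proof of this lemma at all: it is stated as a direct citation of \cite[Theorem 5.3.1]{C}, so there is nothing internal to compare against. Judged on its own terms, your construction of $j_X$ is the standard and correct one: build it affinoid-locally from $A\to\widehat{A}$ via the universal property of analytification, glue by functoriality, and identify $\widehat{X}_K$ with $X_K^{\an}\times_{(\A^n_K)^{\an}}\mathbf{D}^n_K$ in the affine case (using $\widehat{A}\cong \O_K\langle T_1,\dots,T_n\rangle/I$ for finitely presented $A$, which needs a word in the non-noetherian setting but is true). That settles the affine case.

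The gap is in the globalization of the open-immersion claim. You argue: separatedness makes each $U_i\cap U_j$ affine, hence the affine case applies to the overlaps, hence the $j_{U_i}$ ``glue to an open immersion.'' This mechanism is insufficient, because knowing that $j_{U_i}$, $j_{U_j}$ and $j_{U_i\cap U_j}$ are all open immersions does not give injectivity of $j_X$: the preimage $j_{U_i}^{-1}\bigl(U_{j,K}^{\an}\bigr)$ is in general strictly larger than $\widehat{(U_i\cap U_j)}_K$ (already for $X=\A^1_{\O_K}$, $U_j=\Spec \O_K[T,1/T]$: the tube is $\{|T|=1\}$ but the preimage is the punctured closed disc). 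The affine line with doubled origin makes the failure concrete: its two charts overlap in the affine scheme $\mathbf{G}_m$, every local $j$ is an open immersion, and yet $j_X$ identifies no points it should not precisely because... it does identify them --- the two copies of $\{0<|T|<1\}$ have the same image, so $j_X$ is not injective. What is actually needed, and what separatedness buys, is a valuative argument: a point of $\widehat{U_i}_K$ determines a map from the spectrum of a valuation ring into $U_i\subset X$ extending the underlying point of $X_K^{\an}$; if two such points have the same image, separatedness forces the two valuation-ring maps to coincide, so both points lie in $\widehat{(U_i\cap U_j)}_K$ and agree there. That step is absent from your write-up, and deferring it to ``the analogous result for Tate rigid spaces in \cite[Theorem 5.3.1]{C}'' is circular, since that theorem is exactly the statement being proved. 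Either carry out the valuative argument or present the lemma honestly as a citation, as the paper does.
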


\begin{lemma}\label{Galois-analytification} Let $\varphi\colon X \to Y$ be a morphism of flat finitely presented separated $\O_K$-schemes, and suppose that $\varphi_{K}\colon X_K \to Y_K$ factors as a $G$-torsor over its open quasi-compact image $U\subset Y_K$ for a finite group $G$. Then the morphism $\hat{\varphi}_K\colon  \widehat{X}_K \to \widehat{Y}_K$ factors as a $G$-torsor over its open quasi-compact image $V\subset (\widehat{Y})_K$.
\end{lemma}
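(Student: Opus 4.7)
I would begin by applying Lemma~\ref{open} to $X$ and $Y$ (both separated and finitely presented over $\O_K$) to obtain functorial quasi-compact open immersions $j_X\colon \widehat{X}_K \hookrightarrow X_K^{\an}$ and $j_Y\colon \widehat{Y}_K \hookrightarrow Y_K^{\an}$. Analytification is functorial, compatible with fiber products, and preserves flatness, so the factorization $X_K\to U\hookrightarrow Y_K$ analytifies to a factorization $X_K^{\an}\to U^{\an}\hookrightarrow Y_K^{\an}$ in which the first arrow is a $G$-torsor (and $U^{\an}$ is a quasi-compact open in $Y_K^{\an}$). In particular $\varphi_K^{\an}$ is \'etale.

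In the situation at hand (coming from the $G$-invariance of the morphisms produced by Theorem~\ref{schematic-uniformization}), the $G$-action on $X_K$ is inherited from a scheme-level $G$-action on $X$ making $\varphi$ itself $G$-invariant. Functoriality of $\pi$-adic completion and of the adic generic fiber then induces a $G$-action on $\widehat{X}_K$, and naturality of $j_X$ renders $\widehat{X}_K \hookrightarrow X_K^{\an}$ a $G$-equivariant open immersion. Now set $V \coloneqq \widehat{\varphi}_K(\widehat{X}_K)\subset \widehat{Y}_K$. Since $\widehat{\varphi}_K$ is the restriction of the \'etale map $\varphi_K^{\an}$ to the open subspaces $\widehat{X}_K$ and $\widehat{Y}_K$, it is itself \'etale, hence open; thus $V$ is open in $\widehat{Y}_K$, and it is quasi-compact as the image of the quasi-compact adic space $\widehat{X}_K$. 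By construction $V \subset U^{\an}\cap \widehat{Y}_K$.

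The heart of the proof is the identification $\widehat{X}_K = (\varphi_K^{\an})^{-1}(V)$ as open subspaces of $X_K^{\an}$. The inclusion $\subset$ is immediate. For the reverse, take $x'\in X_K^{\an}$ with $v \coloneqq \varphi_K^{\an}(x')\in V$; by definition of $V$ there exists $x\in \widehat{X}_K$ with $\widehat{\varphi}_K(x)=v$, and the $G$-torsor property of $X_K^{\an}\to U^{\an}$ then gives $x' = g\cdot x$ for some $g\in G$, so the $G$-stability of $\widehat{X}_K$ forces $x'\in \widehat{X}_K$. Armed with this identification, $\widehat{\varphi}_K\colon \widehat{X}_K\to V$ is the base change of the $G$-torsor $X_K^{\an}\to U^{\an}$ along the open immersion $V\hookrightarrow U^{\an}$, and is therefore itself a $G$-torsor, which completes the proof. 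The main obstacle is this last identification: it is precisely where the lifting of the $G$-action to the integral model $X$ is decisively used, since without that hypothesis $\widehat{X}_K$ need not be $G$-stable inside $X_K^{\an}$ and the argument would break down.
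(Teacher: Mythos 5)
Your proof is correct and follows essentially the same route as the paper's: analytify to get the $G$-torsor $X_K^{\an}\to U^{\an}$, use the quasi-compact open immersions $j_X, j_Y$ of Lemma~\ref{open} to see $\widehat{\varphi}_K$ is \'etale with quasi-compact open image $V$, and then establish $(\varphi_K^{\an})^{-1}(V)=\widehat{X}_K$ by the transitivity of $G$ on fibers together with $G$-stability of $\widehat{X}_K$. Your explicit remark that the $G$-stability of $\widehat{X}_K$ inside $X_K^{\an}$ requires the action to come from an integral $G$-action on $X$ (as it does in the intended application) is a point the paper's proof leaves implicit, and is worth making.
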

\begin{proof}
We start by observing that  $\varphi_K^{\an}\colon X_K^{\an} \to Y_K^{\an}$ factors as a $G$-torsor over an open $U^{\an}$. We want to use it to deduce the result for $\wdh{\varphi}_K$ via the commutative square

\[
\begin{tikzcd}
\wdh{X}_K \arrow{r}{j_X}  \arrow{d}{\wdh{\varphi}_K}& X_K^{\an} \arrow{d}{\varphi_K^{\an}} \\
\wdh{Y}_K \arrow{r}{j_Y} & Y_K^{\an}\\
\end{tikzcd}
\]
where $j_X$ the $G$-equivariant from Lemma~\ref{open}. Clearly, both morphisms $\wdh{\varphi}_K$ and $\varphi_K^{\an}$ are $G$-invariant. The morphism $\varphi_K^{\an}$ is \'etale as the analytification of the \'etale morphism $\varphi_K$. Lemma~\ref{open} gurantees that $j_X$ and $j_Y$ are both open immersions, so $\wdh{\varphi}_K$ is also \'etale. In particular, its image $V\coloneqq \wdh{\varphi}_K(\wdh{X}_K)$ is open and quasi-compact (as $\wdh{X}_K$ is quasi-compact). 

We claim that $\wdh{\varphi}_K$ is a $G$-torsor over $V$. It suffices to prove that the natural map $\wdh{X}_K \to X_K^{\an}\times_{Y_K^{\an}} V$ is an isomorphism. Since $j_X$ and $j_Y$ are quasi-compact open immersions by Lemma~\ref{open}, we conclude that it is equivalent to show that $(\varphi_K^{\an})^{-1}(V)=\wdh{X}_K$.

We know that $\wdh{X}_K \subset (\varphi_K^{\an})^{-1}(V)$ by the construction of $V$ as an image of $\wdh{X}_K$. So it is enough to show the reverse inclusion. Note that since $V\subset U^{\an}$, we see that $(\varphi_K^{\an})^{-1}(V) \to V$ is a $G$-torsor as a pullback of a $G$-torsor. Moreover, note that a fibre over any point $v\in V$ contains at least one point from $\wdh{X}_K$. This implies that $(\varphi_K^{\an})^{-1}(V)=G.\wdh{X}_K$ (the orbit of $\wdh{X}_K$ under the action of $G$), thus $G.\wdh{X}_K=\wdh{X}_K$ as $\wdh{X}_K$ is $G$-stable. We finally deduce that $\wdh{X}_K = (\varphi_K^{\an})^{-1}(V)$, so $\wdh{\varphi}_K$ is a $G$-torsor over $V$.
\end{proof} 



\section{Log Geometry}\label{log}

The main goal of this section is to construct a structure of log-smooth, log-variety on a successive semi-stable $C$-smooth fibration over a rank-$1$ valuation ring $\O_C$ with algebraically closed fraction field $C$ (see Theorem~\ref{log-str-semist}); this is used at the end of the proof of Theorem~\ref{schematic-uniformization}. The natural log structure on $\Spec \O_C$ is not fine, so one needs to be careful while working with log schemes over $\Spec \O_C$. We handle this issue by using Lemma~\ref{val-approximation}. 

For the readers convenience, we recall the main definitions of log geometry in this appendix. In particular, we recall the definition of a log $\O_C$-variety from \cite{AdipLiuPakTemkin}. We found \cite{Kato}, \cite{ogus-log} and \cite{Niziol} to be especially useful sources on this subject; we refer there for a more comprehensive treatment of the theory. \medskip

Throughout this appendix, all monoids are meant to be commutative monoids. Also, we work only with \'etale log structures. In particular, all sheaves in this sections are considered as sheaves on the small \'etale site of $X$.

\subsection{Basic Definitions}

\begin{defn1} A {\it pre-logarithmic structure} (or just {\it pre-log structure}) on a scheme $X$ is a homomorphism of sheaves of monoids $\alpha\colon \mathcal{M}_X \to \O_X$ on $X_{\et}$. A {\it logarithmic structure} (or just {\it log structure}) is a pre-logarithmic structure such that the induced homomorphism $\alpha^{-1}(\O_X^\times) \to \O_X^\times$ is an isomorphism. Morphisms are defined in the evident way.

We denote the category of prelog structures on $X$ as $\mathbf{Plog}_X$ and the category of log structures on $X$ by $\mathbf{log}_X$. There is the natural forgetful functor $r_X\colon \mathbf{log}_X \to \mathbf{Plog}_X$ denoted also as $r$ if it does not cause any confusion. 
\end{defn1}

\begin{defn1} A {\it log scheme} is a scheme X endowed with a log structure $\a \colon \mathcal{M} \to \O_X$.
\end{defn1}

\begin{exmpl1}\label{associated} Let $Z\subset X$ be a closed subscheme with the complement $j\colon U \to X$. We define the log structure $(\mathcal M_X, \a)$ {\it associated to the pair} $(X, Z)$ as follows. We define the sheaf of monoids $\mathcal M_X$ as $\mathcal M_X = \O_X \times_{j_*\O_U} j_*\O_U^\times$ that is sometimes informally denoted by $\O_X\cap j_*\O_U^\times$. We define the map $\a\colon \mathcal M_X \to \O_X$ to be the natural inclusion map. Then it is straightforward to check that this defines a logarithmic structure on $X$, or that $(X, \mathcal M_X, \a)$ is a log scheme. 
\end{exmpl1}

\begin{exmpl1} Let $\O_K$ be a rank-$1$ valuation ring with a uniformizer $\pi$ and the residue field $k$. We call the log structure associated with the pair $(\Spec \O_K, \Spec k)$ the {\it standard} log structure on $\Spec \O_K$. Explicitly, $\M(\Spec A) = A \cap A[\frac{1}{\pi}]^\times$ for any affine \'etale map $\Spec A \to \Spec \O_K$. 
\end{exmpl1}

The notion of a log scheme is too general. The sheaf of monoids $\mathcal{M}_X$ can be pretty much anything. We need to define the notion of quasi-coherent log structures. Before doing this, we need to discuss the way to associate log structures to monoids or, more generally, to any pre-log structure. 

\begin{defn1} Let $\a\colon \mathcal{P}_X \to \O_X$ be a pre-log structure on $X$. Then we define the {\it associated log structure} $(\mathcal{P}_X^{\log}, \a^{\log})$ as the pushout in the following diagram:
\[
\begin{tikzcd}
\a^{-1}(\O_X^\times) \arrow{d}{\a} \arrow{r} & \mathcal{P}_X \arrow{d} \arrow{rdd}{\a} & \\
\O_X^\times \arrow{r} \arrow{rrd}& \mathcal{P}_X^{\log} \arrow{rd}{\a^{\log}} & \\
& & \O_X
\end{tikzcd}
\]
\end{defn1}

\begin{lemma1} The functor $(-)^{\log}\colon \mathbf{Plog}_X \to \mathbf{log}_X$ defines a left adjoint to the forgetful functor $r_X\colon \mathbf{log}_X \to \mathbf{Plog}_X$. More precisely, the natural map $\mathcal{P}_X \to \mathcal{P}^{\log}_X$ induces an isomorphism $\Hom_{\bf{log}_X}(\mathcal{P}_X^{\log}, \mathcal M) \simeq \Hom_{\bf{Plog}_X}(\mathcal{P}_X, r_X(\mathcal M))$ for any log-structure $\mathcal M$.
\end{lemma1}
\begin{proof}
The proof is essentially trivial and left as an exercise.
\end{proof}

\begin{defn1} Let $X$ be a scheme and $P$ a monoid. A sheaf of monoids $\ud{P}_X$ is defined as the constant sheaf associated to $P$.
\end{defn1}

\begin{exmpl1}\label{monoid-log} Let $R$ be a ring, $P$ be a monoid, and $R[P]$ be the monoid ring of $P$. Then $\Spec R[P]$ has a canonical log structure. Namely, it is defined as the log structure associated to the pre-log structure given by a morphism of sheaves of monoid $\ud{P}_{X}\to \O_X$ on $X=\Spec R[P]$. That morphism, in turn, comes from the natural morphism of monoids $P \to R[P]$.
\end{exmpl1}

We recall that the category of monoids admits all small colimits \cite[page 2]{ogus-log}. For example, if $\begin{tikzcd}
            P\arrow[r, swap, shift right=.75ex, "u"] \arrow[r, shift right=-.75ex, "v"] & Q\end{tikzcd}$ are homomorphisms of monoids, the {\it coequalizer} $\mathrm{coeq}(\begin{tikzcd}
            P\arrow[r, swap, shift right=.75ex, "u"] \arrow[r, shift right=-.75ex, "v"] & Q\end{tikzcd})$ is constructed as the quotient of $Q$ by the minimal congruence relation\footnote{Equivalence relation that is also a monoid} $R\subset Q\oplus Q$ containing elements $(u(p), v(p)) \subset Q\oplus Q$ for all $p\in P$. 
One can construct the {\it cokernel} of $u\colon P \to Q$ as $\coker (u) = \mathrm{coeq}(\begin{tikzcd}
            P\arrow[r, swap, shift right=.75ex, "u"] \arrow[r, shift right=-.75ex, "1"] & Q\end{tikzcd})$

\begin{defn1}\label{gp-units} The {\it Grothendieck group $(P^{\rm{gp}}, p)$} of a monoid $P$ is defined as a group $P^{\rm{gp}}$ with a morphism of monoids $p\colon P \to P^{\rm{gp}}$ that is universal among maps to commutative groups.  \medskip 

The {\it group of units} $P^\times$ of a monoid $P$ is a group of invertible elements in $P$. \medskip 

The {\it sharpening} of a monoid $P$ is the monoid $\overline{P}\coloneqq P/P^\times$.
\end{defn1}

\begin{rmk1} Existence of $(P^{\rm{gp}}, p)$ is discussed at \cite[Beginning of Section~I.1.3]{ogus-log}. Namely, $P^{\rm{gp}}$ can be identified with the cokernel of the diagonal map $P \to P\oplus P$. 
\end{rmk1}

We use the terminology introduced in \cite{AdipLiuPakTemkin} and define the monoid $M[t_1, \dots, t_n]$ to be the monoid $M\oplus \N^{\oplus n}$. We define the monoid $M[t_1, \dots, t_n]/(f_i=g_i)$ for $f_i, g_i\in M[t_1, \dots, t_n]$ ($i=1, \dots, m$) to be the coequalizer 
\[
\frac{M[t_1, \dots, t_n]}{(f_i=g_i)} \coloneqq  \mathrm{coeq}(\begin{tikzcd}
            \N e_1 \oplus \dots \oplus \N e_m \arrow[r, swap, shift right=.75ex, "u"] \arrow[r, shift right=-.75ex, "v"] & M[t_1, \dots, t_n]\end{tikzcd})
\]
where $u(e_i)=f_i$ and $v(e_i)=g_i$.

\begin{defn1} A monoid $P$ is called {\it integral} if the cancellation low holds in $P$, i.e. $a\cdot b=a\cdot c$ implies $b=c$ in $P$. \medskip 


A monoid $P$ is called {\it saturated} if it is integral and if whenever $p\in P^{\rm{gp}}$ is such that $np \in P$
 for some $n\in \Z_{\geq 1}$, then $p \in P$.  


A map of monoids $\phi\colon N \to M$ is called {\it finitely generated} if it can be extended to a surjective map of monoids $\psi\colon N[t_1, \dots, t_n] \to M$. \medskip

If, in addition, one can choose $\phi$ defined by an equivalence relation generated by finitely many relations $f_i = g_i$ with $f_i, g_i \in N[t_1, \dots, t_n]$ (i.e. $M=N[t_1, \dots, t_n]/(f_1=g_1, \dots, g_m=f_m)$), then we say that $\phi$ is {\it finitely presented}. \medskip

A monoid $P$ is called {\it finitely generated} if the natural map ${e} \to P$ from the trivial monoid is finitely generated. \medskip

A monoid $P$ is called {\it fine} if it is finitely generated and integral. \medskip

\end{defn1}

\begin{exmpl1} Let $\O_K$ be a valuation ring of rank-$1$, and let $\Gamma_{\leq 1} \subset \Gamma \subset (\mathbf{R}_{>0}, \times)$ be the monoid of elements of norm less or equal than $1$. Then $\Gamma_{\leq 1}$ is integral and saturated, but it is usually not finitely generated.  
\end{exmpl1}

\begin{defn1} A (global) {\it chart}\footnote{It is called by a (global) affine chart in \cite{AdipLiuPakTemkin}.} for the log structure $\M_X$ consists of a monoid $P$ and a homomorphism $P \to \Gamma(X, \O_X)$, equivalently a morphism $\ud{P}_X \to \O_X$, such that the associated log structure $\ud{P}_X^{\rm{log}}$ is isomorphic to $\M_X$. We will usually denote charts by $(P \to \M_X)$.
\end{defn1} 

\begin{rmk1}\label{chart-etale} Suppose $(X, \M_X)$ is a log scheme with a chart $(P \to \Gamma(X, \O_X))$ and $Y\to X$ is an \'etale morphism with the log structure $(Y, \M_X|_Y)$. Then the associated morphism $(P \to \Gamma(Y,\O_Y))$ defines a chart for $(Y, \M_X|_Y)$.
\end{rmk1}

\begin{lemma1}\label{finite-stalks} Let $(P \to \M_X)$ be a chart for a log scheme $(X, \M_X)$. Then the natural map $\ov{P} \to \ov{\M}_{X, \ov{x}}$ is surjective for any $x\in X$.
\end{lemma1}
\begin{proof}
The proof is essentially trivial and follows from the definitions. 
\end{proof}

\begin{defn1}\label{defn:fine-log-str}  A log scheme $X$ is called {\it quasi-coherent} if its log structure possesses charts \'etale locally. \medskip 

A quasi-coherent log scheme $X$ is called {\it integral} (resp. saturated, resp. coherent) if every point $x\in X$ \'etale locally admits a chart with an integral (resp. saturated, resp. finitely generated) monoid $P$. \medskip

A quasi-coherent log scheme $X$ is called {\it fine} if it is integral and coherent.
\end{defn1}

Note that it is not, a priori, clear if a fine log structure admits charts with fine monoids $P$. However, it turns out to always be possible due to the following lemma:

\begin{lemma1}\label{fs-log} Let $(X, \M_X)$ be a fine (resp. fine and saturated) log scheme. Then, \'etale locally on $X$, $(X, \M_X)$ admits a chart $(P \to \M_X)$ with a fine (resp. fine and saturated) monoid $P$.
\end{lemma1}
\begin{proof}
Suppose $(X, \M_X)$ is fine (resp. fine and saturated), then \cite[Proposition II.1.1.8(b)]{ogus-log} implies that $\M_X$ is fine (resp. fine and saturated) sheaf of monoids in \cite[Definition II.2.1.5 and discussion before Proposition I.1.1.3]{ogus-log}. Then \cite[Corollary II.2.3.6]{ogus-log} implies that, \'etale locally, it is possible to find charts $(P_x \to \M_X)$ with fine (resp. fine and saturated) monoids $P_x$.
\end{proof}

\begin{rmk1}\label{depend} We warn the reader that the notion of a fine log scheme is not independent of a choice of a chart $P$. For instance, let $\O_K$ be a dvr and $(X=\Spec \O_K, \mathcal{M}_X)$ the associated log scheme with the standard log structure. Then $(\O_K \setminus \{ 0 \} \to \mathcal{M}_X)$ is a chart and the monoid $\O_K \setminus \{0\}$ is usually not finitely generated. However, if we pick a uniformizer $\pi$ then the induced map $(\mathbf{N}\pi \to \mathcal{M}_X)$ is also a chart with finitely generated, integral monoid $\mathbf{N}\pi$. 
\end{rmk1}

\begin{rmk1} We also warn the reader that the notion of quasi-coherent log scheme is quite restrictive. Unlike the case of quasi-coherent sheaves in algebraic geometry, many natural constructions of log schemes are not quasi-coherent. For instance, the log structures associated with pairs $(X, Z)$ as in the Example~\ref{associated} are often not quasi-coherent unless $Z$ is a strict normal crossing divisor in a regular scheme $X$ (see Definition~\ref{snc}). \medskip

In the case of a strict normal crossing divisor $Z\subset X$ and the associated log structure $(X, \mathcal{M}_X)$, the key observation is that any point $x\in X$ admits a regular sequence $(t_1, \dots, t_m)$ generating the maximal ideal of $\O_{X,x}$ and a natural number $r$ such that the product $t_1\cdot \dots \cdot t_r$ generates the ideal of $Z$ in $\O_{X,x}$. One shows that the natural map $\oplus_{i=1}^r \mathbf{N}t_i \to \mathcal{M}_U$ is a fine chart in some neighborhood $U$ of a point $x$. See \cite[Proposition III.1.8.2 and III.1.7.3]{ogus-log} for a more detailed discussion. 
\end{rmk1}

\begin{exmpl1}\label{not-fine} The standard log structure on $\Spec \O_K$ for a rank-$1$ valuation ring $\O_K$ is integral and quasi-coherent similarly to example in Remark~\ref{depend}. However, it is not fine unless the associated valuation monoid $\Gamma_{\leq 1}$ is finitely generated. For example, it never happens if $K$ is algebraically closed. 
\end{exmpl1}

\begin{rmk1} We will be mostly concerned with integral quasi-coherent log schemes. However, the fineness assumption is too strong for our purposes since $\Spec \O_K$ with its standard log structure is usually not fine (Example~\ref{not-fine}). 
\end{rmk1}

\begin{defn1} Let $f\colon X' \to X$ be a morphism of schemes. 
\begin{itemize}
\item Let $(\mathcal{M}_{X'}, \a)$ be a pre-log structure on $X'$. We define its {pushforward} $f_*^{\mathrm{plog}}(\mathcal{M}_{X'}, \a)$, or just $f_*^{\mathrm{plog}}(\mathcal{M}_{X'})$, as the fiber product
\[
\begin{tikzcd}
f_*^{\mathrm{plog}}(\mathcal{M}_{X'})\arrow{d} \arrow{r}{f_*^{\mathrm{plog}}(\a)} & \O_X \arrow{d} \\
f_*(\mathcal{M}_{X'}) \arrow{r}{f_*(\a)} & f_*(\O_{X'})
\end{tikzcd}
\]
\item  Let $(\mathcal{M}_X, \a)$ be a pre-log structure on $X$. We define its {pullback} $f^*_{\mathrm{plog}}(\mathcal{M}_X, \a)$, or just $f^*_{\mathrm{plog}}(\mathcal{M}_X)$, as the pre-log structure given by $f^{-1}(\mathcal{M}_X) \to f^{-1}(\O_X) \to \O_{X'}$.
\end{itemize}
\end{defn1}

\begin{defn1} Let $f\colon X' \to X$ be a morphism of schemes. 
\begin{itemize}
\item Let $(\mathcal{M}_{X'}, \a)$ be a log structure on $X'$. We define its {pushforward} $f_*^{\log}(\mathcal{M}_{X'}, \a)$, or just $f_*^{\log}(\mathcal{M}_{X'})$, as the fiber product
\[
\begin{tikzcd}
f_*^{\log}(\mathcal{M}_{X'})\arrow{d} \arrow{r}{f_*^{\log}(\a)} & \O_X \arrow{d}{f^\#} \\
f_*(\mathcal{M}_{X'}) \arrow{r}{f_*(\a)} & f_*(\O_{X'})
\end{tikzcd}
\]
This is already a log structure as can be easily checked. 
\item  Let $(\mathcal{M}_X, \a)$ be a log structure on $X$. We define its {pullback} $f^*_{\log}(\mathcal{M}_X, \a)$, or just $f^*_{\log}(\mathcal{M}_X)$, as the log structure associated to the pre-log structure given by \[f^{-1}(\mathcal{M}_X) \xr{f^{-1}(\a)} f^{-1}(\O_X) \xr{f^\#} \O_{X'} \ . \]
\end{itemize}
\end{defn1}

\begin{lemma1} Let $f\colon X' \to X$ be a morphism of schemes. The functor $f^*_{\mathrm{plog}}\colon \mathbf{Plog}_{X'} \to \mathbf{Plog}_X$ is a left adjoint to the functor $f_*^{\mathrm{plog}}\colon \mathbf{Plog}_{X} \to \mathbf{Plog}_{X'}$. Similarly, $f_{\log}^*$ is left adjoint to $f_*^{\log}$.
\end{lemma1}
\begin{proof}
Straightforward and left as an exercise. 
\end{proof}

\begin{cor1}\label{plog-log} Let $f\colon X' \to X$ be a morphism of schemes and $\M_X$ a pre-log structure on $X$. Then $\left(f^*_{\mathrm{plog}}\left(\M_X\right)\right)^{\log}\simeq f^*_{\mathrm{log}}\left(\M_X^{\log}\right)$.
\end{cor1}
\begin{proof}
Note that the functors $f_*^{\mathrm{plog}}\circ r_{X'}\colon \mathbf{log}_{X'} \to \mathbf{Plog}_X$ and $r_X\circ f_*^{\mathrm{log}}$ are canonically identified simply by the definitions of $f_*^{\mathrm{plog}}$ and $f_*^{\mathrm{log}}$. This means that their left adjoints are canonically isomorphic. The first functor has a left adjoint $(-)^{\log}\circ f^*_{\mathrm{plog}}$ and the second functor has a left adjoint $f^*_{\mathrm{log}} \circ (-)^{\log}$. This says that $(f^*_{\mathrm{plog}}(\M_X))^{\log}$ and $f^*_{\mathrm{log}}(\M_X^{\log})$ are canonically isomorphic for any pre-log structure $\M_X$.
\end{proof}

\begin{defn1} A morphism of log schemes $f\colon (X, \M_X) \to (X', \M_{X'})$ is called {\it strict} if the natural map $f^*_{\log}(\M_{X'}) \to \M_X$ is an isomorphism.
\end{defn1}

\begin{rmk1} A chart for a log structure on an $R$-scheme $X$ can be understood as a strict morphism $X \to \Spec R[P]$ where the target is endowed with the natural log structure from Example~\ref{monoid-log}.
\end{rmk1}

\begin{exmpl1} Let $\O_K$ be a rank-$1$ valuation ring with the value group $\Gamma$ and the associated monoid $V \coloneqq \O_K \setminus \{0\}$\footnote{It is denoted as $R$ is \cite{AdipLiuPakTemkin}. However, we prefer this notation since $R$ can be easily confused with the base ring.}. Then its sharpening $\ov{V}$ is isomorphic to $\Gamma_{\leq 1}$. 
\end{exmpl1}

In what follows, $V$ denotes the monoid $\O_K \setminus \{0\}$ associated to the rank-$1$ valuation ring $\O_K$.

\begin{defn1}\label{log-variety-temkin} A {\it log variety} over a rank-$1$ valuation ring $\O_K$ is an integral quasi-coherent log scheme $(X, \M_X)$ such that the underlying morphism of schemes $X \to \Spec \O_K$ is flat of finite presentation and each homomorphism of monoids $\ov{V}=\Gamma_{\leq 1} \to \ov{\mathcal{M}}_{X, \ov{x}}$ is finitely generated. 
\end{defn1}

\begin{rmk1}\label{fpr-aut} The last condition is automatic if an $\O_K$-log scheme $(X, \M_X)$ admits charts that are finitely generated over $V$. Indeed, if $x\in X$ \'etale locally admits a chart given by a finitely generated $R$-monoid $M$, then the associated log structure is given by $\ud{M}^{\log}$. Therefore, $\ov{M} \to \ov{\ud{M}^{\log}_{X, \ov{x}}}\simeq \ov{\M}_{X,\ov{x}}$ is surjective by Lemma~\ref{finite-stalks}. Therefore, $\ov{\M}_{X,\ov{x}}$ is finitely generated over $\ov{V}\simeq \Gamma_{\leq 1}$.
\end{rmk1}

\subsection{Log Smoothness}

\begin{defn1}\label{defn-smoothness} Let $f\colon (X, \M_X) \to (Y, \M_Y)$ be a morphism of fine quasi-coherent log schemes with a choice of a fine chart $(Q\to \M_Y)$. Then $f$ is called {\it log smooth} if, for each $x\in X$,
\begin{itemize}
\item There is a finitely presented morphism of integral monoids $Q \to P_x$ such that the kernel and the torsion part of the cokernel $Q^{\rm{gp}} \to P_x^{\rm{gp}}$ are finite groups of orders invertible on $X$
\item There is an \'etale neighborhood $U \to X$ of $x$, such that the natural map $U \to Y$ factors as 
$U \xr{g} Y\times_{\Spec \Z[Q]} \Spec \Z[P_x] \to Y$ and the map $g$ is strict and \'etale (in the usual sense).
\end{itemize} 
\end{defn1}

\begin{rmk1} We note that \cite[Theorem 3.5]{Kato} shows that this definition does recover the standard notion of log smooth morphism \cite[Definition at the beginning of (3.3)]{Kato} defined in terms of the log analog of the infinitesimal criterion of smoothness. In particular, this definition does not depend on a choice of a chart $Q$. \end{rmk1}

\begin{exmpl1} A non-trivial example of a log smooth morphism of log varieties is given by a semi-stable degeneration over a dvr $\O_K$. More precisely, let $X\to \Spec \O_K$ be a strictly semi-stable $\O_K$-scheme. Consider the log scheme $(X, \mathcal{M}_X)$ with the log structure associated with the closed fiber $X_s$, and the log scheme $(\Spec \O_K, \mathcal{M}_K)$ with the classical log structure. Then the morphism $(X, \mathcal{M}_X) \to (\Spec \O_K, \mathcal{M}_K)$ is log smooth by \cite[Corollary IV.3.1.18]{ogus-log}. 
\end{exmpl1}

Following \cite{AdipLiuPakTemkin}, we give the following ad-hoc definition of log smooth log variety:

\begin{defn1}\label{log-smooth-temkin} Let $\O_K$ be a rank-$1$ valuation ring with a divisible value group $\Gamma$. Then we say that a log $\O_K$-variety $X$ is {\it log smooth} if for every $x\in X$ there is an \'etale neighborhood $U$ of $x$ with a strict \'etale morphism \[U \to \Spec \O_K \times_{\Spec \Z[V]} \Spec \Z[P]\] for some integral monoid $P$ with a finitely presented, injective morphism $V \hookrightarrow P$.
\end{defn1}

The main goal of this section is to show a successive semi-stable $C$-smooth curve fibration over a rank-$1$ valuation ring with algebraically closed fraction field is indeed a log smooth log variety. Moreover, we will actually show that the desired log structure is the log structure associated to its special fiber. However, before proving this claim we need to deal with a noetherian situation. In order to do so, we need to use the notion of log regular log structures. 

Unfortunately, it is quite difficult to define log regular log structure, so instead we only mention the main properties and refer to \cite[Definition 4.4.5]{Tsuji} or \cite[Theorem III.1.11.1]{ogus-log} for a precise definition.

\begin{facts1}\label{log-reg}
\begin{enumerate}
	\item\label{log-reg-1}\cite[Ex. III.1.11.9]{ogus-log} Suppose that $X$ is a regular noetherian scheme with a strict normal crossing divisor $D$. Then the log structure associated to $D$ is log regular. 
	\item\label{log-reg-2} Any log regular log scheme $(X, \M_X)$ is fine and saturated (see Lemma~\ref{fs-log}). 
	\item\label{log-reg-3}\cite[Prop. 2.6]{Niziol} Suppose that $(X, \M_X)$ is a log regular log scheme. Then the locus of triviality $X_{\rm{tr}}\coloneqq \{x\in X \ | \  \M_{X, \ov{X}}=\O_{X, \ov{x}}^\times \}$ is a dense open subset of $X$, and $\M_X \simeq \O_X \cap 	j_*\O_{X_{\rm{tr}}}^\times$, where $j\colon X_{\rm{tr}} \to X$ is the natural open immersion.
	\item\label{log-reg-4}\cite[Prop. IV.3.5.3]{ogus-log} Let $(X', \M_{X'}) \to (X, \M_X)$ be a log smooth morphism of fine 	saturated log schemes whose underlying schemes are locally noetherian. If $(X, \M_X)$ is log 	regular then so is $(X', \M_{X'})$.
\end{enumerate}
\end{facts1}

\begin{defn1} We say that a pair $(X, Z)$ of a locally noetherian scheme and closed subscheme $Z$ is {\it log regular} if the log structure associated to the pair $(X, Z)$ is log regular.
\end{defn1}

\begin{prop1}\label{noeth-case}\cite[Exp.\ VI, Prop.\ 1.9]{deGabber} Let $(Y, T)$ be a log regular pair. And let $f\colon X \to Y$ be a semi-stable curve fibration that is smooth over $Y\setminus T$. Then the pair $(X, f^{-1}(T))$ is also log regular and the canonical morphism of log schemes $X \to Y$ is log smooth.
\end{prop1}

\begin{proof}
We denote the complement of $T$ in $Y$ as $j\colon U\to Y$ and its pre-image in $X$ as $V$. Pick a point $x\in X$ with $y\coloneqq f(x)\in Y$. The statement is \'etale local on $X$ and $Y$, so we can replace $X$ and $Y$ with \'etale neighborhoods of $x$ and $y$ to assume that $Y=\Spec A$ is affine, admits a global fine and saturated chart $P \to \M_Y$ (Lemma~\ref{fs-log}), and $f \colon X\to Y$ factors as 
\[
\begin{tikzcd}
X\arrow{r}{h} \arrow{d}{f} & \Spec A[x,y]/(xy-a) \arrow{dl}{g}\\
Y &
\end{tikzcd}
\]
where $h$ is \'etale and $a\in A\cap \O(U)^\times$\footnote{If $f$ is smooth at $x\in X$, we can choose $a=1$, and otherwise we use Lemma~\ref{ss-alternative}.} ($U$ is dense in $Y$ by Fact~\ref{log-reg}(\ref{log-reg-3})). The condition that $a$ is invertible in $\O(U)$ comes from the fact that $X \to Y$ is assumed to be smooth over $U$. \medskip

Since $P$ is a chart for $\M_Y=\O_Y \cap j_*\O_U^\times$, we can pass to some \'etale neighborhood of $y$ to assume that $a=um$ for some $m\in P$ and $u\in \O_Y(Y)^\times$. Then we use the isomorphism $\Spec A[x,y]/(xy-a) \cong \Spec A[x,y]/(xy-m)$ to assume that $a$ lies in (the image of) $P$. \medskip

Now we define a log structure on $\Spec A[x,y]/(xy-a)$ associated to the monoid $Q\coloneqq P[x,y]/(xy=a)$ with the obvious map $\beta\colon Q \to A[x,y]/(xy-a)$. This is clearly a fine saturated monoid, and the homomorphism $\phi\colon P \to Q$ defines the homomorphism $\phi^{\rm{gp}}\colon P^{\rm{gp}} \to Q^{\rm{gp}}$ 
that can be identified with the natural inclusion $P^{\rm{gp}} \to P^{\rm{gp}}\oplus \Z$. Therefore, $\ker (\phi^{\rm{gp}})=\{e\}$ and $\coker(\phi^{\rm{gp}})\simeq \Z$, in particular, it is torsion-free. Finally, we note that the natural map 
\[
\Spec A[x,y]/(xy-a) \to \Spec A \times_{\Spec \Z[P]} \Spec \Z[Q]
\] 
is an isomorphism since $\Z[Q]\simeq \Z[P][x, y]/(xy-a)$\footnote{There is a natural map $\Z[Q] \to \Z[P][x, y]/(xy-a)$ that can be checked to be an isomorphism ``by hands''. Alternatively, see Remark~\ref{good-prop-long}.}. This implies that the map 
\[
(\Spec A[x,y]/(xy-a), Q^{\log}) \to (\Spec A, \M_Y)
\] is a log smooth morphism of fine, saturated log schemes. In particular, Fact~\ref{log-reg}(\ref{log-reg-4}) ensures that $(\Spec A[x,y]/(xy-a), Q^{\log})$ is log regular. And now Fact~\ref{log-reg}(\ref{log-reg-3}), in turn, guarantees that the log structure on $\Spec A[x,y]/(xy-a)$ coincides with the log structure associated to $g^{-1}(T)$. \smallskip

Finally, we come back to showing that $f\colon X \to Y$ is log smooth. We reduced the situation to the case that $X$ admits an \'etale map to $\Spec A[x,y]/(xy-a)$. Given all the work above, it is suffices to show that this map is strict when $X$ is provided with its log structure associated with $f^{-1}(T)$ and $\Spec A[x,y]/(xy-a)$ is provided with the log structure $Q^{\log}$ constructed above. However, we have already shown the latter log structure coincides with the log structure associated to $g^{-1}(T)$. And it is clear that the restriction of this log structure on $X$ exactly coincides with the log structure associated with $f^{-1}(T)$ as $h$ is \'etale. In other words, the map $h\colon X \to \Spec A[x,y]/(xy-a)$ is a strict \'etale morphism. This ensures that $f$ is log smooth according to Definition~\ref{defn-smoothness}. Finally, $(X, f^{-1}(T))$ is log regular by Fact~\ref{log-reg}(\ref{log-reg-4}).
\end{proof}

\begin{rmk1}\label{more} The proof actually shows more. If $Y$ has a global chart $P$, then \'etale locally $X$ has a chart of the form $P[x,y]/(xy=a)$. Strictly speaking, we constructed the global chart only in the model example $Y=\Spec A$ and $X=\Spec A[x,y]/(xy-a)$. However, it defines a chart for $X$ \'etale over $\Spec A[x,y]/(xy-a)$ by Remark~\ref{chart-etale}.
\end{rmk1}

\begin{cor1}\label{log-commutes} Let $g\colon (Y', T') \to (Y, T)$ be a ``strict'' morphism of log regular pairs, i.e. a morphism of schemes $g\colon Y' \to Y$ such that $g^{-1}(|T|)=|T'|$. And let $f\colon X \to Y$ be a semi-stable curve fibration that is smooth over $Y\setminus T$, and let $f'\colon X' \to Y'$ be its base-change. Then the natural morphism of log schemes
\[
(X', \O_{X'} \cap j'_*\O^\times_{f'^{-1}(Y' \setminus T')}) \to (X, \O_{X} \cap j_*\O^\times_{f^{-1}(Y \setminus T)}) \times_{(Y, \O_Y\cap i_*\O^\times_{Y\setminus T})} (Y', \O_{Y'}\cap i'_*\O^\times_{Y'\setminus T'})
\]
is an isomorphism.
\end{cor1}
\begin{proof}
The claim is \'etale local on $X$, $Y$ and $Y'$. So we can reduce to the case $X$, $Y=\Spec A$, and $Y'=\Spec A'$ are affine  with $(Y,  \O_Y\cap i_*\O^\times_{Y\setminus T})$ and $(Y', \O_{Y'}\cap i'_*\O^\times_{Y'\setminus T'})$ having global compatible charts $P$ and $P'$, and $X$ having a strict \'etale morphism to 
\[
\Spec A[x,y]/(xy-a)=\Spec A\times_{\Spec \Z[P]}\Spec \Z\left[\frac{P[x,y]}{(xy=a)}\right]
\] 
for some $a\in P$. This follows from Remark~\ref{more}. \smallskip 

So it suffices to prove the claim in the case $X=\Spec A[x,y]/(xy-a)$ with the chart $P[x,y]/(xy=a) \to \M_X$. But then its pullback along the log map $Y' \to Y$ is given by $\Spec A'[x,y]/(xy-a)$ with the chart $P'[x,y]/(xy=a)$. The proof of Proposition~\ref{noeth-case} (and Remark~\ref{more}) ensures that this is exactly the log structure associated with the closed subscheme $(f')^{-1}(T')$. 
\end{proof}

We are almost ready to show that any successive semi-stable $C$-smooth curve fibration has a structure of a log smooth log $\O_C$-variety. We introduce the following definition that will be useful in the proof. 

\begin{defn1}\label{long} If $M$ is a monoid and $a\in M$ an element, we define the {\it semi-stable model} monoid over $M$ as $F_a(M)\coloneqq M[x,y]/(xy=a)$. \smallskip

A {\it good sequence for a monoid $M$} $(a_n, \dots, a_1)$ is a sequence of elements $a_1 \in M$, $a_2\in F_{a_1}(M)$, $a_3\in F_{a_2}(F_{a_1}(M)), \dots$. \smallskip

If $\ud{a}=(a_n, \dots, a_1)$ is a good sequence for a monoid $M$, we define {\it successive semi-stable} monoid over it as $F_{\underline{a}}(M)\coloneqq F_{a_n}(F_{a_{n-1}}(\dots F_{a_1}(M)))$. \smallskip

Similarly, if $A$ is an algebra with an element $a\in A$, we define the {\it semi-stable model} algebra over $A$ as $G_a(A)\coloneqq A[x,y]/(xy-a)$. \smallskip

A {\it good sequence for an algebra $A$} $(a_n, \dots, a_1)$ is a sequence of elements $a_1 \in A$, $a_2\in G_{a_1}(A)$, $a_3\in G_{a_2}(G_{a_1}(A)), \dots$. \smallskip

And, if $\ud{a}=(a_n, \dots, a_1)$ is a good sequence for an algebra $A$, we define {\it successive semi-stable} algebra over it as $G_{\underline{a}}(A)\coloneqq G_{a_n}(G_{a_{n-1}}(\dots G_{a_1}(A)))$
\end{defn1}

\begin{rmk1}\label{good-prop-long} There is a natural isomorphism $\Z[F_{\ud{a}}(M)]\simeq G_{\ud{a}}(\Z[M])$. Indeed, it suffices to prove the claim for one element $a\in M$. Then we have a natural map 
\[
\Z\left[F_a\left(M\right)\right]=\Z\left[\frac{M\left[x,y\right]}{\left(xy=a\right)}\right] \to G_a\left(\Z\left[M\right]\right)=\frac{\Z\left[M\right]\left[x,y\right]}{\left(xy-a\right)}.
\] Firstly, we note that $\Z[M[x,y]] \simeq \Z[M][x,y]$, so it suffices to show that $\Z[-]$ commutes with coequalizers. However, it is a left adjoint functor to the forget functor from (commutative) algebras to (commutative) monoids. Therefore, it does commute with coequalizers. 
\end{rmk1}

\begin{rmk1}\label{good-prop-long-2} One can check ``by hand'' that $F_{\ud{a}}(M)$ is always finitely presented over $M$, and it is integral and saturated provided that so is $M$.
\end{rmk1}

\subsection{Main Result}

\begin{thm1}\label{log-str-semist} Let $\O_C$ be a rank-$1$ valuation ring with the algebraically closed fraction field $C$ and the residue field $k$. Let $f\colon X\to \Spec \O_C$ be a successive semi-stable $C$-smooth fibration. Then the log structure associated with the closed fiber $X_k$ defines the structure of a log smooth log $\O_C$-variety.
\end{thm1}
\begin{proof}
We start by using Lemma~\ref{val-approximation} to write $(\O_C, \pi)$ as a filtered colimit of noetherian, regular subrings $(A_i, t_i)_{i\in I}$ with $\rm{V}(t_i)_{\red}$ an snc divisor. We may replace each $A_i$ with the localization $(A_i)_{\m_C\cap A_i}$ to assume that all $A_i$ are local and the morphisms $A_i \to \O_C$ are local as well. In what follows, we denote the standard log structure on $\Spec \O_C$ by $\M_C$. \medskip

{\it Step 1. Spread $X$ over some $A_i$}: We present $X$ as a successive semi-stable curve fibration 
\[
X=X_n \xr{f_n} X_{n-1} \xr{f_{n-1}} \dots \to X_1 \xr{f_1} X_0=\Spec \O_C
\]
such that each $f_i$ is a relative semi-stable curve smooth over $C$-fibers. Now we use \cite[IV\textsubscript{3}, 8.8.2]{EGA} to successively spread this tower over some $A_i$ to get a tower 
\[
X^{(i)}\coloneqq X_{n, i} \xr{f_{n, i}} X_{n-1, i} \xr{f_{n-1, i}} \dots \to X_{1, i} \xr{f_{1, i}} X_{0, i}=\Spec A_i
\]
of $A_i$-schemes. A standard approximation argument ensures that, after possibly enlarging $i$, we can assume that each $f_{n,i}$ is a relative semi-stable curve whose restriction over $\Spec A_i[1/t_i]$ is smooth. Now we replace the filtered set $I$ with $I_{\geq i}$, and define $X_{k,j}\coloneqq X_{i,j}\times_{\Spec A_i} \Spec A_{j}$  for any $j\geq i$. Clearly, the tower 
\[
X^{(j)}\coloneqq X_{n, j} \xr{f_{n, j}} X_{n-1, j} \xr{f_{n-1, j}} \dots \to X_{1, j} \xr{f_{1, j}} X_{0, j}=\Spec A_j
\]
is a successive semi-stable curve fibration whose restriction over $\Spec A_j[1/t_j]$ is smooth.  
\medskip

{\it Step 2. Construct the log structure over the finite layer $A_i$}: Now we note that $\Spec A_i$ has a canonical log structure associated to the closed subset $\rm{V}(t_i)_{\red}$. Moreover, this log structure is log regular by Facts~\ref{log-reg}(\ref{log-reg-1}), and it admits a global chart with $P_i=\N^d$  where $d$ is the number of irreducible components of $\rm{V}(t_i)_{\red}$. Only the latter claim requires a justification, in fact, it follows from the discussion before \cite[Proposition III.1.7.3]{ogus-log} and the fact that, for each irreducible component $D_i\subset \rm{V}(t_i)_{\red}$, the ideal sheaf $\O(-D_i)$ has a global generator as $A_i$ is a local regular ring. We denote this log structure on $\Spec A_i$ by $\M_i$. 

Now we endow each $X_{k, i}$ with the log structure $\M_{X_{k,i}}$ associated with the closed subscheme $X_{k,i}\times_{\Spec A_i} \Spec (A_i/t_i)$. We use (the proof of) Proposition~\ref{noeth-case} and Remark~\ref{more} successively to show that $X^{(i)}=X_{n, i}$ \'etale locally admits a chart of the form $F_{\ud{a}}(P_i)$ for some good sequence $\ud{a}$ for the monoid $P_i$ (see Definition~\ref{long}). And, moreover, it admits a strict \'etale morphism 
\[
X_{i} \to \Spec A_i \times_{\Spec \Z[P_i]} \Spec \Z[F_{\ud{a}}(P_i)] \simeq \Spec A_i \times_{\Spec \Z[P_i]} \Spec G_{\ud{a}}(\Z[P_i]) \simeq \Spec G_{\ud{a}}(A_i)  
\]
where the target has the log structure associated with the chart $F_{\ud{a}}(P_i) \to G_{\ud{a}}(A_i) \cap \left(G_{\ud{a}}(A_i)[\frac{1}{t_i}]\right)^\times$. \medskip

{\it Step 3. Construct some structure of a log smooth log $\O_C$-variety on $X$}: Now we simply define the log structure on $X$ to be the fiber product $(X^{(i)}, \M_{X^{(i)}}) \times_{(\Spec A_i, \M_i)} (\Spec \O_C, \M_C)$ in the category of log schemes. Recall that \cite[Proposition III.2.1.2]{ogus-log} ensures that the functor sending a log scheme to underlying scheme commutes with fiber products. Thus the underlying scheme of the fiber product $(X^{(i)}, \M_{X^{(i)}}) \times_{(\Spec A_i, \M_i)} (\Spec \O_C, \M_C)$ is exactly $X^{(i)}\times_{\Spec A_i} \Spec \O_C = X$. Thus, this does define some log structure $(X, \M_{X, i})$. We prefer to use this notation to emphasize that we do not know at this point if this structure is independent of a choice of $i$. \smallskip

Moreover, $X$ with this log structure \'etale locally admits a strict \'etale morphism\footnote{In the formula below, we slightly abuse notation and consider $\ud{a}$ as a sequence for $V$ using the natural morphism $P_i \to A_i\cap A_i[\frac{1}{t_i}]^\times \to \O_C\setminus \{0\}=V$.} 
\begin{align*}
X \to & \Spec \O_C \times_{\Spec A_i} \Spec G_{\ov{a}}(A_i) \\
&\simeq \Spec G_{\ov{a}}(\O_C)  \\
&\simeq \Spec \O_C \times_{\Spec \Z[V]} \Spec G_{\ov{a}}(\Z[V])  \\
&\simeq \Spec \O_C \times_{\Spec \Z[V]} \Spec (\Z[F_{\ov{a}}(V)]) \\
\end{align*}
and the target has a chart given by $F_{\ov{a}}(V) \to G_{\ov{a}}(\O_C) \cap \left(G_{\ov{a}}(\O_C)[\frac{1}{\pi}]\right)^\times$. In particular, the log scheme $(X, \M_{X, i})$ admits charts \'etale locally with the associated monoids $F_{\ov{a}}(V)$. This already implies that the constructed log structure is quasi-coherent. Now we need to study properties of these monoids to make sure that $(X, \M_{X, i})$ is actually a log smooth log $\O_C$-variety. 

We recall that Remark~\ref{good-prop-long-2} guarantees that $F_{\ov{a}}(V)$ is $V$-finitely presented, integral and the natural map $V \to F_{\ov{a}}(V)$ is injective. In particular, $X$ becomes an integral quasi-coherent log scheme with $V$-finitely presented charts. This implies $X$ is a log $\O_C$-variety by Remark~\ref{fpr-aut}. Furthermore, $X$ is log smooth as it admits strict \'etale morphisms $X \to \Spec \O_C \times_{\Spec \Z[V]} \Spec (\Z[F_{\ov{a}}(V)])$ \'etale locally on $X$. 

{\it Step 4. Show that the log structure $\M_{X, i}$ is independent on $i$ and coincides with $\O_X \cap j_* \O_{X_C}^\times$}: Similarly to what we did in Step~$3$, we can define the log structure $\M_{X, j}$ on $X$ as the pullback $(X^{(j)}, \M_{X^{(j)}}) \times_{(\Spec A_j, \M_j)} (\Spec \O_C, \M_C)$ for any $j\geq i$. Corollary~\ref{log-commutes} guarantees that $(X, \M_{X, j})\simeq (X, \M_{X, i})$ as $(X^{(j)}, \M_{X^{(j)}}) \simeq (X^{(i)}, \M_{X^{(i)}}) \times_{(\Spec A_i, \M_i)} (\Spec A_j, \M_j)$. 
Now we consider the fiber square 
\[
\begin{tikzcd}
X \arrow{d}{f} \arrow{r}{g'^{(j)}} & X^{(j)} \arrow{d}{f^{(j)}} \\
\Spec \O_C \arrow{r}{g^{(j)}} & \Spec A_j
\end{tikzcd} 
\]
and denote the composition as $h^{(j)}\coloneqq f^{(j)} \circ g'^{(j)} = g^{(j)}\circ f$. The proof of \cite[Proposition III.2.1.2]{ogus-log} shows that $\M_{X, j}$ is given by the coproduct of morphisms of log structures ${h^{(j), *}_{\log}}(\M_j) \to f^*_{\log}(\M_C)$ and ${h^{(j), *}_{\log}}(\M_j) \to {g'^{(j), *}_{\log}}(\M_{X^{(j)}})$. We denote this coproduct as $f^*_{\log}(\M_C) \oplus^{\log}_{{h^{(j), *}_{\log}}(\M_j)} {g'^{(j), *}_{\log}}(\M_{X^{(j)}})$, and write $\M_{X, i}\simeq \colim \M_{X, j}$ as this system is just constant. We claim there are isomorphisms
\begin{align*}
\colim \M_{X, j} & \simeq \colim \left(f^*_{\log}(\M_C) \bigoplus^{\log}_{{h^{(j), *}_{\log}}(\M_j)} {g'^{(j), *}_{\log}}\left(\M_{X^{(j)}}\right)  \right) \\
& \simeq \left(\colim f^*_{\log}\left(\M_C\right)\right) \bigoplus^{\log}_{\colim {h^{(j), *}_{\log}}(\M_j)} \left(\colim {g'^{(j), *}_{\log}}\left(\M_{X^{(j)}}\right)\right) \\
&\simeq  f^*_{\log}(\M_C) \bigoplus^{\log}_{\colim f^*_{\log}\left({g^{(j), *}_{\log}}\M_{i}\right)} \left(\colim {g'^{(j), *}_{\log}}\left(\M_{X^{(j)}}\right)\right) \\
& \simeq  f^*_{\log}(\M_C) \bigoplus^{\log}_{f^*_{\log}\left(\colim \left(\left({g^{(j), *}_{\rm{plog}}}\M_j\right)^{\log}\right)\right)} \colim \left(\left({g'^{(j), *}_{\rm{plog}}}\M_{X^{(j)}}\right)^{\log}\right) \\
& \simeq  f^*_{\log}(\M_C) \bigoplus^{\log}_{f^*_{\log}\left( \left(\colim {g^{(j), *}_{\rm{plog}}}\M_j\right)^{\log}\right)} \left(\colim{g'^{(j), *}_{\rm{plog}}}\M_{X^{(j)}}\right)^{\log} \\
& \simeq  f^*_{\log}(\M_C) \bigoplus^{\log}_{f^*_{\log}\left( \M_C\right)} \left(\O_X \cap j_*\O^\times_{X_C}\right)\\
& \simeq \O_X \cap j_*\O^\times_{X_C} \ . 
\end{align*}

Now we explain each isomorphism. The first is just the definition of $\M_{X, j}$. The second comes from the fact that filtered colimits commute with push-outs. The third just uses the fact that $h^{(j)}=g^{(j)}\circ f$ and that the colimit of a constant system is isomorphic to that constant term. The fourth uses that $f^*_{\log}$ is left adjoint (so it commutes with all colimits) and Corollary~\ref{plog-log}. The fifth uses that $(-)^{\log}$ is left adjoint, thus it commutes with arbitrary colimits, where colimits are understood in the category of {\it pre-log structures}. The sixth uses that $\colim {g^{(j)}_{\rm{plog}}}^{*}\M_{i}$ is already isomorphic to $\M_C$ (as we will soon justify), and so it stays the same after applying $(-)^{\log}$. Similarly, $\colim{g'^{(j), *}_{\rm{plog}}}\M_{X^{(j)}}$ is already isomorphic to $\O_X \cap j_*\O_{X_C}^\times$ (as we will soon justify), so so it stays the same after applying $(-)^{\log}$. The last isomorphism is trivial. \medskip

So, overall, the only thing we are left to show is that $\colim {g^{(j), *}_{\rm{plog}}}\M_j \simeq \M_C$ and $\colim{g'^{(j), *}_{\rm{plog}}}\M_{X^{(j)}} \simeq \O_X \cap j_*\O^\times_{X_C}$. We show the second, and the proof of the first is similar. The pre-log pullback $g'^{(j), *}_{\rm{plog}} \M_{X^{(j)}}$ is given by
\[
g'^{(j), -1}\M_{X^{(j)}}= g'^{(j), -1} (\O_{X^{(j)}}\cap \iota^{(j)}_* \O^\times_{U^{(j)}}) \to g'^{(j), -1}\O_{X^{(j)}} \to \O_{X} \ .
\]
where $\iota^{(j)}\colon U^{(j)} \to X^{(j)}$ is the complement of $X^{(j)}\times_{\Spec A_j} \Spec A_j/t_j$. So the question boils down to showing that the natural morphism
\[
\colim g'^{(j), -1} (\O_{X^{(j)}}\cap \iota^{(j)}_* \O^\times_{U^{(j)}})) \to \O_{X} \cap j_* \O^\times_{X_C}
\]
is an isomorphism, where $\colim$ is understood as the colimit in the category of sheaves of monoids. Since $g'^{(j), -1}$ is exact, it commutes with intersection (or, actually, any fiber product). Moreover, filtered colimits commute with finite limits, so we can rewrite the map as:
\[
\beta\colon \colim g'^{(j), -1} \O_{X^{(j)}} \cap \colim g^{(j), -1} \iota^{(j)}_* \O^\times_{U^{(j)}}  \to \O_{X} \cap j_* \O^\times_{X_C}
\] 
that we want to be an isomorphism. Now \cite[IV\textsubscript{3}, 8.2.12]{EGA} implies that $\colim g'^{(j), -1} \O_{X^{(j)}}  \to \O_{X}$ is an isomorphism. In order to establish that the morphism $\beta$ is an isomorphism, it suffices to show a local section $f$ of $\O_X$ is invertible on $X_C$ if and only if it comes from some local section of $\O_{X^{(j)}}$ invertible on $U^{(j)}$ for some large $j$. We have already shown that $f$ comes from some finite level $i$, and so we only have to show that it becomes invertible on $U^{(j)}$ for some large $j\geq i$. This again follows from \cite[IV\textsubscript{3}, 8.2.12]{EGA} as $X_C= \lim U^{(j)}$. This finishes the proof that the  structure of the log smooth log $\O_C$-variety on $X$ constructed above coincides with the log structure associated to the special fiber. 
\end{proof}

\bibliography{biblio}

\end{document}